\documentclass[a4paper,10pt,oneside]{scrartcl}
 
\usepackage[a4paper, top=2cm, bottom=2cm, left=2cm, right=2cm, heightrounded, centering]{geometry}
 
\usepackage[T1]{fontenc}
\usepackage[utf8]{inputenc}
\usepackage{microtype}
\usepackage{lmodern}
\usepackage{authblk}
\usepackage{placeins}
\usepackage{afterpage}
\usepackage{setspace}
\usepackage{comment}
\usepackage{amsmath, amsfonts, amssymb, amsthm, mathtools}
\usepackage{stmaryrd}
 
\usepackage{graphicx}
\usepackage{tikz}
\usetikzlibrary{arrows.meta, positioning, cd}
\usepackage{adjustbox}
\usepackage{caption}
\usetikzlibrary{calc}

\usepackage{booktabs}
\usepackage{siunitx,array,xcolor,colortbl,longtable}
 \usepackage[
  colorlinks=true,
  linkcolor=purple!80!black,
  citecolor=purple!80!black,
  urlcolor=blue!40!black,
  linktocpage=true
]{hyperref}
\usepackage{cleveref}
\usetikzlibrary{arrows.meta}

\definecolor{flagblue}{HTML}{315B78}
\definecolor{contactpurple}{HTML}{715780}
\definecolor{collapseorange}{HTML}{AD6335}
 \newtheoremstyle{mystyle}
  {12pt}{12pt}
  {\itshape}
  {}
  {\bfseries}
  {.}
  {.5em}
  {}
\DeclareFontFamily{U}{mathx}{\hyphenchar\font45}
\DeclareFontShape{U}{mathx}{m}{n}{
      <5> <6> <7> <8> <9> <10>
      <10.95> <12> <14.4> <17.28> <20.74> <24.88>
      mathx10
      }{}
\DeclareSymbolFont{mathx}{U}{mathx}{m}{n}
\DeclareFontSubstitution{U}{mathx}{m}{n}
\DeclareMathAccent{\widecheck}{0}{mathx}{"71}
\DeclareMathAccent{\wideparen}{0}{mathx}{"75}

\theoremstyle{mystyle}
\newtheorem{thm}{Theorem}[section]
\newtheorem{prop}[thm]{Proposition}
\newtheorem{lemma}[thm]{Lemma}

\theoremstyle{definition}
\newtheorem{defn}[thm]{Definition}

\theoremstyle{remark}
\newtheorem{rmk}[thm]{Remark}

 \newcommand{\Span}{\mathrm{Span}}
\newcommand{\mr}[1]{\mathrm{#1}}
\newtheoremstyle{note}
  {12pt}{12pt}{}
  {}{\bfseries}{.}{.5em}{}
\theoremstyle{note}

\allowdisplaybreaks[4]
\vfill

\author[1]{Antonio Del Donno\thanks{antonio.deldonno@matfyz.cuni.cz}}

\affil[1]{Mathematical Institute of Charles University 

Sokolovská 49/83, 18675 Prague 8, Czech Republic.}

\title{A Dolbeault complex for \\  the contact quantised flag manifolds \\ of type $\mathrm{C}_{2}$ and $\mathrm{D}_{4}$ }
\begin{document}
\maketitle

\begin{abstract}
\noindent Differential calculi on non-irreducible quantised flag manifolds of the $\mathrm{A}$-series have recently been investigated using Lusztig positive root vectors, revealing that several features of classical differential geometry persist in the noncommutative setting. We extend this investigation to quantised flag manifolds associated with the lowest-rank quantum groups of types $\mathrm{C}$ and $\mathrm{D}$ whose classical spaces are holomorphic contact manifolds. Building on the Heckenberger--Kolb framework, we construct covariant first order differential calculi. The maximal prolongations of calculi restricted to contact directions have lower dimensions than their classical counterparts. In type $\mathrm{C}_2$, however, adjoining the Reeb direction yields a three-dimensional calculus on the contact flag manifold whose maximal prolongation has classical dimension. Adjoining the remaining simple-root direction yields a four-dimensional calculus on the full flag manifold, again with a maximal prolongation of classical dimension. For both calculi, we explicitly present the exterior algebras and identify them with those of the corresponding framing calculi on the quantum group. In type $\mathrm{D}_4$, we show that adjoining the Reeb direction yields a nine-dimensional calculus on the contact flag. We conjecture that its maximal prolongation also has classical dimension.
\end{abstract}

\bigskip

\noindent
\textbf{Subject Classification:} 58B32, 16T05, 20G42, 46L87. \\
\textbf{Keywords:} noncommutative differential geometry, Hopf algebras, quantum groups. 

\tableofcontents

\section*{Introduction}
The theory of quantum groups, initiated in the mid-1980s by the foundational work of Drinfel'd \cite{Drinfeld} and Jimbo \cite{Jim85} on quantum universal enveloping algebras, and by Woronowicz's parallel development of compact quantum groups via the formalism of $C^*$-algebras \cite{Wor88}, has by now reached considerable maturity from a purely algebraic and representation-theoretic standpoint. In parallel, and largely independently, the framework of noncommutative differential geometry, as systematised by Connes \cite{Con90} and, in the Hopf-algebraic setting, by Woronowicz \cite{Woronowicz1989} and by Majid \cite{Maj88}, provides a general apparatus for extending differential calculus to noncommutative algebras. The natural problem of reconciling these two traditions, that is, of equipping the coordinate algebras of quantum groups with geometrically meaningful, covariant differential calculi, has proven to be substantially more difficult than one might initially expect. Indeed, a fundamental obstruction already appears at the level of the simplest quantum group $\mathcal{O}_q(\mathrm{SU}_2)$: Woronowicz's three-dimensional calculus fails to be bicovariant \cite{Woronowicz1989}. This behavior is not confined to the particular construction, as subsequent classification results establish the nonexistence of classical dimension bicovariant calculi for general quantum groups \cite{SchmudgenSchuler1995,HeckenbergerSchmudgen}. Nevertheless, rich and geometrically meaningful covariant calculi of classical dimension do exist. The paradigmatic example is again the three-dimensional left-covariant calculus on $\mathcal{O}_q(\mathrm{SU}_2)$ \cite{Wor87}, which restricts to the celebrated two-dimensional calculus on the Pod\-le\'{s} sphere \cite{Pod89}, and whose behavior closely mirrors that of the classical geometry of $\mathrm{SU}_2$ and $S^2$ respectively. In view of these difficulties at the level of the quantum group itself, a more tractable approach is to shift attention to quantum homogeneous spaces, and in particular to the quantised flag manifolds. This strategy is further motivated by the classical fibration $G \to G/P$, which expresses a semisimple Lie group in terms of its flag varieties, so that a systematic understanding of the differential geometry of the latter provides essential structural information about the former. 

A covariant first order differential calculus on a quantum homogeneous space must be compatible with the coaction of the underlying quantum group. The general classification of such calculi is well understood \cite{Hermisson1},  and follows essentially from the equivalence of categories established in \cite{Takeuchi79}. However, singling out those covariant differential structures that carry meaningful geometric content remains a substantially more delicate problem. Significant progress has nonetheless been made in the setting of the \emph{irreducible} quantised flag manifolds, that is, the $q$-deformations of those classical flag varieties that are of compact Hermitian symmetric type. The foundational work of Heckenberger and Kolb \cite{hek-kolb-2003} established a complete classification of all covariant first order differential calculi of classical dimension over these spaces, and subsequently investigated their maximal prolongations and the resulting higher-order differential geometry \cite{hek-kolb-2006}. This family includes, as its most studied members, the quantum projective spaces $\mathbb{CP}^n_q$ and the quantum Grassmannians of the $\mathrm{A}$-series, which have been the subject of intensive investigation by numerous authors \cite{Mat22,CMO23,DDL08}. 

More recently, progress has been made towards the understanding of quantised flag manifolds in the non-irreducible setting. The full flag manifolds of the $\mathrm{A}$-series were studied in full generality by \'{O} Buachalla and Somberg in \cite{Re-Sombrg}, who demonstrated that their noncommutative geometry is remarkably faithful to the classical picture, in particular the exterior algebra of the maximal prolongation recovers the Dolbeault complex, and a noncommutative version of the Borel--Weil theorem holds. Moreover, restricting such calculi to the corresponding contact quantised flag manifold, namely the one associated with the parabolic subset obtained by removing the first and last simple roots, is shown to yield a differential calculus of classical dimension \cite{RéGio}. Along the same lines, in \cite{ale_re_jun} the authors carry out a detailed study of the noncommutative complex geometry of the full flag manifold of $\mathcal{O}_q(\mathrm{SU}_3)$. A similar case study is \cite{VY15}, where a quantum analogue of the BGG resolution for the full quantised flag manifold of $\mathcal{O}_q(\mathrm{SU}_3)$ is employed to verify the Baum--Connes conjecture with trivial coefficients for the dual discrete quantum group. Matassa \cite{Matassa2025} provides a complementary construction of covariant differential calculi on quantised flag manifolds through equivariant quantisations of the positive nilradical, recovering the Heckenberger--Kolb calculi in the irreducible case. Beyond the $\mathrm{A}$-series, the irreducible rank-two setting has also been studied from a spectral perspective: in \cite{WDO22}, the quantum BGG resolution is used to construct a Dolbeault--Dirac operator for the $\mathrm{B}_2$-irreducible quantised flag manifold, yielding a spectral triple. More directly related to the non-irreducible setting considered here, Carotenuto, \'{O} Buachalla, and Pagani \cite{COP} investigate a Lusztig differential calculus on the type $\mathrm{C}_2$ quantised flag manifolds.

Despite these developments, the covariant differential geometry of non-irreducible quantised flag manifolds remains comparatively little explored, particularly outside the $\mathrm{A}$-series. These spaces are $q$-deformations of partial and full flag varieties whose classical counterparts are \emph{not} of Hermitian symmetric type. The geometric and algebraic complications arising beyond the Hermitian symmetric setting are substantial even classically, while quantum deformation introduces additional constraints on the construction of differential calculi and their higher-order extensions. The existing examples therefore motivate a broader investigation of how much of the classical differential geometry survives quantisation, especially for non-irreducible flag manifolds in other Lie types.
	
The present paper constitutes a first step in this direction. We consider the quantisations of those non-irreducible flag manifolds that carry a holomorphic contact structure \cite{parabook} in the lowest-rank examples of the $\mathrm{C}$-series and $\mathrm{D}$-series of complex simple Lie algebras, namely those associated to $\mathfrak{sp}_4$ and $\mathfrak{so}_8$, respectively (and, by the exceptional isomorphism $B_2 \cong C_2$, the $\mathrm{B}$-series as well in the rank-two case). Following the general strategy of Heckenberger--Kolb \cite{hek-kolb-2006}, we construct quantum tangent spaces corresponding to the antiholomorphic contact directions and use them to produce covariant first order differential calculi. We then study their maximal prolongations, comparing the graded dimensions of the resulting exterior algebras with those of the classical exterior algebras on the corresponding cotangent spaces. Additionally, for the case of $\mathrm{D}_4$, we construct a differential calculus directly at the level of the quantised coordinate algebra $\mathcal{O}_q(\mathrm{SO}_8)$, induced from an antiholomorphic tangent space generated by the Lusztig root vectors associated to a distinguished reduced decomposition of the longest element of the Weyl group.

Already for such low-rank examples, directly adapting the Heckenberger--Kolb construction to the contact directions produces a picture different from that of the previously studied $\mathrm{A}$-series calculi: the exterior algebras of the maximal prolongations have graded dimensions strictly smaller than their classical counterparts in certain degrees. The same phenomenon occurs for the calculus considered on $\mathcal{O}_q(\mathrm{SO}_8)$. These examples highlight the need for a careful, case-by-case investigation when extending constructions beyond the irreducible setting, since the recovery of classical dimension cannot be taken for granted.

In type $\mathrm{C}_2$, however, the dimension drop can be overcome by adjoining the tangent direction corresponding classically to the Reeb direction. The resulting three-dimensional first order calculus has a maximal prolongation of classical dimension. We give an explicit presentation of its exterior algebra and establish that the natural identification of the cotangent spaces of the flag calculus and its framing calculus extends to an isomorphism of their exterior algebras. Moreover, adjoining the remaining simple root vector to the corresponding tangent space yields a four-dimensional calculus on $\mathcal{O}_{q}(\mathrm{Sp}_{4})$ whose maximal prolongation also has classical dimension. The constructions are compatible with restriction  to contact and the full quantised flag manifolds respectively, and classical dimension is preserved.

A related four-dimensional calculus with a maximal prolongation of classical dimension was constructed in \cite{COP} using Lusztig root vectors. That construction, however, does not admit a restrictions to the contact flag manifold that is of classical dimension. The calculi here analysed, on the other hand, combine the recovery of classical dimension with compatibility with the quantum homogeneous space structure, in particular with both the contact and full flag manifolds. This compatibility provides a geometric distinction between the constructions and makes the calculi of interest natural candidates for studying the noncommutative differential geometry of type $\mathrm{C}_2$ quantised flag manifolds.

At the same time, our computations indicate that the dimension drop for calculi built from the contact directions alone should not be understood as a peculiarity of the Lie types outside the $\mathrm{A}$-series. Indeed, when the analogous construction is carried out for the full flag manifold of $\mathcal{O}_q(\mathrm{SU}_3)$, whose underlying classical variety is also of contact type, a comparable collapse of the exterior algebra occurs. Together with the extended $\mathrm{C}_2$ calculus, this suggests that the recovery of classical dimension depends on how the contact structure is incorporated into the quantum tangent space, rather than on Lie type alone.

The results presented here suggest several directions for further investigation. A forthcoming study will carry out a similar analysis in type $\mathrm{D}_4$, examining whether the Reeb adjoined calculus that we discuss in this paper has a maximal prolongation of classical dimension. In the contact setting, a natural next step is to explore whether the differential calculus can be adapted to construct a quantum analogue of the Rumin complex \cite{Rumin94}, reflecting the contact structure more closely than the full de Rham complex. Exploring the relationship between differential forms and quantum BGG resolutions \cite{HK07}, together with the full flag example studied in \cite{VY15}, is another promising direction of research. Finally, extending the present analysis to higher-rank contact quantised flag manifolds would provide a broader testing ground for the phenomena identified here.

\medskip

 In Section~\ref{section:preliminaries} we recall the necessary preliminary material on Hopf algebras and their homogeneous spaces, quantum groups, quantised flag manifolds, and differential calculi. 
 
  In Section~\ref{section:C2} we review the basic structure of the Lie algebra $\mathfrak{sp}_4$, together with its quantised enveloping algebra and quantised coordinate algebra, and explicitly construct a two-dimensional first order differential calculus on the contact quantised flag manifold of $\mathcal{O}_q(\mathrm{Sp}_4)$, computing its maximal prolongation. We then briefly compare this construction with the analogous one on the full flag manifold of $\mathcal{O}_q(\mathrm{SU}_3)$. Returning to type $\mathrm{C}_2$, we show that adjoining the Reeb direction yields a three-dimensional calculus whose maximal prolongation has classical dimension. We subsequently extend this calculus by adding the simple root vector $E_2$, obtaining a four-dimensional calculus with the same property. For both calculi, we give explicit presentations of the exterior algebras and establish compatibility with restriction to the corresponding quantised flag manifolds, where classical dimension is recovered.

In Section~\ref{section:lusztig_for_so(8)}, we recall the relevant algebraic structures and construct an eight-dimensional first order differential calculus on the contact quantised flag manifold of $\mathcal{O}_{q}(\mathrm{SO}_{8})$. We compute its maximal prolongation and show that it has non-classical dimension. We then extend this calculus by adjoining the Reeb direction, obtaining a nine-dimensional calculus, whose corresponding maximal prolongation remains undetermined. Finally, we discuss how Lusztig calculi do not supply geometrically meaningful framings for the Reeb-extended calculus.

\begin{figure}[p]
\centering
\begin{tikzcd}[
  column sep=10mm,
  row sep=16mm,
  arrows={
    -{Stealth[length=2mm,width=1.4mm]},
    line width=0.65pt
  },
  cells={nodes={
    draw,
    rounded corners=3pt,
    line width=0.6pt,
    inner xsep=5pt,
    inner ysep=8pt,
    minimum height=16mm,
    text width=0.21\textwidth,
    align=center,
    font=\small
  }},
  /tikz/bluebox/.style={
    draw=flagblue!55,
    fill=flagblue!5,
    text=flagblue
  },
  /tikz/purplebox/.style={
    draw=contactpurple!55,
    fill=contactpurple!5,
    text=contactpurple
  },
  /tikz/orangebox/.style={
    draw=collapseorange!65,
    fill=collapseorange!8,
    text=collapseorange
  },
  /tikz/conjecturebox/.style={
    bluebox,
    densely dashed
  },
  /tikz/comparison/.style={
    draw=black!50,
    densely dashed,
    line width=0.5pt
  },
  /tikz/toparc/.style={
    to path={
      (\tikztostart.north east)
      .. controls
        ([yshift=4mm]\tikztostart.north east)
        and
        ([yshift=4mm]\tikztotarget.north west)
      .. (\tikztotarget.north west)
      \tikztonodes
    }
  }
]
|[bluebox]|
\begin{gathered}
  \text{Hermitian symmetric}\\
  \text{flag manifolds}
\end{gathered}
\arrow[r, toparc, draw=flagblue,
  "\text{\scriptsize quantise}"{text=flagblue}]
\arrow[d, comparison,
  "\text{\scriptsize generalise}"{text=black!65}]
\arrow[ddddd, comparison,
  "\text{\scriptsize generalise}"'
    {text=black!65, sloped},
  to path={
    (\tikztostart.west)
    .. controls
      ([xshift=-22mm]\tikztostart.west)
      and
      ([xshift=-22mm]\tikztotarget.west)
    .. (\tikztotarget.west)
    \tikztonodes
  }]
&
|[bluebox]|
\begin{gathered}
  \text{Heckenberger--Kolb}\\
  \text{differential calculi}
\end{gathered}
\arrow[r, toparc, draw=flagblue,
  "\text{\scriptsize prolong}"{text=flagblue}]
&
|[bluebox]|
\begin{gathered}
  q\text{-deformed}\\
  \text{de Rham complexes}
\end{gathered}
\\
|[purplebox]|
\begin{gathered}
  \text{Holomorphic contact}\\
  \text{flag manifold}\\
  \text{of type }\mathrm C_2
\end{gathered}
\arrow[r, toparc, draw=contactpurple,
  "\text{\scriptsize quantise}"{text=contactpurple}]
&
|[purplebox]|
\begin{gathered}
  2\text{-dimensional calculus}\\
  \text{on the contact}\\
  \text{directions}
\end{gathered}
\arrow[r, toparc, draw=collapseorange,
  "\text{\scriptsize prolong}"{text=collapseorange}]
\arrow[d, draw=contactpurple,
  "\substack{\text{\scriptsize include the}\\
             \text{\scriptsize Reeb }}"
  {text=contactpurple}]
&
|[orangebox]|
\begin{gathered}
  \text{Maximal prolongation}\\
  \text{\bfseries with dimension drop}
\end{gathered}
\\
&
|[purplebox]|
\begin{gathered}
  3\text{-dimensional calculus}\\
  \text{including the}\\
  \text{Reeb direction}
\end{gathered}
\arrow[r, toparc, draw=flagblue,
  "\text{\scriptsize prolong}"{text=flagblue}]
\arrow[d, draw=contactpurple,
  "\text{\scriptsize adjoin }E_2"{text=contactpurple}]
&
|[bluebox]|
\begin{gathered}
  \text{\bfseries classical dimension}\\
  \text{on the quantum group}\\
  \text{and the contact flag}
\end{gathered}
\\
&
|[purplebox]|
\begin{gathered}
  4\text{-dimensional calculus}\\
  \text{corresponding to}\\
  \text{the full flag}
\end{gathered}
\arrow[r, toparc, draw=flagblue,
  "\text{\scriptsize prolong}"{text=flagblue}]
&
|[bluebox]|
\begin{gathered}
  \text{\bfseries classical dimension}\\
  \text{on the quantum group}\\
  \text{and the full flag}
\end{gathered}
\\[3mm]
|[purplebox]|
\begin{gathered}
  \text{Quantum group}\\
  \mathcal O_q(\mathrm{SO}_8)
\end{gathered}
\arrow[r, toparc, draw=contactpurple,
  "\substack{\text{\scriptsize Lusztig}\\
             \text{\scriptsize construction}}"
  {text=contactpurple}]
\arrow[d, draw=contactpurple,
  "\text{\scriptsize pass to flag}"'{text=contactpurple}]
&
|[purplebox]|
\begin{gathered}
  12\text{-dimensional}\\
  \text{Lusztig calculus}
\end{gathered}
\arrow[r, toparc, draw=collapseorange,
  "\substack{\text{\scriptsize restriction}\\
             \text{\scriptsize fails}}"
  {text=collapseorange}]
\arrow[d, draw=contactpurple,
  "\substack{\text{\scriptsize contact direction}\\
             \text{\scriptsize restriction}}"
  {text=contactpurple}]
&
|[orangebox]|
\begin{gathered}
  1\text{-dimensional}\\
  \text{irreducible Levi module}\\
  \text{(highest-root direction)}
\end{gathered}
\\
|[purplebox]|
\begin{gathered}
  \text{Holomorphic contact}\\
  \text{flag manifold}\\
  \text{of type }\mathrm D_4
\end{gathered}
\arrow[r, toparc, draw=contactpurple,
  "\substack{\text{\scriptsize Levi}\\
             \text{\scriptsize construction}}"
  {text=contactpurple}]
&
|[purplebox]|
\begin{gathered}
  8\text{-dimensional calculus:}\\
  \text{Levi construction}\\[-2pt]
  {}=\\
  \text{ restricted Lusztig}
\end{gathered}
\arrow[r, toparc, draw=collapseorange,
  "\text{\scriptsize prolong}"{text=collapseorange}]
\arrow[d, draw=contactpurple,
  "\substack{\text{\scriptsize include the}\\
             \text{\scriptsize Reeb }}"
  {text=contactpurple}]
&
|[orangebox]|
\begin{gathered}
  \text{Maximal prolongation}\\
  \text{\bfseries with dimension drop}
\end{gathered}
\\
&
|[purplebox]|
\begin{gathered}
  9\text{-dimensional calculus}\\
  \text{on the contact flag}\\
  \text{including Reeb}
\end{gathered}
\arrow[r, toparc, draw=flagblue, densely dashed,
  "\text{\scriptsize prolong}"{text=flagblue}]
&
|[conjecturebox]|
\begin{gathered}
  \text{Maximal prolongation}\\
  \text{of classical dimension}\\
  \text{\itshape conjectured}
\end{gathered}
\end{tikzcd}

\caption{Roadmap of the constructions studied in this paper.}
\label{figure:roadmap}
\end{figure}

\section{Preliminaries}
\label{section:preliminaries}
In this Section, we collect the algebraic and geometric background required for the developments in the subsequent sections. Our aim here is not to provide a comprehensive account of the material, but rather to fix notation, recall the main constructions, and highlight those aspects of the theory that will play a central role in what follows. In particular, we review the Hopf-algebraic framework underlying quantum homogeneous spaces, the Drinfeld--Jimbo quantum groups and their associated quantised flag manifolds, as well as the theory of differential calculi over Hopf algebras and homogeneous spaces. These ingredients provide the natural setting for the general constructions developed later.

In Section \ref{section:hopf_modules} we recall basic notions concerning Hopf modules, quantum homogeneous spaces, and the categorical equivalences that will be used throughout the paper. In Section \ref{subsection:DJ_quantum_groups_and_flags} we review Drinfeld--Jimbo quantum groups and the construction of quantised flag manifolds. Finally, in Section \ref{subsection:basics_on_dcs} we discuss differential calculi.

\subsection{Hopf modules, homogeneous spaces and some categorical equivalences} 
\label{section:hopf_modules}
Let $H$ be a Hopf algebra, and consider the category ${}^{H}_{H}\mathrm{Mod}$, whose objects are left $H$-covariant left $H$-modules, that is modules $\mathcal{M}$ such that given the left $H$-coaction ${}_{H}\Delta\colon \mathcal{M}\to H\otimes \mathcal{M}$ and left action $\cdot \colon H\otimes \mathcal{M} \to \mathcal{M}$, the property 
\[ 
{}_{H}\Delta(h\cdot m) = \Delta(h){}_{H}\Delta(m) 
\] 
holds for all $h\in H$ and $m\in \mathcal{M}$, and whose morphisms are left $H$-linear, left $H$-colinear maps. The latter is known as the category of left-left $H$-Hopf modules and it is denoted by ${}^{H}_{H}\mathrm{Mod}$. The other instances of Hopf module categories are constructed in a similar fashion. Additionally, let $V$ be a $\Bbbk$-vector space. 

The following is a standard result in the representation theory of Hopf algebras, known as the \emph{fundamental theorem of Hopf modules}. It describes the equivalence between the category of $H$-Hopf modules introduced, and the category of $\Bbbk$-vector spaces. 

\begin{thm}[\cite{Sweedler1969}, \emph{fundamental theorem of Hopf modules}]
\label{th:fundamental_theorem_hopf}
There is an equivalence of categories
\medskip
\begin{figure}[!htbp]
\centering	
	\begin{tikzcd}[column sep=large]
^{H}_{H}\mathrm{Mod}\arrow[rr, "\Phi", bend left] & \cong &{}_{\Bbbk}\mathrm{Vec} \arrow[ll, "\Psi", bend left]. 
\end{tikzcd}	

\end{figure}
\end{thm}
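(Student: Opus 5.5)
We will write down the two functors explicitly and check that they are mutually quasi-inverse. For a Hopf module $\mathcal{M}\in{}^{H}_{H}\mathrm{Mod}$ we set
\[
\Phi(\mathcal{M}) := {}^{\mathrm{co}H}\mathcal{M} = \{ m\in\mathcal{M} \mid {}_{H}\Delta(m) = 1\otimes m\},
\]
the space of left coinvariants. A morphism $f$ of Hopf modules is colinear, so it sends coinvariants to coinvariants; thus $\Phi(f):=f|_{{}^{\mathrm{co}H}\mathcal{M}}$. In the other direction, for a vector space $V$ we put $\Psi(V):=H\otimes V$. It carries the left action $h\cdot(g\otimes v)=hg\otimes v$ and the left coaction $\Delta\otimes\mathrm{id}$, and on linear maps we set $\Psi(g):=\mathrm{id}_H\otimes g$. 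The compatibility ${}_{H}\Delta(h\cdot m)=\Delta(h){}_{H}\Delta(m)$ for $H\otimes V$ is immediate because $\Delta$ is an algebra map. So $\Psi$ is well defined, and functoriality of both functors is clear.

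Next we produce the two natural isomorphisms. On the vector-space side, the map $V\to{}^{\mathrm{co}H}(H\otimes V)$, $v\mapsto 1\otimes v$, is an isomorphism. To see this, apply $\mathrm{id}\otimes\varepsilon\otimes\mathrm{id}$ to the coinvariance condition $\sum h_{(1)}\otimes h_{(2)}\otimes v=1\otimes\sum h\otimes v$. This shows that any coinvariant $x\in H\otimes V$ equals $1\otimes(\varepsilon\otimes\mathrm{id})(x)$. On the Hopf-module side, we take the multiplication map
\[
\mu\colon H\otimes{}^{\mathrm{co}H}\mathcal{M}\to\mathcal{M},\qquad h\otimes m\mapsto h\cdot m .
\]
It is $H$-linear by construction. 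It is $H$-colinear because ${}_{H}\Delta(hm)=\Delta(h)(1\otimes m)$ when $m$ is coinvariant. Naturality of both maps in $\mathcal{M}$ and $V$ is routine.

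The main step is showing that $\mu$ is bijective. We define the projection $P\colon\mathcal{M}\to\mathcal{M}$ by $P(m)=S(m_{(-1)})\cdot m_{(0)}$. We first verify that $P(m)$ is coinvariant. Using the Hopf module axiom and the anti-comultiplicativity of $S$,
\[
{}_{H}\Delta(P(m)) = S(m_{(-2)})m_{(-1)}\otimes S(m_{(-3)})\cdot m_{(0)} \ \text{(suitably indexed)},
\]
and this collapses to $1\otimes P(m)$ by the antipode axiom. The candidate inverse is then $\nu(m)=m_{(-1)}\otimes P(m_{(0)})$. The identity $\mu\nu=\mathrm{id}$ follows from $m_{(-2)}S(m_{(-1)})\otimes m_{(0)}=1\otimes m$, that is, from the antipode axiom together with counitality. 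For $\nu\mu=\mathrm{id}$ we use that $m$ is coinvariant, so ${}_{H}\Delta(hm)=h_{(1)}\otimes h_{(2)}m$. This gives $\nu(hm)=h_{(1)}\otimes S(h_{(2)})h_{(3)}m=h\otimes m$.

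We expect the bookkeeping of Sweedler indices in these coassociativity and antipode manipulations to be the only delicate point. We will carry it out carefully, using coassociativity of the coaction to re-index at each step.
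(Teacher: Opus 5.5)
Your proposal is correct and is the standard argument behind the equivalence; the paper gives no proof of its own, only the citation to Sweedler and the functors together with a unit and counit. Your inverse $\nu(m)=m_{(-2)}\otimes S(m_{(-1)})m_{(0)}$ is exactly the paper's unit $\Theta$, your map $v\mapsto 1\otimes v$ is inverse to its counit $\Xi(h\otimes v)=\varepsilon(h)v$, and the Sweedler-index checks you sketch (coinvariance of $P(m)$, $\mu\nu=\mathrm{id}$, $\nu\mu=\mathrm{id}$) go through as written.
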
 

\noindent A similar result holds for the other flavors of Hopf modules, and in each case the equivalence is implemented by an analogous pair of inverse functors. As we will be mostly focusing on the case of left-left Hopf modules throughout this paper, we briefly recall the precise datum of this equivalence in this specific context.

\begin{enumerate}
	\item  Given $\mathcal{M},\mathcal{M}'\in {}^{H}_{H}\mathrm{Mod}$ and ${}^{H}_{H}\mathrm{Mod}\ni\phi\colon \mathcal{M}\to \mathcal{M}'$, with left $H$-coaction ${}_{H}\Delta\colon \mathcal{M} \to H\otimes \mathcal{M}$ and left $H$-action $\cdot \colon H\otimes \mathcal{M}\to \mathcal{M}$, there is a functor 
	
\begin{equation}
\begin{aligned}
	\Phi\colon {}^{H}_{H}\mathrm{Mod} &\to {}_{\Bbbk}\mathrm{Vec}, 	\\
	 \mathcal{M} &\mapsto {}^{\mathrm{co}H}\mathcal{M}, \\ 
	\phi & \mapsto \Phi(\phi)=\phi|_{{}^{\mathrm{co}H}\mathcal{M}}, 
\end{aligned}
\end{equation}

\noindent assigning to objects the space ${}^{\mathrm{co}H}\mathcal{M}:=\{m\in \mathcal{M} \mid {}_{H}\Delta(m)=1\otimes m\}$ of coinvariant elements under the left $H$-coaction, and to each morphism the corresponding restriction to the latter. 
\item Given $V,V'\in {}_{\Bbbk}\mathrm{Vec}$ and $\psi\colon V\to V'$ a $\Bbbk$-linear map, there is a functor 
\begin{equation}\label{eq:free_functor_hopf_modules}
	\begin{aligned}
		\Psi\colon {}_{\Bbbk}\mathrm{Vec} &\to {}^{H}_{H}\mathrm{Mod},  \\ 
		 V &\mapsto H\otimes V, \\ 
		 \psi &\mapsto (\mathrm{id}\otimes \psi). 
	\end{aligned}
\end{equation}
\item The functors $\Phi$ and $\Psi$ provide an  equivalence of categories with unit 
\begin{equation}
	\begin{aligned}
		\Theta\colon \mathcal{M} &\to H \otimes  {}^{\mathrm{co}H}\mathcal{M}, \\
		            m&\mapsto m_{(-2)}\otimes S(m_{(-1)}) m_{(0)}, \\
	\end{aligned}
\end{equation}
and counit 
\begin{equation}
	\begin{aligned}
				\Xi \colon {}^{\mathrm{co}H}(H\otimes V) & \to V, \\ 
		h\otimes v & \mapsto \varepsilon(h) v. 
	\end{aligned}
\end{equation}
\end{enumerate}
In particular, the fundamental theorem of Hopf modules also accounts, as a special case, for the equivalence ${}^{H}_{H}\mathrm{Mod}_{H}\cong \mathrm{Mod}_{H}$ with the same functorial data. 

The next discussion consists of a generalisation of the latter to the case of quantum homogeneous spaces of a Hopf algebra, of which we first recall the two equivalent definitions.

\begin{defn}
	Let $A$ be a Hopf algebra with invertible antipode, and let $B\subseteq A$ be a left coideal subalgebra, namely $\Delta(B)\subseteq A\otimes B\,$.  The coideal subalgebra $B$ is called a \emph{quantum homogeneous space} if $A$ is faithfully flat as a right $B$-module and the condition $B^{+}A=AB^{+}$ holds. 
\end{defn}
\noindent We may provide an analogous notion by considering right coideal subalgebras, where now $A$ is understood as a faithfully flat left $B$-module. 

The above definition may be equivalently rephrased considering a Hopf algebra surjection $\pi_{B}\colon A \to A/B^{+}A$. In particular, consider the associated right coaction of the \emph{isotropy quantum group} $\pi_{B}(A)$ of $B$, given as
\begin{equation}
	\begin{aligned}
		\Delta_{\pi_{B}}\colon A \to A\otimes \pi_{B}(A), \qquad a \mapsto (\mathrm{id}_{A}\otimes \pi_{B})\circ \Delta(a). 
	\end{aligned}
\end{equation}
As shown in \cite{Takeuchi79}, the space of right $\pi_{B}(A)$ coinvariant elements 
\[ 
A^{\mathrm{co}(\pi_{B}(A))}:=\Big\{b\in A \mid \Delta_{\pi_{B}}(b)=b\otimes 1\Big\}
\]
is precisely the coideal subalgebra $B$. Accordingly, we may think of quantum homogeneous spaces as coinvariant elements under the coaction of the isotropy subalgebra on a total space Hopf algebra. 

Fix a quantum homogeneous space $B:=A^{\mathrm{co}(\pi_{B}(A))}$. Consider the category ${}_{B}^{A}\mathrm{Mod}_{B}$ of $(A,B)$-relative Hopf modules, whose objects are left $A$-covariant $B$-bimodules, \emph{i.e.} left $A$-comodules $\mathcal{M}$ with a $B$-bimodule structure such that given the left $A$-coaction ${}_{A}\Delta\colon \mathcal{M} \to A\otimes \mathcal{M}$ we have 
	\[
	{}_{A}\Delta(b\cdot m\cdot b') = \Delta(b){}_{A}\Delta(m)\Delta(b'), 
	\]
	 and whose morphisms are left $A$-colinear, $B$-bilinear maps.  Consider  the category ${}^{\pi_{B}(A)}\mathrm{Mod}_{B}$ whose objects are left $\pi_{B}(A)$-covariant right $B$-modules, \emph{i.e.} left $\pi_{B}(A)$-comodules with a right $B$-action such that, letting ${}_{\pi_{B}(A)}\Delta\colon \mathcal{M}\to \pi_{B}(A)\otimes \mathcal{M}$ we have 
	\[
	{}_{\pi_{B}(A)}\Delta(m\cdot b) =  {}_{\pi_{B}(A)}\Delta(m)\Delta(b), 
	\]
	 and whose morphisms are left $\pi_{B}(A)$-colinear, right $B$-linear maps.
	 
 We now briefly recall the datum of \emph{Takeuchi's  equivalence} between the categories of modules aforementioned.
 
 \smallskip
 
\begin{thm}[\cite{Takeuchi79}, \emph{Takeuchi's equivalence}]
\label{thm:takeuchi_equivalence}
	Let $B\subseteq A$ be a quantum homogeneous space and consider  $\pi_{B}(A):=A/B^{+}A$. We have the equivalence of categories 
	\bigskip
	\begin{figure}[!htbp]
	\centering
	\begin{tikzcd}
{}^{A}_{B}\mathrm{Mod}_{B} \arrow[rr, "\Phi", bend left] & \cong & {}^{\pi_{B}(A)}\mathrm{Mod}_{B} \arrow[ll, "\Psi", bend left].
\end{tikzcd}	
	\end{figure}
\end{thm}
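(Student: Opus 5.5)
The plan is to write down the two functors explicitly and then check that the natural unit and counit maps are isomorphisms, in the same way as for the fundamental theorem of Hopf modules (Theorem~\ref{th:fundamental_theorem_hopf}), which is the special case $B=\Bbbk$.

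\textbf{The functors.} For $\mathcal{M}\in{}^{A}_{B}\mathrm{Mod}_{B}$, set
\[
\Phi(\mathcal{M}):=\mathcal{M}/B^{+}\mathcal{M}.
\]
It carries the right $B$-action induced from $\mathcal{M}$; this is well defined because $B^{+}\mathcal{M}$ is a $B$-subbimodule. It carries the left $\pi_{B}(A)$-coaction $[m]\mapsto \pi_{B}(m_{(-1)})\otimes[m_{(0)}]$. This is well defined because ${}_{A}\Delta(B^{+}\mathcal{M})\subseteq A\otimes B^{+}\mathcal{M}+B^{+}A\otimes\mathcal{M}$, using $\Delta(B)\subseteq A\otimes B$ and $\Delta(b)-b_{(1)}\otimes \varepsilon(b_{(2)})\ldots$, and the second summand is killed by $\pi_{B}$. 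In the other direction, for $V\in{}^{\pi_{B}(A)}\mathrm{Mod}_{B}$, take the cotensor product
\[
\Psi(V):=A\,\square_{\pi_{B}(A)}V=\Big\{\textstyle\sum a_i\otimes v_i \;\Big|\; \sum a_{i(1)}\otimes\pi_{B}(a_{i(2)})\otimes v_i=\sum a_i\otimes v_{i(-1)}\otimes v_{i(0)}\Big\}.
\]
Its left $A$-coaction is $\Delta\otimes\mathrm{id}$. The left $B$-action is multiplication on the first factor. The right $B$-action is the diagonal one, $(a\otimes v)b=ab_{(1)}\otimes vb_{(2)}$. One then checks that this space is stable under all of these structures, using $B=A^{\mathrm{co}\pi_{B}(A)}$ from \cite{Takeuchi79} together with the covariance axioms. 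Functoriality on morphisms is immediate in both directions.

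\textbf{Unit and counit.} The unit is
\[
\eta\colon \mathcal{M}\to A\,\square_{\pi_{B}(A)}\Phi(\mathcal{M}),\qquad m\mapsto m_{(-1)}\otimes[m_{(0)}].
\]
The counit is
\[
\epsilon\colon \Phi\Psi(V)\to V,\qquad \Big[\textstyle\sum a_i\otimes v_i\Big]\mapsto \textstyle\sum\varepsilon(a_i)v_i.
\]
Checking that both are morphisms in the relevant categories is routine Sweedler calculus. The real content is that they are bijective.

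\textbf{Bijectivity via faithful flatness.} For the counit, first note that $A\,\square_{\pi_{B}(A)}(\pi_{B}(A)\otimes W)\cong A\otimes W$. From this, $\epsilon$ is bijective on cofree comodules, and one then extends to all $V$ using exactness of the cotensor functor. For the unit, I will use faithful flatness of $A$ over $B$ together with the Galois-type isomorphism
\[
A\otimes_{B}A\to A\otimes\pi_{B}(A),\qquad a\otimes a'\mapsto aa'_{(1)}\otimes\pi_{B}(a'_{(2)}),
\]
which follows from the condition $B^{+}A=AB^{+}$ and faithful flatness. Tensoring $\eta$ with $A$ over $B$ turns it into a map that can be identified with the bijection $A\otimes_{B}\mathcal{M}\cong A\otimes\Phi(\mathcal{M})$, given by $a\otimes m\mapsto am_{(-1)}\otimes[m_{(0)}]$, whose inverse is built from the antipode as in the unit $\Theta$ of Theorem~\ref{th:fundamental_theorem_hopf}. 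Faithful flatness then lets us conclude that $\eta$ itself is bijective.

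\textbf{Main obstacle.} I expect the hardest step to be this faithful-flatness descent for the unit. In particular, one must establish that the cotensor functor $A\,\square_{\pi_{B}(A)}-$ is exact, equivalently that $A$ is coflat, or faithfully coflat, over $\pi_{B}(A)$. This is exactly where the hypotheses on the quantum homogeneous space are needed: invertibility of the antipode, faithful flatness, and $B^{+}A=AB^{+}$. I would first try to deduce coflatness from faithful flatness by the standard argument of \cite{Takeuchi79}. If that becomes unwieldy, in the setting of the paper $\pi_{B}(A)$ is cosemisimple and coflatness is automatic, so I would fall back on that.
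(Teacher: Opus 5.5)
The functors, unit and counit you write down are exactly the datum the paper records. The paper gives no argument beyond citing \cite{Takeuchi79}. Your faithful-flatness descent for the unit is the standard argument and is correct.

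\textbf{The gap: the counit step.} As written it does not go through, and you have misidentified what it needs. To pass from cofree comodules to an arbitrary $V$ through an exact sequence $0\to V\to C_{0}\to C_{1}$ with $C_0,C_1$ cofree, you need $\Phi\Psi$ to be left exact.
\begin{itemize}
\item Left exactness of $\Psi=A\,\square_{\pi_{B}}-$ is automatic: it is the kernel of a natural map between the exact functors $A\otimes-$ and $A\otimes\pi_{B}(A)\otimes-$. So coflatness is not the issue.
\item The issue is $\Phi$. Since $\mathcal{M}/B^{+}\mathcal{M}\cong\Bbbk\otimes_{B}\mathcal{M}$, it is only right exact in general.
\end{itemize}
What is missing is that $\Phi$ is exact on ${}^{A}_{B}\mathrm{Mod}$. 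The map $A\otimes_{B}\mathcal{M}\to A\otimes\Phi(\mathcal{M})$, $a\otimes m\mapsto am_{(-1)}\otimes[m_{(0)}]$, is bijective for every relative Hopf module. Its inverse is $a\otimes[m]\mapsto aS(m_{(-1)})\otimes_{B}m_{(0)}$, which is well defined because $S(b_{(1)})b_{(2)}=\varepsilon(b)$ with $b_{(2)}\in B$. Flatness of $A$ as a right $B$-module then makes $A\otimes\Phi(-)$ exact, hence $\Phi$ exact.

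Conversely, the coflatness you flag as the main obstacle is never used. The unit descent only needs $A\otimes_{B}-$ to commute with the equaliser $A\otimes V\rightrightarrows A\otimes\pi_{B}(A)\otimes V$ defining the cotensor product. That is just flatness, since both maps are left $B$-linear because $\pi_{B}(b)=\varepsilon(b)1$ for $b\in B$. Your cosemisimple fallback would in any case prove the theorem only for the paper's examples, not as stated.

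\textbf{A direct fix.} You can bypass the resolution with the computation you already use for the unit. The Galois map $\beta\colon A\otimes_{B}A\to A\otimes\pi_{B}(A)$ is bijective for any left coideal subalgebra. Its inverse is $a\otimes\pi_{B}(a')\mapsto aS(a'_{(1)})\otimes_{B}a'_{(2)}$, well defined for the same reason; neither $B^{+}A=AB^{+}$ nor faithful flatness is needed. It is also right $\pi_{B}(A)$-colinear, so flatness gives
\[
A\otimes\Phi\Psi(V)\cong A\otimes_{B}\bigl(A\,\square_{\pi_{B}}V\bigr)\cong\bigl(A\otimes_{B}A\bigr)\,\square_{\pi_{B}}V\cong\bigl(A\otimes\pi_{B}(A)\bigr)\,\square_{\pi_{B}}V\cong A\otimes V.
\]
Chasing $a\otimes[\sum_{i}a_{i}\otimes v_{i}]$ through these maps gives $a\otimes\sum_{i}\varepsilon(a_{i})v_{i}$. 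So the composite is $\mathrm{id}_{A}\otimes\epsilon_{V}$, and $\epsilon_{V}$ is bijective because $A\neq0$. As a by-product, the natural isomorphism $A\otimes_{B}\Psi(-)\cong A\otimes(-)$ together with faithful flatness shows that $\Psi$ is exact. Coflatness is thus a consequence of the theorem, not an input.
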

\noindent In particular:
	 \begin{enumerate}
	 	\item Given $\mathcal{M},\mathcal{M}'\in {}^{A}_{B}\mathrm{Mod}_B$ and ${}^{A}_{B}\mathrm{Mod}_{B}\ni \phi\colon \mathcal{M}\to \mathcal{M}'$, there is a functor 
	 	\begin{equation}
	 		\begin{aligned}
	 			\Phi\colon {}^{A}_{B}\mathrm{Mod}_{B} & \to {}^{\pi_{B}(A)}\mathrm{Mod}_{B}, \\ 
	 			 \mathcal{M}&\mapsto \mathcal{M}/B^{+}\mathcal{M},\\
	 			 \phi & \mapsto \phi\,\big|_{\mathcal{M}/B^{+}\mathcal{M}}, 
	 		\end{aligned}
	 	\end{equation}
	 	where $\mathcal{M}/B^{+}\mathcal{M}$ has the structure of a left $\pi_{B}(A)$-comodule given by ${}_{\pi_{B}}\Delta(m) = \pi_{B}(m_{-1})\otimes [m_{0}]$, with square brackets denoting the coset in $\mathcal{M}/B^{+}\mathcal{M}$, and the obvious structure of a right $B$-module. 
	 	
	 	\item Given $V,V'\in {}^{\pi_{B}(A)}\mathrm{Mod}_{B}$ and ${}^{\pi_{B}(A)}\mathrm{Mod}_{B}\ni\psi\colon V\to V'$, there is a functor 
	 
	 	\begin{equation}\label{eq:free_functor_takeuchi}
	 		\begin{aligned}
	 			\Psi\colon {}^{\pi_{B}(A)}\mathrm{Mod}_{B} & \to {}^{A}_{B}\mathrm{Mod}_{B}, \\ 
	 			V &\mapsto A\,\square_{\pi_{B}}V,\\ 
	 			\psi&\mapsto (\mathrm{id}_{A}\otimes \psi), 
	 		\end{aligned}
	 	\end{equation}
	 	
	 	\noindent where $\square_{\pi_{B}}$ denotes the balanced cotensor product over $\pi_{B}(A)$, the left $A$-comodule structure on $\Psi(V)$ is defined on the first tensor factor, and the right $B$-module structure is the diagonal.
	 	
	 	\item The functors $\Phi$ and $\Psi$ provide an equivalence of categories with unit 
	 	
	 	\begin{equation}
	 		\begin{aligned}
	 			\Theta\colon \mathcal{M} &\to A \,\square_{\pi_{B}}\mathcal{M}/B^{+}\mathcal{M}, \\
	 			m & \mapsto m_{-1}\otimes [m_{0}], 
	 		\end{aligned}
	 	\end{equation}
	 	
	 	\noindent and counit 
	 	
	 	\begin{equation}
	 		\begin{aligned}
	 			\Xi\colon \left(A\,\square_{\pi_{B}}V\right)/\left(B^{+}\left(A\,\square_{\pi_{B}}V\right)\right) & \to V, \\
	 			[a\otimes v] & \mapsto \varepsilon(a)v. 
	 		\end{aligned}
	 	\end{equation}
	 \end{enumerate}
	 The next theorem specifies under which conditions the latter is even a monoidal equivalence.
\begin{thm}[\cite{OB16}]
\label{thm:takeuchi_equivalence_0}
	Let $B\subseteq A$ be a quantum homogeneous space. Let ${}^{A}_{B}\mathrm{Mod}_{0}$ denote the full subcategory of ${}^{A}_{B}\mathrm{Mod}_{B}$ endowed with the right $B$-action $m\cdot b = (m_{(-2)}\, b \, S(m_{-1}))\cdot m_{0}$,  and let ${}^{\pi_{B}(A)}\mathrm{Mod}_{0}$ denote the full subcategory of ${}^{\pi_{B}(A)}\mathrm{Mod}_{B}$ endowed with trivial right action. The equivalence in Theorem \ref{thm:takeuchi_equivalence} restricts to the monoidal equivalence of categories $\left({}^{A}_{B}\mathrm{Mod}_{0},\otimes_{B}\right) \cong \left({}^{\pi_{B}(A)}\mathrm{Mod}_{0},\otimes\right)$.
\end{thm}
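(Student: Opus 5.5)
We take Takeuchi's equivalence (Theorem~\ref{thm:takeuchi_equivalence}) as given and only check three things: the functors $\Phi$ and $\Psi$ preserve the two full subcategories, the equivalence is monoidal, and the tensor products are well defined.

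\textbf{Step 1: the subcategories correspond.} Let $\mathcal{M}\in{}^{A}_{B}\mathrm{Mod}_{0}$, so that $m\cdot b=(m_{(-2)}bS(m_{(-1)}))\cdot m_{(0)}$. Then for $[m]\in\Phi(\mathcal{M})=\mathcal{M}/B^{+}\mathcal{M}$ we compute
\[
[m]\cdot b=[(m_{(-2)}bS(m_{(-1)}))m_{(0)}].
\]
Using $\Delta(B)\subseteq A\otimes B$ and the coset relation, one shows $[m]\cdot b=\varepsilon(b)[m]$. The cleanest route is to apply the unit $\Theta$ and compare both sides inside $A\,\square_{\pi_B}\Phi(\mathcal{M})$. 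So $\Phi$ lands in ${}^{\pi_{B}(A)}\mathrm{Mod}_{0}$.

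Conversely, let $V$ carry the trivial right action. An element $\sum a^i\otimes v^i\in A\,\square_{\pi_B}V$ then has right action $\sum a^ib\otimes v^i$. We check that this equals
\[
\sum (a^i_{(1)}bS(a^i_{(2)}))\,a^i_{(3)}\otimes v^i,
\]
which is the formula defining ${}^{A}_{B}\mathrm{Mod}_{0}$ for the first-factor coaction; this follows from the antipode axiom. Both subcategories are full, so the restricted functors are still mutually inverse, with the same unit and counit.

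\textbf{Step 2: monoidal structure.} For $\mathcal{M},\mathcal{N}\in{}^{A}_{B}\mathrm{Mod}_{0}$, the object $\mathcal{M}\otimes_B\mathcal{N}$ carries the diagonal coaction and again lies in the subcategory. We define
\[
\Phi(\mathcal{M})\otimes\Phi(\mathcal{N})\to\Phi(\mathcal{M}\otimes_B\mathcal{N}),\qquad [m]\otimes[n]\mapsto[m\otimes n].
\]
\begin{enumerate}
\item \emph{Well defined.} If $m\in B^{+}\mathcal{M}$, then clearly $m\otimes n\in B^{+}(\mathcal{M}\otimes_B\mathcal{N})$. If $n=bn'$ with $b\in B^{+}$, then $m\otimes bn'=m b\otimes n'$. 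By the $0$-condition on $\mathcal{M}$ this is $(m_{(-2)}bS(m_{(-1)}))m_{(0)}\otimes n'$. Since $m_{(-2)}bS(m_{(-1)})\in A B^{+}A=B^{+}A$ acts from the left, the element lies in $B^{+}(\mathcal{M}\otimes_B\mathcal{N})$ after reorganising. This is where $B^{+}A=AB^{+}$ is used.
\item \emph{Colinear.} Since $\pi_B$ is an algebra map, the map intertwines the diagonal $\pi_B(A)$-coactions.
\item \emph{Surjective.} This is clear.
\item \emph{Injective.} This is the expected main obstacle. Rather than attack it directly, we construct an inverse by passing through $\Psi$. We build a natural isomorphism
\[
(A\,\square_{\pi_B}V)\otimes_B(A\,\square_{\pi_B}W)\cong A\,\square_{\pi_B}(V\otimes W),
\]
\[
(a\otimes v)\otimes_B(a'\otimes w)\mapsto aa'\otimes v\otimes w.
\]
Its inverse is obtained through the fundamental-theorem-type decomposition $A\,\square_{\pi_B}V\cong A\otimes_B\Phi(\cdot)$, which needs faithful flatness of $A$ over $B$ so that cotensor and tensor products commute.
\end{enumerate}
Combining this with the unit $\Theta$ identifies the map above as an isomorphism.

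\textbf{Step 3: coherence.} Associativity and unit coherence follow because every map is induced by multiplication in $A$ and concatenation of tensors. The unit object is $B$ on one side and $\Phi(B)=B/B^{+}B\cong\Bbbk$ on the other.
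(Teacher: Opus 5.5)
The paper gives no proof of this statement and only cites it, so your argument has to stand on its own. Its overall shape is right, but Step 2(4) is a genuine gap. You call injectivity the main obstacle and defer it to an isomorphism $(A\,\square_{\pi_B}V)\otimes_B(A\,\square_{\pi_B}W)\cong A\,\square_{\pi_B}(V\otimes W)$ whose inverse you never construct. The ``decomposition $A\,\square_{\pi_B}V\cong A\otimes_B\Phi(\cdot)$'' does not make sense as stated. Its natural reading, $A\otimes_B V$ with $B$ acting on $V$ through $\varepsilon$, is $A/AB^{+}\otimes V\cong\pi_B(A)\otimes V$, not a cotensor product. Building that inverse is no easier than the injectivity it is meant to deliver. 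You also never check that the resulting isomorphism is the map $[m]\otimes[n]\mapsto[m\otimes n]$. As written, you have only shown that this map is a colinear surjection.

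No detour is needed. Define $\Phi(\mathcal{M}\otimes_B\mathcal{N})\to\Phi(\mathcal{M})\otimes\Phi(\mathcal{N})$ by $[m\otimes_B n]\mapsto[m]\otimes[n]$. It is $B$-balanced because $[m\cdot b]=\varepsilon(b)[m]$ (your Step 1) and $[bn]=\varepsilon(b)[n]$. It vanishes on $B^{+}(\mathcal{M}\otimes_B\mathcal{N})$ because $[bm]=\varepsilon(b)[m]$. On elementary tensors it is visibly a two-sided inverse of your map, so bijectivity is immediate.

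The justification in Step 2(1) is also not valid as written. $\mathcal{M}$ is only a left $B$-module, so an element of $B^{+}A$ cannot ``act from the left''. What is true is that $\sum m_{(-2)}bS(m_{(-1)})\otimes m_{(0)}$ lies in $B\otimes\mathcal{M}$. To see this, apply $(\mathrm{id}\otimes\pi_B)\Delta$ to the first leg and use $b_{(1)}\otimes\pi_B(b_{(2)})=b\otimes 1$; this shows $B=A^{\mathrm{co}\,\pi_B(A)}$ is stable under the left adjoint action. Moreover, $\varepsilon\otimes\mathrm{id}$ sends this element to $\varepsilon(b)m$. Hence for $b\in B^{+}$ it lies in $B^{+}\otimes\mathcal{M}$, giving $\mathcal{M}B^{+}\subseteq B^{+}\mathcal{M}$. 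That inclusion is exactly what well-definedness of $[m]\otimes[n]\mapsto[m\otimes n]$ requires.

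The same observation, together with $[cm']=\varepsilon(c)[m']$ for $c\in B$, gives Step 1 in one line, without passing through $\Theta$. The hypothesis $B^{+}A=AB^{+}$ enters only through $\pi_B(A)$ being a Hopf algebra with $B=A^{\mathrm{co}\,\pi_B(A)}$, not at the point you indicate.
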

	
	\noindent From a differential geometric perspective, Theorem \ref{thm:takeuchi_equivalence} provides a noncommutative analogue of the correspondence between equivariant vector bundles over a homogeneous space $G/H$ and representations of the structure group $H$. From this perspective, $B$-bimodules endowed with compatible $A$-coactions play the role of homogeneous vector bundles, while $\pi_B(A)$-comodules encode their fibrewise symmetry data. 
	 \subsection{Drinfeld--Jimbo quantum groups and quantised flag manifolds}
\label{subsection:DJ_quantum_groups_and_flags}
Let $\mathfrak{g}$ be a complex, semisimple Lie algebra of rank $\ell$, with simple roots $\alpha_{1},\dots,\alpha_{\ell}\in\Pi$. Denote by $\mathcal{P}$ the set of weights of $\mathfrak{g}$ and by $\mathcal{P}^{+}$ the subset of integral dominant weights. Fix $q_{i}=q^{\frac{(\alpha_{i},\alpha_{i})}{2}}$, with $q_{i}$ not a root of unity, and let $a_{ij}$ denote the Cartan matrix of $\mathfrak{g}$. We recall that the \emph{Drinfeld-Jimbo quantised enveloping algebra} $\mathrm{U}_{q}(\mathfrak{g})$ \cite{Drinfeld} is the  algebra on $4\ell$ generators $E_{i},F_{i},K_{i}^{\pm 1}$ subject to the relations
\[
\begin{aligned}
K_i K_j &= K_j K_i,  
&\quad K_i K_i^{-1} &= K_i^{-1} K_i = 1,  \\
K_i E_j &= q_{i}^{a_{ij}} E_j K_i,  
&\quad K_i F_j &= q_{i}^{-a_{ij}} F_j K_i, 
\end{aligned}
\]
\[
E_{i}F_{j}-F_{j}E_{i}= \delta_{ij}\, \frac{K_{i}-K_{i}^{-1}}{q_{i}-q_{i}^{-1}}, 
\]

\noindent alongside the \emph{q-Serre relations}

\begin{equation}
		\sum_{k=0}^{1-a_{ij}}(-1)^{k}\begin{bmatrix}
			1- a_{ij} \\ k
		\end{bmatrix}_{q_{i}}\, E_{i}^{1-a_{ij}-k}E_{j}E_{i}^{k} =0, \quad \sum_{k=0}^{1-a_{ij}}(-1)^{k}\begin{bmatrix}
			1- a_{ij} \\ k
		\end{bmatrix}_{q_{i}}\, F_{i}^{1-a_{ij}-k}F_{j}F_{i}^{k}  = 0 , 
\end{equation}
where 
\begin{equation*} \begin{bmatrix}
	n \\ r
\end{bmatrix}_{q}= \frac{[n]_{q}!}{[r]_{q}![n-r]_{q}!}, \quad [n]_{q}=\frac{q^{n}-q^{-n}}{q-q^{-1}}.  
\end{equation*} 
The algebra $\mathrm{U}_{q}(\mathfrak{g})$ has a Hopf algebra structure that is essentially a unique deformation of the Hopf algebra structure on the enveloping algebra $\mathrm{U}(\mathfrak{g})$, and it is given on the generators by \cite{Kli-Sch}
\medskip
\[
\begin{aligned}
		\Delta(E_{i}) &= E_{i}\otimes K_{i} + 1\otimes E_{i},   &     \quad \varepsilon(E_{i})& =0,    &  \quad   S(E_{i}) & =-E_{i}K_{i}^{-1}, \\ 
		\Delta(F_{i}) &= F_{i}\otimes 1 + K_{i}^{-1}\otimes F_{i},     &   \varepsilon(F_{i})& =0,    &   S(F_{i}) & =-K_{i}F_{i}, \\ 
		\Delta(K_{i})&=K_{i}\otimes K_{i},    &   \varepsilon(K_{i})& =1,   &   S(K_{i})&=K_{i}^{-1}. 
\end{aligned}
\]

 A finite-dimensional $\mathrm{U}_{q}(\mathfrak{g})$-module $V$ is said to be of \emph{type-$1$} if it admits a weight-space decomposition $V=\bigoplus_{\mu\in\mathcal{P}}V_{\mu}$,
where the \emph{$\mu$-weight space} is defined by
\[
V_{\mu}:= \left\{ v\in V \mid K_i v=q_i^{\langle\mu,\alpha_i^\vee\rangle}v, \quad i=1,\ldots,\ell \right\}.
\]
The full subcategory of ${}_{\mathrm{U}_{q}(\mathfrak{g})}\mathrm{Mod}$ consisting of finite-dimensional type-$1$ modules is semisimple, see \emph{e.g.} \cite{Lusz88} or \cite{Kli-Sch}. In particular, every such module decomposes as a direct sum of finite-dimensional irreducible modules. Highest and lowest weight vectors are defined analogously to the classical case. In particular, a nonzero vector $v\in V$ is said to be a highest weight vector if it is a weight vector annihilated by all the $E_i$-generators. For every dominant integral weight $\mu\in\mathcal{P}^{+}$, there exists, up to isomorphism, a unique finite-dimensional irreducible type-$1$ representation $V_{\mu}$ of highest weight $\mu$. Moreover, its character, and hence its dimension, coincides with that of the corresponding irreducible $\mathfrak{g}$-representation. Consequently, the finite-dimensional irreducible type-$1$ representations of $\mathrm{U}_{q}(\mathfrak{g})$ are parametrised by $\mathcal{P}^{+}$.

We now outline the construction of the quantum group, or quantised coordinate algebra, associated to $\mathrm{U}_{q}(\mathfrak{g})$. Given any left $\mathrm{U}_{q}(\mathfrak{g})$-module $V$ we consider the corresponding linear dual $V^{\ast}$ and functions 

\[
\begin{aligned}
	c_{f,v}^{V}\colon \mathrm{U}_{q}(\mathfrak{g})&\to \mathbb{C}, \\ 
	X & \mapsto f(X\triangleright v), 
\end{aligned}
\]

\noindent  for any  $v\in V$ and $f\in V^{\ast}$. The space of matrix coefficients of $V$ is the subspace 

\begin{equation}
	\mathcal{C}(V) := \mathrm{Span}_{\mathbb{C}}\left\{c_{f,v}^{V}\mid v\in V,\, f\in V^{\ast}\right\}\subseteq \mathrm{U}_{q}(\mathfrak{g})^{\ast}.
\end{equation}

\noindent Letting $\mathrm{U}_{q}(\mathfrak{g})^{\circ}$ be the dual Hopf algebra of $\mathrm{U}_{q}(\mathfrak{g})$, a Hopf subalgebra of $\mathrm{U}_{q}(\mathfrak{g})^{\circ}$ is given by the \emph{quantised coordinate algebra of $\mathrm{G}$}, \emph{i.e.} the direct sum of all spaces of matrix coefficients associated to the type-$1$ representations of $\mathrm{U}_{q}(\mathfrak{g})$

\begin{equation}
	\mathcal{O}_{q}(\mathrm{G}):= \bigoplus_{\mu\in \mathcal{P}^{+}}\mathcal{C}(V_{\mu}), 
\end{equation}

\noindent where $\mathrm{G}$ is the Lie group whose complexified Lie algebra is $\mathfrak{g}$. For any weight basis $\{e_{i}\}_{i=1}^{n}$ of $V_{\varpi_{n}}$, $\varpi_{n}\in\mathcal{P}^{+}$, with dual basis $\{f_{i}\}_{i=1}^{n}$, the associated matrix coefficients are $ u_{ij}:=c^{\varpi_{n}}_{f_{i},e_{j}}$. 

The algebra $\mathcal{O}_{q}(\mr{G})$ is generated by such elements $u_{ij}$ with some commutation relations (see, for example, \cite{FRT}). Furthermore, for $q$ not a root of unity, the category of finite-dimensional type-1 $\mathrm{U}_q(\mathfrak g)$-modules is semisimple, and therefore $\mathcal{O}_{q}(\mathrm{G})$ is cosemisimple as a coalgebra by construction.

 We now recall the notion of quantised flag manifold, which is of central importance for the development of the paper. Let $\{\alpha_{i}\}_{i\in S}\subseteq \Pi$ be a subset of simple roots of $\mathfrak{g}$, and consider the \emph{quantised Levi subalgebra}, that is the Hopf subalgebra 
 
\[ \mathrm{U}_{q}(\ell_{S}):=\left\langle \, K_{i}^{\pm 1},E_{j},F_{j}\mid i=1,\dots \ell,\ j\in S\, \right\rangle,  \]

\noindent along with its semisimple Hopf subalgebra

\[
\mathrm{U}_{q}(\ell_{S}^{\mathrm{s}}):=\left\langle \, K_{i}^{\pm 1},E_{i},F_{i}\mid i\in S \right\rangle. 
\]

\noindent The category ${}_{\mathrm{U}_{q}(\ell_{S})}\mathrm{Mod}$ of $\mathrm{U}_{q}(\ell_{S})$ finite dimensional type-1 representations is also semisimple \cite{Re-Sombrg}. From the natural Hopf algebra embedding $\iota_{S}\colon \mathrm{U}_{q}(\ell_{S})\hookrightarrow \mathrm{U}_{q}(\mathfrak{g})$ we have an induced dual Hopf algebra map $\iota_{S}^{\circ}\colon \mathrm{U}_{q}(\mathfrak{g})^{\circ}\to \mathrm{U}_{q}(\ell_{S})^{\circ}$, and since $\mathcal{O}_{q}(\mathrm{G})\subset \mathrm{U}_{q}(\mathfrak{g})^{\circ}$ we may also consider the restriction $\pi_{S}:=\iota_{S}^{\circ}|_{\mathcal{O}_{q}(\mathrm{G})}\colon \mathcal{O}_{q}(\mathrm{G}) \to \mathrm{U}_{q}(\ell_{S})^{\circ}$, which defines an action of $\mathrm{U}_{q}(\ell_{S})$ on $\mathcal{O}_{q}(\mathrm{G})$ by the non-degenerate dual pairing between the dual pair of Hopf algebras
\[
\langle-,-\rangle \colon \mathrm{U}_{q}(\mathfrak{g})\times \mathcal{O}_{q}(\mathrm{G}) \to \mathbb{C}. 
\]
 \begin{defn}
 	The \emph{quantised flag manifold} associated to the subset of simple roots $S\subset \Pi$ is the subalgebra 
 	\begin{equation}
 		\mathcal{O}_{q}(\mathrm{G}/\mathrm{L}_{S}):={}^{\mathrm{U}_{q}(\ell_{S})}\mathcal{O}_{q}(\mathrm{G})=\Big\{h\in \mathcal{O}_{q}(\mathrm{G}) \ \mid \ X\triangleright h= h_{(1)}\langle X,h_{(2)}\rangle = \varepsilon(X)h, \ \forall X\in \mathrm{U}_{q}(\ell_{S})\Big\}
 	\end{equation}
 	of invariant elements under the action of $\mathrm{U}_{q}(\ell_{S})$. The quantised flag manifold $\mathcal{O}_{q}(\mathrm{G}/\mathrm{L}_{S})$ is called \emph{irreducible} if $\Pi\setminus S=\{\alpha\}$ where $\alpha$ has coefficient one in the highest root, and \emph{non-irreducible} otherwise. We say that it is of \emph{contact type} if the grading determined by the crossed roots $\Pi\setminus S$ is a contact grading, namely
\[
\mathfrak{g}
=\mathfrak{g}_{-2}\oplus\mathfrak{g}_{-1}
\oplus\mathfrak{g}_{0}\oplus\mathfrak{g}_{1}\oplus\mathfrak{g}_{2},
\]
where $\dim\mathfrak{g}_{-2}=1$ and the Lie bracket
$\mathfrak{g}_{-1}\times\mathfrak{g}_{-1}\to\mathfrak{g}_{-2}$
is non-degenerate. In this case, the underlying classical flag manifold carries its natural invariant holomorphic contact structure.
\end{defn}

In Table \ref{tab:contact-flags} we list the subsets of crossed simple roots $\Pi\setminus S$ that determine contact gradings of $\mathfrak{g}$, and hence the corresponding choices of quantised Levi subalgebras giving rise to quantised flag manifolds $\mathcal{O}_{q}(\mathrm{G}/\mathrm{L}_{S})$ of contact type, and whose classical counterparts carry natural invariant holomorphic contact structures. 

\begingroup
\tikzset{
  contact circle/.style={circle, draw=black, fill=lightgray,
    minimum size=2.5mm, inner sep=0pt},
  contact cross/.style={rectangle, minimum size=3mm, inner sep=0pt}
}
\tikzcdset{
  contact diagram/.style={
    ampersand replacement=\&, column sep=6mm, row sep=4mm,
    arrows={no head, /tikz/>={Computer Modern Rightarrow}}, cells={nodes={inner sep=0pt, anchor=center}},
    /tikz/every label/.append style={inner sep=1pt},
    /tikz/baseline=(current bounding box.center)
  }
}
\begin{table}[h]
\centering
\setlength{\tabcolsep}{7mm}
\begin{tabular}{@{}lll@{}}

Type & $\Pi\setminus S$ & Crossed Dynkin diagram \\[3mm]
\midrule
$\mathrm{A}_n\ (n\geq2)$ & $\{\alpha_1,\alpha_n\}$ & \begin{tikzcd}[contact diagram, ampersand replacement=\&]
|[contact cross, /tikz/label={above:\alpha_{1}}]| \scalebox{1.5}{$\times$} \arrow[r] \& \cdots \arrow[r] \& |[contact cross, /tikz/label={above:\alpha_{n}}]| \scalebox{1.5}{$\times$}
\end{tikzcd} \\[8mm]
$\mathrm{B}_n\ (n\geq3)$ & $\{\alpha_2\}$ & \begin{tikzcd}[contact diagram, ampersand replacement=\&]
|[contact circle, /tikz/label={above:\alpha_{1}}]| {} \arrow[r] \& |[contact cross, /tikz/label={above:\alpha_{2}}]| \scalebox{1.5}{$\times$} \arrow[r] \& \cdots \arrow[r] \& |[contact circle, /tikz/label={above:\alpha_{n-1}}]| {} \arrow[r, /tikz/double, /tikz/double distance=1pt, ->] \& |[contact circle, /tikz/label={above:\alpha_{n}}]| {}
\end{tikzcd} \\[8mm]
$\mathrm{C}_n\ (n\geq2)$ & $\{\alpha_1\}$ & \begin{tikzcd}[contact diagram, ampersand replacement=\&]
|[contact cross, /tikz/label={above:\alpha_{1}}]| \scalebox{1.5}{$\times$} \arrow[r] \& \cdots \arrow[r] \& |[contact circle, /tikz/label={above:\alpha_{n-1}}]| {} \arrow[r, /tikz/double, /tikz/double distance=1pt, <-] \& |[contact circle, /tikz/label={above:\alpha_{n}}]| {}
\end{tikzcd} \\[8mm]
$\mathrm{D}_n\ (n\geq4)$ & $\{\alpha_2\}$ & \begin{tikzcd}[contact diagram, ampersand replacement=\&]
 \&  \&  \&  \& |[contact circle, /tikz/label={right:\alpha_{n}}]| {} \\[-1mm]
|[contact circle, /tikz/label={above:\alpha_{1}}]| {} \arrow[r] \& |[contact cross, /tikz/label={above:\alpha_{2}}]| \scalebox{1.5}{$\times$} \arrow[r] \& \cdots \arrow[r] \& |[contact circle, /tikz/label={above:\alpha_{n-2}}]| {} \arrow[ur] \arrow[dr] \&  \\[-1mm]
 \&  \&  \&  \& |[contact circle, /tikz/label={right:\alpha_{n-1}}]| {}
\end{tikzcd} \\[8mm]
$\mathrm{E}_6$ & $\{\alpha_2\}$ & \begin{tikzcd}[contact diagram, ampersand replacement=\&]
 \&  \& |[contact cross, /tikz/label={above:\alpha_{2}}]| \scalebox{1.5}{$\times$} \&  \&  \\[-1mm]
|[contact circle, /tikz/label={above:\alpha_{1}}]| {} \arrow[r] \& |[contact circle, /tikz/label={above:\alpha_{3}}]| {} \arrow[r] \& |[contact circle, /tikz/label={below:\alpha_{4}}]| {} \arrow[r] \arrow[u] \& |[contact circle, /tikz/label={above:\alpha_{5}}]| {} \arrow[r] \& |[contact circle, /tikz/label={above:\alpha_{6}}]| {}
\end{tikzcd} \\[8mm]
$\mathrm{E}_7$ & $\{\alpha_1\}$ & \begin{tikzcd}[contact diagram, ampersand replacement=\&]
 \&  \& |[contact circle, /tikz/label={above:\alpha_{2}}]| {} \&  \&  \&  \\[-1mm]
|[contact cross, /tikz/label={above:\alpha_{1}}]| \scalebox{1.5}{$\times$} \arrow[r] \& |[contact circle, /tikz/label={above:\alpha_{3}}]| {} \arrow[r] \& |[contact circle, /tikz/label={below:\alpha_{4}}]| {} \arrow[r] \arrow[u] \& |[contact circle, /tikz/label={above:\alpha_{5}}]| {} \arrow[r] \& |[contact circle, /tikz/label={above:\alpha_{6}}]| {} \arrow[r] \& |[contact circle, /tikz/label={above:\alpha_{7}}]| {}
\end{tikzcd} \\[8mm]
$\mathrm{E}_8$ & $\{\alpha_8\}$ & \begin{tikzcd}[contact diagram, ampersand replacement=\&]
 \&  \& |[contact circle, /tikz/label={above:\alpha_{2}}]| {} \&  \&  \&  \&  \\[-1mm]
|[contact circle, /tikz/label={above:\alpha_{1}}]| {} \arrow[r] \& |[contact circle, /tikz/label={above:\alpha_{3}}]| {} \arrow[r] \& |[contact circle, /tikz/label={below:\alpha_{4}}]| {} \arrow[r] \arrow[u] \& |[contact circle, /tikz/label={above:\alpha_{5}}]| {} \arrow[r] \& |[contact circle, /tikz/label={above:\alpha_{6}}]| {} \arrow[r] \& |[contact circle, /tikz/label={above:\alpha_{7}}]| {} \arrow[r] \& |[contact cross, /tikz/label={above:\alpha_{8}}]| \scalebox{1.5}{$\times$}
\end{tikzcd} \\[8mm]
$\mathrm{F}_4$ & $\{\alpha_1\}$ & \begin{tikzcd}[contact diagram, ampersand replacement=\&]
|[contact cross, /tikz/label={above:\alpha_{1}}]| \scalebox{1.5}{$\times$} \arrow[r] \& |[contact circle, /tikz/label={above:\alpha_{2}}]| {} \arrow[r, /tikz/double, /tikz/double distance=1pt, ->] \& |[contact circle, /tikz/label={above:\alpha_{3}}]| {} \arrow[r] \& |[contact circle, /tikz/label={above:\alpha_{4}}]| {}
\end{tikzcd} \\[8mm]
$\mathrm{G}_2$ & $\{\alpha_2\}$ & \begin{tikzcd}[contact diagram, ampersand replacement=\&]
|[contact circle, /tikz/label={above:\alpha_{1}}]| {} \arrow[r] \arrow[r, shift left=1.4pt] \arrow[r, shift right=1.4pt] \arrow[r, /tikz/>={Computer Modern Rightarrow[length=4pt,width=8pt]}, <-] \& |[contact cross, /tikz/label={above:\alpha_{2}}]| \scalebox{1.5}{$\times$}
\end{tikzcd} \\[8mm]

\end{tabular}
\caption{Crossed Dynkin diagrams defining contact gradings of complex simple
Lie algebras. Crosses mark the roots in $\Pi\setminus S$;
uncrossed nodes belong to $S$. See \cite{parabook}, Section 3.2.4 for further details.}
\label{tab:contact-flags}
\end{table}
\endgroup

 \begin{thm}\label{th:generators_of_a_flag}
A set of generators for $\mathcal{O}_{q}(\mathrm{G}/\mathrm{L}_{\mathrm{S}}^{\mathrm{s}}):={}^{\mathrm{U}_{q}(\ell_{\mathrm{S}}^{\mathrm{s}})}\mathcal{O}_{q}(\mathrm{G})$ is given by 

		\begin{equation}\nonumber 
		z_{i}:=c_{f_{i},v_{N_{x}}}^{\varpi_{x}}, \qquad \overline{z}_{j}:=c_{v_{j},f_{N_{x}}}^{\varpi_{x}},   
		\end{equation} 
		
		\noindent for  $i,j=1,\dots,N_{x}:=\mathrm{dim}(V_{\varpi_{x}})$, $x\in \Pi \backslash S$. 
\end{thm}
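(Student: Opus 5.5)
The first step is to check that the elements $z_i$ and $\overline{z}_j$ lie in $\mathcal{O}_{q}(\mathrm{G}/\mathrm{L}_{S}^{\mathrm{s}})$. For $x\in\Pi\setminus S$ and $j\in S$ we have $\langle\varpi_x,\alpha_j^\vee\rangle=0$. Hence the extremal weight vector $v_{N_x}$ of $V_{\varpi_x}$ (which I take to be the highest weight vector in the ordering used for the generators) is annihilated by $E_j$ and $F_j$, and is fixed by $K_j^{\pm1}$, for all $j\in S$. Indeed, $v_{N_x}$ spans a one-dimensional $\mathrm{U}_q(\ell^{\mathrm s}_S)$-module of weight zero for the Levi part. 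The action in the definition of the flag manifold is $X\triangleright c_{f,v}=c_{f,X v}$, so $z_i=c_{f_i,v_{N_x}}$ is invariant. The same argument applied to the dual module shows that $f_{N_x}$ is invariant, and therefore so is $\overline{z}_j$. Since $\mathcal{O}_{q}(\mathrm{G}/\mathrm{L}_{S}^{\mathrm{s}})$ is a subalgebra, all noncommutative polynomials in the $z_i,\overline{z}_j$ are invariant.

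For the reverse inclusion I will use the Peter--Weyl decomposition $\mathcal{O}_q(\mathrm{G})=\bigoplus_{\mu}\mathcal{C}(V_\mu)$ with $\mathcal{C}(V_\mu)\cong V_\mu^{\ast}\otimes V_\mu$. Because the invariance condition only acts on the vector slot, this gives
\[
\mathcal{O}_{q}(\mathrm{G}/\mathrm{L}_{S}^{\mathrm{s}})\cong\bigoplus_{\mu\in\mathcal{P}^+}V_\mu^{\ast}\otimes V_\mu^{\mathrm{U}_q(\ell^{\mathrm s}_S)} .
\]
It therefore suffices to show that, for each $\mu$, the monomials in the generators span a subspace of $\mathcal{C}(V_\mu)$ whose invariant part has the right size. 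I will proceed in two stages.

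\textbf{(a) Pure monomials.} A product $z_{i_1}\cdots z_{i_k}$ with $i_r$ attached to $x_r\in\Pi\setminus S$ is a matrix coefficient of $V_{\varpi_{x_1}}\otimes\cdots\otimes V_{\varpi_{x_k}}$ against the tensor product of the highest weight vectors. That tensor product generates the Cartan component $V_\lambda$ with $\lambda=\sum_r\varpi_{x_r}$. Consequently these products span $V_\lambda^{\ast}\otimes v_\lambda$ for every $\lambda$ in the monoid $\mathcal{P}^+_S$ generated by $\{\varpi_x\}_{x\notin S}$. Dually, products of the $\overline{z}_j$ span the matrix coefficients of $V_\lambda$ against the invariant extremal functional.

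\textbf{(b) Mixed monomials.} Products $z^{a}\overline{z}^{b}$ are matrix coefficients of $V_\lambda\otimes V_\nu^{\ast}$ evaluated against $v_\lambda\otimes f_\nu$, and they project to invariant vectors in each component $V_\mu\subseteq V_\lambda\otimes V_\nu^\ast$. I then need a counting statement: the images of $v_\lambda\otimes f_\nu$, as $\lambda,\nu$ range over $\mathcal{P}^+_S$, span $V_\mu^{\mathrm{U}_q(\ell^{\mathrm s}_S)}$ for every $\mu$.

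I expect this counting statement in (b) to be the main obstacle. My plan is to prove it by a specialisation argument rather than directly. The dimensions of the invariant subspaces $V_\mu^{\mathrm{U}_q(\ell^{\mathrm s}_S)}$ agree with the classical ones, because characters of type-$1$ modules, both for $\mathfrak{g}$ and for the semisimple Levi factor, coincide with their classical counterparts. The spanning property I want is a rank condition on the map sending these tensor vectors to their components. This map can be defined over the localisation $\mathbb{C}[q,q^{-1}]$ localised at $q=1$, using Lusztig's integral form. Its rank is lower semicontinuous, so it suffices to know that the rank is full at $q=1$. At $q=1$ the statement is the classical fact that $\mathbb{C}[G]^{L_S^{\mathrm s}}$ is generated by the coordinates of the highest weight vectors of the $V_{\varpi_x}$ and of their duals, equivalently the classical description of the homogeneous coordinate ring of $G/L^{\mathrm s}_S$. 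Failing that route, I would instead use a filtration by total degree in $\lambda+\nu$ and pass to the associated graded, reducing to the same classical comparison.
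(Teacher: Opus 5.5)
The paper states this theorem as background and gives no proof (for $\mathrm{Sp}_4$ it defers the reverse inclusion to Stokman's Theorem 2.5), so your proposal has to stand on its own. Your first step is correct. With $v_{N_x}$ the highest weight vector, $f_{N_x}$ is a lowest weight vector of $V_{\varpi_x}^{\ast}$ of weight $-\varpi_x$, and $X\triangleright c_{f,v}=c_{f,Xv}$. The Peter--Weyl reduction is also correct. The ordered products $z^a\overline{z}^b$ realise all matrix coefficients of $V_\lambda\otimes V_\nu^{\ast}$ against $v_\lambda\otimes f_\nu$. So the theorem is equivalent to your spanning statement: the vectors $\pi(v_\lambda\otimes f_\nu)$, with $\pi\in\mathrm{Hom}_{\mathrm{U}_q(\mathfrak g)}(V_\lambda\otimes V_\nu^{\ast},V_\mu)$ and $\lambda,\nu\in\mathcal P^+_S$, span $V_\mu^{\mathrm{U}_q(\ell^{\mathrm s}_S)}$.

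That statement is the whole content of the theorem, and the specialisation argument does not prove it, for two reasons.
\begin{enumerate}
\item Lower semicontinuity only carries full rank from $q=1$ to a Zariski-open set of parameters. For each $\mu$ you get the claim for all $q$ outside a finite exceptional set $Z_\mu$. Since there are infinitely many $\mu$, you obtain the theorem only for $q\notin\bigcup_\mu Z_\mu$, a countable set you have no control over. The paper fixes a single $q$ that is merely not a root of unity (e.g.\ $q\in(0,1)$), and nothing in your argument excludes that this $q$ is exceptional.
\item Your $q=1$ input is not the classical description of a homogeneous coordinate ring. That description covers only stage (a), i.e.\ the $z$'s alone and the flag variety $G/P_S$. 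The mixed statement that $\mathbb{C}[G]^{L^{\mathrm s}_S}$ is generated by both families is just the $q=1$ case of the theorem, and it needs its own proof.
\end{enumerate}
Your associated-graded fallback is too unspecified to assess.

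The missing idea is cyclicity, and it makes specialisation unnecessary.
\begin{itemize}
\item Since $E_i(v_\lambda\otimes w)=v_\lambda\otimes E_iw$ and $F_i(u\otimes w)=F_iu\otimes w+K_i^{-1}u\otimes F_iw$, induction shows that $v_\lambda\otimes f_\nu$ generates $V_\lambda\otimes V_\nu^{\ast}$ for every $q$.
\item Hence $\pi\mapsto\pi(v_\lambda\otimes f_\nu)$ is injective on $\mathrm{Hom}_{\mathrm{U}_q(\mathfrak g)}(V_\lambda\otimes V_\nu^{\ast},V_\mu)$. Its image lies in the weight-$(\lambda-\nu)$ part of $V_\mu^{\mathrm{U}_q(\ell^{\mathrm s}_S)}$, which is a sum of weight spaces because the torus normalises $\mathrm{U}_q(\ell^{\mathrm s}_S)$.
\item Every weight $\gamma$ of an invariant vector satisfies $\langle\gamma,\alpha_i^\vee\rangle=0$ for $i\in S$. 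So $\gamma=\lambda-\nu$ with $\lambda,\nu\in\mathcal P^+_S$ and $\nu$ as large as you like.
\item For such $\nu$, use the classical formula $\dim\mathrm{Hom}(V_\lambda\otimes V_\nu^{\ast},V_\mu)=\dim\mathrm{Hom}(V_\lambda,V_\mu\otimes V_\nu)=\dim\{v\in(V_\mu)_{\lambda-\nu}\mid e_i^{\langle\nu,\alpha_i^\vee\rangle+1}v=0\text{ for all }i\}$. It reduces to the conditions $e_iv=0$ for $i\in S$, which on weight $\gamma$ mean $\mathfrak{l}^{\mathrm s}_S$-invariance.
\item Tensor product multiplicities and branching to the Levi factor are the same as classically. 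So the injective map is between spaces of equal dimension and is onto. Summing over $\gamma$ and $\mu$ gives the theorem for every $q$ that is not a root of unity.
\end{itemize}
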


\begin{rmk} It is an easy check that any quantised flag manifold is automatically a left coideal subalgebra. Indeed, let $g\in \mathcal{O}_{q}(\mathrm{G}/\mathrm{L}_{S})$. Then $X\triangleright g = g_{1}\langle X,g_{2}\rangle=\varepsilon(X)g$, thus 
 	\begin{align*}
 		g_{(1)}\otimes g_{(2)}\langle X,g_{(3)}\rangle = \varepsilon(X)g_{(1)}\otimes g_{(2)},
 	\end{align*} 
 	meaning $\Delta(\mathcal{O}_{q}(\mathrm{G}/\mathrm{L}_{S}))\subseteq \mathcal{O}_{q}(\mathrm{G})\otimes \mathcal{O}_{q}(\mathrm{G}/\mathrm{L}_{S})$. Moreover, it can be proven that $\mathcal{O}_{q}(\mathrm{G})$ is right faithfully flat over $\mathcal{O}_{q}(\mathrm{G}/\mathrm{L}_{S})$ and $\mathcal{O}_{q}(\mathrm{G})\left(\mathcal{O}_{q}(\mathrm{G}/\mathrm{L}_{S})\right)^{+} = \left(\mathcal{O}_{q}(\mathrm{G}/\mathrm{L}_{S})\right)^{+}\mathcal{O}_{q}(\mathrm{G})$, see \emph{e.g.} \cite{Sch90,MS99}, and thus quantised flag manifolds supply a particularly rich class of quantum homogeneous spaces. \qed 
\end{rmk}
\subsection{Differential calculi over Hopf algebras and homogeneous spaces}
\label{subsection:basics_on_dcs}
After introducing the algebraic framework underlying our approach, we now turn to one of the fundamental structures of noncommutative differential geometry, namely differential calculi. For the reader’s convenience, we briefly recall the basic notions and constructions that will be needed in the sequel.
\begin{defn}
	Let $A$ be an algebra.
	\begin{enumerate}
		\item  A first order Differential Calculus (FODC) over $A$ is a tuple $(\Omega^{1}(A),\mathrm{d})$, where $\Omega^{1}(A)$ is an $A$-bimodule and $\mathrm{d}\colon A \to \Omega^{1}(A)$ is a derivation such that $\Omega^{1}(A)$ is generated as a left (or equivalently right) $A$-module by elements of the form $\mathrm{d}A$. 
		\item A Differential Calculus (DC) over $A$ is a Differential Graded Algebra (DGA) $(\Omega^{\bullet}(A),\mathrm{d})$ generated by elements $a,\mathrm{d}b$, for $a,b\in\Omega^{0}(A)=A$. 
	\end{enumerate}
   We say that a DC $(\Omega^{\bullet}(A),\mathrm{d}_{\Omega})$ extends a FODC $(\Gamma(A),\mathrm{d}_{\Gamma})$ if there is a bimodule isomorphism $\varphi\colon \Omega^{1}(A)\to \Gamma(A)$ such that $\mathrm{d}_{\Gamma}=\varphi\circ\mathrm{d}_{\Omega}$. 
\end{defn}
In contrast to the classical smooth geometrical setting, where a fixed smooth structure on a manifold has its canonical de Rham differential calculus, differential structure over algebras can be realised by means of multiple choices of FODCi, which, furthermore, can be extended to DCs in multiple ways. However, given a FODC, there is a canonical procedure to extend the latter to a DC \cite{BM20} that goes as follows. Let $(\Gamma,\mathrm{d})$ be a FODC over $A$. Consider the tensor algebra 
\[ 
\Gamma^{\otimes_{A}}=\bigoplus_{n=0}^{\infty}\Gamma^{\otimes^{n}_{A}}=A\oplus\Gamma\oplus (\Gamma\otimes_{A}\Gamma)\oplus \cdots, 
\]
and consider the graded ideal $S\subset \Gamma^{\otimes_{A}}(A)$ generated by elements of the form $\mathrm{d}a^{i}\otimes_{A}\mathrm{d}b^{i}$ such that $a^{i}\mathrm{d}b^{i}=0$, where summation over repeated indices is understood. The quotient $\Gamma^{\bullet}:=\Gamma^{\otimes_{A}}/S$ is a graded associative unital algebra, whose product induced by the quotient we denote by $\wedge\colon \Gamma^{k}\otimes \Gamma^{\ell}\to \Gamma^{k+\ell}$. Furthermore, the differential $\mathrm{d}\colon \Gamma^{0}\to \Gamma^{1}$ extends as a derivation on $\Gamma^{1}$ by
\begin{equation}
	\begin{aligned}
		\mathrm{d}^{1} \colon \Gamma^{1} & \to\Gamma^{2},  \\ 
		          a\mathrm{d}b & \mapsto [\mathrm{d}a\otimes_{A}\mathrm{d}b], 
	\end{aligned}
\end{equation}
and therefore extends as a derivation $\mathrm{d}^{\bullet}\colon \Gamma^{\bullet}\to \Gamma^{\bullet +1}$. It follows that $(\Gamma^{\bullet},\wedge,\mathrm{d}^{\bullet})$ is a differential calculus over $A$, the \emph{maximal prolongation of} $\Gamma$, whose truncation to the first order consists of the original datum $(\Gamma,\mathrm{d})$. This extension is maximal, in the sense that any other DC $(\Omega^{\bullet},\mathrm{d}_{\Omega})$ over $A$ whose truncation to the first order is $(\Gamma,\mathrm{d})$ can be induced as a quotient of $\Gamma^{\bullet}$. 

    Next, we turn to covariant calculi, namely differential calculi on a comodule algebra equipped with a compatibility condition with the underlying coaction. 
    \begin{defn}
	Let $H$ be a Hopf algebra, and let $A$ be a left $H$-comodule algebra. A differential calculus $(\Omega^{\bullet},\mathrm{d})$ over $A$ is called \emph{left covariant} if the coaction ${}_{A}\Delta\colon A\to H\otimes A$ extends to a coaction ${}_{\Omega^{\bullet}}\Delta\colon \Omega^{\bullet}\to H\otimes \Omega^{\bullet}$ giving $\Omega^{\bullet}$ the structure of an object in the category ${}^{H}_{A}\mathrm{Mod}_{A}$ of left $H$-covariant $A$-bimodules, and the differential $\mathrm{d}$ is left $H$-colinear. Right $H$-covariant and bicovariant differential calculi are defined similarly. 
\end{defn}

\noindent In the following $B\subseteq A$ will be a quantum homogeneous space of a Hopf algebra $A$, with $A^{\circ}$ denoting the corresponding dual Hopf algebra of $A$ and $B^{\circ}$ denoting the dual coalgebra of $B$. We briefly describe the relation of tangent spaces and differential calculi at the level of the total space Hopf algebra $A$ and at the level of its homogeneous spaces, in the spirit of the classification theorems of Woronowicz and Hermisson. 
\begin{defn}
A \emph{tangent space} for $A$ is a vector subspace $T\subset A^{\circ}$ such that $X(1)=0$ for any $X\in T$ and $T\oplus \Bbbk 1$ is a right coideal of $A^{\circ}$. 
\end{defn} 

\noindent For any tangent space $T\subset A^{\circ}$ a right $A$-ideal in $A^{+}:=\ker\varepsilon$ is given by 
\begin{equation}\label{eq:ideal_from_tangent_space}
I=\left\{a\in A^{+}\mid X(a)=0, \ \text{for all}\ X\in T\right\}, 
\end{equation}
and therefore the quotient $A^{+}/ I$ is an object in the category of right $A$-modules $\mathrm{Mod}_{A}$. Let $\Psi\colon \mathrm{Mod}_{A}\to {}^{A}_{A}\mathrm{Mod}_{A}$ be the free functor in Theorem \ref{th:fundamental_theorem_hopf}, as appearing in Equation \eqref{eq:free_functor_hopf_modules}. It follows that $\Psi(A^{+}/ I)=A\otimes A^{+}/ I:=\Omega^{1}(A)$ is naturally an object in ${}^{A}_{A}\mathrm{Mod}_{A}$. Furthermore, there is a derivation 
\begin{equation}
	\begin{aligned}
		\mathrm{d}\colon A & \to \Omega^{1}(A), \\
		 				 a & \mapsto a_{(1)}\otimes [a_{(2)}^{+}], 
	\end{aligned}
\end{equation}
where $(\ \ )^{+}\colon A\to A^{+}$ denotes the projection to the kernel of the counit of $A$ and where square brackets stand for cosets in $A^{+}/I$. The pair $(\Omega^{1}(A):=A\otimes \Lambda^{1}_{A},\mathrm{d})$ is a left $A$-covariant FODC over $A$ with \emph{cotangent space} $\Lambda^{1}_{A}:=A^{+}/ I$. 

For the convenience of the reader, we include a proof of the following standard observation.

\begin{lemma}
Let $W \subseteq A^\circ$ be a Hopf subalgebra, and let $\pi\colon A \to W^\circ$ be the Hopf algebra map defined by $\pi(a)(w) := \langle w,a\rangle.$ Set $K:=\pi(A)$. Suppose that the quantum tangent space $T\subseteq A^\circ$ is invariant under the right adjoint action $X\triangleleft_{\mathrm{Ad},\mathrm{R}}w := S(w_{(1)})Xw_{(2)}.$ Then the classifying ideal
	\[
    I:=\{a\in A^+ \mid \langle X,a\rangle=0
                       \text{ for all }X\in T\}
	\]
    is stable under the relative right adjoint coaction $\mathrm{coAd}_{\mathrm{R}}\colon A\to A\otimes K$, $\mathrm{coAd}_{\mathrm{R}}(a)= a_{(2)} \otimes\pi\bigl(S(a_{(1)})a_{(3)}\bigr)$.

\begin{proof}
For $X\in T$, $w\in W$, and $a\in A$, we have
\begin{align*}
    \langle X\triangleleft_{\mathrm{Ad},\mathrm{R}}w,a\rangle
    &=
    \langle S(w_{(1)})Xw_{(2)},a\rangle\\
    &=
    \langle w_{(1)},\pi(S(a_{(1)}))\rangle
    \langle X,a_{(2)}\rangle
    \langle w_{(2)},\pi(a_{(3)})\rangle\\
    &=
    \langle X,a_{(2)}\rangle
    \langle w,\pi(S(a_{(1)})a_{(3)})\rangle\\
    &=
    \langle X\otimes w,\mathrm{coAd}_{\mathrm{R}}(a)\rangle.
\end{align*}
\end{proof}
\end{lemma}

\begin{thm}[{\cite{Woronowicz1989}}, \emph{Woronowicz's classification theorem}]\label{ThmWor}
There is a $1$$:$$1$-correspondence
\begin{equation}
\begin{Bmatrix}
\text{left covariant}\\
\text{FODC}\\
\text{on $H$}
\end{Bmatrix}\xleftrightarrow{1:1}
\begin{Bmatrix}
\text{right}\\
\text{$H$-ideals}\\
\text{in $H^+$}
\end{Bmatrix}
\end{equation}
Furthermore, a FODC on $A$ is bicovariant if and only if the classifying ideal $I$ is closed under the adjoint right $A$-coaction, i.e. if $\mathrm{coAd}(I)\subseteq I\otimes A$, where $\mathrm{coAd}\colon A\to A\otimes A$, $a\mapsto a_{(2)}\otimes S(a_{(1)})a_{(3)}$.
\end{thm}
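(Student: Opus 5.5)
The statement to prove is Woronowicz's classification theorem (Theorem \ref{ThmWor}). The plan is to build the two assignments explicitly, check that they are mutually inverse using the fundamental theorem of Hopf modules (Theorem \ref{th:fundamental_theorem_hopf}), and then translate bicovariance into a condition on the ideal.

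\emph{From ideals to calculi.} Given a right $H$-ideal $I\subseteq H^{+}$, set $\Lambda^{1}:=H^{+}/I$. This is a right $H$-module, since $I$ is a right ideal and $H^{+}$ is one as well. Apply the functor $\Psi$ to obtain $\Omega^{1}:=H\otimes\Lambda^{1}\in{}^{H}_{H}\mathrm{Mod}_{H}$, with differential $\mathrm{d}a=a_{(1)}\otimes[a_{(2)}^{+}]$, exactly as in the construction preceding Theorem \ref{ThmWor}. Then check three things:
\begin{itemize}
\item $\mathrm{d}$ is a left colinear derivation; this follows from $(ab)^{+}=a^{+}b+\varepsilon(a)b^{+}$ and the diagonal right action.
\item $\mathrm{d}$ generates $\Omega^{1}$, because $S(a_{(1)})\mathrm{d}a_{(2)}=1\otimes[a^{+}]$.
\item The calculus is therefore left covariant.
\end{itemize}

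\emph{From calculi to ideals.} Given a left covariant FODC $(\Omega^{1},\mathrm{d})$, the fundamental theorem gives $\Omega^{1}\cong H\otimes{}^{\mathrm{co}H}\Omega^{1}$. Define the Maurer--Cartan map
\[
\varpi\colon H^{+}\to{}^{\mathrm{co}H}\Omega^{1},\qquad \varpi(a)=S(a_{(1)})\mathrm{d}a_{(2)}.
\]
Covariance of $\mathrm{d}$ shows that $\varpi(a)$ is coinvariant. Next, check that $\varpi(ab)=\varpi(a)\cdot b$ for the induced right action $\omega\cdot b=S(b_{(1)})\omega b_{(2)}$, which makes $I:=\ker\varpi$ a right ideal. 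Surjectivity of $\varpi$ follows from the generating condition: write $\omega=\sum a_i\mathrm{d}b_i$ coinvariant, apply the unit $\Theta$ and the counit, and express $\omega$ as $\varpi$ of $\sum\varepsilon(a_i)b_i^{+}$-type elements. The \textbf{main obstacle} is making this surjectivity and well-definedness bookkeeping precise. I would handle it by using the identity $a\,\mathrm{d}b=a b_{(1)}\varpi(b_{(2)}^{+})$ and then applying $\varepsilon\otimes\mathrm{id}$ after $\Theta$.

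\emph{The two constructions are mutually inverse.} One direction is immediate: $\varpi$ for the calculus built from $I$ is the quotient map $H^{+}\to H^{+}/I$, so its kernel is $I$. For the other direction, $\Theta$ together with $\varpi$ gives an isomorphism of calculi $\Omega^{1}\cong H\otimes H^{+}/\ker\varpi$ intertwining the differentials.

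\emph{Bicovariance.} Suppose $\mathrm{coAd}(I)\subseteq I\otimes H$. Then $\Lambda^{1}$ carries the right coaction $[a]\mapsto[a_{(2)}]\otimes S(a_{(1)})a_{(3)}$. Extend it to $\Omega^{1}$ by $h\otimes v\mapsto h_{(1)}v_{(0)}\otimes h_{(2)}v_{(1)}$, and verify that it commutes with the left coaction, is bimodule compatible, and makes $\mathrm{d}$ right colinear. Conversely, if the calculus is bicovariant, the right coaction restricts to the left coinvariants, and $\varpi$ intertwines $\mathrm{coAd}$ with it: $\varpi(a)\mapsto\varpi(a_{(2)})\otimes S(a_{(1)})a_{(3)}$. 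Hence $\ker\varpi$ is $\mathrm{coAd}$-stable.
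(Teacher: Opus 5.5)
Your proposal is correct, and it is the standard argument. The paper does not prove this theorem; it quotes it from \cite{Woronowicz1989}. The surrounding text records only the two ingredients you use: the calculus $A\otimes A^{+}/I$ with $\mathrm{d}a=a_{(1)}\otimes[a_{(2)}^{+}]$, and the identification $I=\ker\varpi$. So your route through the fundamental theorem of Hopf modules is exactly what the paper's framework presupposes.

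Two points you use implicitly deserve a line each:
\begin{itemize}
\item The step you call the main obstacle is immediate. The map $P:=(\varepsilon\otimes\mathrm{id})\circ\Theta$, i.e.\ $P(\omega)=S(\omega_{(-1)})\omega_{(0)}$, fixes left coinvariants and satisfies $P(a\,\mathrm{d}b)=\varepsilon(a)\varpi(b^{+})$. This gives surjectivity of $\varpi$ in one line.
\item In the bicovariance part you need $\mathrm{coAd}(H^{+})\subseteq H^{+}\otimes H$, which follows from $(\varepsilon\otimes\mathrm{id})\mathrm{coAd}(a)=\varepsilon(a)1$. You need it twice: to make $[a]\mapsto[a_{(2)}]\otimes S(a_{(1)})a_{(3)}$ well defined on $H^{+}/I$, and to deduce $\mathrm{coAd}(a)\in I\otimes H$ from $(\varpi\otimes\mathrm{id})\mathrm{coAd}(a)=0$ in the converse direction.
\end{itemize}
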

\noindent In particular, the classifying ideal $I$ of a covariant FODC is given as the kernel of the \emph{quantum Maurer--Cartan} form 

\begin{equation}
	\begin{aligned}
		\varpi \colon A^{+}& \to \Lambda^{1}_{A}, \\ 
		       a &\mapsto S(a_{(1)})\mathrm{d}(a_{(2)}). 
	\end{aligned}
\end{equation}

\noindent Owing to the following Proposition, which can be found e.g. in \cite{BM20} (Proposition 2.31 page 107) or in \cite{Re-Sombrg} (page 7), it turns out that the relations for the maximal prolongation of a covariant FODC over a Hopf algebra are given in terms of the quantum Maurer--Cartan form as well. 

\begin{prop}
\label{proposition:quantum_exterior_algebra}
	Let $\Omega^{1}(A)\cong A\otimes \Lambda^{1}_{A}$ be a left covariant FODC. Its maximal prolongation is a left covariant DC of the form 
	
	\[
	\Omega^{\bullet}(A)\cong A \otimes \Lambda^{\bullet}_{A},  	\qquad  \Lambda^{\bullet}_{A}:=\bigoplus_{k\in\mathbb{Z}_{\geq 0}}\Lambda^{k}_{A}:= \mathcal{T}\left(\Lambda^{1}_{A}\right)/\langle \mathcal{I}^{(2)}\rangle, 
	\]
	
	\noindent where $\mathcal{T}\left(\Lambda^{1}_{A}\right)$ denotes the tensor algebra of $\Lambda^{1}_{A}$, $\pi_{\varepsilon}\colon A\to A^{+}$, and where 
	\begin{equation}\label{eq:I2}
	\begin{aligned}
		\mathcal{I}^{(2)}& := \Big\{(\varpi\circ \pi_{\varepsilon} \otimes \varpi\circ \pi_{\varepsilon})\circ \Delta(x) \mid x\in \mathcal{I} \Big\}\\ 
		& = \Big\{\varpi^{\otimes_{2}}(x):=[x_{(1)}^{+}]\otimes [x_{(2)}^{+}] \mid x\in \mathcal{I} \Big\}. 
	\end{aligned}	
	\end{equation}
\end{prop}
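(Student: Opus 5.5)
The plan is to identify the maximal prolongation $\Omega^{\bullet}(A)$ with the quotient $\Gamma^{\otimes_A}/S$ from the construction recalled above, and to transport everything to the cotangent space via the fundamental theorem of Hopf modules, Theorem~\ref{th:fundamental_theorem_hopf}. Since $\Omega^{1}(A)\cong A\otimes\Lambda^{1}_{A}$ is an object of ${}^{A}_{A}\mathrm{Mod}_{A}$, the monoidal form of the equivalence (with right module structure on $\Lambda^1_A$ given by the induced action) gives
\[
\Omega^{1}(A)^{\otimes_A n}\cong A\otimes (\Lambda^{1}_{A})^{\otimes n},
\]
by sending $a\,\omega_1\otimes_A\cdots\otimes_A\omega_n$ to $a\otimes[\omega_1]\otimes\cdots\otimes[\omega_n]$ on left-coinvariant forms. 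Thus the tensor algebra $\Gamma^{\otimes_A}$ becomes $A\otimes\mathcal{T}(\Lambda^{1}_{A})$. It remains to show that the ideal $S$ corresponds to $A\otimes\langle\mathcal{I}^{(2)}\rangle$.

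\textbf{Step 1: coinvariant generators.} I will use that $S$ is generated by $\sum_i \mathrm{d}a^i\otimes_A\mathrm{d}b^i$ with $\sum_i a^i\mathrm{d}b^i=0$, and that $S$ is a left covariant sub-bimodule, because $\mathrm{d}$ is colinear. By Theorem~\ref{th:fundamental_theorem_hopf}, $S$ is then determined by its coinvariant part ${}^{\mathrm{co}A}S$, which is generated as a two-sided ideal by coinvariant degree-two elements. The task is therefore to compute the coinvariant degree-two elements of $S$.

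\textbf{Step 2: Maurer--Cartan rewriting.} Every coinvariant one-form is $\varpi(x)=S(x_{(1)})\mathrm{d}x_{(2)}$. The relation $\sum a^i\mathrm{d}b^i=0$ can be rewritten, via the unit $\Theta$, as a statement about $\varpi$. I would show the following: a relation $\sum_i a^i\mathrm{d}b^i=0$ projects, under $\Theta$ followed by $\varepsilon\otimes\mathrm{id}$, to an element $x=\sum_i\varepsilon(a^i_{\ldots})\ldots\in A^+$ with $\varpi(x)=0$, that is, $x\in\mathcal{I}$. Conversely, each $x\in\mathcal{I}$ gives the relation $S(x_{(1)})\mathrm{d}x_{(2)}=0$.

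\textbf{Step 3: computing the generator.} For $x\in\mathcal{I}$, I compute the associated generator of $S$. Using the Leibniz rule and the identity $\mathrm{d}S(x_{(1)})\,x_{(2)}=-S(x_{(1)})\mathrm{d}x_{(2)}$, one gets
\[
\mathrm{d}S(x_{(1)})\otimes_A\mathrm{d}x_{(2)} = -S(x_{(1)})\mathrm{d}x_{(2)}\otimes_A S(x_{(3)})\mathrm{d}x_{(4)} = -\varpi(x_{(1)}^{+})\otimes\varpi(x_{(2)}^{+}).
\]
Up to sign and the counit corrections absorbed by $\pi_\varepsilon$, this is exactly $\varpi^{\otimes 2}(x)$. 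So $\mathcal{I}^{(2)}\subseteq{}^{\mathrm{co}A}S$.

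\textbf{Main obstacle.} The reverse inclusion is the hard part: showing that an arbitrary generator $\sum\mathrm{d}a^i\otimes\mathrm{d}b^i$ lies in the ideal generated by $A\otimes\mathcal{I}^{(2)}$. I would first left-multiply by $S$ of the coaction legs to reduce to the case $a^i$, $b^i$ of the Maurer--Cartan form. Then I would use the right $A$-ideal property of $\mathcal{I}$, which Theorem~\ref{ThmWor} guarantees, to handle the remaining terms via $\varpi(xa)=\varpi(x)\triangleleft a$. I expect the bookkeeping of Sweedler indices here to be the delicate point.
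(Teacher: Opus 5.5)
\textbf{There is a gap: the reverse inclusion is announced but not proved, and Step~2 as stated cannot supply it.}

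The paper does not prove this proposition; it quotes it from \cite{BM20} (Proposition~2.31) and \cite{Re-Sombrg}. Your outline is the standard route: transport $\Gamma^{\otimes_A}$ to $A\otimes\mathcal{T}(\Lambda^1_A)$ by the fundamental theorem of Hopf modules, then identify the coinvariant degree-two relations with $\mathcal{I}^{(2)}$. Step~3 is correct, and it in fact holds for every $y\in A$: under the identification, $\mathrm{d}S(y_{(1)})\otimes_A\mathrm{d}y_{(2)}=-\varpi^{\otimes_2}(y)$. This gives $\mathcal{I}^{(2)}\subseteq{}^{\mathrm{co}A}S$.

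The missing half is the reverse inclusion, which is what shows the maximal prolongation is not smaller than $A\otimes\mathcal{T}(\Lambda^1_A)/\langle\mathcal{I}^{(2)}\rangle$. Projecting a relation $\sum_i a^i\mathrm{d}b^i=0$ by $\varepsilon\otimes\mathrm{id}$ does produce the right element $x:=\pi_\varepsilon\bigl(\sum_i\varepsilon(a^i)b^i\bigr)\in\mathcal{I}$. But identifying the coinvariant part of the generator $\sum_i\mathrm{d}a^i\otimes_A\mathrm{d}b^i$ with $-\varpi^{\otimes_2}(x)$ needs the unprojected relation. With $P(m)=S(m_{(-1)})m_{(0)}$ and $[c]\triangleleft b=[cb]$ one finds
\[
P\Bigl(\sum_i\mathrm{d}a^i\otimes_A\mathrm{d}b^i\Bigr)=\sum_i[\pi_\varepsilon(a^i)b^i_{(1)}]\otimes[\pi_\varepsilon(b^i_{(2)})]=(\varpi\circ\pi_\varepsilon\otimes\mathrm{id})\Bigl(\sum_i a^ib^i_{(1)}\otimes[\pi_\varepsilon(b^i_{(2)})]\Bigr)-\varpi^{\otimes_2}(x).
\]
The first term vanishes because $\sum_i a^ib^i_{(1)}\otimes[\pi_\varepsilon(b^i_{(2)})]$ is exactly the element $\sum_i a^i\mathrm{d}b^i=0$ of $A\otimes\Lambda^1_A$. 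The set of generators is closed under left and right multiplication by $A$, so it spans $S^{(2)}$. Hence ${}^{\mathrm{co}A}S^{(2)}=P(S^{(2)})\subseteq\mathcal{I}^{(2)}$.

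Your own suggestion also works once made precise. Write $\sum_ia^ib^i_{(1)}\otimes b^i_{(2)}=\sum_jc_j\otimes y_j$ with the $c_j$ linearly independent, so that $\sum_ia^i\otimes b^i=\sum_jc_jS(y_{j(1)})\otimes y_{j(2)}$. Freeness of $A\otimes\Lambda^1_A$ then gives $\pi_\varepsilon(y_j)\in\mathcal{I}$. The terms $\mathrm{d}c_j\otimes_A S(y_{j(1)})\mathrm{d}y_{j(2)}$ vanish, and Step~3 turns the rest into $-\sum_jc_j\,\varpi^{\otimes_2}(\pi_\varepsilon(y_j))$.

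In neither version does the right-ideal property of $\mathcal{I}$ dispose of the remaining terms; they vanish by the relation itself. That property belongs in your Step~1. It gives $\varpi^{\otimes_2}(xa)=\varpi^{\otimes_2}(x)\triangleleft a$ for $x\in\mathcal{I}$ and $a\in A$, so $\langle\mathcal{I}^{(2)}\rangle$ is stable under the diagonal right action. Consequently $A\otimes\langle\mathcal{I}^{(2)}\rangle$ is a two-sided ideal for the $\otimes_A$-product. This is what lets you pass from the degree-two equality to $S\cong A\otimes\langle\mathcal{I}^{(2)}\rangle$ in all degrees; the same conclusion also follows from $S^{(2)}$ being a sub-bimodule.
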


\noindent We refer to $\Lambda^{\bullet}_{A}$ as the \emph{quantum exterior algebra} associated to the DC $\Omega^{1}(A)$.

 We now present a similar construction for the more general case of a quantum homogeneous space of a Hopf algebra. Let $B\subseteq A$ be the quantum homogeneous space defined by the space of invariant elements  
\[
B:={}^{W}A=\Big\{b\in A \mid b_{(1)}\langle X,b_{(2)}\rangle=\varepsilon(X)b, \ \text{for all} \ X \in W\Big\}
\]
under the action of a subalgebra $W\subseteq A^{\circ}$ in the dual Hopf algebra of $A$. A \emph{tangent space} for the quantum homogeneous space $B$ is a subspace $T\subseteq B^{\circ}$ of the dual coalgebra of $B$, such that $T\oplus \Bbbk 1$ is a right coideal in $B^{\circ}$, $T(1)=0$, and $T$ is stable under the right adjoint $W$-action. 

As before, for any tangent space we have a right ideal in $B^{+}:=B\cap \ker\varepsilon$ as in Equation \eqref{eq:ideal_from_tangent_space}. The quotient $B^{+}/\mathcal{I}$ is therefore an object in the category ${}^{\pi_{B}}\mathrm{Mod}_{B}$ of left $\pi_{B}$-covariant, right $B$-modules. Let $\Psi\colon {}^{\pi_{B}}\mathrm{Mod}_{B} \to {}^{A}_{B}\mathrm{Mod}_{B}$ be the free functor in Theorem \ref{thm:takeuchi_equivalence}, appearing in Equation \eqref{eq:free_functor_takeuchi}. It follows that $\Psi(B^{+}/I)=A\square_{\pi_{B}}(B^{+}/\mathcal{I})$ is an object in ${}^{A}_{B}\mathrm{Mod}_{B}$ and, furthermore, the map
\begin{equation}
	\begin{aligned}
		\mathrm{d}\colon B & \to \Omega^{1}(B), \\
		 				 b & \mapsto b_{(1)}\otimes [b_{(2)}^{+}], 
	\end{aligned}
\end{equation}
is a derivation. The pair $\left(\Omega^{1}(B):=A\square_{\pi_{B}} V^{1}_{B},\mathrm{d}\right)$ is a left $A$-covariant FODC over $B$ with cotangent space $V^{1}_{B}:=B^{+}/\mathcal{I}$. 
This is the essential content of two classification theorems for covariant calculi on quantum homogeneous spaces due to Hermisson and Heckenberger--Kolb. 
	\begin{thm}[\cite{Hermisson1}, \emph{Hermisson's classification theorem}]
	\label{th:hermisson_class} The pair $\left(\Omega^{1}(B):=A\square_{\pi_{B}}(B^{+}/ I),\mathrm{d}\right)$ is a left $A$-covariant FODC on $B$.  Moreover, every left $A$-covariant FODC on $B$ is of this form. 
	\end{thm}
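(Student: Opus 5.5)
The proof has two parts. First, the pair $\bigl(A\square_{\pi_B}(B^+/I),\mathrm{d}\bigr)$ is a left $A$-covariant FODC on $B$. Second, every such calculus arises this way. Both parts rest on Takeuchi's equivalence (Theorem \ref{thm:takeuchi_equivalence}).

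\textbf{The construction gives a covariant FODC.} I first check that $B^+/I$ lies in ${}^{\pi_B(A)}\mathrm{Mod}_B$.
\begin{itemize}
\item $B^+$ is a right $B$-module, since $\varepsilon$ is multiplicative.
\item $B^+$ carries the left $\pi_B(A)$-coaction $b\mapsto \pi_B(b_{(1)})\otimes b_{(2)}$. This lands in $\pi_B(A)\otimes B^+$ because $\Delta(B)\subseteq A\otimes B$ and $(\mathrm{id}\otimes\varepsilon)$ recovers $b$.
\item $I$ is a right ideal, since $T\oplus\Bbbk1$ is a right coideal: $\langle X,ab\rangle=\langle X_{(1)},a\rangle\langle X_{(2)},b\rangle$ with $\Delta X\in T\otimes B^\circ + \Bbbk1\otimes T$, and for $a\in I$, $b\in B$ every term vanishes.
\item $I$ is a subcomodule. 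This is the analogue of the Lemma above: invariance of $T$ under the right adjoint $W$-action dualises to stability of $I$ under the relative coaction.
\end{itemize}
Hence $\Psi(B^+/I)\in{}^A_B\mathrm{Mod}_B$.

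Next, $\mathrm{d}b=b_{(1)}\otimes[b_{(2)}^+]$ lands in the cotensor product; this is coassociativity of $\Delta$ together with $\Delta(B)\subseteq A\otimes B$. The map $\mathrm{d}$ is left colinear by construction. For the Leibniz rule I would use the diagonal right $B$-action and the identity $(bc)^+=b^+c+\varepsilon(b)c^+$. Finally, generation: under the unit $\Theta$, the left $B$-span of $\mathrm{d}B$ surjects onto the fibre $B^+/I$. By faithful flatness, a left covariant sub-bimodule whose fibre is everything is the whole module, so $B\,\mathrm{d}B=\Omega^1(B)$.

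\textbf{Every covariant FODC arises this way.} Let $(\Gamma,\mathrm{d})$ be a left $A$-covariant FODC on $B$. Takeuchi's functor $\Phi$ gives $\Gamma\cong A\square_{\pi_B}\Phi(\Gamma)$. I would define $\varpi\colon B^+\to\Phi(\Gamma)=\Gamma/B^+\Gamma$ by $b\mapsto[\mathrm{d}b]$ and check three things.
\begin{itemize}
\item $\varpi$ is a morphism in ${}^{\pi_B(A)}\mathrm{Mod}_B$. Right linearity uses $[\mathrm{d}(bc)]=[\mathrm{d}b\,c]+[b\,\mathrm{d}c]$, and the second term dies because $b\in B^+$.
\item $\varpi$ is surjective, since $\Gamma=B\,\mathrm{d}B$.
\item Setting $I:=\ker\varpi$ gives $\Phi(\Gamma)\cong B^+/I$, and the composite isomorphism $\Gamma\cong A\square_{\pi_B}(B^+/I)$ intertwines the differentials.
\end{itemize}
The last point reduces, via $\Theta(\mathrm{d}b)=b_{(1)}\otimes[\mathrm{d}b_{(2)}]$ and colinearity of $\mathrm{d}$, to the defining formula for $\mathrm{d}$.

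If one also wants the correspondence with tangent spaces, set $T=I^\perp\subseteq B^\circ$. This is finite-dimensional duality when $B^+/I$ is finite-dimensional, and the coideal and adjoint-invariance conditions dualise back.

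\textbf{Main obstacle.} The delicate steps are the coaction compatibility, i.e. showing that $I$ is a $\pi_B(A)$-subcomodule precisely when $T$ is $W$-adjoint stable, and the use of faithful flatness to pass between generation by $\mathrm{d}B$ and surjectivity on fibres. I would handle the first exactly as in the Lemma preceding Theorem \ref{ThmWor}, pairing $X\triangleleft w$ against elements of $B$. For the second I would invoke the unit $\Theta$ of Takeuchi's equivalence being an isomorphism.
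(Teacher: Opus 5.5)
Your proposal is correct and follows the route the paper itself indicates. The paper gives no separate proof: in the paragraph before the statement it builds the calculus exactly as you do, by placing $B^{+}/\mathcal{I}$ in ${}^{\pi_B(A)}\mathrm{Mod}_B$, applying Takeuchi's free functor $\Psi$ and using $\mathrm{d}b=b_{(1)}\otimes[b_{(2)}^{+}]$, and it defers the converse to Hermisson, for which your map $b\mapsto[\mathrm{d}b]\in\Phi(\Gamma)$ is the standard argument. One point worth making explicit: deducing that $I$ is a $\pi_B(A)$-subcomodule from the $\mathrm{Ad}$-stability of $T$ uses that $W$ separates the points of $\pi_B(A)$ and that $S(W)=W$, both of which hold for the quantised flag manifolds treated here.
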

		\begin{thm}[\cite{hek-kolb-2003}]
		\label{th:hk_tangent}
			There is a bijective correspondence between isomorphism classes of finite-dimensional tangent spaces and finitely generated left $A$-covariant FODCi on $B$.
		\end{thm}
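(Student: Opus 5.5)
The plan is to pass through the classification of ideals, since Theorem \ref{th:hermisson_class} already tells us that left $A$-covariant FODCi on $B$ correspond to right ideals $\mathcal{I}\subseteq B^{+}$ with $B^{+}/\mathcal{I}\in{}^{\pi_{B}}\mathrm{Mod}_{B}$, equivalently $\mathcal{I}$ stable under the relative adjoint coaction. So the statement reduces to a bijection between finite-dimensional tangent spaces $T\subseteq B^{\circ}$ and right ideals $\mathcal{I}\subseteq B^{+}$ of finite codimension that are stable under the relative adjoint coaction. Finite generation matters here: since $\Omega^{1}(B)=A\square_{\pi_{B}}(B^{+}/\mathcal{I})$ and $A$ is faithfully flat over $B$, the module $\Omega^{1}(B)$ is finitely generated exactly when $V^{1}_{B}=B^{+}/\mathcal{I}$ is finite-dimensional.

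\textbf{From tangent spaces to ideals.} Given $T$, I define $\mathcal{I}_{T}=\{b\in B^{+}\mid X(b)=0\ \forall X\in T\}$, which has finite codimension. The coideal condition $\Delta(T)\subseteq T\otimes B^{\circ}+\Bbbk 1\otimes T$ (for $T\oplus\Bbbk1$ a right coideal) gives $X(bc)=X_{(1)}(b)X_{(2)}(c)$. For $b\in\mathcal{I}_{T}$ each term vanishes, either because $X_{(1)}\in T$ kills $b$ or because $X_{(1)}=1$ gives $\varepsilon(b)=0$. Hence $\mathcal{I}_{T}$ is a right ideal. Stability of $T$ under the right adjoint $W$-action gives stability of $\mathcal{I}_{T}$ under the relative adjoint coaction, by the computation in the lemma preceding Theorem \ref{ThmWor}, carried out with $B$ in place of $A$.

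\textbf{From ideals to tangent spaces.} Given $\mathcal{I}$, I set $T_{\mathcal{I}}=\{X\in B^{*}\mid X(1)=0,\ X|_{\mathcal{I}}=0\}$, which is isomorphic to $(B^{+}/\mathcal{I})^{*}$ and so finite-dimensional. I would then show $T_{\mathcal{I}}\subseteq B^{\circ}$ and that $T_{\mathcal{I}}\oplus\Bbbk1$ is a right coideal. The point is that $\mathcal{J}:=\mathcal{I}\oplus(\text{nothing})\subseteq B$ together with $\Bbbk 1$ determines the codimension-finite right ideal $\mathcal{I}$ of $B$ (indeed $\mathcal{I}$ is a right ideal of $B$ since $\mathcal{I}B\subseteq\mathcal{I}B^{+}+\mathcal{I}\subseteq \mathcal{I}$). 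Functionals vanishing on a finite-codimension right ideal lie in $B^{\circ}$, because their coproduct can be read off from the finite-dimensional right $B$-module $B/\mathcal{I}$: $\Delta X(b\otimes c)=X(bc)$ factors through $B\otimes$ the matrix coefficients of $B/\mathcal{I}$. This same description shows $\Delta(T_{\mathcal{I}}\oplus\Bbbk1)\subseteq (T_{\mathcal{I}}\oplus\Bbbk1)\otimes B^{\circ}$, since the first leg still annihilates $\mathcal{I}$. Adjoint $W$-stability of $T_{\mathcal{I}}$ follows by reversing the lemma's computation.

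\textbf{Bijectivity and isomorphism classes.} Finite-dimensional linear duality gives $T_{\mathcal{I}_{T}}=T$ and $\mathcal{I}_{T_{\mathcal{I}}}=\mathcal{I}$; the second uses that $\mathcal{I}$ has finite codimension in $B^{+}$, so it is closed in the finite topology. Isomorphic FODCi have equal ideals, because the ideal is the kernel of $b\mapsto[b^{+}]$, i.e.\ of the Maurer--Cartan map composed with $\mathrm{d}$, which is intrinsic.

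\textbf{Main obstacle.} I expect the hardest step to be the right-coideal property of $T_{\mathcal{I}}\oplus\Bbbk1$ inside $B^{\circ}$. The difficulty is that $B$ is only a coalgebra-compatible subalgebra, not a Hopf algebra. My first attempt would be to write $\Delta X(b\otimes c)=X(bc)$ explicitly and check that for $b\in\mathcal{I}$ it vanishes, using that $\mathcal{I}$ is a right ideal of $B$.
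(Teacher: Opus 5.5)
The paper gives no proof of this statement; it is quoted from Heckenberger--Kolb. Your route is the standard argument and is correct in substance: you use Theorem~\ref{th:hermisson_class} to pass to right ideals $\mathcal I\subseteq B^+$ that are $\pi_B(A)$-subcomodules, and then finite-dimensional duality $T\leftrightarrow\mathcal I$.

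Your ``main obstacle'' is already settled by your own argument, and it needs no Hopf structure on $B$. $B^\circ$ is the finite dual of the algebra $B$. If $X|_{\mathcal I}=0$ and $\Delta X=\sum_i Y_i\otimes Z_i$ with the $Z_i$ linearly independent, then $0=X(bc)=\sum_i Y_i(b)Z_i(c)$ for $b\in\mathcal I$, $c\in B$. This forces $Y_i\in\mathcal I^{\perp}=T_{\mathcal I}\oplus\Bbbk\varepsilon$. The sentence about ``$\mathcal I\oplus(\text{nothing})$'' should simply say that $\mathcal I$ is a right ideal of $B$ of finite codimension.

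What does need care is the covariance step.
\begin{itemize}
\item For $X\in A^\circ$ and $w\in W$ one has $(Xw)|_B=\varepsilon(w)X|_B$ on $B={}^{W}\!A$. So on $B^\circ$ the right adjoint action is $X\mapsto S(w)\cdot X$, where $(f\cdot X)(b)=f(b_{(1)})X(b_{(2)})$.
\item It is cleaner to pair with the left coaction directly rather than with $\mathrm{coAd}_{\mathrm R}$: $\langle w\otimes X,\pi(b_{(1)})\otimes b_{(2)}\rangle=(w\cdot X)(b)$. This shows that $W$-stability of $T$ is dual to $\mathcal I$ being a subcomodule for the coaction of $\pi(A)\subseteq W^\circ$.
\item Hermisson's theorem, however, requires the coaction of $\pi_B(A)=A/B^+A$. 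So in the direction $T\mapsto\mathcal I_T$ you are tacitly using $\ker\pi=B^+A$ (only $B^+A\subseteq\ker\pi$ is automatic) and $S(W)=W$.
\item Both hold for quantised flag manifolds, where $\pi_S(\mathcal O_q(\mathrm G))$ is cosemisimple and Takeuchi's correspondence applies. They should still be stated as hypotheses.
\end{itemize}

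Finally, the implication ``$V^1_B$ finite-dimensional $\Rightarrow$ $\Omega^1(B)$ finitely generated'' is only asserted as written. Its justification is the Takeuchi isomorphism $A\otimes_B\Omega^1(B)\cong A\otimes V^1_B$, $a\otimes\omega\mapsto a\omega_{(-1)}\otimes[\omega_{(0)}]$, followed by faithfully flat descent of finite generation. The converse implication is immediate.
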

		
A FODC over a quantum homogeneous $A$-space $B$ is \emph{irreducible} if it is simple as an $(A,B)$-relative Hopf module. Finally, given any FODC over an algebra, and in particular a left covariant FODC $\Omega^{1}(A)=A\otimes \Lambda_A^1$ over a Hopf algebra $A$, one may consider its restriction to $B$, namely the $B$-subbimodule of $\Omega^{1}(A)$ generated by elements of the form $\mathrm{d}b$, with $b\in B$. This defines a covariant FODC on $B$, the \emph{restriction} of $\Omega^{1}(A)$ to $B$.

\section{Differential calculi on $\mathcal{O}_{q}(\mathrm{Sp}_{4})$ and its contact quantised flag manifold}
\label{section:C2}
We now specialise the preceding constructions to the contact quantised flag manifold corresponding to the symplectic Lie algebra $\mathfrak{sp}_4$. 

In Section \ref{section:basics_C2} we recall the Lie algebra $\mathfrak{sp}_4$, its quantised enveloping algebra, and the associated quantum group. In Section \ref{section:contact_calculus_C2} we construct the two-dimensional contact calculus on $\mathcal O_q(\mathrm{Sp}_4/\mathrm L_S)$ together with its framing calculus on $\mathcal O_q(\mathrm{Sp}_4)$, and examine the dimension drop in its maximal prolongation. In \ref{section:reeb_calculus_C2} we adjoin the Reeb direction, obtaining a three-dimensional calculus whose maximal prolongation defines a Dolbeault complex of classical dimension on the contact flag manifold. Finally, Section \ref{section:contact_adapted_calculus_C2} extends this construction by adjoining $E_2$, producing a four-dimensional contact-adapted calculus on $\mathcal O_q(\mathrm{Sp}_4)$ and its full flag manifold. For ease of reference, the dual pairings used throughout this section are collected in a table displayed in Appendix~\ref{appendix}.

\subsection{The Lie algebra $\mathfrak{sp}_{2\ell}$, its quantised envelope and quantum group}\label{section:basics_C2}
 	Consider the symplectic Lie algebra $\mathfrak{g}:=\mathfrak{sp}_{2\ell}$. We briefly recall some basics about the construction of the corresponding root system, providing an explicit basis for the latter. Moreover, we write down the Cartan matrix and Dynkin diagram, eventually presenting the associated Weyl group. Finally, we discuss the structure of the quantised enveloping algebra $\mathrm{U}_{q}(\mathfrak{sp}_{2\ell})$ and the quantised coordinate algebra $\mathcal{O}_{q}(\mathrm{Sp}_{4})$. The material presented in this Section is standard and can be found in many textbooks on Lie algebras and representation theory, see \emph{e.g.} \cite{Hump78} and \cite{Kli-Sch}. 
 	
The following elements form a basis of $\mathfrak{sp}_{2\ell}$:
\begin{align*}
h_i&=e_{ii}-e_{\ell+i,\ell+i}, & &1\leq i\leq\ell, &  m_{ij}&=e_{ij}-e_{\ell+j,\ell+i}, & & 1\leq i\ne j\leq\ell,\\
p_{ij}&=e_{i,\ell+j}+e_{j,\ell+i}, & &1\leq i\leq j\leq\ell, & q_{ij}&=e_{\ell+j,i}+e_{\ell+i,j}, & &1\leq i\leq j\leq\ell.
\end{align*}
Letting $\mathfrak{h} \subseteq \mathfrak{g}$ be the set of diagonal matrices, with entries labeled from $1$ to $2\ell$, we consider a generic element $h\in \mathfrak{h}$ with diagonal entries $a_{1},\dots,a_{\ell},- a_{1},\dots,-a_{\ell}$. One easily shows that the adjoint $\mathfrak{h}$-action on $\mathfrak{g}$ reads  
\[
\begin{aligned}
		\mathrm{Ad}_{\mathfrak{h}}(m_{ij})& =(a_{i}-a_{j})m_{ij}, \qquad  \mathrm{Ad}_{\mathfrak{h}}(p_{ij})&=(a_{i}+a_{j})p_{ij}, \qquad  \mathrm{Ad}_{\mathfrak{h}}(q_{ij})&=-(a_{i}+a_{j})q_{ij}.  
\end{aligned}
\]
For $i=1,\dots,\ell$ consider maps $\epsilon_{i}\colon \mathfrak{h}\to \mathbb{C}$, sending $h \mapsto a_{i}$, i.e.  each element $h\in \mathfrak{h}$ is sent to the corresponding $i$-th diagonal entry $a_{i}$. We summarise the roots corresponding to the common eigenvectors for the action of the Cartan subalgebra on $\mathfrak{g}$:

\begin{equation*}
\begin{tabular}{c|c|c|c|c|c}
 
  root & $\varepsilon_{i}-\varepsilon_{j}$ & $\varepsilon_{i}+\varepsilon_{j}$ & $-(\varepsilon_{i}+\varepsilon_{j})$ &$2\varepsilon_{i}$ &$-2\varepsilon_{i}$ \\
  \hline
  eigenvector & $m_{ij}\ (i\neq j)$ & $p_{ij} \ (i<j)$ & $q_{ji} \ (i<j)$ & $p_{ii}$ & $q_{ii}$\\

\end{tabular}
\end{equation*}
Fixing the labeling $\alpha_{i}:=\varepsilon_{i}-\varepsilon_{i+1}$ and $\beta_{\ell}:= 2\varepsilon_{\ell}$, the \emph{standard basis} for the root system is given by 
\[
\mathcal B= \{\alpha_i:=\varepsilon_i-\varepsilon_{i+1}\mid 1\leq i<\ell\} \cup\{\beta_\ell:=2\varepsilon_\ell\}.
\]
The associated Cartan matrix and Dynkin diagram read 
\[
\mathrm{A}_{\mathrm{C}_{n}}=
\begin{pmatrix}
2 & -1 & 0 & 0 & \cdots & 0 & 0 & 0 \\
-1 & 2 & -1 & 0 & \cdots & 0 & 0 & 0 \\
0 & -1 & 2 & -1 & \cdots & 0 & 0 & 0 \\
0 & 0 & -1 & 2 & \cdots & 0 & 0 & 0 \\
\vdots & \vdots & \vdots & \vdots & \ddots & -1 & 0 & 0 \\
0 & 0 & 0 & 0 & -1 & 2 & -1 & 0 \\
0 & 0 & 0 & 0 & 0 & -1 & 2 & -2 \\
0 & 0 & 0 & 0 & 0 & 0 & -1 & 2
\end{pmatrix}
\]
\begin{figure}[!htbp]
\centering	
\begin{tikzpicture}[scale=1]
\tikzstyle{smallcircle} = [circle, draw=black, fill=lightgray, minimum size=2.5mm, inner sep=0pt]

\node[smallcircle, label=above:$\alpha_1$] (n1) at (0,0) {};
\node[smallcircle, label=above:$\alpha_2$] (n2) at (2,0) {};
\node (dots) at (4,0) {$\cdots$};
\node[smallcircle, label=above:$\alpha_{n-1}$] (n3) at (6,0) {};
\node[smallcircle, label=above:$\alpha_n$] (n4) at (8,0) {};

\draw (n1) -- (n2);
\draw (n2) -- (dots);
\draw (dots) -- (n3);

\draw[double, double distance=1pt, <-] (n3) -- (n4);

\end{tikzpicture}
\end{figure}

\noindent and the associated set of fundamental weights is $\varpi_{i}=e_{1}+\ldots+e_{i}$.  The Weyl group of the root system is generated by reflections with respect to simple roots and an explicit presentation in terms of generators and relations can be easily deduced from the Dynkin diagram, namely
\begin{equation*}
 \begin{aligned}
 	 	\mathcal{W}=\langle s_{\alpha_{1}},s_{\alpha_{2}},\dots,s_{\alpha_{n-1}},s_{\beta} & \mid
(s_i)^2=1,\;
(s_i s_j)^{m_{ij}}=1\rangle ,  
\end{aligned}
\end{equation*}
where  $m_{ii}=1$,  $m_{ij}=3$ if $i,j$ are connected by a single edge, $m_{n-1,n}=4$, and $m_{ij}=2$ otherwise.

Let us now fix $q_{i}=q^{\frac{(\alpha_{i},\alpha_{i})}{2}}$, with $q_{i}^{2}\neq 1$ for each $i=1,\dots, n$. Note that, since the Dynkin diagram of $\mathfrak{sp}_{2\ell}$ contains edges of two different lengths, the parameters $q_i$ take two distinct values. With the standard normalization, we have $q_i=q$ for $i<\ell$ and $q_\ell=q^2$. Thus the short simple roots carry the parameter $q$, while the long simple root carries the parameter $q^2$. With the above datum, we construct the quantised enveloping algebra $\mathrm{U}_{q}(\mathfrak{sp}_{2\ell})$ precisely as in Section~\ref{subsection:DJ_quantum_groups_and_flags}. In particular, the defining relations among the generators are mostly the same as for the simple Lie algebras of the $\mathrm{A}$-series, which can be found for example in \cite{Re-Sombrg}, besides the relations involving the last two nodes of the Dynkin diagram, which we summarise here for convenience. 

\begin{equation}
\label{eq:algebra_rel_C2}
	\begin{aligned}
		K_{\ell-1}^{\pm 1}E_{\ell-1}& =q^{\pm 2} E_{\ell-1}K_{\ell-1}^{\pm1},  & K_{\ell-1}^{\pm1}E_{\ell} & =q^{\mp 2}E_{\ell}K_{\ell-1}^{\pm 1}, \\
		K_{\ell-1}^{\pm 1}F_{\ell-1}& =q^{\mp 2} F_{\ell-1}K_{\ell-1}^{\pm1},  & K_{\ell-1}^{\pm1}F_{\ell} & =q^{\pm 2}F_{\ell}K_{\ell-1}^{\pm 1}, \\
		K_{\ell}^{\pm 1}E_{\ell-1}& =q^{\mp 2} E_{\ell-1}K_{\ell}^{\pm1},  & K_{\ell}^{\pm1}E_{\ell} & =q^{\pm 4}E_{\ell}K_{\ell}^{\pm 1}, \\
		K_{\ell}^{\pm 1}F_{\ell-1}& =q^{\pm 2} F_{\ell-1}K_{\ell}^{\pm1},  & K_{\ell}^{\pm1}F_{\ell} & =q^{\mp 4}F_{\ell}K_{\ell}^{\pm 1}, \\
		E_{\ell-1}F_{\ell-1} & = F_{\ell-1}E_{\ell-1} + \frac{K_{\ell-1}-K_{\ell-1}^{-1}}{q-q^{-1}},  &  E_{\ell-1}F_{\ell} &= F_{\ell}E_{\ell-1}, \\ 
		E_{\ell}F_{\ell} & = F_{\ell}E_{\ell} + \frac{K_{\ell}-K_{\ell}^{-1}}{q^{2}-q^{-2}},  & E_{\ell}F_{\ell-1}&=F_{\ell-1}E_{\ell}, 
	\end{aligned}
\end{equation}

\noindent with $q$-Serre relations 

\begin{equation}
\label{eq:serre_c2}
	\begin{aligned}
		E_{\ell}^{2}E_{\ell-1}-[2]_{q^{2}}E_{\ell}E_{\ell-1}E_{\ell}+E_{\ell-1}E_{\ell}^{2}&=0, \\  
		E_{\ell-1}^{3}E_{\ell}-[3]_{q}E_{\ell-1}^{2}E_{\ell}E_{\ell-1} + [3]_{q}E_{\ell-1}E_{\ell}E_{\ell-1}^{2}- E_{\ell}E_{\ell-1}^{3} & =0, \\ 
		F_{\ell}^{2}F_{\ell-1}-[2]_{q^{2}}F_{\ell}F_{\ell-1}F_{\ell}+F_{\ell-1}F_{\ell}^{2}&=0, \\ 
		F_{\ell-1}^{3}F_{\ell}-[3]_{q}F_{\ell-1}^{2}F_{\ell}F_{\ell-1} + [3]_{q}F_{\ell-1}F_{\ell}F_{\ell-1}^{2}- F_{\ell}F_{\ell-1}^{3} & =0. 
	\end{aligned}
\end{equation}

Fix a weight basis $\{e_{i}\}_{i=1}^{2\ell}$ of $V_{\varpi_{1}}$. The quantised coordinate algebra $\mathcal{O}_{q}(\mr{Sp}_{4})$ is generated by the elements $u_{ij}:=c^{\varpi_{1}}_{e^{i},e_{j}}$, and the Hopf algebra structure on $\mathcal{O}_{q}(\mr{Sp}_{4})$ is given by coproduct and counit

\[
\Delta(u_{ij}):=\sum_{k=1}^{2\ell}u_{ik}\otimes u_{kj},\quad \varepsilon(u_{ij}):=\delta_{ij}, 
\]  

\noindent and an antipode that we omit. Furthermore, denoting by $\langle - , - \rangle \colon \mr{U}_{q}(\mathfrak{sp}_{2\ell})\times \mathcal{O}_{q}(\mr{Sp}_{2\ell})\to \mathbb{C}$ the dual pairing between the two Hopf algebras, and denoting $i'=2\ell+1-i$, we have (see e.g. \cite{Kli-Sch} Section 9.4)

\begin{equation}\label{eq:dual_pairing_C2}
\begin{aligned}
\langle K_{i}^{-1},u_{i,i}\rangle & = \langle K_{i}^{-1},u_{(i+1)',(i+1)'}\rangle = \langle K_{i},u_{i',i'}\rangle = \langle K_{i},u_{i+1,i+1}\rangle= q, \quad i\neq \ell, \\ 
\langle K_{i}^{-1},u_{j,j}\rangle & = 1,  \quad \text{for} \quad i\neq \ell,  \ j\neq i,i+1,i',(i+1)', \\
\langle K_{\ell}^{-1},u_{\ell,\ell}\rangle & = \langle K_{\ell},u_{\ell+1,\ell+1}\rangle = q^{2},  \quad \langle K_{\ell},u_{j,j}\rangle=1, \quad \text{for}\quad j\neq \ell,\ell+1, \\
\langle E_{i},u_{i+1,i}\rangle & =-\langle E_{i},u_{i',(i+1)'}\rangle =\langle F_{i},u_{i,i+1}\rangle = -\langle F_{i},u_{(i+1)',i'}\rangle =1, \quad i\neq \ell, \\ 
\langle E_{\ell},u_{\ell+1,\ell}\rangle & = \langle F_{\ell},u_{\ell,\ell+1}\rangle =1. 
\end{aligned}
\end{equation}

\subsection{Contact differential calculi for $\mathcal{O}_{q}(\mathrm{Sp}_4)$ and $\mathcal{O}_{q}(\mathrm{Sp}_4 / \mathrm{L}_{\alpha_{2}})$}\label{section:contact_calculus_C2}
From now until the end of the Section, we fix $\ell=2$ and consider the quantised enveloping algebra and quantised coordinate algebra corresponding to the complex semisimple Lie algebra $\mathfrak{sp}_{4}$, focusing on the corresponding non-irreducible (contact) quantised flag manifold, of which we now outline the realisation. 

Let $S:=\Pi\setminus\{\alpha_{1}\}=\{\alpha_{2}\}$ be a subset of simple roots of $\mathfrak{sp}_{4}$. We represent this choice graphically by placing a cross on the Dynkin diagram of the underlying Lie algebra as follows: 

\[
\begin{tikzpicture}
\tikzstyle{smallcircle} = [circle, draw=black, fill=lightgray, minimum size=2.5mm, inner sep=0pt]
\tikzstyle{cross} = [minimum size=3mm, inner sep=0pt]

\node[cross, label=above:$\alpha_1$] (n1) at (0,0) {\scalebox{1.5}{$\times$}};
\node[smallcircle, label=above:$\alpha_2$] (n2) at (2,0) {};

\draw[double distance=1.5pt,<-] (n1) -- (n2);
\end{tikzpicture}. 
\]

The quantised Levi subalgebra $\mathrm{U}_{q}(\ell_{\alpha_{2}})$ is the Hopf subalgebra 

\[
\mathrm{U}_{q}(\ell_{\alpha_{2}})=\left\langle E_{j},F_{j},K_{i}^{\pm 1}\mid i=1,\dots,\ell,\ j\in S\right\rangle\subset \mathrm{U}_{q}(\mathfrak{sp}_{4}). 
\] 
A tangent space for $\mathcal{O}_{q}(\mathrm{Sp}_{4})$ can be obtained by the right adjoint action of the quantised Levi subalgebra on the node excluded by the underlying set of simple roots. The resulting root vectors will be in 1-1 correspondence with the positive roots $\Delta_{\alpha_{1}}^{+}/\{\alpha_{\mathrm{max}}\}$, where $\alpha_{\mathrm{max}}=2\alpha_{1}+\alpha_{2}$ is the highest root of $\mathfrak{sp}_{4}$. 
\begin{prop}\label{prop:root_vectors_C2}
A tangent space for $\mathcal{O}_{q}(\mathrm{Sp}_{4})$ is given by 

\begin{equation}
	\underline{T}_{2}^{(0,1)}:=E_{1}\triangleleft_{\mathrm{Ad}} \mathrm{U}_{q}(\ell_{\alpha_{2}})=\mathrm{Span}_{\mathbb{C}}\left\{E_{1},[E_{1},E_{2}]_{q^{2}}\right\}. 
\end{equation}	

\begin{proof}
	The right adjoint action of the $K_{i}$-generators in $\mathrm{U}_{q}(\ell_{\alpha_{2}})$ is a rescaling by integer powers of $q$, whereas 
	\[	
		\begin{aligned}
		E_{1}\triangleleft E_{2} &  = [E_{1},E_{2}]_{q^{2}},  &  E_{1}\triangleleft F_{2} & = 0, \\ 
		[E_{1},E_{2}]_{q^{2}}\triangleleft E_{2} & = 0,  & [E_{1},E_{2}]_{q^{2}}\triangleleft F_{2} & = E_{1}, 	
	\end{aligned}
	\]
	other terms vanishing according to the algebraic structure of $\mathrm{U}_{q}(\mathfrak{sp}_{4})$. 
	
	It is clear that $\underline{T}_{2}^{(0,1)}\subset \ker\varepsilon$, and as the coproducts on its linear generators read 
	
\begin{align*}
\Delta(E_{1}) & = E_{1}\otimes K_{1} + 1\otimes E_{1}, \\ 
\Delta([E_{1},E_{2}]_{q^2}) & = 	[E_{1},E_{2}]_{q^2}\otimes K_{2}K_{1} + E_{1}\otimes (K_{1}E_{2}- q^2\, E_{2}K_{1})\\ 
& \quad   + E_{2} \otimes (E_{1}K_2 - q^2\, K_{2}E_{1})+ 1\otimes [E_{1},E_{2}]_{q^2} \\ 
& = [E_{1},E_{2}]_{q^2}\otimes K_{2}K_{1} + (q^{-2}-q^{2})E_{1}\otimes E_{2}K_{1} + 1\otimes [E_{1},E_{2}]_{q^2},  
\end{align*} 
we have $\Delta(\underline{T}_{2}^{(0,1)})\subseteq (\underline{T}_{2}^{(0,1)}\oplus \mathbb{C})\otimes \mathrm{U}_{q}(\mathfrak{sp}_4)$. 
\end{proof}
\end{prop} 
\noindent Letting $\mathcal{I}$ be the right ideal corresponding to the above tangent space according to Theorem \ref{ThmWor}, we have a left covariant FODC on $\mathcal{O}_{q}(\mathrm{Sp}_{4})$ as 
\begin{equation}
	\label{eq:framing_calculus}
\left(\underline{\Omega}^{(0,1)}_{2}(\mathrm{Sp}_{4}):=\mathcal{O}_{q}(\mathrm{Sp}_{4})\otimes \Lambda_2^{(0,1)},\overline{\partial}\right), 
\end{equation}
where $\Lambda^{(0,1)}_{2}:=\mathcal{O}_{q}(\mathrm{Sp}_{4})^{+}/\,\mathcal{I}$ is the associated cotangent space. We call this calculus the \emph{contact calculus}. 

We now construct an analogous \emph{contact} FODC on a quantum homogeneous space of the above Hopf algebra. In particular, we consider the contact quantised flag manifold, as the underlying classical variety is of holomorphic contact type $\mathcal{O}_{q}(\mathrm{Sp}_{4}/\mathrm{L}_{\alpha_{2}})$, i.e. the coideal subalgebra of invariant elements in $\mathcal{O}_{q}(\mathrm{Sp}_{4})$ under the action of $\mathrm{U}_{q}(\ell_{\alpha_{2}})$, for $S$ chosen as above: 

\[
\mathcal{O}_{q}(\mathrm{Sp}_{4}/\mathrm{L}_{\alpha_{2}}) := {}^{\mathrm{U}_{q}(\ell_{\alpha_{2}})}\mathcal{O}_{q}(\mathrm{Sp}_4)=\left\{ b\in \mathcal{O}_{q}(\mathrm{Sp}_{4}) \mid X\triangleright b = b_{(1)}\langle X,b_{(2)}\rangle =\varepsilon(X)b, \ X\in \mathrm{U}_{q}(\ell_{\alpha_{2}})\right\}. 
\]

\begin{prop}
	A tangent space for $\mathcal{O}_{q}(\mathrm{Sp}_{4}/\mathrm{L}_{\alpha_{2}})$ is given by 
	\begin{equation}
		T^{(0,1)}_{2}:=\mathrm{U}_{q}(\ell_{\alpha_{2}})E_{1}=\mathrm{Span}_{\mathbb{C}}\left\{E_{1},E_{2}E_{1}\right\}.  
	\end{equation}
	Moreover, the latter is understood as a restriction to $\mathcal{O}_{q}(\mathrm{Sp}_{4}/\mathrm{L}_{\alpha_{2}})$ of the $\mathcal{O}_{q}(\mathrm{Sp}_{4})$-tangent space $\underline{T}_{2}^{(0,1)}$.  
	\begin{proof}
		Let $Y\in\mathcal{O}_{q}(\mathrm{Sp}_{4}/\mathrm{L}_{\alpha_{2}})^{\circ}$. As $X\triangleright b=\varepsilon(X)b$, for any $b\in\mathcal{O}_{q}(\mathrm{Sp}_{4}/ \mathrm{L}_{\alpha_{2}})$ and $X\in\mathrm{U}_{q}(\ell_{\alpha_{2}})$, we have 
	\begin{align*}
	\langle Y\varepsilon(X),b\rangle & = \langle Y,\varepsilon(X)b\rangle \\ 
	& = \langle Y, X\triangleright b\rangle \\
	& = \langle Y, b_{(1)}\langle X,b_{(2)}\rangle \rangle \\ 
	& = \langle Y,b_{(1)}\rangle \langle X,b_{(2)}	\rangle \\ 
	& = \langle YX,b\rangle. 
	\end{align*}
	Thus, at the level of $\mathcal{O}_{q}(\mathrm{Sp}_{4}/\mathrm{L}_{\alpha_{2}})$, the right $\mathrm{U}_{q}(\ell_{\alpha_{2}})$-action reduces to a left multiplication. The claim now follows as evidently $\Delta(T_{2}^{(0,1)})\subseteq (T_{2}^{(0,1)}\oplus \mathbb{C})\otimes \mathcal{O}_{q}(\mathrm{Sp}_{4}/\mathrm{L}_{\alpha_{2}})^\circ$. 
	\end{proof}
\end{prop}

 Letting $\mathcal{I}_{2}$ be the right $\mathcal{O}_{q}(\mathrm{Sp}_{4}/\mathrm{L}_{\alpha_{2}})$-ideal in $\mathcal{O}_{q}(\mathrm{Sp}_{4}/\mathrm{L}_{\alpha_{2}})^{+}$ corresponding to the above tangent space according to Theorem \ref{th:hermisson_class}, we have a left $\mathcal{O}_{q}(\mathrm{Sp}_{4})$-covariant FODC on $\mathcal{O}_{q}(\mathrm{Sp}_{4}/\mathrm{L}_{\alpha_{2}})$ given by 

\begin{equation}
	\label{eq:calculus_downstairs_C2}
\left({\Omega}^{(0,1)}_{2}(\mathrm{Sp}_{4}/\mathrm{L}_{\alpha_{2}}):=\mathcal{O}_{q}(\mathrm{Sp}_{4})\square_{\pi_{B}} V_{2}^{(0,1)},\overline{\partial}\right), 
\end{equation}

\noindent where $V^{(0,1)}_{2}:=\mathcal{O}_{q}(\mathrm{Sp}_{4}/\mathrm{L}_{\alpha_{2}})^{+}/\,\mathcal{I}_{2}$ is the associated cotangent space. We may represent the relations between the two tangent spaces as in Figure  \ref{figure:embedding_tangent_C2}.  

\begin{figure}[!htbp]
\centering
\begin{tikzcd}[row sep=1.2cm, column sep=1cm]
0                     &  & E_{1} \arrow[rrrr, "E_{2}", bend left] \arrow[ll, "F_{2}"'] \arrow[dd, dotted] &  &  &  & {[E_{1},E_{2}]_{q^2}} \arrow[llll, "F_{2}", bend left] \arrow[rr, "E_{2}"] \arrow[dd, dotted] &  & 0 \\
                      &  &                                                                                &  &  &  &                                                                                             &  &   \\
0  &  & E_{1} \arrow[ll, "F_{2}", swap] \arrow[rrrr, "E_{2}", bend left]        &  &  &  &  \arrow[llll, "F_{2}", bend left] E_{2}E_{1} \arrow[rr, "E_{2}"]                                                              &  & 0
\end{tikzcd}
\caption{The tangent spaces $\underline{T}_{2}^{(0,1)}$ and $T_{2}^{(0,1)}$}
\label{figure:embedding_tangent_C2}
\end{figure}

For the following discussion, it is useful to write down a dual pairing between elements of the tangent space $\underline{T}^{(0,1)}_{2}$ described above and the elements $u_{ij}\in\mathcal{O}_{q}(\mathrm{Sp}_4)$. The pairing descends from the non-degenerate pairing of dual Hopf algebras $\mathcal{O}_{q}(\mathrm{Sp}_4)\times \mathrm{U}_{q}(\mathfrak{sp}_{4})\to\mathbb{C}$ and from the relations in \eqref{eq:dual_pairing_C2}. We summarise the result in Table \ref{table:root_vectors_dual_pairing_C2_up}, which provides two bases for the anti-holomorphic cotangent space $\Lambda_{2}^{(0,1)}$.  Similar pairings may be deduced at the level of the quantised flag manifold $\mathcal{O}_{q}(\mathrm{Sp}_{4}/\mathrm{L}_{\alpha_{2}})$ by passing to the quotient via the ideal defining the homogeneous space, and verifying that the relevant relations descend compatibly to the quotient algebra. 

\begin{prop} 
The following elements generate $\mathcal{O}_{q}(\mathrm{Sp}_{4}/\mathrm{L}_{\alpha_{2}})$:

	\begin{equation}
	\label{eq:generators_of_sp(4)_flag}
		z_{ij}:=u_{i4}S(u_{4j}),  \quad i,j=1,\cdots 4.
	\end{equation}

	\begin{proof}
	We easily verify that these elements are invariant under the action of $\mathrm{U}_{q}(\ell_{\alpha_{2}})$: first of all we notice $\Delta(z_{ij})=\sum_{a,b}u_{ia}S(u_{bj})\otimes z_{ab}$. Furthermore, we have 
	\begin{align*}
	\sum_{a,b}u_{ia}S(u_{bj})\langle E_{2},z_{ab}\rangle & = 0, & 	 \sum_{a,b}u_{ia}S(u_{bj})\langle F_{2},z_{ab}\rangle & = 0,  \\ 
	\sum_{a,b}u_{ia}S(u_{bj})\langle K_{2}^{\pm 1},z_{ab}\rangle & = u_{i4}S(u_{4j}) & \sum_{a,b}u_{ia}S(u_{bj})\langle K_{1}^{\pm 1},z_{ab}\rangle & = u_{i4}S(u_{4j}).
	\end{align*}
 For the reverse inclusion, apply \cite[Theorem~2.5]{Sto03}.
\end{proof}
\end{prop}

\noindent We now build a basis for the cotangent space $V_{2}^{(0,1)}$ on the quantised flag manifold $\mathcal{O}_{q}(\mathrm{Sp}_{4}/\mathrm{L}_{\alpha_{2}})$ in terms of the generators $z_{ij}$ and the dual pairing with the generators of $T_{2}^{(0,1)}$. Moreover, we also deduce the pairing of the element $[E_{1},E_{2}]_{q^2}\in\underline{T}^{(0,1)}_{2}$ with the same set of generators.

\begin{prop} Let $\Lambda^{(0,1)}_{2}$ be the dual space to $\underline{T}_{2}^{(0,1)}$ with respect to the non-degenerate  dual pairing $\langle -,-\rangle\colon \mathrm{U}_{q}(\mathfrak{sp}_{4})\times  \mathcal{O}_{q}(\mathrm{Sp}_{4}) \to \mathbb{C}$, and let $V^{(0,1)}_{2}$ be the dual space to $T_{2}^{(0,1)}$ with respect to the non-degenerate  dual pairing $\langle -,-\rangle\colon \mathcal{O}_{q}(\mathrm{Sp}_{4}/\mathrm{L}_{\alpha_{2}})\times \mathcal{O}_{q}(\mathrm{Sp}_{4}/\mathrm{L}_{\alpha_{2}})^{\circ} \to \mathbb{C}$. Table \ref{table:root_vectors_dual_pairing_C2} provides two bases for the latter.

\begin{proof}
	The proof is an easy calculation using the dual pairing in Equation \eqref{eq:dual_pairing_C2}.
\end{proof}
\end{prop}

\noindent We now explicitly write down the $\mathcal{O}_{q}(\mathrm{Sp}_4)$-module structure of the antiholomorphic cotangent space $\Lambda^{0,1}_{2}$. Generally speaking, the action will send a basis element into a linear combination of basis elements, as 

\begin{equation*}
	\begin{aligned}
		e_{\beta}\triangleleft u_{ij}& =[u_{\beta}]\triangleleft u_{ij}=[u_{\beta}u_{ij}] =\sum_{\gamma\in\Delta^{+}}\theta^{\gamma}_{ij\beta}e_{\gamma}. 
	\end{aligned}
\end{equation*}

\noindent Dually pairing the last identity with a general root vector $E_{\gamma}\in T_{2}^{(0,1)}$, we deduce  

	\begin{equation}
	\begin{aligned}
	\langle E_{\gamma},e_{\beta}\triangleleft u_{ij}\rangle  = \theta_{ij\beta}^{\gamma}, 
		\end{aligned}
	\end{equation}	
	
\noindent and thus, understanding how the $\mathcal{O}_{q}(\mr{Sp}_{4})$-action operates on the cotangent space directly translates to the calculation of the complex numbers $\theta^{\gamma}_{ij\beta}$. Hence, the module structure is easily deduced from the coproduct of the root vectors of $T_{2}^{(0,1)}$ in Proposition \ref{prop:root_vectors_C2}. For clarity, we present the results of the calculations diagrammatically in Figure \ref{figure:module_structure_C2}. In particular, given positive roots $\beta,\gamma$ corresponding to elements $e_{\beta},e_{\gamma}\in\Lambda_{2}^{(0,1)}$, we draw an arrow from $\beta\to\gamma$ if the action $e_{\beta}\triangleleft u_{ij}$ sends $e_{\beta}$ to a direct sum in which $e_{\gamma}$ appears, that is if at least one of the $\theta_{ij\beta}^{\gamma}$ coefficients is non-zero. For the sake of intelligibility of the diagram, we choose to omit loops. The result is the diagram in Figure \ref{figure:module_structure_C2}.

\begin{figure}[!htpb]
\centering	
\begin{tikzcd}[row sep=1.3cm, column sep=0.8cm]
\alpha_{1}  \arrow[dd]        &  &                                                                  \\
                                                                 &  &                                                                  \\
\alpha_{1}+\alpha_{2} &  & \alpha_{2}
\end{tikzcd}
\caption{The $\mathcal{O}_{q}(\mathrm{Sp}_{4})$-module structure of $\Lambda^{(0,1)}_{2}$}
\label{figure:module_structure_C2}
\end{figure}

\noindent We now explicitly write down the relations for the maximal prolongation of the FODC 
\[
\left(\Omega^{(0,1)}_{2}\left(\mathrm{Sp}_{4}/\,\mathrm{L}_{\alpha_{2}}\right):=\mathcal{O}_{q}(\mathrm{Sp}_{4})\square_{\pi_{B}} V_{2}^{(0,1)},\overline{\partial}\right)
\]
 on $\mathcal{O}_{q}(\mathrm{Sp}_{4}/\mathrm{L}_{\alpha_{2}})$. The relations will be calculated using Theorem 3.1 of \cite{Re-Jun-Ale-io} which, substantially, describes how the relations for the maximal prolongation of a FODC $\Omega^{1}(B)$ on a quantum homogeneous space $B\subseteq A$ can be understood in terms of the bimodule map of a torsion-free bimodule connection $\nabla\colon \Omega^{1}(B)\to \Omega^{1}(B)\otimes_{B} \Omega^{1}(B)$.
 
  Specialising to the setting of interest, it is already known that for any irreducible quantised flag manifold equipped with the Heckenberger--Kolb FODC \cite{hek-kolb-2006} there is a unique covariant connection that in particular is a Levi-Civita connection \cite{Levi-Civita_connections}, and therefore a torsion-free bimodule connection. Here, we apply the same machinery to the anti-holomorphic tangent space of $\mathcal{O}_{q}(\mathrm{Sp}_{4}/\mathrm{L}_{\alpha_{2}})$ analysed in the previous section. As already mentioned, non-irreducibility implies that the tangent space is not irreducible as a module of the quantised Levi subalgebra.
  
   Our analysis will be performed with a focus on the irreducible $\mathrm{U}_{q}(\ell_{\alpha_{2}})$-submodule of $T^{(0,1)}_{2}$ that is generated by the $\mathrm{U}_{q}(\ell_{\alpha_{2}})$-action on the element $E_{1}\in\mathrm{U}_{q}(\mathfrak{sp}_{4})$, that is the root vector corresponding to the only simple root excluded from $\mathrm{U}_{q}(\ell_{\alpha_{2}})$ itself. We adopt this choice because it closely mirrors the construction of Heckenberger–Kolb for covariant differential calculi in the irreducible case. Notably, in \cite{Arnab-Reamonn}, the existence of a torsion-free bimodule connection was shown for any 1-crossed bundle, therefore also encompassing the setting here at hand.
   
   Furthermore, we will also describe the other irreducible $\mathrm{U}_{q}(\ell_{\alpha_{2}})$-submodule   $T^{(0,1)}_{\theta}$, corresponding to the highest root of $\mathfrak{sp}(4)$. In particular, we will show how the Lusztig differential calculus on $\mathcal{O}_{q}(\mathrm{Sp}_{4})$ studied in \cite{COP} fails to give a restriction to the latter, and to the full flag, that has classical dimension. Finally, we deduce an explicit expression for the lift of such element $E_{\theta}$ to a three-dimensional $\mathcal{O}_{q}(\mathrm{Sp}_{4})$ tangent space, spanning a one-dimensional $\mathrm{U}_{q}(\ell_{\alpha_{2}})$-submodule, according to the well-known picture emerging from the classical differential geometry. 

\begin{defn}
Let $A$ be an algebra, $(\Omega^{\bullet}(A),\wedge,\mathrm{d})$ a DC over $A$, and $\mathcal{M}\in{}_{A}\mathrm{Mod}$. 
\begin{enumerate}
\item A \emph{connection} on $\mathcal{M}$ is a $\mathbb{C}$-linear map $\nabla\colon \mathcal{M}\to \Omega^{1}(A)\otimes_{A} \mathcal{M}$ satisfying the identity 
\begin{equation}
	\nabla(a m) = \mathrm{d}a\otimes_{A} m + a\nabla(m)\, \qquad a\in A, \ m\in \mathcal{M}. 
\end{equation}
\item A left bimodule connection on $\mathcal{M}\in {}_{A}\mathrm{Mod}_{A}$ is a pair consisting of a left connection $\nabla$ and a \emph{bimodule map} $\sigma\colon \mathcal{M}\otimes_{A}\Omega^{1}(A)\to \Omega^{1}(A)\otimes_{A}\mathcal{M}$ satisfying 
\begin{equation}
	\nabla( m a ) = \nabla(m)a+ \sigma(m\otimes_{A}\mathrm{d}a). 
\end{equation}
\item Considering $\mathcal{M}=\Omega^{1}(A)$, the \emph{torsion} of a connection is the left $A$-module map
\begin{equation}
\begin{aligned}
	T_{\nabla}\colon\Omega^{1}(A)& \to \Omega^{2}(A), \\
	 \omega_{A} & \mapsto \left(\wedge\circ \nabla - \mathrm{d}\right)\omega_{A},  
\end{aligned}
\end{equation}
and if $T_{\nabla}=0$ we say the connection $\nabla$ is \emph{torsion free}. 
\end{enumerate}
\end{defn}
We start with a theorem showing how the existence of a torsion-free bimodule connection on a FODC gives the relations of its maximal prolongation in a relatively simple, compact form. 
\begin{thm}[\cite{Re-Jun-Ale-io}, Theorem 3.1]\label{theorem:relations_max_prolongation_flag}
 Let $\nabla$ be a bimodule connection on $\Omega^1(A)$ that is torsion-free with respect to the maximal prolongation of $\Omega^1(A)$, and let $\sigma$ denote its associated bimodule map. It holds that the subbimodule $N^{(2)}$ containing the relations for the maximal prolongation of $\Omega^{1}(A)$ is generated as an $A$-bimodule by the sets 
	\begin{equation}\label{eq:sets_of_relations_max_prolongation}
	\begin{aligned}
		G_{1}&:=\{\mathrm{d}a\otimes_{A} \mathrm{d}a'+\sigma(\mathrm{d}a\otimes_{A}\mathrm{d}a') \mid a,a'\in A\},  \\
		G_{2}&:=\{\nabla(\mathrm{d}a)\mid a\in A\}.  
	\end{aligned}
	\end{equation}
\end{thm}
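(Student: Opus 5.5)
The plan is to compare $N^{(2)}$, defined as the kernel of $\Omega^1(A)\otimes_A\Omega^1(A)\to\Omega^2(A)$ for the maximal prolongation, with the sub-bimodule $N'$ generated by $G_1\cup G_2$, and prove both inclusions. Throughout I use the description from the preliminaries: $N^{(2)}$ is the $A$-bimodule generated by the elements $\sum_i \mathrm{d}a^i\otimes_A\mathrm{d}b^i$ with $\sum_i a^i\mathrm{d}b^i=0$.

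For $N'\subseteq N^{(2)}$ I check the two generating sets separately.

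\emph{Elements of $G_2$.} Torsion-freeness gives $\wedge\circ\nabla=\mathrm{d}$ on $\Omega^1(A)$. Applied to $\mathrm{d}a$ this yields $\wedge\nabla(\mathrm{d}a)=\mathrm{d}^2a=0$.

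\emph{Elements of $G_1$.} Since $T_\nabla$ is a left module map, I compare $\nabla(\mathrm{d}a\cdot a')$ with the right Leibniz rule. The bimodule property gives
\[
\nabla(\mathrm{d}a\, a')=\nabla(\mathrm{d}a)a'+\sigma(\mathrm{d}a\otimes_A\mathrm{d}a').
\]
Applying $\wedge$ and using torsion-freeness, the left side becomes $\mathrm{d}(\mathrm{d}a\,a')=-\mathrm{d}a\wedge\mathrm{d}a'$, by the graded Leibniz rule. The first term on the right becomes $(\wedge\nabla(\mathrm{d}a))a'=0$. Hence $\wedge(\mathrm{d}a\otimes\mathrm{d}a'+\sigma(\mathrm{d}a\otimes\mathrm{d}a'))=0$. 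Since $\wedge$ is a bimodule map, the whole of $N'$ lies in $N^{(2)}$.

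For $N^{(2)}\subseteq N'$, take $\sum_i a^i\mathrm{d}b^i=0$ and apply $\nabla$ to this vanishing element. The left Leibniz rule gives
\[
0=\sum_i \mathrm{d}a^i\otimes_A\mathrm{d}b^i+\sum_i a^i\nabla(\mathrm{d}b^i).
\]
The second sum lies in the left submodule generated by $G_2$. Therefore $\sum_i\mathrm{d}a^i\otimes\mathrm{d}b^i\in N'$, and every generator of $N^{(2)}$ lies in $N'$, which gives the inclusion.

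The main obstacle is an asymmetry between the two inclusions. The second inclusion uses only $G_2$, so the role of $G_1$ has to be checked carefully. One must confirm that the left-module generators suffice for the bimodule $N^{(2)}$: since $N^{(2)}$ is a bimodule anyway, this is harmless. The other point to verify is the sign convention in the right Leibniz rule for $\mathrm{d}$ on one-forms. This sign is exactly what produces the \emph{plus} sign in $G_1$, so I would verify it against the maximal prolongation's definition $\mathrm{d}(a\,\mathrm{d}b)=[\mathrm{d}a\otimes\mathrm{d}b]$.
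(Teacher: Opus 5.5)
Your proof is correct and complete. The paper does not prove this theorem itself; it imports it from \cite{Re-Jun-Ale-io}. So there is no in-text argument to compare against, and your two-inclusion argument is the natural self-contained proof.

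The asymmetry you flag is not an obstacle to be checked but a feature of the statement. Your second inclusion puts every generator of $N^{(2)}$ into the span of $G_2$ using only the left Leibniz rule, so it holds for any left connection. Torsion-freeness is exactly what gives $G_2\subseteq N^{(2)}$, so $G_2$ alone already generates $N^{(2)}$.

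In fact $G_1$ is redundant once all of $G_2$ is available, since
\[
\mathrm{d}a\otimes_A\mathrm{d}a'+\sigma(\mathrm{d}a\otimes_A\mathrm{d}a')=\nabla(\mathrm{d}(aa'))-\nabla(\mathrm{d}a)\,a'-a\,\nabla(\mathrm{d}a').
\]
This identity also gives a one-line alternative to your torsion computation for $G_1\subseteq N^{(2)}$.

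$G_1$ becomes indispensable only in the form the paper actually uses, \cite[Lemma~3.5]{Re-Jun-Ale-io}. There $G_2$ is cut down to $\{\nabla(\mathrm{d}x)\mid x\in X\}$ for an algebra generating set $X$. The same identity, read backwards, is what recovers $\nabla(\mathrm{d}(xy))$ from $\nabla(\mathrm{d}x)$, $\nabla(\mathrm{d}y)$ and $G_1$.

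Your sign check is also immediate. The relation $\mathrm{d}a\,a'=\mathrm{d}(aa')-a\,\mathrm{d}a'$, together with $\mathrm{d}(a\,\mathrm{d}b)=\mathrm{d}a\wedge\mathrm{d}b$ and $\mathrm{d}^2=0$, gives $\mathrm{d}(\mathrm{d}a\,a')=-\mathrm{d}a\wedge\mathrm{d}a'$. This is exactly the plus sign in $G_1$.
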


The expression for the bimodule map associated to a torsion-free bimodule connection is explicitly given once a covariant differential calculus over $A$ is fixed, and in particular it depends only on the bimodule connection itself. For a given left $A$-covariant FODC in ${}^{A}_{B}\mathrm{Mod}_{0}$ (recall the definition from Section \ref{section:hopf_modules}) on a quantum homogeneous space $B:=A^{\mathrm{co\pi_{B}(A)}}$, we have (see \cite{Re-Jun-Ale-io}, Lemma 4.1)
\begin{equation}
	\sigma(\mathrm{d}b\otimes_{B} \mathrm{d}b')=\mathrm{d}(b_{(-2)}\,b'S(b_{(-1)}))\otimes_{B}\mathrm{d}b_{(0)}. 
\end{equation}

In particular, consider now a quantised flag manifold $B:=\mathcal{O}_{q}(\mathrm{G}/\mathrm{L}_{S})\subseteq A:=\mathcal{O}_{q}(\mathrm{G})$, and fix such a FODC $\Omega^{1}(B)$ over $B$. As $\sigma\colon\Omega^{1}(B)\otimes_{B}\Omega^{1}(B)\to \Omega^{1}(B)\otimes_{B}\Omega^{1}(B)$ is a morphism in the category of left $A$-covariant $B$-bimodules $^{A}_{B}\mathrm{Mod}_{0}$ we may take advantage of Theorem \ref{thm:takeuchi_equivalence} to construct a morphism in the category ${}^{\pi_{B}(A)}\mathrm{Mod}_{0}$ of left $\pi_{B}(A)$-covariant right $B$-modules starting from the latter, where $\pi_{B}\colon A\to A/ B^{+}A$ and $\pi_{B}(A)=\mathrm{Im}(\pi_{B})$. In particular, denoting by abuse of notation the induced morphism $\sigma\colon V^{(0,1)}_{S}\otimes V^{(0,1)}_{S} \to V^{(0,1)}_{S}\otimes V^{(0,1)}_{S} \in {}^{\pi_{B}(A)}\mathrm{Mod}_{0}$ , it holds that
\begin{equation}\label{eq:sigma}
	\sigma([b]\otimes [c])=[c]\triangleleft S(b_{(1)})\otimes [b_{(2)}],\qquad \sigma^{-1}([b]\otimes [c]) = [c_{(2)}]\otimes [b]\triangleleft c_{(1)},
\end{equation}
where the square brackets are cosets in $V^{(0,1)}_{S}\cong B^{+}/\mathcal{I}_S$. 

In view of the last remarks, the calculation of the relations in $G_1$ for the maximal prolongation of a FODC on a quantised flag manifold where a torsion-free bimodule connection exists is essentially reduced to the evaluation of Equation \eqref{eq:sigma} on the cotangent space defining the calculus. In the next lemma, we discuss an even more convenient approach to the calculation of the relations in $G_{1}$. As a consequence of the non-degenerate pairing of Hopf algebras in Equation \eqref{eq:dual_pairing_C2}, we can indeed produce the relations in $G_{1}$ by dualising 
 
 \[
 \left(\sigma\colon V_{S}^{(0,1)}\otimes V_{S}^{(0,1)}\to V_{S}^{(0,1)}\otimes V_{S}^{(0,1)}\right) \longrightarrow  \left(\widecheck\sigma\colon T_{S}^{(0,1)}\otimes T_{S}^{(0,1)} \to T_{S}^{(0,1)}\otimes T_{S}^{(0,1)}\right),   
 \]
 where $T_{S}^{(0,1)}\subseteq \mathcal{O}_{q}(\mathrm{Sp}_{4}/ \mathrm{L}_{S})^{\circ}$. We present this result as a general statement for those quantised flag manifolds realised as invariant spaces under the action of a quantised Levi subalgebra corresponding to a single crossed node of the Dynkin diagram for the underlying Lie algebra. Notice that the same result appears in \cite{Re-Jun-Ale-io} in a more general setting.

\begin{lemma}\label{lemma:bimodule_map_tangent_space_sigma}
Let $\mathcal{O}_{q}(\mathrm{G}/\mathrm{L}_{S})$ be a quantised flag manifold corresponding to a choice of a single simple root $\Pi\backslash S=\{\alpha_{x}\}$ in the definition of $\mathrm{U}_{q}(\ell_{S})$, and let $T_{S}^{(0,1)}:=\mathrm{U}_{q}(\ell_{S})E_{x}$ be a quantum tangent space for $\mathcal{O}_{q}(\mathrm{G}/\mathrm{L}_{S})$. A $U_{q}(\ell_{S})$-module map $\widecheck{\sigma}\colon T_{S}^{(0,1)}\otimes T_{S}^{(0,1)} \to T_{S}^{(0,1)}\otimes T_{S}^{(0,1)}$ is given by  
\begin{equation}\label{eq:definition_sigma_check}
\widecheck\sigma(YE_{x}\otimes ZE_{x}):= Y_{(1)}K_{x}^{-1}S(Y_{(3)})ZE_{x}\otimes Y_{(2)}E_{x}, 
\end{equation} 
where $Y,Z\in \mathrm{U}_{q}(\ell_{S})$. 

Moreover, this map is dual to the bimodule map $\sigma\colon V_{S}^{(0,1)}\otimes V_{S}^{(0,1)} \to V_{S}^{(0,1)}\otimes V_{S}^{(0,1)}$ corresponding to a torsion free bimodule connection on $\big(\mathcal{O}_{q}(\mathrm{G})\square_{\pi_{B}}V^{(0,1)}_{S},\overline{\partial}\big)$.
\proof For any $E_{\alpha},E_{\beta}\in T^{(0,1)}_{S}$ and any $[b],[c]\in V^{(0,1)}_{S}:=\mathcal{O}_{q}(G/\mathrm{L}_{S})^{+}/\mathcal{I}$, we have 
\begin{equation}\label{eq:sigma_check}
\begin{aligned}
	\langle \widecheck{\sigma}(E_{\alpha}\otimes E_{\beta}), [b]\otimes [c]\rangle & = \langle E_{\alpha}\otimes E_{\beta}, \sigma([b]\otimes [c])\rangle \\ 
	& = \langle E_{\alpha}\otimes E_{\beta}, [b_{(1)} \,c\, S(b_{(2)})]\otimes [b_{(3)}] \rangle \\ 
	& = \langle \iota(E_{\alpha})\otimes \iota(E_{\beta}), b_{(1)} \,c\, S(b_{(2)})\otimes b_{(3)} \rangle \\
	& = \langle E_{\alpha}, b_{(1)} \,c\, S(b_{(2)})\rangle \langle E_{\beta}, b_{(3)}\rangle \\ 
	& = \langle (E_{\alpha})_{(1)},b_{(1)}\rangle \langle (E_{\alpha})_{(2)},c)\rangle \langle (E_{\alpha})_{(3)},S(b_{(2)})\rangle  \langle E_{\beta}, b_{(3)}\rangle\\ 
	& = \langle (YE_{x})_{(1)},b_{(1)}\rangle \langle (YE_{x})_{(2)},c)\rangle \langle (YE_{x})_{(3)},S(b_{(2)})\rangle  \langle E_{\beta}, b_{(3)}\rangle,\\ 
\end{aligned}
\end{equation}
where in the last line we used that every $E_{\alpha}\in T^{(0,1)}_{S}$ can be written as the action of the quantised Levi subalgebra on the element $E_{x}$ corresponding to the root $\alpha_{x}$. Now, as the iterated coproduct of an element of the form  $YE$ reads 
\begin{equation}
	(\Delta\otimes \mathrm{id})\Delta(YE) = Y_{(1)}E\otimes Y_{(2)}K\otimes Y_{(3)}K + Y_{(1)}\otimes Y_{(2)}E\otimes Y_{(3)}K + Y_{(1)}\otimes Y_{(2)} \otimes Y_{(3)}E, 
\end{equation}
the dual pairing in Equation \eqref{eq:dual_pairing_C2} shows that the first and last summands of this expression give zero contributions to Equation \eqref{eq:sigma_check}. Therefore, the only nonzero contribution is due to the second summand, from which we get 
\begin{align*}
		\langle \widecheck{\sigma}(YE_{x}\otimes E_{\beta}), [b]\otimes [c]\rangle & = \langle Y_{(1)},b_{(1)}\rangle \langle Y_{(2)}E_{x},c\rangle \langle S(Y_{(3)}K_{x}),b_{(2)}\rangle \langle E_{\beta},b_{(3)}\rangle \\ 
		& = \langle Y_{(1)}K_{x}^{-1}S(Y_{(3)})E_{\beta},b\rangle \langle Y_{(2)}E_{x},c\rangle \\ 
		& = \langle \iota(Y_{(1)}K_{x}^{-1}S(Y_{(3)})E_{\beta})\otimes \iota(Y_{(2)}E_{x}), b\otimes c\rangle \\ 
		& = \langle Y_{(1)}K_{x}^{-1}S(Y_{(3)})E_{\beta}\otimes Y_{(2)}E_{x}), [b]\otimes [c]\rangle. 
\end{align*}
The last form precisely defines $\widecheck{\sigma}$ as in Equation \eqref{eq:definition_sigma_check}, since $E_{\beta}=ZE_{x}$ for some $Z\in U_{q}(\ell_{S})$. \qed  
\end{lemma}

With the intent of discussing in more detail how the computations of the $G_2$-relations are performed, we first briefly recall a complementary description of the quadratic relations of the maximal prolongation. Let $N^{(2)}$ denote the bimodule of quadratic relations, and write
\[
q_2\colon \Omega^{1}(B)\otimes_B\Omega^{1}(B) \to \Phi\bigl(\Omega^{1}(B)\otimes_B\Omega^{1}(B)\bigr) \simeq V_S^{(0,1)}\otimes V_S^{(0,1)}
\]
for the canonical quotient map (Theorem \ref{thm:takeuchi_equivalence} and Theorem \ref{thm:takeuchi_equivalence_0}) followed by the cotangent-space identification. For a quantum homogeneous space admitting a framing calculus, such as the one considered here, Proposition A.1 of \cite{ale_re_jun} identifies $q_2(N^{(2)})$ with the image of the map
\[
\begin{aligned}
\varpi^{\otimes 2}\colon \mathcal I_S
&\to
V_S^{(0,1)}\otimes V_S^{(0,1)},\\
x&\mapsto
\bigl((\varpi\circ\pi_\varepsilon)\otimes
(\varpi\circ\pi_\varepsilon)\bigr)\Delta(x)
=
[x_{(1)}^{+}]\otimes[x_{(2)}^{+}],
\end{aligned}
\]
where the brackets are interpreted in the cotangent space of the framing calculus, using the embedding of $V_S^{(0,1)}$ into that space. Thus these tensors, for $x\in\mathcal I_S$, give the full quadratic relation space of the associated exterior algebra.

This description concerns the relations generated jointly by $G_1$ and $G_2$, but does not by itself identify the image of $G_2$ separately. Such an identification is available when the torsion-free bimodule connection $\nabla$ is induced by a strong principal connection (see e.g. \cite{BM20} for the definition) on the framing calculus, and the natural embedding $V_S^{(0,1)}\hookrightarrow\Lambda^{(0,1)}_{S}:=A^+/\widetilde{\mathcal I_S}$ is an isomorphism. For the case at hand, the FODCi under consideration are obtained by the same framing construction as in the Heckenberger--Kolb setting. In particular, the corresponding cotangent spaces  are naturally identified, and the argument of \cite[Theorem~3.3]{CO22} applies verbatim. Hence the framing calculus carries a strong principal connection given by the zero projection $\Pi=0$. The induced connection is
\[
\nabla=m^{-1}\circ(\mathrm d_A\otimes\mathrm{id}_{V_{S}^{(0,1)}}),
\]
where
\[
\begin{aligned}
m\colon\Omega^1(B)\otimes_B(A\square_{\pi_{B}} V^{(0,1)}_{S}) & \to (\Omega^1(B)A)\square_{\pi_{B}} V^{(0,1)}_{S},\\ 
\omega\otimes_B\Bigl(\sum_i a_i\otimes v_i\Bigr) & \mapsto\sum_i\omega a_i\otimes v_i.
\end{aligned}
\]
We obtain
\[
m\bigl(\nabla(\mathrm d_Bb)\bigr) =\mathrm d_A b_{(1)}\otimes[b_{(2)}^+]= b_{(1)}\otimes[b_{(2)}^+]\otimes[b_{(3)}^+].
\]
Using the framing realisation of the cotangent-space identification via $q_2$, passage to the cotangent space applies $\varepsilon$ to the first tensor factor, and hence
\[
q_2\bigl(\nabla(\mathrm d_Bb)\bigr) =[b_{(1)}^+]\otimes[b_{(2)}^+] =\varpi^{\otimes_2}(b),\quad b\in B,
\]
where the same coproduct expression now defines $\varpi^{\otimes2}$ (see Equation \eqref{eq:I2}) on $B$. Thus, under these additional hypotheses, the $G_2$-generators are represented by these tensors. By \cite[Lemma~3.5]{Re-Jun-Ale-io}, if $X$ generates $B$ as a unital algebra, then
\[
N^{(2)}
= \left\langle G_1\cup\{\nabla(d_Bx)\mid x\in X\} \right\rangle_{B}.
\]
Consequently, in addition to the $G_1$-relations, it suffices to consider the tensors
\[
q_2(\nabla(d_Bx)) =[x_{(1)}^+]\otimes[x_{(2)}^+], \quad x\in X.
\]

Finally, we now give the full set of relations for the maximal prolongation for the FODC $\left(\Omega^{(0,1)}\left(\mathrm{Sp}_{4}/\mathrm{L}_{S}\right),\overline{\partial}\right)$. 

\begin{prop}\label{prop:relations_C2_flag_S}
Let $\alpha,\beta\in\Delta_{1}^{+}$ and let $e_{\alpha},e_{\beta}$ be the corresponding basis elements in $V^{(0,1)}_{2}$. The relations for the exterior algebra of $V^{(0,1)}_{2}$ corresponding to the generating sets $G_{i}$, $i=1,2$,  are: 

\begin{equation}\label{eq:relations_C2}
\begin{aligned}
e_{i}\wedge e_{i} & = 0,  & e_{2+1}\wedge e_{1}& = - e_{1}\wedge e_{2+1},  & e_{2+1}\wedge e_{1} & =0.  	
\end{aligned}
\end{equation}

\noindent Thus, it follows that the maximal prolongation of  $\left(\Omega^{(0,1)}\left(\mathrm{Sp}_{4}/\mathrm{L}_{\alpha_{2}}\right),\overline{\partial}\right)$ is trivial, namely that $\Omega^{(0,k)}(\mathrm{Sp}_4/\mathrm{L}_{\alpha_{2}})=0$, for every $k\geq 2$. 
\begin{proof}
	The relations in the set $G_{1}$ are easily obtained using either Equation \eqref{eq:sigma_check} or Equation \eqref{eq:sigma} on the generators of $V_{2}^{(0,1)}$. In particular
	
	\begin{align*}
	\widecheck{\sigma}(E_{1}\otimes E_{1}) & = q^{-2}E_{1}\otimes E_{1}, &
	\widecheck{\sigma}(E_{2}E_{1}\otimes E_{2}E_{1}) & = q^{-2}E_{2}E_{1}\otimes E_{2}E_{1}, \\ 
	\widecheck{\sigma}(E_{1}\otimes E_{2}E_{1}) & = E_{2}E_{1}\otimes E_{1}, & 
	\widecheck{\sigma}(E_{2}E_{1}\otimes E_{1}) & = (q^{-2}-q^{2})E_{2}E_{1}\otimes E_{1} + E_{1}\otimes E_{2}E_{1}, 
	\end{align*}
	
and thus the relations in Equation \eqref{eq:relations_C2} hold. 

Furthermore, by direct inspection, the relations from the set $G_{2}$ turn out to be contained in those obtained from the set $G_{1}$, for example one finds 
\begin{align*}
\varpi^{\otimes_{2}}(z_{41}) & = [(z_{41})_{(1)}^{+}] \otimes 	[(z_{41})_{(2)}^{+}]\\ 
               & = [(u_{4a}S(u_{b1}))^{+}] \otimes [z_{ab}^{+}] \\ 
               & = [u_{44}S(u_{31})] \otimes [z_{43}] +[u_{44}S(u_{21})] \otimes [z_{42}] \\ 
               & =  q\, \langle 1,u_{44}\rangle \langle [E_{1},E_{2}]_{q^{2}},S(u_{31})\rangle e_{2+1}\otimes e_{1} -q^{3} \langle 1,u_{44}\rangle \langle E_{1},S(u_{21})\rangle e_{1}\otimes e_{2+1} \\ 
               & = q\, \langle S(E_{1}E_{2}),u_{31}\rangle e_{2+1}\otimes e_{1} - q^{3}\langle S(E_{1}),u_{21}\rangle e_{1}\otimes e_{2+1}\\ 
               & = q\langle E_{2}K_{2}^{-1}E_{1}K_{1}^{-1},u_{31}\rangle e_{2+1}\otimes e_{1}+ q^{3}\langle E_{1}K_{1}^{-1},u_{21}\rangle e_{1}\otimes e_{2+1}\\ 
                 & = q^{3}\langle E_{2}E_{1}K_{2}^{-1}K_{1}^{-1},u_{31}\rangle e_{2+1}\otimes e_{1}+ q^{3}\langle E_{1}K_{1}^{-1},u_{21}\rangle e_{1}\otimes e_{2+1}\\ 
               & = q^{4}( e_{2+1}\otimes e_{1} + e_{1}\otimes e_{2+1}),  
\end{align*}
with the corresponding relation for the exterior algebra being again $e_{2+1}\wedge e_{1} =- e_{1}\wedge e_{2+1}$.
\end{proof}
\end{prop}

\begin{rmk}
As is evident from the relations in Equation \eqref{eq:relations_C2}, it turns out that the exterior algebra $V_{2}^{(0,\bullet)}$ supplies a differential calculus over $\mathcal{O}_{q}(\mathrm{Sp}_{4})$ that is not a $q$-deformed version of the de Rham complex of the underlying flag variety. In particular, $\Omega^{(0,\bullet)}_{2}(\mathrm{Sp}_{4})$ is not of classical dimension, as the dimension is strictly smaller than its classical counterpart, and in particular vanishes already at the level of the differential 2-forms. \qed 
\end{rmk}

The collapse of the exterior algebra is not confined to flag manifolds outside the $\mathrm{A}$-series. As the following remark shows, a similar phenomenon occurs for a differential calculus on the full quantised flag manifold associated to $\mathfrak{sl}_{3}$, whose maximal prolongation vanishes in all degrees greater than one. As this paper is mainly concerned with the $\mathrm{B},\mathrm{C}$ and $\mathrm{D}$ cases, we omit providing full details for $\mathrm{U}_{q}(\mathfrak{sl}_{3})$ and refer the reader to \cite{ale_re_jun,Re-Sombrg} for further information.  

\begin{rmk}
Consider the quantised enveloping algebra $\mathrm{U}_{q}(\mathfrak{sl}_{3})$ alongside its finitary dual, namely the $q$-deformed coordinate algebra $\mathcal{O}_{q}(\mathrm{SU}_{3})$. We now show that a similar collapse of the maximal prolongation also occurs in the $\mathrm{A}$-series, by considering the full quantised flag manifold corresponding to the choice $\mathrm{U}_{q}(\ell_{\emptyset})=\mathrm{U}_{q}(\mathfrak{h}) =\langle K_{1}^{\pm 1},K_{2}^{\pm 1}\rangle$. Its underlying classical flag manifold carries a holomorphic contact structure, providing a further point of comparison with the examples discussed above.
\[
\begin{tikzpicture}
\tikzstyle{smallcircle} = [circle, draw=black, fill=lightgray,
minimum size=2.5mm, inner sep=0pt]
\tikzstyle{cross} = [minimum size=3mm, inner sep=0pt]
\node[cross, label=above:$\alpha_1$] (n1) at (0,0)
{\scalebox{1.5}{$\times$}};
\node[cross, label=above:$\alpha_2$] (n2) at (2,0)
{\scalebox{1.5}{$\times$}};
\draw (n1) -- (n2);
\end{tikzpicture}
\]
Building on the results of the previous section, we consider the quantum tangent space $T^{(0,1)}_{2}=\mathrm{Span}_{\mathbb{C}}\left\{E_{1},E_{2}\right\}$ and the associated first order differential calculus
\[
\left(\Omega^{(0,1)}_{2} \left( \mathrm{SU}_{3}/\mathrm{L}_{\alpha_{1+2}}\right):=\mathcal{O}_{q}(\mathrm{SU}_{3}) \square_{\pi_{B}} V^{(0,1)}_{2},\overline{\partial}\right),
\]

\noindent where
$V^{(0,1)}_{2}:= \mathcal{O}_{q}(\mathrm{SU}_{3}/\mathrm{L}_{\alpha_{1+2}})^{+}/\mathcal{I}_2$ is the corresponding quantum cotangent space. Unlike the classical-dimensional FODCi considered above, this calculus has a two-dimensional quantum tangent space, as the root vector corresponding to $\alpha_{1}+\alpha_{2}$ is not included. It therefore does not provide a deformation of the full classical antiholomorphic Dolbeault complex already at the level of the differential 1-forms. Nevertheless, this example allows us to examine the same higher-order phenomenon shown to hold for the contact flag of $\mathrm{C}_{2}$ even in an $A$-series example. In fact, by a direct application of Theorem~\ref{theorem:relations_max_prolongation_flag} we obtain

\begin{align*}
 \sigma(e_{1}\otimes e_{2})&  =q\,e_{2}\otimes e_{1}, &   \sigma(e_{2}\otimes e_{1})&=q\,e_{1}\otimes e_{2}, \\
 \sigma(e_{1}\otimes e_{1})&=q^{-2}\,e_{1}\otimes e_{1}, &  \sigma(e_{2}\otimes e_{2})& =q^{-2}\,e_{2}\otimes e_{2},
\end{align*}

\noindent and the corresponding quadratic relations force all products of degree-one elements to vanish:

\begin{equation}
e_{1}\wedge e_{1} =0, \qquad e_{2}\wedge e_{2}=0, \qquad e_{1}\wedge e_{2}=0, \qquad   e_{2}\wedge e_{1}=0.
\end{equation}

\noindent  Here, $e_{1},e_{2}\in V_{2}^{(0,1)}$ denote the dual elements to $E_{1},E_{2}\in T^{(0,1)}_{2}$ with respect to the restriction of the non-degenerate dual pairing of Hopf algebras $\langle-,-\rangle\colon \mathrm{U}_{q}(\mathfrak{sl}_{3}) \times\mathcal{O}_{q}(\mathrm{SU}_{3})\to\mathbb{C}$. One can also check that no non-trivial relations corresponding to the set $G_{2}$ in Equation \eqref{eq:sets_of_relations_max_prolongation} are produced. 

Consequently, the exterior algebra $V_{S}^{(0,\bullet)}$ vanishes in every degree at least two, while its degree-zero and degree-one components remain nonzero. In particular, even the single degree-two component expected for the ordinary exterior algebra of a two-dimensional space is lost.

This example thus exhibits a particularly pronounced form of the collapse observed above and shows that this behaviour, for the calculi constructed here, is not confined to Lie types outside the $\mathrm{A}$-series. However, although we do not provide further details here, we emphasise that the expectation of recovering a FODC of classical dimension on this contact quantised flag manifold becomes quite plausible once we take into account the root vector corresponding to the positive root $\alpha_1+\alpha_2$.  \qed 
\end{rmk}

\subsection{A Reeb extended Dolbeault complex for the contact flag manifold $\mathcal{O}_{q}(\mathrm{Sp}_{4}/\mathrm{L}_{\alpha_{2}})$}
\label{section:reeb_calculus_C2}
The relations for the exterior algebra $V^{(0,\bullet)}_{2}$ established in the last section are relative to the irreducible $\mathrm{U}_{q}(\ell_{\alpha_{2}})$-submodule $T^{(0,1)}_{2} := \mathrm{U}_{q}(\ell_{\alpha_{2}})E_{1}$. However, as already argued, the full antiholomorphic tangent space for the contact flag manifold $\mathcal{O}_{q}(\mathrm{Sp}_{4}/\mathrm{L}_{\alpha_{2}})$ is not irreducible as a $\mathrm{U}_{q}(\ell_{\alpha_{2}})$-module, but rather decomposes as a direct sum
\[
T^{(0,1)}_{3} := T^{(0,1)}_{2} \oplus T^{(0,1)}_{\theta}
\]
of irreducible $\mathrm{U}_{q}(\ell_{\alpha_{2}})$-modules. This decomposition reflects the fact that the corresponding classical flag variety is not of Hermitian symmetric type, and therefore its holomorphic tangent bundle is not itself irreducible as a homogeneous vector bundle. In order to maintain conformity with the classical differential geometric picture, the summand $T^{(0,1)}_{\theta}$ must be a $1$-dimensional $\mathrm{U}_{q}(\ell_{\alpha_{2}})$-module whose generating element corresponds to the highest root of the underlying Lie algebra $\mathfrak{sp}_{4}$.

The statement of the next result relies on the notion of Lusztig root vectors and the associated quantum tangent space; since its precise definition plays no essential role in the present discussion, we postpone it to Section \ref{section:lusztig_for_so(8)} (see, in particular, Equation \eqref{eq:definition_of_root_vectors}) where the definition is presented in the context of $\mathcal{O}_{q}(\mathrm{SO}_{8})$. Roughly speaking, these are quantum tangent spaces for the quantised coordinate algebra of a complex, connected, simply connected semisimple Lie group, arising from \emph{quantum root vectors} obtained from suitable reduced decompositions of the longest element of the Weyl group of the corresponding root system. For further details on Lusztig differential calculi, in particular for the case of $\mathrm{C}_{2}$ and its quantised flag manifolds, see \cite{COP}.
	
\begin{prop}
\label{prop:tangent_space_with_theta_C2}
There exists a unique three-dimensional quantum tangent space for $\mathcal O_q(\mathrm{Sp}_4/\mathrm{L}_{\alpha_{2}})$ which decomposes, as a $\mathrm{U}_q(\mathfrak \ell_{\alpha_{2}})$-module, into the two-dimensional irreducible module $T_2^{(0,1)}$ and a one-dimensional irreducible module 
	\[
	T_{\theta}^{(0,1)}:=\mathrm{U}_{q}(\ell_{\alpha_{2}})E_{\theta},  \quad E_{\theta}:=E_{2}E_{1}^{2}-(q^{2}+q^{-2})E_{1}E_{2}E_{1}.
	\]
	Moreover, $T^{(0,1)}$ cannot be understood as a restriction to $\mathcal{O}_{q}(\mathrm{Sp}_{4}/\mathrm{L}_{\alpha_{2}})$ of an $\mathcal{O}_{q}(\mathrm{Sp}_{4})$ Lusztig tangent space.  
\begin{proof}
 Using the relations in Equation \eqref{eq:algebra_rel_C2} and Equation \eqref{eq:serre_c2}, we find that $E_{\theta}$ is closed under the $\mathrm{U}_{q}(\ell_{\alpha_{2}})$-action up to multiplication by integer powers of $q$. Moreover, $\Delta(T^{(0,1)}_{\theta})\subseteq \left(T^{(0,1)}_{3}\oplus 1\right)\otimes \mathcal{O}_{q}(\mathrm{Sp}_{4}/\mathrm{L}_{\alpha_{2}})^{\circ}$, so $T^{(0,1)}_{3}$ is indeed a 3-dimensional $\mathcal{O}_{q}(\mathrm{Sp}_{4}/\mathrm{L}_{\alpha_{2}})$-tangent space, and it is unique as $E_{\theta}$ is the only possible combination such that $\mathrm{dim}(\mathrm{U}_{q}(\ell_{\alpha_{2}})E_{\theta})=1$. To see this, one considers a general linear combination of monomials with root content $2\alpha_{1}+\alpha_{2}$ and checks that closure as a $1$-dimensional $\mathrm{U}_{q}(\ell_{S})$-module implies the claimed expression. Furthermore, the last statement follows as neither of the (Lusztig) tangent spaces on $\mathcal{O}_{q}(\mathrm{Sp}_{4})$ arising from one of the two reduced decompositions of the longest element of the Weyl group correctly restricts to $E_{\theta}$.  
\end{proof}
\end{prop}

 Letting $\mathcal{I}_{3}$ be the right $\mathcal{O}_{q}(\mathrm{Sp}_{4}/\mathrm{L}_{\alpha_{2}})$-ideal in $\mathcal{O}_{q}(\mathrm{Sp}_{4}/\mathrm{L}_{\alpha_{2}})^{+}$ corresponding to the tangent space $T^{(0,1)}_{3}$ according to Theorem \ref{th:hermisson_class}, we have a left $\mathcal{O}_{q}(\mathrm{Sp}_{4})$-covariant FODC on $\mathcal{O}_{q}(\mathrm{Sp}_{4}/\mathrm{L}_{\alpha_{2}})$ as 

\begin{equation}
\left(\underline{\Omega}^{1}_{3}(\mathrm{Sp}_{4}/\mathrm{L}_{\alpha_{2}}):=\mathcal{O}_{q}(\mathrm{Sp}_{4})\square_{\pi_{B}} V_{3}^{(0,1)},\overline{\partial}\right), 
\end{equation}

\noindent where $V_{3}^{(0,1)}:=\mathcal{O}_{q}(\mathrm{Sp}_{4}/\mathrm{L}_{\alpha_{2}})^{+}/\,\mathcal{I}_{3}$ is the associated cotangent space. We call such FODC the \emph{Reeb calculus} of $\mathcal{O}_{q}(\mathrm{Sp}_{4}/\mathrm{L}_{\alpha_{2}})$. We now produce a framing calculus in $\mathcal{O}_{q}(\mathrm{Sp}_{4})$, that we will also call the \emph{Reeb calculus}, from the latter. 

\begin{lemma}\label{lemma:the_3d_tangent_space_c2}
	 The space $\underline{T}_{3}^{(0,1)}:=\underline{T}^{(0,1)}_{2}\oplus \underline{T}^{(0,1)}_{\theta}$, where 
	\[
	\underline{T}_{\theta}^{(0,1)}:=\underline{E}_{\theta}\triangleleft_{\mathrm{Ad}}\mathrm{U}_{q}(\ell_{S}),  \quad \underline{E}_{\theta}:=[[E_{2},E_{1}]_{q^{2}},E_{1}]_{q^{-2}}, 
	\]
 is a 3-dimensional tangent space for $\mathcal{O}_{q}(\mathrm{Sp}_{4})$. 
 \begin{proof}
 	As we already established that $\underline{T}^{(0,1)}$ is a 2-dimensional $\mathcal{O}_{q}(\mathrm{Sp}_{4})$ tangent space, we are basically left to the explicit calculation for the coproduct of $\underline{E}_{\theta}$, for which we obtain
 	\begin{align*}
 	\Delta(	[[E_{2},E_{1}]_{q^{2}},E_{1}]_{q^{-2}} ) & = [[E_{2},E_{1}]_{q^{2}},E_{1}]_{q^{-2}}\otimes K_{2}K_{1}^{2} \\ 
 	 & \quad + 1 \otimes [[E_{2},E_{1}]_{q^{2}},E_{1}]_{q^{-2}} \\ 
 	 & \quad + (1-q^{-2})E_{1}\otimes [E_{2},E_{1}]_{q^{2}}K_{1}\\ 
 	 & \quad + (1-q^{-2})[E_{1},E_{2}]_{q^{2}} \otimes E_{1}K_{1}K_{2}.
 	\end{align*}
	Furthermore, one checks that $\mathrm{Span}_{\mathbb{C}}\left\{\underline{E}_{\theta}\right\}$ is a 1-dimensional $\mathrm{U}_{q}(\ell_{\alpha_{2}})$-module.
 \end{proof}
\end{lemma}

\begin{rmk}
	It is useful to record how the tangent space constructed above behaves under projection to the corresponding cominuscule partial flag manifold, since this provides a direct consistency check with the Heckenberger--Kolb classification and clarifies the relation between the present calculus on $\mathcal{O}_{q}(\mathrm{Sp}_{4})$ and the standard covariant calculus on the quotient. 	Notice that the tangent space established in the previous lemma projects onto a $2$-dimensional tangent space of the cominuscule $\mathrm{C}_{2}$ quantised flag manifold whose closure under the action of the quantised Levi subalgebra leads to the three dimensional Heckenberger--Kolb $\mathcal{O}_{q}(\mathrm{Sp}_{4})$-covariant FODC. Indeed, upon projection to the flag manifold, the only non-trivial root vectors that survive are
	\[
	E_{1}E_{2}, \qquad E_{1}^{2}E_{2}.
	\]
	Closing their span under the $\mathrm{U}_{q}(\ell_{\alpha_{1}})$-action then recovers the remaining root vector $E_{2}$, while all possible higher contributions vanish as a consequence of the $q$-Serre relations. Hence the closure under the $\mathrm{U}_{q}(\ell_{\alpha_{1}})$-action of the projected tangent space is generated precisely by the expected root vectors spanning a three-dimensional tangent space. In particular, it coincides with the Heckenberger--Kolb tangent space of the corresponding cominuscule quantised flag manifold, in accordance with their classification theorem for finite-dimensional covariant FODCi over irreducible quantised flag manifolds.
	\qed
\end{rmk}
 
Let $\underline{\mathcal{I}}_{3}\subseteq\mathcal{O}_{q}(\mathrm{Sp}_{4})^{+}$ be the right $\mathcal{O}_{q}(\mathrm{Sp}_{4})$-ideal corresponding to the tangent space $\underline{T}^{(0,1)}_{3}$. By Woronowicz’s classification theorem, this ideal determines a left $\mathcal{O}_{q}(\mathrm{Sp}_{4})$-covariant FODC on $\mathcal{O}_{q}(\mathrm{Sp}_{4})$, namely
\begin{equation}\label{eq:full_calculus_C2}
\left(\underline{\Omega}^{1}_{3}(\mathrm{Sp}_{4}):=\mathcal{O}_{q}(\mathrm{Sp}_{4})\otimes\Lambda^{(0,1)}_{3},\overline{\partial}\right),
\end{equation}
where
\[
\Lambda^{(0,1)}_{3}\cong \mathcal{O}_{q}(\mathrm{Sp}_{4})^{+}/\underline{\mathcal{I}}_{3}
\]
is spanned by the classes of the elements listed in Table \ref{table:root_vectors_dual_pairing_C2}, together with the class of
\[
-q^{-4}(q^{2}+q^{-2})^{-1}z_{41}.
\]
A direct computation shows that this additional class is dual to the root vector $\tilde{E}_{\theta}$.  The above discussion provides a proof to the following lemma.
\begin{lemma}
The cotangent spaces $V^{(0,1)}_{3}$ and $\Lambda^{(0,1)}_{3}$ are spanned by the classes of the elements listed in  Table~\ref{table:root_vectors_dual_pairing_C2_full}, respectively. These classes form bases dual to the corresponding tangent space.

\end{lemma}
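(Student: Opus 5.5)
The plan is to reduce the statement to a finite duality check: exhibit three cotangent classes for each calculus and show that their pairing matrix with the three tangent generators is invertible. By Theorem \ref{th:hk_tangent} and the construction of the ideal in Equation \eqref{eq:ideal_from_tangent_space}, the cotangent space $V^{(0,1)}_{3}=\mathcal{O}_{q}(\mathrm{Sp}_{4}/\mathrm{L}_{\alpha_{2}})^{+}/\mathcal{I}_{3}$ is the quotient by the annihilator of the tangent space. Its dimension is therefore $\dim T^{(0,1)}_{3}=3$, since $\mathcal{I}_3$ is the common kernel of three functionals, which are linearly independent because the pairing is non-degenerate. The same holds for $\Lambda^{(0,1)}_{3}=\mathcal{O}_{q}(\mathrm{Sp}_{4})^{+}/\underline{\mathcal{I}}_{3}$ and $\underline{T}^{(0,1)}_{3}$. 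Hence it suffices to show that the $3\times 3$ matrix of pairings between the listed elements and the tangent generators is the identity (or at least invertible). Invertibility gives linear independence of the classes, and since they are three in number they span.

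The steps, in order, are as follows.

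First, I reuse the pairings already recorded in Table \ref{table:root_vectors_dual_pairing_C2}. These give the entries pairing $E_1,E_2E_1$ (respectively $E_1,[E_1,E_2]_{q^2}$) against the two basis elements of the contact cotangent space.

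Second, I compute the new entries involving $E_\theta$ (respectively $\underline{E}_\theta$). For this I expand $S(E_\theta)$, or the relevant coproducts, through the Hopf structure and apply the pairing of Equation \eqref{eq:dual_pairing_C2}, arguing by weights throughout. An element $u_{ij}S(u_{kl})$ pairs nontrivially with a monomial in the $E$'s only if its weight matches the root content. The root content of $E_\theta$ is $2\alpha_1+\alpha_2$, while the old basis elements $z_{4j}$ have weights $\alpha_1$ and $\alpha_1+\alpha_2$. This weight separation forces the off-diagonal entries between $\{E_\theta\}$ and the old classes to vanish in both directions, so the matrix is block triangular.

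Third, it remains to evaluate the single diagonal entry $\langle E_\theta, z_{41}\rangle$, and similarly for $\underline{E}_\theta$. I use $\Delta(z_{41})=\sum u_{4a}S(u_{b1})\otimes z_{ab}$ and the explicit coproduct from Lemma \ref{lemma:the_3d_tangent_space_c2}. This should produce the normalisation $-q^{4}(q^{2}+q^{-2})$, which justifies the rescaled element $-q^{-4}(q^2+q^{-2})^{-1}z_{41}$.

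I expect the diagonal evaluation to be the main obstacle. It requires tracking the factors of $K_i$ and the antipode through the degree-three monomials $E_2E_1^2$ and $E_1E_2E_1$ acting in $V_{\varpi_1}$, and the paths $4\to3\to2\to1$ through the weight basis must be combined with the correct signs from Equation \eqref{eq:dual_pairing_C2}.

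I would first compute the action of $E_\theta$ on $e_1$ in $V_{\varpi_1}$ directly from the matrix units. Doing this at the level of $\mathrm{U}_q(\mathfrak{sp}_4)$ makes the flag and group cases identical, since both $E_\theta$ and $\underline{E}_\theta$ pair with $z_{41}\in \mathcal{O}_{q}(\mathrm{Sp}_{4}/\mathrm{L}_{\alpha_{2}})$. I would then check that the two results agree up to the stated scalar, consistent with the identification of $V^{(0,1)}_3$ with $\Lambda^{(0,1)}_3$.
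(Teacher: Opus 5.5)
Your proposal is correct and is essentially the paper's argument: the paper adjoins the rescaled class $-q^{-4}(q^{2}+q^{-2})^{-1}z_{41}$ to the previous basis and checks by direct computation that it is dual to the new root vector, leaving implicit the weight separation and the dimension count that you spell out. One small correction: linear independence of the restricted functionals on the flag does not follow from non-degeneracy of the $\mathrm{U}_q(\mathfrak{sp}_4)\times\mathcal{O}_q(\mathrm{Sp}_4)$ pairing, since for example $E_1^2E_2$ vanishes on $\mathcal{O}_q(\mathrm{Sp}_4/\mathrm{L}_{\alpha_2})$. Independence instead comes from your invertible diagonal pairing matrix, and $\dim V^{(0,1)}_3\le 3$ holds automatically because the cotangent space embeds in the dual of the tangent space.

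The same vanishing also settles your last step directly. Since $\underline{E}_\theta=E_\theta+E_1^2E_2$ and $E_1^2E_2$ kills the flag, $E_\theta$ and $\underline{E}_\theta$ give the same pairing with $z_{41}$.
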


We now move towards the calculation of the maximal prolongation of the FODC in Equation \eqref{eq:full_calculus_C2}. First of all, we conveniently express the corresponding classifying ideal in terms of the union of some generating sets.
\begin{prop}\label{prop:generating_set_ideal_C2}
A generating set $\mathcal{G}$ for the right ideal $\underline{\mathcal{I}}_{3}$ associated to the quantum tangent space $\underline{T}^{(0,1)}_{3}$ is given by the union of the following sets:

\begin{align*}\mathcal{G}_{1}&:=\left\{u_{12},u_{13},u_{14},u_{23},u_{24},u_{32},u_{34}\right\},\\
\mathcal{G}'_{1}&:=\left\{u_{11}-1,\,u_{22}-1,\,u_{33}-1,\,u_{44}-1\right\},\\
\mathcal{G}''_{1}&:=\left\{u_{21}+u_{43},\,u_{31}-q^2u_{42}\right\},\\
\mathcal{G}_{2}&:=\left\{u_{\alpha}u_{k\ell}\;\middle|\;e_{\alpha}\triangleleft u_{k\ell}=0\right\},\\
\mathcal{G}'_{2}&:=\left\{u_{\alpha}u_{k\ell}-\theta_{k\ell\alpha}^{\beta}u_{\beta}\;\middle|\;e_{\alpha}\triangleleft u_{k\ell}=\theta_{k\ell\alpha}^{\beta}e_{\beta},\quad\theta_{k\ell\alpha}^{\beta}\neq0,\quad k\neq\ell\right\},\\
\mathcal{G}''_{2}&:=\left\{u_{\alpha}u_{kk}-\theta_{kk\alpha}^{\alpha}u_{\alpha}\;\middle|\;e_{\alpha}\triangleleft u_{kk}=\theta_{kk\alpha}^{\alpha}e_{\alpha}\right\},
\end{align*}
where $u_{\alpha},u_{\beta}\in\{u_{43},u_{42},u_{41}\}$, $e_{\alpha}=[u_{\alpha}]$,  $1\leq k,\ell\leq4$, and the coefficients $\theta_{k\ell\alpha}^{\beta}\in\mathbb C$ are determined by the right $\mathcal{O}_{q}(\mathrm{Sp}_{4})$-module structure of $\Lambda^{(0,1)}_{3}$.

\begin{proof}
The proof follows the lines of Proposition 3.7 in \cite{Re-Sombrg}. All the proposed generators have zero counit. The nonzero pairings of the tangent vectors with the matrix generators are given by Table \ref{table:root_vectors_dual_pairing_total_C2}. All other pairings with matrix generators not displayed in this table vanish. It follows immediately that

\[
\mathcal{G}_{1}\cup\mathcal{G}'_{1}
\cup\mathcal{G}''_{1}\subseteq\underline{\mathcal{I}}_{3}.
\]

\noindent For the quadratic generators, we use

\[
\langle E_{\gamma},u_{ij}u_{k\ell}\rangle
=
\left\langle (E_{\gamma})_{(1)},u_{ij}\right\rangle
\left\langle (E_{\gamma})_{(2)},u_{k\ell}\right\rangle.
\]

\noindent Substituting the coproducts of the root vectors and the pairings on matrix generators shows that every element of $\mathcal{G}_{2}\cup\mathcal{G}'_{2}\cup\mathcal{G}''_{2}$ is annihilated by every $E_{\gamma}\in \underline{T}^{(0,1)}_{3}$. For example,
\[
\left\langle
[[E_2,E_1]_{q^2},E_1]_{q^{-2}},u_{42}u_{43} \right\rangle=q-q^{-1},\quad  \left\langle [[E_2,E_1]_{q^2},E_1]_{q^{-2}},u_{41} \right\rangle=q^2+q^{-2}.
\]
Moreover, both $E_1$ and $[E_1,E_2]_{q^2}$ vanish on these two elements. Hence
\[
u_{42}u_{43}
-\frac{q-q^{-1}}{q^2+q^{-2}}u_{41}
\in\underline{\mathcal{I}}_{3}.
\]
The other quadratic generators are checked in the same way. Thus $\mathcal{G}\subseteq\mathcal{I}_{3}$.

Set
\[
A:=\mathcal{O}_q(\mathrm{Sp}_4), \qquad \underline{\mathcal{J}}_{3}:=\sum_{g\in\mathcal{G}}gA.
\]
Since $\underline{\mathcal{I}}_{3}$ is a right ideal containing $\mathcal{G}$, we have $\underline{\mathcal{J}}_{3}\subseteq\underline{\mathcal{I}}_{3}$. For the opposite inclusion, we first show that the classes of $1,u_{43},u_{42},u_{41}$ span $A/\underline{\mathcal{J}}_{3}$. The generators in $\mathcal{G}_{1}\cup\mathcal{G}'_{1} \cup\mathcal{G}''_{1}$ give
\[
u_{ii} \equiv 1\pmod{\underline{\mathcal{J}}_{3}}, \qquad u_{21} \equiv -u_{43} \pmod{\underline{\mathcal{J}}_{3}},\qquad u_{31}\equiv q^2 u_{42} \pmod{\underline{\mathcal{J}}_{3}},
\]
and
\[
u_{12},u_{13},u_{14},u_{23},u_{24},u_{32},u_{34} \equiv 0 \pmod{\underline{\mathcal{J}}_{3}}.
\]
Therefore every matrix generator $u_{ij}$ is congruent modulo $\underline{\mathcal{J}}_{3}$
to a linear combination of $1,u_{43},u_{42},u_{41}$. The quadratic generators cover every product
\[
u_{43}u_{k\ell},\qquad
u_{42}u_{k\ell},\qquad
u_{41}u_{k\ell},
\qquad 1\leq k,\ell\leq4.
\]
Each such product is consequently congruent (modulo $\underline{\mathcal{J}}_{3}$) to a linear combination of $u_{43},u_{42},u_{41}$, and since $\underline{\mathcal{J}}_{3}$ is a right ideal, this congruence remains valid after multiplying on the right by any matrix generator $u_{k\ell}$. The resulting terms are reduced using the linear and quadratic relations above. The same spanning statement therefore holds for the longer word by inductive reasoning. Since the matrix generators generate $A$, the classes of $1,u_{43},u_{42},u_{41}$ span $A/\underline{\mathcal{J}}_{3}$. Finally, the counit and the pairing table show that the classes of $1,u_{43},u_{42},u_{41}$ are linearly independent in $A/\underline{\mathcal{I}}_{3}$ (just consider a general linear combination of these classes, dually pair with the elements of the tangent space and apply the counit to show linear independence). Since these classes span $A/\underline{\mathcal{J}}_{3}$, the natural surjection  $A/\underline{\mathcal{J}}_{3}\twoheadrightarrow A/\underline{\mathcal{I}}_{3}$ maps a spanning family to a linearly independent family and is therefore an isomorphism. Its kernel is $\underline{\mathcal{I}}_{3}/\underline{\mathcal{J}}_{3}$, so $\underline{\mathcal{I}}_{3}=\underline{\mathcal{J}}_{3}$.
\end{proof}
\end{prop}

We are now ready to determine the relations for the maximal prolongation of $\Omega^{1}(\mathrm{Sp}_{4})$ at the level of its exterior algebra $\Lambda^{(0,\bullet)}_{3}$, following Proposition~\ref{proposition:quantum_exterior_algebra}. Using the presentation of the classifying ideal $\underline{\mathcal{I}}_{3}$ established above, we compute the map
\begin{equation}\label{eq:the_map_varpi_C2}
\begin{aligned}
\varpi^{\otimes_{2}}\colon \underline{\mathcal{I}}_{3} &\to \Lambda^{(0,1)}_{3}\otimes\Lambda^{(0,1)}_{3},\\
x&\mapsto  \bigl((\varpi\circ\pi_{\varepsilon})\otimes(\varpi\circ\pi_{\varepsilon})\bigr)\Delta(x) =[x_{(1)}^{+}]\otimes[x_{(2)}^{+}]
\end{aligned}
\end{equation}
on the generating families $\mathcal{G}_{1}$, $\mathcal{G}'_{1}$, $\mathcal{G}''_{1}$, $\mathcal{G}_{2}$, $\mathcal{G}'_{2}$, and $\mathcal{G}''_{2}$. To discuss the results, it is helpful to give a diagrammatic representation of the module structure of $\Lambda^{(0,1)}_{3}$. The latter can easily be deduced from the coproducts of the elements in $\underline{T}^{(0,1)}_{3}$, in the same spirit of Section \ref{section:contact_calculus_C2}. 
\begin{figure}[htbp]
\centering
\begin{tikzcd}[row sep=huge, column sep=large]
\alpha_{1}
    \arrow[d, "{u_{32}}"]
    \arrow[dd, bend right=55, swap, "{u_{31},\,u_{42}}"]
\\
\alpha_{1}+\alpha_{2}
    \arrow[d, "{u_{21},\,u_{43}}"]
\\
2\alpha_{1}+\alpha_{2}
\end{tikzcd}
\caption{Nonzero off-diagonal right actions of the matrix generators on $\Lambda^{(0,1)}_{3}$. An arrow labelled $u_{ij}$ indicates that its action sends the source vector to a nonzero scalar multiple of the target vector.}
\label{figure:module_structure_full_C2}
\end{figure}
 
In the next lemma we discuss which generating sets for the ideal $\underline{\mathcal{I}}_{3}$ give no non-trivial relations for the exterior algebra. 

\begin{lemma}
\label{lem:vanishing_relations_for_C2_3D}
We have 
\[
\varpi^{\otimes_{2}}(\mathcal{G}_{1})=\varpi^{\otimes_{2}}(\mathcal{G}'_{1})=\varpi^{\otimes_{2}}(\mathcal{G}''_{1})=\varpi^{\otimes_{2}}(\mathcal{G}''_{2})=0.
\]
\begin{proof}
	The defining formula for $\varpi^{\otimes_{2}}$ extends to every element of $\mathcal{O}_{q}(\mathrm{Sp}_{4})$, and in particular we have 
\[
\varpi^{\otimes_{2}}(u_{ij})= \sum_{a=1}^{4} [u_{ia}^{+}]\otimes[u_{aj}^{+}].
\]
The linear relations in the cotangent space give
\[
[u_{21}]=-[u_{43}], \qquad [u_{31}]=q^2[u_{42}],
\]
while $[u_{ij}^{+}]=0$ for all $(i,j)\notin \{(2,1),(4,3),(3,1),(4,2),(4,1)\}$. Consequently, since $\varpi^{\otimes_{2}}(1)=0$, this proves
\[
\varpi^{\otimes_{2}}(\mathcal{G}_{1}) =\varpi^{\otimes_{2}}(\mathcal{G}'_{1}) =0.
\]
Furthermore, we have 
\begin{align*}
\varpi^{\otimes_{2}}(u_{21}+u_{43}) & = 	\varpi^{\otimes_{2}}(u_{21})+\varpi^{\otimes_{2}}(u_{43}) =0, & \varpi^{\otimes_{2}}(u_{31}-q^{2}u_{42}) & = \varpi^{\otimes_{2}}(u_{31}) -q^{2}\varpi^{\otimes_{2}}(u_{42})= 0,
\end{align*}
and thus $\varpi^{\otimes_{2}}(\mathcal{G}''_{1})=0$.   Finally, as neither $\alpha_1$ nor $\alpha_1+\alpha_2$ is a sum of two weights of $\underline{T}^{(0,1)}_{3}$, for $1\leq i\leq 4$ we have 
\[
\varpi^{\otimes_{2}}(u_{43}u_{ii}) = \varpi^{\otimes_{2}}(u_{42}u_{ii}) = \varpi^{\otimes_{2}}(u_{43}) = \varpi^{\otimes_{2}}(u_{42}) =0,
\]
implying that every generator of $\mathcal{G}''_{2}$ with left factor $u_{43}$ or $u_{42}$ has zero image. For the remaining generators, the coproduct formulas and the pairing on matrix generators give
\[
q^{2}\varpi^{\otimes_{2}}(u_{41}u_{11}) = \varpi^{\otimes_{2}}(u_{41}u_{22})  = \varpi^{\otimes_{2}}(u_{41}u_{33})=q^{-2}\varpi^{\otimes_{2}}(u_{41}u_{44})=\varpi^{\otimes_{2}}(u_{41}).
\]
In each case, the image of the quadratic term is cancelled by that of its corresponding linear correction, hence $\varpi^{\otimes_{2}}(\mathcal{G}''_{2})=0$.
\end{proof} 
\end{lemma}

Next, we calculate the relations coming from the remaining generating sets for $\underline{\mathcal{I}}_{3}$. Indeed, the non-trivial relations for the maximal prolongation will be supplied from these generating sets. 
\begin{lemma}
\label{lem:relations_from_G2_C2_3D} 
The relations arising from $\mathcal{G}_{2}$ and $\mathcal{G}'_{2}$ are generated by
\[
\begin{aligned}
e_{1}\wedge e_{1} &= 0, & e_{2+1}\wedge e_{2+1} &= 0, & e_{2+1+1}\wedge e_{2+1+1} &= 0,\\
e_{2+1+1}\wedge e_{1}+e_{1}\wedge e_{2+1+1} &= 0, & e_{2+1+1}\wedge e_{2+1} +e_{2+1}\wedge e_{2+1+1} &= 0, & e_{1}\wedge e_{2+1} + q^{2}e_{2+1}\wedge e_{1} & = 0.
\end{aligned}
\]
\end{lemma}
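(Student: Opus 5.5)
The proof computes $\varpi^{\otimes_{2}}$ on the generators in $\mathcal{G}_{2}\cup\mathcal{G}'_{2}$ through the formula
\[
\varpi^{\otimes_{2}}(u_{\alpha}u_{k\ell})=\sum_{a,b}[(u_{\alpha_1 a}u_{kb})^{+}]\otimes[(u_{a\alpha_2}u_{b\ell})^{+}],
\]
where $u_{\alpha}=u_{\alpha_1\alpha_2}\in\{u_{43},u_{42},u_{41}\}$. Each class $[(u_{ij}u_{k\ell})^{+}]$ in $\Lambda^{(0,1)}_{3}$ is evaluated by pairing it with the dual basis $E_{1},[E_1,E_2]_{q^2},\underline{E}_{\theta}$. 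The class of a product is then obtained by the rule
\[
[(xy)^{+}]=[x^{+}]\varepsilon(y)+\varepsilon(x)[y^{+}]+[x^{+}]\triangleleft y^{+}.
\]
Every term is thus expressed through the linear classes listed in Lemma \ref{lem:vanishing_relations_for_C2_3D} and the right module structure of Figure \ref{figure:module_structure_full_C2}. The coefficients $\theta^{\beta}_{k\ell\alpha}$ are read off from the coproducts in Proposition \ref{prop:root_vectors_C2} and Lemma \ref{lemma:the_3d_tangent_space_c2}.

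To organise the many generators, I grade by weight. Each tensor $e_\beta\otimes e_\gamma$ has weight $\beta+\gamma$. The possible weights are $2\alpha_1$, $2\alpha_1+\alpha_2$, $2\alpha_1+2\alpha_2$, $3\alpha_1+\alpha_2$, $3\alpha_1+2\alpha_2$ and $4\alpha_1+2\alpha_2$, and $\varpi^{\otimes_2}$ is weight-preserving. It therefore suffices to find, for each weight, one or two generators whose image is nonzero in that weight. I would use these choices:
\begin{itemize}
\item $u_{43}u_{43}$, or an element of $\mathcal{G}_2$ containing it, for weight $2\alpha_1$; this should give $e_1\otimes e_1$.
\item $u_{42}u_{42}$ for weight $2\alpha_1+2\alpha_2$; this should give $e_{2+1}\otimes e_{2+1}$.
\item $u_{41}u_{41}$ for weight $4\alpha_1+2\alpha_2$; this should give $e_{2+1+1}\otimes e_{2+1+1}$.
\item The generator $u_{42}u_{43}-\frac{q-q^{-1}}{q^2+q^{-2}}u_{41}$, exhibited in the proof of Proposition \ref{prop:generating_set_ideal_C2}, and its partner $u_{43}u_{42}-c\,u_{41}$ for weight $2\alpha_1+\alpha_2$. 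A suitable combination of the two yields $e_{1}\otimes e_{2+1}+q^2e_{2+1}\otimes e_1$.
\item Generators such as $u_{41}u_{43}$ and $u_{43}u_{41}$ for weight $3\alpha_1+\alpha_2$, and $u_{41}u_{42}$ and $u_{42}u_{41}$ for weight $3\alpha_1+2\alpha_2$. These yield the two anticommutation relations involving $e_{2+1+1}$.
\end{itemize}

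Next I would show that no other relations occur. In each weight space the image of $\mathcal{G}_2\cup\mathcal{G}'_2$ is spanned by the stated tensor. For the one-dimensional weight spaces this is immediate. The weights $2\alpha_1+\alpha_2$, $3\alpha_1+\alpha_2$ and $3\alpha_1+2\alpha_2$ have two-dimensional spaces, so there one must check that all generators land on a single line. The argument is a dimension count. The images of all of $\underline{\mathcal{I}}_3$ lie in the kernel of the multiplication map into the expected quantum exterior square. Moreover, the $\mathrm{U}_q(\ell_{\alpha_2})$-covariance of $\varpi^{\otimes_2}$, which follows from the adjoint invariance lemma, forces the image in each such space to be a proper submodule. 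A proper submodule of a two-dimensional weight space that contains one nonzero tensor is the line it spans. Combined with Lemma \ref{lem:vanishing_relations_for_C2_3D}, this determines the quadratic relation space as six-dimensional.

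The main obstacle is tracking the $q$-coefficients in the mixed weight $2\alpha_1+\alpha_2$. Two distinct mechanisms contribute there: the arrow $\alpha_1\to\alpha_1+\alpha_2$ via $u_{32}$, and the diagonal scalings coming from $K_1,K_2$. Obtaining the exact factor $q^{2}$, rather than a $q^{-2}$ or a sign, requires care with the antipode and with the $(q^{-2}-q^{2})$ term in $\Delta([E_1,E_2]_{q^2})$. I would first compute $\varpi^{\otimes_2}(u_{43}u_{32})$ directly, and then cross-check the result against the $G_1$-type braiding $\widecheck\sigma(E_1\otimes E_2E_1)$ from Proposition \ref{prop:relations_C2_flag_S}.
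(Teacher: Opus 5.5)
\textbf{Gap: the completeness step.} Your weight grading and your choice of representatives would suffice to show that the six tensors actually occur. The step ``no other relations occur'' is a genuine gap, for two reasons.

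First, the dimension count is circular. $\Lambda^{(0,2)}_{3}$ is \emph{defined} as the quotient of $\Lambda^{(0,1)}_{3}\otimes\Lambda^{(0,1)}_{3}$ by $\varpi^{\otimes_{2}}(\underline{\mathcal{I}}_{3})$. So there is no independent ``expected quantum exterior square'' whose multiplication kernel could be shown to contain these images. You would first have to construct a calculus extending $\underline{\Omega}^{1}_{3}(\mathrm{Sp}_{4})$ with three-dimensional degree-two part, and you do not.

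Second, covariance does not close the gap. Weight spaces are not $\mathrm{U}_{q}(\ell_{\alpha_{2}})$-submodules, since $E_{2},F_{2}$ shift weights by $\pm\alpha_{2}$. Write $V=\mathrm{Span}_{\mathbb{C}}\{e_{1},e_{2+1}\}$. The tensor square then splits as $V\otimes V$ (a three-dimensional plus a one-dimensional irreducible), two isomorphic doublets $V\otimes e_{2+1+1}$ and $e_{2+1+1}\otimes V$, and the line through $e_{2+1+1}\otimes e_{2+1+1}$.
\begin{itemize}
\item An invariant relation space may or may not contain the singlet sitting in weight $2\alpha_{1}+\alpha_{2}$. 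Nothing forces the image there to be ``proper''.
\item In weights $3\alpha_{1}+\alpha_{2}$ and $3\alpha_{1}+2\alpha_{2}$, the two doublets are isomorphic, so there is a one-parameter family of invariant doublets. Invariance cannot single out the one through $e_{2+1+1}\otimes e_{1}+e_{1}\otimes e_{2+1+1}$, nor exclude that both occur.
\end{itemize}

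\textbf{Remedy.} Do the exhaustive computation; your grading makes it short.
\begin{itemize}
\item In weight $2\alpha_{1}+\alpha_{2}$, the only contributing generators are the four elements of $\mathcal{G}'_{2}$ built on $u_{43}u_{31}$, $u_{43}u_{42}$, $u_{42}u_{21}$, $u_{42}u_{43}$ (each corrected by a multiple of $u_{41}$).
\item In weight $3\alpha_{1}+\alpha_{2}$, they are $u_{43}u_{41}$, $u_{41}u_{21}$, $u_{41}u_{43}$.
\item In weight $3\alpha_{1}+2\alpha_{2}$, they are $u_{42}u_{41}$, $u_{41}u_{31}$, $u_{41}u_{42}$.
\end{itemize}
You must compute all ten images and check that each is proportional to the relevant tensor. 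This is exactly what the paper's complete list of nonzero images accomplishes.

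In particular, each of the four weight-$(2\alpha_{1}+\alpha_{2})$ images is individually a multiple of $e_{1}\otimes e_{2+1}+q^{2}e_{2+1}\otimes e_{1}$. No ``suitable combination'' is involved. If two of them were not proportional, you would get the stronger $e_{1}\wedge e_{2+1}=e_{2+1}\wedge e_{1}=0$.

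\textbf{Your planned cross-check would mislead you.}
\begin{itemize}
\item $\varpi^{\otimes_{2}}(u_{43}u_{32})$ has weight $\alpha_{1}+\alpha_{2}$, which is not a weight of the tensor square, so it vanishes.
\item The braiding $\widecheck{\sigma}$ of Proposition~\ref{prop:relations_C2_flag_S} belongs to the two-dimensional contact calculus. Its $G_{1}$-relation from $e_{1}\otimes e_{2+1}$ is $e_{1}\otimes e_{2+1}+e_{2+1}\otimes e_{1}$, which drives the collapse there. It carries no information about the factor $q^{2}$ in the three-dimensional framing calculus, and in fact is not proportional to it.
\end{itemize}
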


\begin{proof}
The proof is a direct calculation involving the map $\varpi^{\otimes_{2}}$. We find the only non trivial contributions from the generating set $\mathcal{G}_{2}$ to be 
\begin{align*}
\varpi^{\otimes_{2}}(u_{43}u_{21}) & = -(q+q^{-1})e_{1}\otimes e_{1},\\ 
\varpi^{\otimes_{2}}(u_{43}u_{41}) & = -(q^{3}+q^{-3})(e_{2+1+1}\otimes e_{1} + e_{1}\otimes e_{2+1+1}),\\ 
\varpi^{\otimes_{2}}(u_{43}u_{43}) & = (q+q^{-1})e_{1}\otimes e_{1},\\ 
\varpi^{\otimes_{2}}(u_{42}u_{31}) & = q(q^{2}+1)e_{2+1}\otimes e_{2+1},\\ 
\varpi^{\otimes_{2}}(u_{42}u_{41}) & = -(q^{3}+q^{-3})(e_{2+1+1}\otimes e_{2+1} + e_{2+1}\otimes e_{2+1+1}),\\ 
\varpi^{\otimes_{2}}(u_{42}u_{42}) & = (q+q^{-1})e_{2+1}\otimes e_{2+1},\\
\varpi^{\otimes_{2}}(u_{41}u_{21}) & = (q^{2}+q^{-4})(e_{2+1+1}\otimes e_{1}+e_{1}\otimes e_{2+1+1}),\\ 
\varpi^{\otimes_{2}}(u_{41}u_{31}) & = -(q^{4}+q^{-2})(e_{2+1+1}\otimes e_{2+1}+e_{2+1}\otimes e_{2+1+1}) \\
\varpi^{\otimes_{2}}(u_{41}u_{41}) & = (q+q^{-1})(q^{2}+q^{-2})(q^{3}+q^{-3}) e_{2+1+1}\otimes e_{2+1+1},\\ 
\varpi^{\otimes_{2}}(u_{41}u_{42}) & = -(q^{4}+q^{-2})(e_{2+1+1}\otimes e_{2+1} + e_{2+1}\otimes e_{2+1+1}),\\
\varpi^{\otimes_{2}}(u_{41}u_{43})  & = -(q^{4}+q^{-2})(e_{2+1+1}\otimes e_{1}+e_{1}\otimes e_{2+1+1}).
\end{align*}
Proceeding similarly, the only non-trivial contributions from the generating set $\mathcal{G}'_{2}$ are
\begin{align*}
	\varpi^{\otimes_{2}}\left(u_{43}u_{31}
 +\frac{q^{-1}-q^{-3}}{q^2+q^{-2}}u_{41}\right) & = \frac{q(q^{2}+1)}{q^{2}+q^{-2}}(e_{1}\otimes e_{2+1} + q^{2}e_{2+1}\otimes e_{1})\\ 
 \varpi^{\otimes_{2}}\left(u_{43}u_{42} +\frac{q^3-q}{q^2+q^{-2}}u_{41}\right)& = \frac{q^{-3}(q^{2}+1)}{q^{2}+q^{-2}}(e_{1}\otimes e_{2+1} + q^{2}e_{2+1}\otimes e_{1}),\\ 
 \varpi^{\otimes_{2}}\left(u_{42}u_{21} +\frac{q^{-1}-q^{-3}}{q^2+q^{-2}}u_{41}\right) & = -\frac{q^{-3}(q^{2}+1)}{q^{2}+q^{-2}}(e_{1}\otimes e_{2+1} + q^{2}e_{2+1}\otimes e_{1}),\\ 
 \varpi^{\otimes_{2}}\left(u_{42}u_{43}  -\frac{q-q^{-1}}{q^2+q^{-2}}u_{41}\right)& = \frac{q^{-1}(q^{2}+1)}{q^{2}+q^{-2}}(e_{1}\otimes e_{2+1} + q^{2}e_{2+1}\otimes e_{1}).
\end{align*}
Passing to the exterior algebra gives the claim. 
\end{proof}

Finally, in the next theorem we provide the full set of relations for the exterior algebra corresponding to the maximal prolongation of the FODC in Equation \eqref{eq:full_calculus_C2}. In particular, we show that the latter shares the same graded dimension of the classical de Rham complex for the underlying holomorphic contact variety. The proof consists of showing that the set of relations obtained above is closed under the action of $\mathcal{O}_{q}(\mathrm{Sp}_{4})$ and that the corresponding set of monomials provides a Gr\"{o}bner basis. 
\begin{thm}
Let $\Lambda^{(0,\bullet)}_{3}$ be the antiholomorphic exterior algebra of the maximal prolongation of the calculus $\underline{\Omega}_{3}^{(0,1)}(\mathrm{Sp}_{4})$ over $\mathcal{O}_{q}(\mathrm{Sp}_{4})$. Then $\Lambda^{(0,\bullet)}_{3}$ is generated in degree one by $e_{1},e_{2+1},e_{2+1+1}$, subject to the same relations as in Lemma \ref{lem:relations_from_G2_C2_3D}. Moreover, the ordered monomials
\[
\begin{gathered}
1,\\
e_{1},\quad e_{2+1},\quad e_{2+1+1},\\
e_{1}\wedge e_{2+1},\quad
e_{1}\wedge e_{2+1+1},\quad
e_{2+1}\wedge e_{2+1+1},\\
e_{1}\wedge e_{2+1}\wedge e_{2+1+1}
\end{gathered}
\]
form a $\mathbb{C}$-basis of $\Lambda^{(0,\bullet)}_{3}$. In particular, the antiholomorphic exterior algebra has classicaldimensions:
\[
\dim_{\mathbb{C}}\Lambda^{(0,k)}_{3} =\binom{3}{k}, \quad 0\leq k\leq 3, \qquad \Lambda^{(0,k)}_{3}=0, \quad k>3.
\]

\begin{proof}
First, we prove that the set of relations obtained in Lemma \ref{lem:relations_from_G2_C2_3D} is closed under the right $\mathcal{O}_{q}(\mathrm{Sp}_{4})$-action. This basically follows as a direct consequence of how $\Lambda^{(0,1)}_{3}$ is structured as an $\mathcal{O}_{q}(\mathrm{Sp}_{4})$-module. We provide explicit calculations to show the claim. To begin, consider
\[
(e_{1}\otimes e_{1})\triangleleft_{\otimes}u_{ij}  = \sum_{a=1}^{4}[e_{1} u_{ia}]\otimes [e_{1} u_{aj}]. \\ 
\]
By Figure \ref{figure:module_structure_full_C2}, the only matrix generators $u_{ij}$ with $i\neq j$ acting nontrivially on $e_{1}$ are $u_{32},u_{31}$ and $u_{42}$.  We obtain
\begin{align*}
(e_{1}\otimes e_{1})\triangleleft_{\otimes}u_{32} &=(q^{-2}-q^{2})\bigl(e_{1}\otimes e_{2+1}+q^{2}e_{2+1}\otimes e_{1} \bigr),\\
(e_{1}\otimes e_{1})\triangleleft_{\otimes}u_{31} &=(q^{-2}-q^{-4})\bigl(e_{1}\otimes e_{2+1+1}+e_{2+1+1}\otimes e_{1}\bigr),\\
(e_{1}\otimes e_{1})\triangleleft_{\otimes}u_{42}&=(q^{4}-q^{2})\bigl(e_{1}\otimes e_{2+1+1}+e_{2+1+1}\otimes e_{1}\bigr).
\end{align*}
All other off-diagonal matrix generators annihilate $e_{1}\otimes e_{1}$, while each diagonal generator acts by a scalar. Hence the action of every matrix generator on $e_{1}\otimes e_{1}$ belongs to the span of the six quadratic relations.  We repeat the same procedure for the other relations, listing only the nonzero off-diagonal actions.
 
 \begin{align*}
(e_{2+1}\otimes e_{2+1})\triangleleft_{\otimes}u_{21} &=(q^{-2}-q^{-4})\bigl(e_{2+1+1}\otimes e_{2+1}+e_{2+1}\otimes e_{2+1+1}\bigr),\\
(e_{2+1}\otimes e_{2+1})\triangleleft_{\otimes}u_{43} &=(1-q^{2})\bigl(e_{2+1}\otimes e_{2+1+1}+e_{2+1+1}\otimes e_{2+1}\bigr),\\ 
(e_{2+1+1}\otimes  e_{1}+e_{1}\otimes  e_{2+1+1}) \triangleleft_{\otimes} u_{32}& = q(q^{-2}-q^{2})(e_{2+1+1}\otimes e_{2+1} + e_{2+1}\otimes e_{2+1+1}),\\ 
(e_{2+1+1}\otimes  e_{1}+e_{1}\otimes  e_{2+1+1}) \triangleleft_{\otimes} u_{31}&  =(q^{-1}-q^{-5})e_{2+1+1}\otimes e_{2+1+1},\\ 
(e_{2+1+1}\otimes  e_{1}+e_{1}\otimes  e_{2+1+1}) \triangleleft_{\otimes} u_{42}& = (q^{5}-q) e_{2+1+1}\otimes e_{2+1+1},\\
	(e_{2+1+1}\otimes e_{2+1} +e_{2+1}\otimes e_{2+1+1})\triangleleft_{\otimes} u_{21} & = (1-q^{-2})(q^{-1}+q^{-3})e_{2+1+1}\otimes e_{2+1+1}, \\ 
    (e_{2+1+1}\otimes e_{2+1} +e_{2+1}\otimes e_{2+1+1})\triangleleft_{\otimes} u_{43} & = -(1-q^{-2})(q^{3}+q)e_{2+1+1}\otimes e_{2+1+1},\\ 
(e_{1}\otimes e_{2+1} + q^{2}e_{2+1}\otimes e_{1})\triangleleft_{\otimes} u_{21} & = (1-q^{-2})(e_{1}\otimes e_{2+1+1} + e_{2+1+1}\otimes e_{1}),\\ 
(e_{1}\otimes e_{2+1} + q^{2}e_{2+1}\otimes e_{1})\triangleleft_{\otimes} u_{32} & = (q^{-2}-q^{2})(1+q^{4})e_{2+1}\otimes e_{2+1},\\ 
(e_{1}\otimes e_{2+1} + q^{2}e_{2+1}\otimes e_{1})\triangleleft_{\otimes} u_{43} & =(1-q^{2})(e_{1} \otimes e_{2+1+1} + e_{2+1+1}\otimes e_{1}),\\ 
(e_{1}\otimes e_{2+1} + q^{2}e_{2+1}\otimes e_{1})\triangleleft_{\otimes} u_{42} & = (q^{6}-q^{4})(e_{2+1+1}\otimes e_{2+1}+  e_{2+1}\otimes e_{2+1+1}),\\ 
(e_{1}\otimes e_{2+1}+q^{2}e_{2+1}\otimes e_{1}) \triangleleft_{\otimes}u_{31} &=(q^{-2}-q^{-4}) \bigl(e_{2+1+1}\otimes e_{2+1} +e_{2+1}\otimes e_{2+1+1} \bigr).
\end{align*}

All off-diagonal actions not listed above vanish, and each diagonal matrix generator acts on every displayed relation tensor by a scalar. Thus the linear span of the six relation tensors is a right $\mathcal{O}_{q}(\mathrm{Sp}_{4})$-submodule. The computations of $\varpi^{\otimes_{2}}(\mathcal{G})$ show that these images span precisely this subspace. Since the quadratic relation space of the maximal prolongation is the right $\mathcal{O}_{q}(\mathrm{Sp}_{4})$-submodule generated by these images, it coincides with their linear span. Consequently, the six displayed relations give a complete presentation of $\Lambda^{(0,\bullet)}_{3}$.  It remains to prove that the ordered monomials in the statement form a basis. 

For this, we apply the Diamond Lemma \cite[Theorem~1.2]{Ber78}, using the ordering

\[
e_{1} < e_{2+1} < e_{2+1+1} 
\]

\noindent on the generators of $\Lambda^{(0,1)}_{3}$, and orient the relations as 
\[
\begin{aligned}
e_{2+1}e_{1}&\longrightarrow -q^{-2}e_{1}e_{2+1}, & e_{2+1+1}e_{1}&\longrightarrow -e_{1}e_{2+1+1},\\
e_{2+1+1}e_{2+1}&\longrightarrow -e_{2+1}e_{2+1+1}, & e_{1}^{2}, e_{2+1}^{2}, e_{2+1+1}^{2} &\longrightarrow 0.
\end{aligned}
\]
These reductions strictly decrease the degree-lexicographic order induced by the chosen ordering of the generators. Since all leading words have length two, there are no inclusion ambiguities. The overlap ambiguities are precisely the length-three words whose generators occur in weakly decreasing order. For the three cubes, reducing either adjacent pair gives zero.  Next consider the six overlap ambiguities involving exactly two distinct generators. In each case, reducing the adjacent square gives zero immediately (the alternative reduction paths also give zero):

 \[
\begin{alignedat}{3}
e_{2+1}e_{1}e_{1}
&\;\longrightarrow\; -q^{-2}e_{1}e_{2+1}e_{1}
& &\longrightarrow\; q^{-4}e_{1}e_{1}e_{2+1}
& &\longrightarrow\; 0,\\
e_{2+1}e_{2+1}e_{1}
&\;\longrightarrow\; -q^{-2}e_{2+1}e_{1}e_{2+1}
&&\longrightarrow\; q^{-4}e_{1}e_{2+1}e_{2+1}
&&\longrightarrow\; 0,\\
e_{2+1+1}e_{1}e_{1}
&\;\longrightarrow\; -e_{1}e_{2+1+1}e_{1}
&&\longrightarrow\; e_{1}e_{1}e_{2+1+1}
&&\longrightarrow\; 0,\\
e_{2+1+1}e_{2+1+1}e_{1}
&\;\longrightarrow\; -e_{2+1+1}e_{1}e_{2+1+1}
&&\longrightarrow\; e_{1}e_{2+1+1}e_{2+1+1}
&&\longrightarrow\; 0,\\
e_{2+1+1}e_{2+1}e_{2+1}
&\;\longrightarrow\; -e_{2+1}e_{2+1+1}e_{2+1}
&&\longrightarrow\; e_{2+1}e_{2+1}e_{2+1+1}
&&\longrightarrow\; 0,\\
e_{2+1+1}e_{2+1+1}e_{2+1}
&\;\longrightarrow\; -e_{2+1+1}e_{2+1}e_{2+1+1}
&&\longrightarrow\; e_{2+1}e_{2+1+1}e_{2+1+1}
&&\longrightarrow\; 0.
\end{alignedat}
\]

Finally, we analyse the remaining combination, corresponding to the products of the three generators of $\Lambda^{(0,1)}_{3}$:
\begingroup
\small
\[
\begin{tikzcd}[
  column sep={24mm,between origins},
  row sep={12mm,between origins},
  cells={nodes={inner sep=1.5pt}},
  arrows={draw=black!100, line width=0.45pt}
]
&
e_{2+1+1}e_{2+1}e_1
  \arrow[ld] \arrow[rd]
&
\\
-e_{2+1}e_{2+1+1}e_1
  \arrow[d]
&
&
-q^{-2}e_{2+1+1}e_1e_{2+1}
  \arrow[d]
\\
e_{2+1}e_1e_{2+1+1}
  \arrow[rd]
&
&
q^{-2}e_1e_{2+1+1}e_{2+1}
  \arrow[ld]
\\
&
-q^{-2}e_1e_{2+1}e_{2+1+1}
&
\end{tikzcd}
\]
\endgroup
Thus all overlap ambiguities are resolvable. By the Diamond Lemma, the irreducible words form a vector space basis of $\Lambda^{(0,\bullet)}_{3}$. These are precisely the strictly increasing monomials listed in the statement. Counting them by degree gives 
\[
\dim_{\mathbb{C}}\Lambda^{(0,k)}_{3}=\binom{3}{k}, \quad 0\leq k\leq 3,  \qquad \Lambda^{(0,k)}_{3}=0, \quad k> 3.
\]

\end{proof}
\end{thm}

Finally, let $V^{(0,\bullet)}_{3}$ be the exterior algebra of the maximal prolongation of the projected calculus $\Omega^{(0,1)}_{3}(\mathcal{O}_{q}(\mathrm{Sp}_{4}/\mathrm{L}_{\alpha_{2}}))$ over the corresponding quantised flag manifold. In the case at hand, the natural map

\[
\begin{aligned}
j\colon V^{(0,1)}&\to \Lambda^{(0,1)},\\
[b]_{V}&\mapsto [b]_{\Lambda},
\end{aligned}
\]

\noindent where $[-]_{V}$ and $[-]_{\Lambda}$ denote cosets in $\mathcal{O}_{q}(\mathrm{Sp}_{4}/\mathrm{L}_{\alpha_{2}})^{+}/\mathcal{I}_{3}$ and $\mathcal{O}_{q}(\mathrm{Sp}_{4})^{+}/\underline{\mathcal{I}}_{3}$, respectively, is a $\mathbb{C}$-linear isomorphism. Under this identification, the quadratic relations defining $V^{(0,\bullet)}_{3}$ coincide with those defining $\Lambda^{(0,\bullet)}_{3}$.

\begin{thm}\label{thm:maximal_prolongation_for_the_contact_flag_c2_3d}
The above identification induces an isomorphism of graded algebras
\[
j^{(\bullet)}\colon V^{(0,\bullet)}_{3}\to \Lambda^{(0,\bullet)}_{3}.
\]
Equivalently, the quadratic relation spaces of the two maximal prolongations coincide under $j\otimes j$, that is, $(j\otimes j)\bigl(\mathcal{I}^{(2)}_{3}\bigr)=\underline{\mathcal{I}}_{3}^{(2)}$, and in particular,
\[
\dim_{\mathbb{C}}V^{(0,k)}_{3} = \binom{3}{k}, \quad 0\leq k\leq 3,\qquad V^{(0,k)}_{3}=0,\quad k>3
\]

\end{thm}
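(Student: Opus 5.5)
The plan is to reduce the statement to a comparison of quadratic relation spaces, and then use the description of these spaces recalled in Section~\ref{section:contact_calculus_C2}. Both exterior algebras are quadratic: by Proposition~\ref{proposition:quantum_exterior_algebra}, $\Lambda^{(0,\bullet)}_3=\mathcal T(\Lambda^{(0,1)}_3)/\langle\underline{\mathcal I}^{(2)}_3\rangle$. For the flag, the $A$-bimodule $N^{(2)}$ is generated in degree two by Theorem~\ref{theorem:relations_max_prolongation_flag}, and under $q_2$ it becomes $\mathcal I^{(2)}_3\subseteq V^{(0,1)}_3\otimes V^{(0,1)}_3$. The map $j$ is already known to be a linear isomorphism. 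So it is enough to prove $(j\otimes j)(\mathcal I^{(2)}_3)=\underline{\mathcal I}^{(2)}_3$. Then $j$ extends to an isomorphism of tensor algebras, this isomorphism maps one two-sided ideal onto the other, and $j^{(\bullet)}$ is an isomorphism of graded algebras. The dimension formula then follows from the preceding theorem.

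\textbf{Step 1: the inclusion $\subseteq$.} By Proposition A.1 of \cite{ale_re_jun}, $q_2(N^{(2)})$ is the image of $\varpi^{\otimes 2}$ on $\mathcal I_3$. Here the brackets are read in the framing cotangent space, using the embedding $V^{(0,1)}_3\hookrightarrow\Lambda^{(0,1)}_3$, which is exactly $j$. Now $\mathcal I_3\subseteq\underline{\mathcal I}_3$: an element of $B^+$ killed by $T^{(0,1)}_3$ is killed by $\underline T^{(0,1)}_3$, because $\underline T^{(0,1)}_3$ restricts to $T^{(0,1)}_3$ on $B$ (Proposition~\ref{prop:tangent_space_with_theta_C2} and Lemma~\ref{lemma:the_3d_tangent_space_c2}). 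Also, $\varpi^{\otimes2}$ is given by the same coproduct formula on $A$ and on $B$. Together these give $(j\otimes j)(\mathcal I^{(2)}_3)\subseteq\underline{\mathcal I}^{(2)}_3$.

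\textbf{Step 2: the inclusion $\supseteq$.} Since the target space is only six-dimensional, I would produce explicit elements of $\mathcal I_3$ whose images span it. By Lemma~\ref{lem:relations_from_G2_C2_3D}, the span is that of $e_1\otimes e_1$, $e_{2+1}\otimes e_{2+1}$, $e_{2+1+1}\otimes e_{2+1+1}$, the two symmetric tensors involving $e_{2+1+1}$, and $e_1\otimes e_{2+1}+q^2e_{2+1}\otimes e_1$.

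I would work with the generators $z_{ij}=u_{i4}S(u_{4j})$ of $B$ and use the $G_2$-generators $q_2(\nabla(\mathrm d z_{ij}))=\varpi^{\otimes2}(z_{ij})$. The computation of $\varpi^{\otimes 2}(z_{41})$ in Proposition~\ref{prop:relations_C2_flag_S} already yields the symmetric combination of $e_1$ and $e_{2+1}$, now recomputed with the $\tilde E_\theta$-term included. I would supplement these with the $G_1$-relations $v\otimes w+\sigma(v\otimes w)$, computing $\widecheck\sigma$ on $T^{(0,1)}_3\otimes T^{(0,1)}_3$ via Lemma~\ref{lemma:bimodule_map_tangent_space_sigma}, extended to the summand $T^{(0,1)}_\theta$. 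I would also use quadratic elements $z_{ab}z_{cd}-\lambda z_{41}$ lying in $\mathcal I_3$, chosen to mirror the $\mathcal G_2,\mathcal G'_2$ elements of Proposition~\ref{prop:generating_set_ideal_C2}.

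\textbf{Main obstacle.} The hard part is realising the squares $e_{2+1+1}\otimes e_{2+1+1}$ and the mixed tensors involving $e_\theta$ on the flag side. Lemma~\ref{lemma:bimodule_map_tangent_space_sigma} is stated only for $T=\mathrm U_q(\ell_S)E_x$, so the extension of $\sigma$ to the $\theta$-summand has to be set up separately. The backup route avoids $\sigma$. The image of Step 1 is a right $B$-submodule of $\Lambda^{(0,1)}_3\otimes\Lambda^{(0,1)}_3$ (by the $B$-bimodule structure of $N^{(2)}$). I would therefore start from one nonzero relation, such as $e_{1}\otimes e_{2+1}+q^{2}e_{2+1}\otimes e_{1}$, and act on it by elements $z_{ij}\in B$, using the explicit action table from the proof of the preceding theorem. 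Since $B$ is expected to move the relation tensors along the chain $\alpha_1\to\alpha_1+\alpha_2\to2\alpha_1+\alpha_2$ of Figure~\ref{figure:module_structure_full_C2}, this should generate all six relations, which gives equality.
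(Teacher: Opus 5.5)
Your reduction to $(j\otimes j)(\mathcal I^{(2)}_3)=\underline{\mathcal I}^{(2)}_3$ and your Step~1 coincide with the paper's argument. The paper passes from surjectivity of $j\otimes j|_{\mathcal I^{(2)}_3}$ to injectivity of $j^{(2)}$ via the snake lemma, which is equivalent to your direct argument since $j\otimes j$ is bijective.

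The gap is in Step~2, and it is substantive: the $G_2$-route you propose is false for this calculus.
\begin{itemize}
\item Since $\langle E_\theta,z_{41}\rangle=-q^4(q^2+q^{-2})\neq0$, we have $z_{41}\notin\mathcal I_3$. So Proposition A.1 of \cite{ale_re_jun} says nothing about $\varpi^{\otimes2}(z_{41})$, and in fact it is not a relation.
\item To see this, note $\underline E_\theta=E_1[E_1,E_2]_{q^2}-q^{-2}[E_1,E_2]_{q^2}E_1$. Hence for $x$ of weight $\theta$ one has $\varpi^{\otimes2}(x)=a\,e_1\otimes e_{2+1}+b\,e_{2+1}\otimes e_1$ with $a-q^{-2}b=\langle\underline E_\theta,x\rangle$.
\item So the weight-$\theta$ part of $\underline{\mathcal I}^{(2)}_3=\varpi^{\otimes2}(\underline{\mathcal I}_3)$ lies on the line through $e_1\otimes e_{2+1}+q^2e_{2+1}\otimes e_1$, while $\varpi^{\otimes2}(z_{41})$ lies off it. Adjoining $\underline E_\theta$ does not change this tensor, since $\theta$ splits into two weights of $\underline T^{(0,1)}_3$ only as $\alpha_1+(\alpha_1+\alpha_2)$.
\item Imposing it would kill both $e_1\wedge e_{2+1}$ and $e_{2+1}\wedge e_1$, giving $\dim V^{(0,2)}_3=2$. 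Classically, the form dual to the Reeb direction is not closed.
\end{itemize}
Consequently, no torsion-free connection on this calculus can satisfy $q_2(\nabla(\mathrm d b))=\varpi^{\otimes2}(b)$. The $\sigma$-formula of Section~\ref{section:contact_calculus_C2} is also unavailable: it presupposes ${}^{A}_{B}\mathrm{Mod}_0$, whereas $z_{43}z_{42}\notin\mathcal I_3$ shows $B^+B^+\not\subseteq\mathcal I_3$.

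Your backup route fails for weight reasons. The right action of $A$ on $\Lambda^{(0,1)}_3\otimes\Lambda^{(0,1)}_3$ only raises weights. So the $B$-submodule generated by the weight-$\theta$ relation never contains $e_1\otimes e_1$, which has weight $2\alpha_1$.

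Your third ingredient, quadratic elements of $B$ lying in $\mathcal I_3$, is the right one, but it has to carry the whole argument; this is what the paper does, using Proposition A.1 alone.
\begin{itemize}
\item The products $z_{43}z_{43}$, $z_{42}z_{42}$, $z_{41}z_{41}$, $z_{43}z_{41}$, $z_{42}z_{41}$ have weights $2\alpha_1$, $2\alpha_1+2\alpha_2$, $2\theta$, $\alpha_1+\theta$, $\alpha_1+\alpha_2+\theta$. None of these is a weight of $T^{(0,1)}_3$, so they lie in $\mathcal I_3$ with no correction.
\item Their images are nonzero multiples of $e_1\otimes e_1$, $e_{2+1}\otimes e_{2+1}$, $e_{2+1+1}\otimes e_{2+1+1}$ and the two symmetric tensors containing $e_{2+1+1}$.
\item Only in weight $\theta$ is a correction needed: subtract from $z_{43}z_{42}$ the multiple of $z_{41}$ fixed by pairing with $E_\theta$. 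This gives the remaining relation.
\end{itemize}
Thus the squares and $e_{2+1+1}$-tensors you flag as the ``main obstacle'' come for free from weight bookkeeping, and only one element needs an actual computation. As written, your Step~2 does not contain this argument and instead leans on ingredients that do not hold for the Reeb calculus.
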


\begin{proof}
Consider the  map $j\colon V_{3}^{(0,1)}\to\Lambda_{3}^{(0,1)}$ sending  $[b]_{V} \mapsto[b]_{\Lambda}$. For the case at hand, this map is a $\mathbb{C}$-linear isomorphism, and in particular, its image is a right $\mathcal{O}_{q}(\mathrm{Sp}_{4})$-submodule. By injectivity, we have 
\[
\mathcal{I}_{3}=\underline{\mathcal{I}}_{3}\cap \mathcal{O}_{q}(\mathrm{Sp}_{4}/\mathrm{L}_{\alpha_{2}}).
\] 
Generally speaking, Proposition A.1 of \cite{ale_re_jun} describes the quadratic relations of the maximal prolongation of a left $A$-covariant FODC on a quantum homogeneous space $B\subseteq A$, in the presence of a framing calculus on $A$. In particular, for the case at hand, it provides an identification of the quadratic relation space $\mathcal{I}^{(2)}_{3}$ of the calculus on the contact quantised flag manifold $\mathcal{O}_{q}(\mathrm{Sp}_{4}/\mathrm{L}_{\alpha_{2}})$ through
\[
\mathcal{I}^{(2)}_{3} := \operatorname{span}_{\mathbb{C}} \left\{ j^{-1}\left([x_{(1)}^{+}]_{\Lambda}\right)\otimes j^{-1}\left([x_{(2)}^{+}]_{\Lambda}\right)\mid x \in\underline{\mathcal{I}}\cap \mathcal{O}_{q}(\mathrm{Sp}_{4}/\mathrm{L}_{\alpha_{2}}) \right\}.
\]
The resulting exterior algebra is
\[
V^{(0,\bullet)}_{3} = \mathcal{T}\bigl(V^{(0,1)}_{3}\bigr)/ \bigl\langle\mathcal{I}^{(2)}_{3}\bigr\rangle,
\]
Since $\mathcal{I}_{3} = \underline{\mathcal{I}}_{3}\cap \mathcal{O}_{q}(\mathrm{Sp}_{4}/\mathrm{L}_{\alpha_{2}})$, the above description immediately gives
\[
(j\otimes j)\bigl(\mathcal{I}^{(2)}_{3}\bigr)
\subseteq
\underline{\mathcal{I}}^{(2)}_{3}.
\]
Moreover, since $j\otimes j$ is a $\mathbb C$-linear isomorphism, its restriction to $\mathcal I^{(2)}_3$ is injective and it induces a surjective map $j^{(2)}$ on the corresponding quotients. Thus, we have the following commuting diagram

\[
\begin{tikzcd}[column sep=large, row sep=huge]
0 \arrow[r]
&
\mathcal I^{(2)}_{3}
    \arrow[r, hook]
    \arrow[d, "j\otimes j|_{\mathcal{I}^{(2)}_{3}}", hook]
&
\bigl(V^{(0,1)}_{3}\bigr)^{\otimes 2}
    \arrow[r, "\pi_V", two heads]
    \arrow[d, "j\otimes j", "\simeq"']
&
V^{(0,2)}_{3}
    \arrow[r]
    \arrow[d, "j^{(2)}", two heads]
&
0
\\
0 \arrow[r]
&
\underline{\mathcal I}^{(2)}_{3}
    \arrow[r, hook]
&
\bigl(\Lambda^{(0,1)}_{3}\bigr)^{\otimes 2}
    \arrow[r, "\pi_\Lambda", two heads]
&
\Lambda^{(0,2)}_{3}
    \arrow[r]
&
0.
\end{tikzcd}
\]
 By the snake lemma, we have a long exact sequence 

\[
\begin{tikzcd}[
  column sep=large,
  row sep=22mm
]
0 \arrow[r]
&
\ker(j\otimes j|_{\mathcal I^{(2)}_{3}})
  \arrow[r]
&
\ker(j\otimes j)
  \arrow[r]
&
\ker(j^{(2)})
  \arrow[dlll,
    to path={
      let
        \p1=(\tikztostart.east),
        \p2=(\tikztotarget.west),
        \n1={(\y1+\y2)/2}
      in
        (\p1)
        .. controls
          ({\x1+12mm},\y1)
          and
          ({\x1+12mm},\n1)
        .. (\x1,\n1)
        -- (\x2,\n1)
        .. controls
          ({\x2-12mm},\n1)
          and
          ({\x2-12mm},\y2)
        .. (\p2)
        \tikztonodes
    }]
\\
\operatorname{coker}(
  j\otimes j|_{\mathcal I^{(2)}_{3}}
)
  \arrow[r]
&
\operatorname{coker}(j\otimes j)
  \arrow[r]
&
\operatorname{coker}(j^{(2)})
  \arrow[r]
&
0
\end{tikzcd}
\]

reducing to  
\[
0 \to \ker (j^{(2)}) \stackrel{\cong}{\to}  \operatorname{coker}(j\otimes j|_{\mathcal I^{(2)}_{3}}) \to 0 
\]
Consequently, to show injectivity of $j^{(2)}$ holds if and only if $j\otimes j|_{\mathcal I^{(2)}_{3}}$ is surjective. For the case at hand, surjectivity amounts to showing that every defining quadratic relation for the exterior algebra $\Lambda^{(0,\bullet)}_{3}$ lies in $(j\otimes j)(\mathcal I^{(2)}_{3})$. By the description of $\mathcal I^{(2)}_{3}$ above, this amounts to expressing each of the six relation tensors as a linear combination of elements of the form $\varpi^{\otimes_{2}}(z)$, with $z\in\underline{\mathcal{I}_{3}}\cap \mathcal{O}_{q}(\mathrm{Sp}_{4}/\mathrm{L}_{\alpha_{2}})$. 

For the last computations, we infer the possible elements of $\mathcal{O}_{q}(\mathrm{Sp}_{4}/\mathrm{L}_{\alpha_{2}})$ giving the already established quadratic relations for $\Lambda^{(0,2)}_{3}$ reasoning on the underlying root system. For example, we have 

\begin{align*}
\varpi^{\otimes_{2}}(z_{43}z_{43}) & = \langle E_{1}E_{1},z_{43}z_{43}\rangle e_{1}\otimes e_{1}\\ 
& = \big(\langle E_{1}^{2},z_{43}\rangle\langle K_{1}^{2},z_{43}\rangle + \langle E_{1},z_{43}\rangle\langle K_{1}E_{1},z_{43}\rangle  \\ 
& \quad + \langle E_{1},z_{43}\rangle\langle E_{1}K_{1},z_{43}\rangle   + \langle 1,z_{43}\rangle\langle E_{1}^{2},z_{43}\rangle \big)e_{1}\otimes e_{1} \\ 
& =  q\left( \langle K_{1}E_{1},z_{43}\rangle + \langle E_{1}K_{1},z_{43}\rangle \right) e_{1}\otimes e_{1}\\ 
& =  q\left( \langle K_{1},u_{44}S(u_{33})\rangle \langle E_{1},z_{43}\rangle + \langle E_{1},u_{44}S(u_{43})\rangle \langle K_{1},z_{44}\rangle\right) e_{1}\otimes e_{1}\\ 
& = q^{3}(q + q^{-1}) e_{1}\otimes e_{1},
\end{align*}

and similarly

\begin{align*}
\varpi^{\otimes_{2}}(z_{42}z_{42}) & = 	q^{11}(q+q^{-1})e_{2+1}\otimes e_{2+1},\\ 
\varpi^{\otimes_{2}}(z_{41}z_{41}) & =  q^{10}(q+q^{-1})(q^2+q^{-2})(q^3+q^{-3})e_{2+1+1}\otimes e_{2+1+1}.
\end{align*}

To obtain the relations featuring mixed cosets, we start by evaluating

\begin{align*}
\varpi^{\otimes_{2}}(z_{43}z_{41}) & = -(q^{9}+q^{3})(e_{2+1+1}\otimes e_{1}+ e_{1}\otimes e_{2+1+1}),\\ 
\varpi^{\otimes_{2}}(z_{42}z_{41}) & = -q^{10}(q^{3}+q^{-3})(e_{2+1+1}\otimes e_{2+1}+ e_{2+1}\otimes e_{2+1+1}).
\end{align*}
To obtain the remaining relation, we determine the scalar $\lambda$ for which $z_{43}z_{42}+\lambda z_{41}\in\mathcal{I}_{3}$. This element has zero counit and, by weight considerations, pairs trivially with $E_1$ and $[E_1,E_2]_{q^2}$ for every $\lambda$. Moreover 

\begin{align*}
\langle [[E_{2},E_{1}]_{q^{2}},E_{1}]_{q^{-2}},z_{43}z_{42}+ \lambda z_{41}\rangle & = \langle [[E_{2},E_{1}]_{q^{2}},E_{1}]_{q^{-2}},z_{43}z_{42}\rangle  -\lambda q^{4}(q^{2}+q^{-2}) \\ 
& = (1-q^{-2})\langle E_{1},z_{43}\rangle \langle [E_{2},E_{1}]_{q^{2}}K_{1}, z_{42}\rangle  -\lambda q^{4}(q^{2}+q^{-2}) \\ 
& = q(1-q^{-2}) \langle [E_{2},E_{1}]_{q^{2}},u_{44}S(u_{42})\rangle \langle K_{1}, z_{44}\rangle  -\lambda q^{4}(q^{2}+q^{-2}) \\
& = -q^{4}(1-q^{-2}) -\lambda q^{4}(q^{2}+q^{-2}),
\end{align*}
and thus we calculate  

\[
\varpi^{\otimes_{2}}\left(z_{43}z_{42}-\frac{1-q^{-2}}{q^{2}+q^{-2}}z_{41}\right)=\frac{q^{8}(q^{2}+1)}{q^{4}+1}\left(e_{1}\otimes e_{2+1}+q^{2}e_{2+1}\otimes e_{1}\right).
\]

We deduce surjectivity of $j\otimes j|_{\mathcal{I}^{(2)}_{3}}$, and thus injectivity of $j^{(2)}$. Thus, $j^{(2)}$ is an isomorphism and the claim follows.  
\end{proof}

\subsection{A Dolbeault complex for the full flag manifold of $\mathcal{O}_{q}(\mathrm{Sp}_{4})$}
\label{section:contact_adapted_calculus_C2}
In light of the results obtained in the previous section, it is natural to ask whether the three-dimensional tangent space can be completed to a four-dimensional one for $\mathcal{O}_{q}(\mathrm{Sp}_{4})$ defining a covariant FODC whose maximal prolongation again has the classical dimension.  Such a construction would provide a complete and geometrically meaningful picture for the $\mathrm{C}_{2}$ case, encompassing both partial flag manifolds as well as the full flag manifold. With this aim, we construct the required tangent space and the associated FODC, and analyse their main properties and structural features. That adding the remaining simple root to the $3$-dimensional tangent space introduced in the previous section again yields a quantum tangent space is straightforward to verify; this is the content of the following lemma.

\begin{lemma}\label{lemma:the_4d_tangent_space_c2}
	 The following is a 4-dimensional tangent space for $\mathcal{O}_{q}(\mathrm{Sp}_{4})$: 
	 \[
	 \underline{T}^{(0,1)}_{4}:=\underline{T}^{(0,1)}_{2}\oplus \underline{T}^{(0,1)}_{\theta}\oplus \mathrm{Span}_{\mathbb{C}}\{E_{2}\}.
	 \]
 \begin{proof}
 The proof is automatic, as the condition $\Delta(\underline{T}^{(0,1)}_{4})\subseteq (T^{(0,1)}_{4}\oplus \mathbb{C}1) \otimes \mathrm{U}_{q}(\mathfrak{sp}_{4})$ follows in light of Lemma \ref{lemma:the_3d_tangent_space_c2}. 
 \end{proof}
\end{lemma}

Table \ref{table:root_vectors_dual_pairing_total_C2} provides the dual pairing between the elements of the $4$-dimensional tangent space and elements of the quantised coordinate algebra. We now provide a generating set for the classifying ideal associated to the tangent space above. The proof that the union of the following sets generates the full ideal after closure under the right action is entirely analogous to that of the previous section, and we therefore omit it.

\begin{prop}\label{prop:generating_set_ideal_4d_C2}
A generating set $\mathcal{G}$ for the right ideal $\underline{\mathcal{I}}_{4}$ associated to the quantum tangent space $\underline{T}_{4}^{(0,1)}$ is given by the union of the sets of linear elements
\begin{align*}
\mathcal G_1 &:=\{u_{12},u_{13},u_{14},u_{23},u_{24},u_{34}\},\\ 
\mathcal G'_1&:=\{u_{ii}-1\mid 1\leq i\leq 4 \},\\
\mathcal G''_1&:=\{u_{21}+u_{43},\ u_{31}-q^2u_{42}\},
\end{align*}

and the sets of quadratic elements

\begin{align*}
\mathcal G_2 &:= \left\{u_{43}u_{ij}\ \middle|\ 1\leq i,j\leq 4,\ i\neq j,\ (i,j)\notin \{(3,1),(3,2),(4,2)\}\right\}\\
&\quad\cup \left\{u_{32}u_{ij}\ \middle|\ 1\leq i,j\leq 4,\ i\neq j\right\}\\
&\quad\cup\left\{u_{42}u_{ij}\ \middle|\ 1\leq i,j\leq 4,\ i\neq j,\ (i,j)\notin\{(2,1),(4,3)\}\right\}\\
&\quad\cup \left\{ u_{41}u_{ij}\ \middle|\ 1\leq i,j\leq 4,\ i\neq j \right\},\\
\mathcal G'_2 &:=\left\{u_\alpha u_{k\ell}-\theta_{k\ell\alpha}^{\beta}u_\beta\ \middle|\ e_\alpha\triangleleft u_{k\ell}=\theta_{k\ell\alpha}^{\beta}e_\beta,\ \theta_{k\ell\alpha}^{\beta}\neq 0,\ k\neq\ell\right\},\\
\mathcal G''_2 &:=\left\{u_\alpha u_{kk}-\theta_{kk\alpha}^{\alpha}u_\alpha\ \middle|\ e_\alpha\triangleleft u_{kk}=\theta_{kk\alpha}^{\alpha}e_\alpha \right\},
\end{align*}
where
$u_\alpha,u_\beta\in\{u_{43},u_{32},u_{42},u_{41}\}$, $e_\alpha=[u_\alpha]$, and $1\leq k,\ell\leq4$, and the coefficients $\theta_{k\ell\alpha}^{\beta}\in\mathbb C$ are determined by the right $\mathcal{O}_{q}(\mathrm{Sp}_{4})$-module structure of the cotangent space.
\end{prop}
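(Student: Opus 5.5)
The strategy follows the proof of Proposition~\ref{prop:generating_set_ideal_C2}. First show $\mathcal{G}\subseteq\underline{\mathcal{I}}_{4}$. Then show that the right ideal $\underline{\mathcal{J}}_{4}:=\sum_{g\in\mathcal{G}}gA$ has codimension at most $5$ in $A:=\mathcal{O}_q(\mathrm{Sp}_4)$. Since $\underline{\mathcal{I}}_{4}\oplus\mathbb{C}$ has codimension exactly $5$ (the tangent space is four-dimensional and the pairing is non-degenerate), the surjection $A/\underline{\mathcal{J}}_{4}\twoheadrightarrow A/\underline{\mathcal{I}}_{4}$ is then an isomorphism, and $\underline{\mathcal{J}}_{4}=\underline{\mathcal{I}}_{4}$.

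\textbf{Containment $\mathcal{G}\subseteq\underline{\mathcal{I}}_{4}$.} All proposed generators have zero counit. For the linear families $\mathcal{G}_1,\mathcal{G}'_1,\mathcal{G}''_1$, read off Table~\ref{table:root_vectors_dual_pairing_total_C2}.
\begin{itemize}
\item $E_2$ pairs nontrivially only with $u_{32}$. This is why $u_{32}$ is now removed from $\mathcal{G}_1$.
\item $E_1$, $[E_1,E_2]_{q^2}$ and $\underline{E}_\theta$ pair as before.
\end{itemize}
Hence every linear generator is annihilated by all of $\underline{T}^{(0,1)}_4$.

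For the quadratic families I would use $\langle X,u_{ij}u_{k\ell}\rangle=\langle X_{(1)},u_{ij}\rangle\langle X_{(2)},u_{k\ell}\rangle$. The inputs are the coproducts of Proposition~\ref{prop:root_vectors_C2} and Lemma~\ref{lemma:the_3d_tangent_space_c2}, together with $\Delta(E_2)=E_2\otimes K_2+1\otimes E_2$. A weight argument disposes of most products.
\begin{itemize}
\item A product $u_\alpha u_{ij}$ can pair nontrivially only with a root vector whose weight equals the sum of the weights of the two factors.
\item The available weights are $\alpha_1,\alpha_2,\alpha_1+\alpha_2,2\alpha_1+\alpha_2$.
\item So the only surviving pairs are those where the combined weight is again a weight of $\underline{T}^{(0,1)}_4$, and the $K$-factors in the coproducts are nonzero.
\end{itemize}
The excluded indices in $\mathcal{G}_2$ record exactly these surviving pairs:
\begin{itemize}
\item $u_{43}u_{32}$, $u_{43}u_{31}$, $u_{43}u_{42}$, all of total weight $\alpha_1+\alpha_2$ or $2\alpha_1+\alpha_2$;
\item $u_{42}u_{21}$, $u_{42}u_{43}$.
\end{itemize}
These are instead placed in $\mathcal{G}'_2$ with their linear corrections, as in the $3$-dimensional case.

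The new feature relative to Proposition~\ref{prop:generating_set_ideal_C2} comes from the added vector $E_2$ and its class $[u_{32}]$. One must recompute the right module structure of the $4$-dimensional cotangent space, extending Figure~\ref{figure:module_structure_full_C2} by a fourth node $\alpha_2$ with its outgoing arrows, in order to pin down the coefficients $\theta^\beta_{k\ell\alpha}$ in $\mathcal{G}'_2$ and $\mathcal{G}''_2$.

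\textbf{Spanning step.} Modulo $\underline{\mathcal{J}}_4$, the linear generators reduce every $u_{ij}$ to a combination of $1,u_{43},u_{32},u_{42},u_{41}$. The quadratic families cover every product $u_\alpha u_{k\ell}$ with $u_\alpha$ in this list:
\begin{itemize}
\item $\mathcal{G}''_2$ covers the diagonal products $k=\ell$;
\item $\mathcal{G}_2\cup\mathcal{G}'_2$ covers the off-diagonal ones.
\end{itemize}
Each such product is therefore congruent to a linear combination of $u_{43},u_{32},u_{42},u_{41}$. Because $\underline{\mathcal{J}}_4$ is a right ideal, induction on word length in the generators $u_{ij}$ shows that $1,u_{43},u_{32},u_{42},u_{41}$ span $A/\underline{\mathcal{J}}_4$. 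Their classes are independent in $A/\underline{\mathcal{I}}_4$: pair a general combination against $E_1,E_2,[E_1,E_2]_{q^2},\underline{E}_\theta$ and apply $\varepsilon$. This gives the isomorphism and hence equality of the ideals.

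\textbf{Main obstacle.} I expect the hardest step to be the precise bookkeeping of which off-diagonal products $u_{43}u_{ij}$ and $u_{42}u_{ij}$ pair nonzero with a root vector. These products require linear corrections and so belong in $\mathcal{G}'_2$ rather than $\mathcal{G}_2$. The difficulty is sharpest for products involving $u_{32}$, since the term $(q^{-2}-q^2)E_1\otimes E_2K_1$ in $\Delta([E_1,E_2]_{q^2})$ and the cross terms in $\Delta(\underline{E}_\theta)$ now contribute against $E_2$. I would organise this check by weights, computing explicitly only those products whose total weight lies in $\{\alpha_1+\alpha_2,\,2\alpha_1+\alpha_2\}$.
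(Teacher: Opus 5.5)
Your proposal is correct and follows the same route as the paper, which omits the proof as analogous to that of Proposition~\ref{prop:generating_set_ideal_C2}: you show $\mathcal{G}\subseteq\underline{\mathcal{I}}_4$ via the pairings, show that the classes of $1,u_{43},u_{32},u_{42},u_{41}$ span $A/\underline{\mathcal{J}}_4$ by right-ideal induction on word length, and show their independence in $A/\underline{\mathcal{I}}_4$ using $\varepsilon$ and the pairing table. Two precision points: it is $\underline{\mathcal{I}}_4$ itself (not $\underline{\mathcal{I}}_4\oplus\mathbb{C}$) that has codimension $5$; and $u_{32}u_{21}$, $u_{32}u_{43}$ carry the tangent weight $\alpha_1+\alpha_2$, so their membership in $\underline{\mathcal{I}}_4$ does not follow from weights but from the cancellation of the $E_2\otimes(\,\cdot\,)$ term in $\Delta([E_1,E_2]_{q^2})$ (because $E_1K_2=q^2K_2E_1$), which is also why the node $\alpha_2$ has no outgoing arrows at all.
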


We next determine the relations in the maximal prolongation of $\underline{\Omega}_{4}^{1}(\mathrm{Sp}_{4})$ at the level of its antiholomorphic exterior algebra. As in Proposition~\ref{proposition:quantum_exterior_algebra}, these are obtained from the map
\begin{equation}\label{eq:the_map_varpi_C2}
\begin{aligned}
\varpi^{\otimes_{2}}\colon \underline{\mathcal{I}}_{4} &\to \Lambda^{(0,1)}_{4}\otimes\Lambda^{(0,1)}_{4},\\
x&\mapsto  \bigl((\varpi\circ\pi_{\varepsilon})\otimes(\varpi\circ\pi_{\varepsilon})\bigr)\Delta(x) =[x_{(1)}^{+}]\otimes[x_{(2)}^{+}],
\end{aligned}
\end{equation}
which we evaluate on the generating families $\mathcal{G}_{1}$, $\mathcal{G}'_{1}$, $\mathcal{G}''_{1}$, $\mathcal{G}_{2}$, $\mathcal{G}'_{2}$, and $\mathcal{G}''_{2}$.

The right $\mathcal{O}_{q}(\mathrm{Sp}_{4})$-module structure of $\Lambda^{(0,1)}_{4}$ is read off directly from the coproducts of the elements spanning $\underline{T}_{4}^{(0,1)}$. The nonzero off-diagonal actions are summarised in Figure~\ref{figure:module_structure_full_4d_C2}.
\begin{figure}[htbp]
\centering
\begin{tikzcd}[row sep=huge, column sep=huge]
\alpha_{1}
    \arrow[d, "{u_{32}}"]
    \arrow[dd, bend right=55, swap, "{u_{31},\,u_{42}}"]
& \alpha_{2}
\\
\alpha_{1}+\alpha_{2}
    \arrow[d, "{u_{21},\,u_{43}}"]
&
\\
2\alpha_{1}+\alpha_{2}
&
\end{tikzcd}
\caption{Nonzero off-diagonal right actions of the matrix generators on $\Lambda^{(0,1)}_{4}$. An arrow labelled $u_{ij}$ indicates that its action sends the source vector to a nonzero scalar multiple of the target vector.}
\label{figure:module_structure_full_4d_C2}
\end{figure}

Before turning to the nontrivial quadratic relations, it is useful to isolate those generators of $\underline{\mathcal{I}}_{4}$ whose images under $\varpi^{\otimes_{2}}$ vanish. Three of the four families that produced no relations in the $3$-dimensional calculus continue to do so after adjoining the additional tangent direction. The verification is formally identical to the proof of Lemma \ref{lem:vanishing_relations_for_C2_3D}, so we do not repeat it here.

\begin{lemma}
	We have 
\[
\varpi^{\otimes_{2}}(\mathcal{G}_{1})=\varpi^{\otimes_{2}}(\mathcal{G}'_{1})=\varpi^{\otimes_{2}}(\mathcal{G}''_{2})=0.
\]
\end{lemma}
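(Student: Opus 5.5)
We follow the proof of Lemma~\ref{lem:vanishing_relations_for_C2_3D} step by step, now using the four-dimensional cotangent space $\Lambda^{(0,1)}_{4}$. The starting observation is that $\varpi^{\otimes_{2}}$ is defined on all of $\mathcal{O}_{q}(\mathrm{Sp}_{4})$. On matrix generators it gives
\[
\varpi^{\otimes_{2}}(u_{ij})=\sum_{a=1}^{4}[u_{ia}^{+}]\otimes[u_{aj}^{+}].
\]
Now read off from Table~\ref{table:root_vectors_dual_pairing_total_C2} which classes $[u_{ij}^{+}]$ are nonzero in $\Lambda^{(0,1)}_{4}$. Compared with the three-dimensional case, the only change is that $u_{32}$ now pairs nontrivially with $E_{2}$. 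So the nonvanishing classes are exactly those indexed by $(2,1),(4,3),(3,1),(4,2),(4,1),(3,2)$, subject to the linear relations $[u_{21}]=-[u_{43}]$ and $[u_{31}]=q^{2}[u_{42}]$.

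For $\mathcal{G}_{1}=\{u_{12},u_{13},u_{14},u_{23},u_{24},u_{34}\}$ we use that these are upper-triangular entries. In each summand $[u_{ia}^{+}]\otimes[u_{aj}^{+}]$ with $i<j$, one of the two indices $(i,a)$ or $(a,j)$ must be strictly upper triangular or diagonal, and the corresponding class vanishes. Since $\varpi^{\otimes_{2}}(1)=0$, the set $\mathcal{G}'_{1}$ is handled the same way: for $\varpi^{\otimes_{2}}(u_{ii})$, a nonzero summand would need both $(i,a)$ and $(a,i)$ strictly lower triangular, which is impossible. This part is purely combinatorial and uses only the support of the pairing.

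For $\mathcal{G}''_{2}$ we argue by weights. Each generator has the form $u_{\alpha}u_{kk}-\theta u_{\alpha}$, and $\varpi^{\otimes_{2}}$ is compatible with the weight grading given by the $K_i$. So $\varpi^{\otimes_{2}}(u_{\alpha}u_{kk})$ lies in the weight space of $e_{\alpha}$ inside $\Lambda^{(0,1)}_{4}\otimes\Lambda^{(0,1)}_{4}$. The weights of the tangent space are $\alpha_{1},\alpha_{2},\alpha_{1}+\alpha_{2},2\alpha_{1}+\alpha_{2}$.
\begin{itemize}
\item The weights $\alpha_{1}$ and $\alpha_{2}$ are not sums of two of these weights, so the generators with $u_{\alpha}\in\{u_{43},u_{32}\}$ map to zero together with their linear corrections.
\item The weight $\alpha_{1}+\alpha_{2}$ now does decompose as $\alpha_{1}+\alpha_{2}$, using the new direction. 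For $u_{\alpha}=u_{42}$ one therefore has to compute explicitly. We expect
\[
\varpi^{\otimes_{2}}(u_{42}u_{kk})=\lambda_{k}\,\varpi^{\otimes_{2}}(u_{42}),
\qquad \lambda_{k}=\langle K\text{-weight},u_{kk}\rangle=\theta_{kk,42}^{42},
\]
which follows from the fact that $u_{kk}$ pairs only with Cartan-type factors in the relevant coproduct terms.
\item For $u_{\alpha}=u_{41}$ the same computation as in the three-dimensional case should go through, now with the extra $E_{2}$-terms.
\end{itemize}

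The main obstacle is this last point: showing that the new terms involving $e_{2}$ in $\varpi^{\otimes_{2}}(u_{42}u_{kk})$ and $\varpi^{\otimes_{2}}(u_{41}u_{kk})$ are rescaled by exactly the same scalar $\theta_{kk\alpha}^{\alpha}$ as the linear term. To do this, I would expand $\Delta(u_{\alpha}u_{kk})=\sum u_{ia}u_{kb}\otimes u_{aj}u_{bk}$. Since the nonzero classes are lower triangular, the only contributing index choices have $b=k$ in both factors. Then $u_{kk}$ enters only through its grouplike-like pairing, $\langle K_i,u_{kk}\rangle$, on each side. This yields $\varpi^{\otimes_{2}}(u_{\alpha}u_{kk})=\theta\,\varpi^{\otimes_{2}}(u_{\alpha})$ uniformly, and the cancellation against the linear correction gives the claim.
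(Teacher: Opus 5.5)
Your proof is correct. For $\mathcal{G}_{1}$ and $\mathcal{G}'_{1}$ it is the paper's argument: a nonzero summand $[u_{ia}^{+}]\otimes[u_{aj}^{+}]$ needs $i>a>j$. For $\mathcal{G}''_{2}$ you take a different and more robust route.

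The paper only says the verification is formally identical to Lemma~\ref{lem:vanishing_relations_for_C2_3D}. There, $u_{43}$ and $u_{42}$ are killed by weights, and the four scalings for $u_{41}$ are asserted from the coproduct formulas. As you notice, the weight shortcut for $u_{42}$ does not survive in $\Lambda^{(0,1)}_{4}$. Now $\varpi^{\otimes_{2}}(u_{42})=[u_{43}]\otimes[u_{32}]=-e_{1}\otimes e_{2}\neq 0$. This is also why $\mathcal{G}''_{1}$ drops out of the list, and why $\varpi^{\otimes_{2}}(u_{31}-q^{2}u_{42})\neq 0$ in Lemma~\ref{lem:relations_from_G2_C2_4d}. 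So one must genuinely show that $u_{kk}$ rescales $\varpi^{\otimes_{2}}(u_{42})$ by $\theta^{\alpha}_{kk\alpha}$.

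Your closing argument, which supersedes the hedged bullets, does this uniformly for $u_{42}$ and $u_{41}$. You keep only the terms of $\Delta(u_{\alpha}u_{kk})$ with $b=k$ and let $u_{kk}$ act diagonally on both tensor legs. This replaces the case-by-case coproduct check with a structural reason.

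Make two steps explicit when you write it up.

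First, the tensor factors are $[(u_{ia}u_{kb})^{+}]=[u_{ia}^{+}]\triangleleft u_{kb}+\delta_{ia}[u_{kb}^{+}]$. Forcing $b=k$ therefore needs more than the nonzero classes being lower triangular. You also need that strictly upper triangular generators act by zero on $\Lambda^{(0,1)}_{4}$. This holds because the second legs of the coproducts of the tangent vectors lie in the algebra generated by the $E_{i}$ and $K_{i}^{\pm 1}$, which pairs trivially with $u_{kb}$ for $k<b$ (cf. Figure~\ref{figure:module_structure_full_4d_C2}).

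Second, $e_{\beta}\triangleleft u_{kk}=\langle K_{\beta},u_{kk}\rangle e_{\beta}$, where $K_{\beta}$ is the grouplike in $\Delta(E_{\beta})=E_{\beta}\otimes K_{\beta}+\cdots$, and $\beta\mapsto\langle K_{\beta},u_{kk}\rangle$ is multiplicative. The weights of $[u_{ia}]$ and $[u_{aj}]$ add up to that of $[u_{ij}]$, so the product of the two scalars is exactly $\theta^{\alpha}_{kk\alpha}$.

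As a check, this reproduces the scalings $q^{-2},1,1,q^{2}$ for $u_{41}$ stated in the three-dimensional proof.
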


Finally, we turn to the relations arising from the remaining generating families, thereby obtaining a complete generating set of relations for the maximal prolongation of the $4$-dimensional calculus.

\begin{lemma}
\label{lem:relations_from_G2_C2_4d} 
The relations arising from $\mathcal{G}''_{1}$, $\mathcal{G}_{2}$ and $\mathcal{G}'_{2}$ are generated by $e_i\wedge e_i = 0$ and 
\begin{align*}
e_1\wedge e_2+q^{-2}e_2\wedge e_1&=0, & e_1\wedge e_{2+1}+q^2e_{2+1}\wedge e_1&=0,\\
e_1\wedge e_{2+1+1}+e_{2+1+1}\wedge e_1&=0, & e_2\wedge e_{2+1}+q^{-2}e_{2+1}\wedge e_2&=0,\\
e_2\wedge e_{2+1+1}+e_{2+1+1}\wedge e_2&=0, & e_{2+1}\wedge e_{2+1+1} +e_{2+1+1}\wedge e_{2+1}&=0.
\end{align*}
\end{lemma}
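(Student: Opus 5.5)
The proof will mirror the one for Lemma~\ref{lem:relations_from_G2_C2_3D}. It is a direct evaluation of $\varpi^{\otimes_{2}}$ on the generating families $\mathcal{G}''_{1}$, $\mathcal{G}_{2}$ and $\mathcal{G}'_{2}$ of Proposition~\ref{prop:generating_set_ideal_4d_C2}. Throughout we use the cotangent identifications $[u_{43}]\sim e_{1}$, $[u_{32}]\sim e_{2}$, $[u_{42}]\sim e_{2+1}$, $[u_{41}]\sim e_{2+1+1}$, up to the normalising scalars of Table~\ref{table:root_vectors_dual_pairing_total_C2}. We also use $[u_{21}]=-[u_{43}]$ and $[u_{31}]=q^{2}[u_{42}]$. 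The basic tool is the formula
\[
\varpi^{\otimes_{2}}(xy)=\sum [x_{(1)}^{+}y_{(1)}]\otimes[x_{(2)}y_{(2)}]^{+}\text{-type expansions},
\]
computed in practice by pairing with $\underline{T}^{(0,1)}_{4}\otimes\underline{T}^{(0,1)}_{4}$. Concretely, the coefficient of $e_{\beta}\otimes e_{\gamma}$ in $\varpi^{\otimes_{2}}(x)$ is $\langle E_{\beta}E_{\gamma},x\rangle$, suitably normalised. That is how $\varpi^{\otimes_{2}}(z_{43}z_{43})$ was computed in the proof of Theorem~\ref{thm:maximal_prolongation_for_the_contact_flag_c2_3d}. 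We therefore only need the iterated coproducts of the four root vectors from Proposition~\ref{prop:root_vectors_C2} and Lemma~\ref{lemma:the_3d_tangent_space_c2}, together with $\Delta(E_{2})=E_{2}\otimes K_{2}+1\otimes E_{2}$.

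\textbf{Step 1: weight bookkeeping.} Before computing, sort by weight. A product $u_{\alpha}u_{k\ell}$ can only contribute terms $e_{\beta}\otimes e_{\gamma}$ with $\beta+\gamma$ equal to its weight. The possible degree-two weights are $2\alpha_{1}$, $2\alpha_{2}$, $\alpha_{1}+\alpha_{2}$, $2\alpha_{1}+2\alpha_{2}$, $3\alpha_{1}+\alpha_{2}$, $2\alpha_{1}+\alpha_{2}+\alpha_{2}$, $3\alpha_{1}+2\alpha_{2}$ and $4\alpha_{1}+2\alpha_{2}$. Each weight space of $\Lambda^{(0,1)}_{4}\otimes\Lambda^{(0,1)}_{4}$ is then at most two-dimensional, spanned by $e_{\beta}\otimes e_{\gamma}$ and $e_{\gamma}\otimes e_{\beta}$. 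So each generator yields either a square $e_{\beta}\otimes e_{\beta}$ or a single two-term tensor, and we only have to find its ratio.

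\textbf{Step 2: old relations.} The relations not involving $e_{2}$ are $e_{1}^{2}$, $e_{2+1}^{2}$, $e_{2+1+1}^{2}$ and the three mixed ones. They arise from exactly the same elements as in Lemma~\ref{lem:relations_from_G2_C2_3D}, for instance $u_{43}u_{43}$, $u_{42}u_{42}$, $u_{41}u_{41}$, $u_{43}u_{41}$, $u_{42}u_{41}$ and the $\mathcal{G}'_{2}$-element $u_{42}u_{43}-\frac{q-q^{-1}}{q^{2}+q^{-2}}u_{41}$. Adjoining $E_{2}$ to the tangent space changes nothing for these elements: by weight, $E_{2}$ can only appear in tensors $e_{2}\otimes e_{\gamma}$ of weight containing $\alpha_{2}$ once more than is available. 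We must still check that the ideal generators themselves are unchanged in those weights, which is the content of comparing the two propositions.

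\textbf{Step 3: new relations.} We need $e_{2}\wedge e_{2}=0$, which we expect from $u_{32}u_{32}$ with coefficient $\langle E_{2}^{2},u_{32}u_{32}\rangle=q^{\pm 2}(q^{2}+q^{-2})$, and the three mixed relations. Following the earlier computation of $\lambda$, these should come as follows.
\begin{itemize}
\item $e_{1}\wedge e_{2}+q^{-2}e_{2}\wedge e_{1}$ comes from $u_{43}u_{32}$ and $u_{32}u_{43}$, or from a $\mathcal{G}'_{2}$-correction $u_{43}u_{32}-\theta\,u_{42}$. Its two coefficients are $\langle E_{1}E_{2},\cdot\rangle$ and $\langle E_{2}E_{1},\cdot\rangle$, whose ratio is dictated by $[E_{1},E_{2}]_{q^{2}}$ annihilating the element.
\item $e_{2}\wedge e_{2+1}+q^{-2}e_{2+1}\wedge e_{2}$ comes from $u_{32}u_{42}$, $u_{42}u_{32}$, and the analogous element in weight $\alpha_{1}+2\alpha_{2}$. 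There is no root vector in this weight, so no linear correction appears.
\item $e_{2}\wedge e_{2+1+1}+e_{2+1+1}\wedge e_{2}$ comes from $u_{32}u_{41}$ and $u_{41}u_{32}$.
\end{itemize}
Finally, $\mathcal{G}''_{1}$ is now listed among the contributing families, since $[u_{21}]$ and $[u_{43}]$ may differ on the $E_{2}$-components. We check directly that $\varpi^{\otimes_{2}}(u_{21}+u_{43})$ and $\varpi^{\otimes_{2}}(u_{31}-q^{2}u_{42})$ land in the span of the relations above. For example, the $e_{2}\otimes e_{1}$ and $e_{1}\otimes e_{2}$ components arise from $\sum_{a}[u_{2a}]\otimes[u_{a1}]$ through $u_{23}$ and $u_{31}$.

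\textbf{Main obstacle.} The hard part is the exact $q$-power in the mixed relations involving $e_{2}$. This requires the precise pairings $\langle E_{2},u_{32}\rangle$, the $K$-twists $\langle K_{i},u_{jj}\rangle$, and the coproduct of $\underline{E}_{\theta}$ containing $E_{2}$-free terms only. Sign and scalar errors are likely there, so I would cross-check each ratio by requiring that the resulting relation span is closed under the right $\mathcal{O}_{q}(\mathrm{Sp}_{4})$-action, using Figure~\ref{figure:module_structure_full_4d_C2}. Since $e_{2}$ is fixed by all off-diagonal generators, this closure check is cheap.
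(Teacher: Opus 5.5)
Your overall route (evaluating $\varpi^{\otimes_{2}}$ on $\mathcal{G}''_{1}$, $\mathcal{G}_{2}$, $\mathcal{G}'_{2}$ and reading the coefficient of $e_{\beta}\otimes e_{\gamma}$ as $\langle E_{\beta}E_{\gamma},x\rangle$) is the paper's. However, the shortcut in Step~1 is false, and Step~2 depends on it.

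Your list of degree-two weights contains $2\alpha_{1}+2\alpha_{2}$ twice, since ``$2\alpha_{1}+\alpha_{2}+\alpha_{2}$'' is the same weight. It also omits $2\alpha_{1}+\alpha_{2}$ and $\alpha_{1}+2\alpha_{2}$. The weight space of $\Lambda^{(0,1)}_{4}\otimes\Lambda^{(0,1)}_{4}$ at $2\alpha_{1}+2\alpha_{2}$ is three-dimensional, spanned by $e_{2+1}\otimes e_{2+1}$, $e_{2}\otimes e_{2+1+1}$ and $e_{2+1+1}\otimes e_{2}$.

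So it is not true that every generator yields a square or a two-term tensor whose ratio is all that must be found. Nor is it true that adjoining $E_{2}$ leaves the old generators unchanged ``by weight''. In the four-dimensional calculus, $\varpi^{\otimes_{2}}(u_{42}u_{31})=q(q^{2}+1)e_{2+1}\otimes e_{2+1}-(q^{-1}-q^{-3})(e_{2+1+1}\otimes e_{2}+e_{2}\otimes e_{2+1+1})$, which contains $e_{2}$-terms absent from Lemma~\ref{lem:relations_from_G2_C2_3D}. Even for $u_{42}u_{42}$, the coefficient of $e_{2+1+1}\otimes e_{2}$ (up to normalisation $q^{-2}\langle\underline{E}_{\theta},u_{43}u_{42}\rangle+\langle\underline{E}_{\theta},u_{42}u_{43}\rangle$) vanishes only by cancellation.

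This is exactly the weight in which a mixed relation of Heckenberger--Kolb type, such as $e_{2+1}\wedge e_{2+1}=-(1-q^{2})e_{2+1+1}\wedge e_{2}$, could appear instead of the two pure ones. It is also where a third independent image would cut $\Lambda^{(0,2)}_{4}$ down to dimension $5$. Your proposed cross-check by closure under the right action only tests the span you have already written down, so it detects neither.

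To close the gap you have two options.
\begin{enumerate}
\item Compute the whole weight-$(2\alpha_{1}+2\alpha_{2})$ block, i.e.\ the images of $u_{42}u_{42}$, $u_{42}u_{31}$, $u_{32}u_{41}$, $u_{41}u_{32}$. Then check that they span exactly $\mathrm{span}\{e_{2+1}\otimes e_{2+1},\,e_{2}\otimes e_{2+1+1}+e_{2+1+1}\otimes e_{2}\}$, as the paper does.
\item Push your own observation that the coefficients are values of the functionals $E_{\beta}E_{\gamma}$, and hence obey every identity of $\mathrm{U}_{q}(\mathfrak{sp}_{4})$. The cubic $q$-Serre relation gives $E_{1}\underline{E}_{\theta}=\underline{E}_{\theta}E_{1}$. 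The quadratic one multiplied by $E_{1}$ on either side gives $E_{2}\underline{E}_{\theta}=\underline{E}_{\theta}E_{2}$, so $\underline{E}_{\theta}$ also commutes with $[E_{1},E_{2}]_{q^{2}}$. The quadratic one alone gives $[E_{1},E_{2}]_{q^{2}}E_{2}=q^{-2}E_{2}[E_{1},E_{2}]_{q^{2}}$. In addition, $[E_{1},E_{2}]_{q^{2}}$ and $\underline{E}_{\theta}$ annihilate $\underline{\mathcal{I}}_{4}$, which controls weights $\alpha_{1}+\alpha_{2}$ and $2\alpha_{1}+\alpha_{2}$ respectively. Together these force every image into the tensors listed in the lemma, with a two-dimensional span in weight $2\alpha_{1}+2\alpha_{2}$. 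This is precisely the uniformity your ratio-finding tacitly assumes. The explicit images of $u_{43}u_{43}$, $u_{42}u_{42}$, $u_{32}u_{41}$, etc.\ then show the span is attained.
\end{enumerate}

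Finally, your $\mathcal{G}''_{1}$ remark is misattributed. Since $u_{21}+u_{43}\in\underline{\mathcal{I}}_{4}$, we have $[u_{21}]=-[u_{43}]$ exactly, and $\varpi^{\otimes_{2}}(u_{21}+u_{43})=0$ by weight (also $[u_{23}]=0$). The only contribution is $\varpi^{\otimes_{2}}(u_{31}-q^{2}u_{42})=[u_{32}]\otimes[u_{21}]-q^{2}[u_{43}]\otimes[u_{32}]=e_{2}\otimes e_{1}+q^{2}e_{1}\otimes e_{2}$. It is nonzero now because $u_{32}\notin\mathcal{G}_{1}$.
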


\begin{proof}
The proof is a direct calculation involving the map $\varpi^{\otimes_{2}}$. For the linear set $\mathcal{G}''_{1}$ we have only a single non-trivial relation, namely
\[
\varpi^{\otimes_{2}}(u_{31}-q^{2}u_{42}) = e_{2}\otimes e_{1} + q^{2} e_{1}\otimes e_{2}.
\]

Turning to the first quadratic family, namely $\mathcal{G}_{2}$, we observe that all elements containing a factor $u_{ij}$ on the right with $i<j$ again give vanishing contributions. We are therefore left with the following nontrivial quadratic tensors:
 \begin{align*}
	\varpi^{\otimes_{2}}(u_{43}u_{21}) & = -(q+q^{-1})e_{1}\otimes e_{1},\\ 
	\varpi^{\otimes_{2}}(u_{43}u_{41}) & = -(q^{3}+q^{-3})(e_{2+1+1}\otimes e_{1}+e_{1}\otimes e_{2+1+1}),\\
	\varpi^{\otimes_{2}}(u_{43}u_{43}) & = (q+q^{-1})e_{1}\otimes e_{1},\\
	\varpi^{\otimes_{2}}(u_{32}u_{21}) & = e_{1}\otimes e_{2} + q^{-2}e_{2}\otimes e_{1},\\ 
	\varpi^{\otimes_{2}}(u_{32}u_{31}) & = -q^{2}(e_{2+1}\otimes e_{2} + q^{2}e_{2}\otimes e_{2+1}),\\  
	\varpi^{\otimes_{2}}(u_{32}u_{32}) & = (q^{2}+q^{-2})e_{2}\otimes e_{2},\\ 
	\varpi^{\otimes_{2}}(u_{32}u_{41}) & = (q^{2}+q^{-2})(e_{2}\otimes e_{2+1+1} + e_{2+1+1}\otimes e_{2}),\\
	\varpi^{\otimes_{2}}(u_{32}u_{42}) & = -(e_{2}\otimes e_{2+1} + q^{-2}e_{2+1}\otimes e_{2}),\\ 
	\varpi^{\otimes_{2}}(u_{32}u_{43}) & = e_{2}\otimes e_{1} + q^{2}e_{1}\otimes e_{2},\\ 
	\varpi^{\otimes_{2}}(u_{42}u_{32}) & = -q^{-1}(q^{-2}e_{2+1}\otimes e_{2} + e_{2}\otimes e_{2+1}),\\ 
	\varpi^{\otimes_{2}}(u_{42}u_{31}) & = q(q^{2}+1)e_{2+1}\otimes e_{2+1} - (q^{-1}-q^{-3})(e_{2+1+1}\otimes e_{2} + e_{2}\otimes e_{2+1+1}),\\ 
	\varpi^{\otimes_{2}}(u_{42}u_{42}) & = (q+q^{-1})e_{2+1}\otimes e_{2+1},\\ 
	\varpi^{\otimes_{2}}(u_{42}u_{41}) & = -(q^{3}+q^{-3})(e_{2+1+1}\otimes e_{2+1} + e_{2+1}\otimes e_{2+1+1}),\\ 
	\varpi^{\otimes_{2}}(u_{41}u_{21}) & = (q^{2}+q^{-4})(e_{2+1+1}\otimes e_{1}+ e_{1}\otimes e_{2+1+1}),\\ 
	\varpi^{\otimes_{2}}(u_{41}u_{31}) & =-(q^{4}+q^{-2})(e_{2+1+1}\otimes e_{2+1} + e_{2+1}\otimes e_{2+1+1}),\\ 
    \varpi^{\otimes_{2}}(u_{41}u_{32}) & =(q^{2}+q^{-2})(e_{2+1+1}\otimes e_{2} + e_{2}\otimes e_{2+1+1}),\\ 
    \varpi^{\otimes_{2}}(u_{41}u_{41}) & = (q+q^{-1})(q^2+q^{-2})(q^3+q^{-3})e_{2+1+1}\otimes e_{2+1+1},\\ 
    \varpi^{\otimes_{2}}(u_{41}u_{42}) & =-(q^{4}+q^{-2})(e_{2+1+1}\otimes e_{2+1} + e_{2+1}\otimes e_{2+1+1}),\\ 
    \varpi^{\otimes_{2}}(u_{41}u_{43}) & = -(q^{4}+q^{-2})(e_{2+1+1}\otimes e_{1} + e_{1}\otimes e_{2+1+1}). 
\end{align*}

Finally, moving to the set $\mathcal{G}'_{2}$, we find
 \begin{align*}
	\varpi^{\otimes_{2}}\left(u_{43}u_{31}+\frac{q^{-1}-q^{-3}}{q^2+q^{-2}}u_{41}\right) & = \frac{q(q^{2}+1)}{q^{2}+q^{-2}}(e_{1}\otimes e_{2+1} + q^{2}e_{2+1}\otimes e_{1}),\\
	\varpi^{\otimes_{2}}\left(u_{43}u_{32}+(q^{3}-q^{-1})u_{42}\right) & = -q(e_{2}\otimes e_{1} + q^{2}e_{1}\otimes e_{2}),\\ 
	\varpi^{\otimes_{2}}\left(u_{43}u_{42}+\frac{q^{3}-q}{q^2+q^{-2}}u_{41}\right) & = \frac{q^{-1}+q^{-3}}{q^{2}+q^{-2}}(e_{1}\otimes e_{2+1} + q^{2}e_{2+1}\otimes e_{1}) ,\\
	\varpi^{\otimes_{2}}\left(u_{42}u_{21}+\frac{q^{-1}-q^{-3}}{q^2+q^{-2}}u_{41}\right) & = -\frac{q^{-1}+q^{-3}}{q^{2}+q^{-2}}(e_{1}\otimes e_{2+1}+q^{2}e_{2+1}\otimes e_{1}),\\ 
	\varpi^{\otimes_{2}}\left(u_{42}u_{43}-\frac{q-q^{-1}}{q^2+q^{-2}}u_{41}\right) & = \frac{q+q^{-1}}{q^{2}+q^{-2}}(e_{1}\otimes e_{2+1}+q^{2}e_{2+1}\otimes e_{1}).
\end{align*}

Passing to the exterior algebra gives the claim. 
\end{proof}

The relations above are precisely those of a quantum symmetric algebra, which strongly suggests that the resulting exterior algebra should retain the classical dimension. To establish this, however, it remains to close the set of quadratic relations under the right action of $\mathcal{O}_{q}(\mathrm{Sp}_{4})$ and to verify that no overlap ambiguities occur among degree-$3$ monomials in the chosen basis of the cotangent space. 
 
\begin{thm}
Let $\Lambda^{(0,\bullet)}_{4}$ be the exterior algebra of the maximal prolongation of the calculus $\underline{\Omega}_{4}^{(0,1)}(\mathrm{Sp}_{4})$ over $\mathcal{O}_{q}(\mathrm{Sp}_{4})$. Then $\Lambda^{(0,\bullet)}_{4}$ is generated in degree one by $e_{1},e_{2},e_{2+1},e_{2+1+1}$, subject to the same relations as in Lemma \ref{lem:relations_from_G2_C2_4d}.  Moreover, the ordered monomials
\[
\begin{gathered}
1,\\
e_{1},\quad e_{2},\quad e_{2+1},\quad e_{2+1+1},\\
e_{1}\wedge e_{2},\quad
e_{1}\wedge e_{2+1},\quad
e_{1}\wedge e_{2+1+1},\\
e_{2}\wedge e_{2+1},\quad
e_{2}\wedge e_{2+1+1},\quad
e_{2+1}\wedge e_{2+1+1},\\
e_{1}\wedge e_{2}\wedge e_{2+1},\quad
e_{1}\wedge e_{2}\wedge e_{2+1+1},\\
e_{1}\wedge e_{2+1}\wedge e_{2+1+1},\quad
e_{2}\wedge e_{2+1}\wedge e_{2+1+1},\\
e_{1}\wedge e_{2}\wedge e_{2+1}\wedge e_{2+1+1}
\end{gathered}
\]

form a $\mathbb{C}$-basis of $\Lambda^{(0,\bullet)}_{4}$. In particular, the exterior algebra has classical dimension:

\[
\dim_{\mathbb{C}}\Lambda^{(0,k)}_{4} =\binom{4}{k}, \quad 0\leq k\leq 4,\qquad  \Lambda^{(0,k)}_{4}=0, \quad k>4.
\]
\begin{proof}
First, we prove that the span of the ten quadratic relation tensors is stable under the right $\mathcal{O}_{q}(\mathrm{Sp}_{4})$-action. We verify this explicitly using the $\mathcal{O}_{q}(\mathrm{Sp}_{4})$-module structure of $\Lambda^{(0,1)}_{4}$ and the diagonal action on its tensor square. To begin, consider
\[
(e_1\otimes e_1)\triangleleft_{\otimes}u_{ij}=\sum_{a=1}^{4}(e_1\triangleleft u_{ia})\otimes(e_1\triangleleft u_{aj}).
\]
By Figure \ref{figure:module_structure_full_4d_C2}, the only off-diagonal matrix generators acting nontrivially on $e_1$ are $u_{32},u_{31}$ and $u_{42}$. Computing the tensor-product action gives
\begin{align*}
(e_1\otimes e_1)\triangleleft_{\otimes}u_{32} &=(q^{-2}-q^2)\bigl(e_1\otimes e_{2+1}+q^2e_{2+1}\otimes e_1\bigr),\\
(e_1\otimes e_1)\triangleleft_{\otimes}u_{31}&=(q^{-2}-q^{-4})\bigl(e_1\otimes e_{2+1+1} +e_{2+1+1}\otimes e_1\bigr),\\
(e_1\otimes e_1)\triangleleft_{\otimes}u_{42}&=(q^4-q^2)\bigl(e_1\otimes e_{2+1+1}+e_{2+1+1}\otimes e_1\bigr).
\end{align*}
All other off-diagonal matrix generators annihilate $e_1\otimes e_1$, while each diagonal generator acts by a scalar. Hence the action of every matrix generator on $e_1\otimes e_1$ belongs to the span of the ten quadratic relation tensors.

We repeat the same procedure for the other relations, listing only the nonzero off-diagonal actions. For $e_{2+1}\otimes e_{2+1}$, we obtain
\begin{align*}
(e_{2+1}\otimes e_{2+1})\triangleleft_{\otimes}u_{21} &=(q^{-2}-q^{-4})\bigl(e_{2+1}\otimes e_{2+1+1}+e_{2+1+1}\otimes e_{2+1}\bigr),\\ 
(e_{2+1}\otimes e_{2+1})\triangleleft_{\otimes}u_{43}&=(1-q^2)\bigl(e_{2+1}\otimes e_{2+1+1} +e_{2+1+1}\otimes e_{2+1}\bigr).
\end{align*}

Every off-diagonal matrix generator annihilates
$e_{2+1+1}\otimes e_{2+1+1}$. For the mixed relation tensors involving $e_1,e_{2+1}$ and $e_{2+1+1}$, we obtain
\begin{align*}
(e_1\otimes e_{2+1}+q^2e_{2+1}\otimes e_1)\triangleleft_{\otimes}u_{21}&=(1-q^{-2})\bigl(e_1\otimes e_{2+1+1} +e_{2+1+1}\otimes e_1\bigr),\\
(e_1\otimes e_{2+1}+q^2e_{2+1}\otimes e_1)\triangleleft_{\otimes}u_{31}&=(q^{-2}-q^{-4})\bigl(e_{2+1}\otimes e_{2+1+1}+e_{2+1+1}\otimes e_{2+1}\bigr),\\
(e_1\otimes e_{2+1}+q^2e_{2+1}\otimes e_1)\triangleleft_{\otimes}u_{32}&=(q^{-2}-q^2)(1+q^4)e_{2+1}\otimes e_{2+1},\\
(e_1\otimes e_{2+1}+q^2e_{2+1}\otimes e_1)\triangleleft_{\otimes}u_{42}&=(q^6-q^4)\bigl(e_{2+1}\otimes e_{2+1+1}+e_{2+1+1}\otimes e_{2+1}\bigr),\\
(e_1\otimes e_{2+1}+q^2e_{2+1}\otimes e_1)\triangleleft_{\otimes}u_{43}&=(1-q^2)\bigl(e_1\otimes e_{2+1+1}+e_{2+1+1}\otimes e_1\bigr)\\ 
(e_1\otimes e_{2+1+1}+e_{2+1+1}\otimes e_1)\triangleleft_{\otimes}u_{31}&=(q^{-1}-q^{-5})e_{2+1+1}\otimes e_{2+1+1},\\
(e_1\otimes e_{2+1+1}+e_{2+1+1}\otimes e_1)\triangleleft_{\otimes}u_{32}&=(q^{-1}-q^3)\bigl(e_{2+1}\otimes e_{2+1+1}+e_{2+1+1}\otimes e_{2+1}\bigr),\\
(e_1\otimes e_{2+1+1}+e_{2+1+1}\otimes e_1)\triangleleft_{\otimes}u_{42}&=(q^5-q)e_{2+1+1}\otimes e_{2+1+1},\\
(e_{2+1}\otimes e_{2+1+1}+e_{2+1+1}\otimes e_{2+1})\triangleleft_{\otimes}u_{21}&=(q^{-1}-q^{-5})
e_{2+1+1}\otimes e_{2+1+1},\\
(e_{2+1}\otimes e_{2+1+1}+e_{2+1+1}\otimes e_{2+1})\triangleleft_{\otimes}u_{43} &=(q^{-1}-q^3)
e_{2+1+1}\otimes e_{2+1+1}.
\end{align*}

We now consider the four additional relation tensors involving $e_2$. Since both $e_2$ and $e_{2+1+1}$ are annihilated by every off-diagonal matrix generator, we have

\[
(e_2\otimes e_2)\triangleleft_{\otimes}u_{ij}=0, \qquad (e_2\otimes e_{2+1+1}+e_{2+1+1}\otimes e_2)\triangleleft_{\otimes}u_{ij}=0,\qquad i\neq j.
\]

For the remaining two relation tensors, the nonzero
off-diagonal actions are
\begin{align*}
(e_1\otimes e_2+q^{-2}e_2\otimes e_1)\triangleleft_{\otimes}u_{31}&=(q^{-1}-q^{-3})
\bigl(e_2\otimes e_{2+1+1}+e_{2+1+1}\otimes e_2\bigr),\\
(e_1\otimes e_2+q^{-2}e_2\otimes e_1)\triangleleft_{\otimes}u_{32}&=(q^{-1}-q^3)
\bigl(e_2\otimes e_{2+1}+q^{-2}e_{2+1}\otimes e_2\bigr),\\
(e_1\otimes e_2+q^{-2}e_2\otimes e_1)\triangleleft_{\otimes}u_{42}&=(q-q^{-1})\bigl(e_2\otimes e_{2+1+1}+e_{2+1+1}\otimes e_2\bigr),\\
(e_2\otimes e_{2+1}+q^{-2}e_{2+1}\otimes e_2)\triangleleft_{\otimes}u_{21}&=(q^{-3}-q^{-5})\bigl(e_2\otimes e_{2+1+1}+e_{2+1+1}\otimes e_2\bigr),\\
(e_2\otimes e_{2+1}+q^{-2}e_{2+1}\otimes e_2)\triangleleft_{\otimes}u_{43}&=(q^{-1}-q)\bigl(e_2\otimes e_{2+1+1} +e_{2+1+1}\otimes e_2\bigr).
\end{align*}

All off-diagonal actions not listed above vanish. Moreover, each diagonal matrix generator sends every relation tensor to a scalar multiple of itself. Consequently, the span of the ten quadratic relation tensors is stable under all matrix generators and hence is a right $\mathcal{O}_{q}(\mathrm{Sp}_{4})$-submodule.

The computations of $\varpi^{\otimes_{2}}(\mathcal{G})$ show that these images span precisely this subspace. Since the quadratic relation space of the maximal prolongation is the right $\mathcal{O}_{q}(\mathrm{Sp}_{4})$-submodule generated by these images, it coincides with their linear span. Consequently, the ten displayed relations give a complete presentation of $\Lambda^{(0,\bullet)}_{4}$.

It remains to prove that the ordered monomials in the statement form a basis. For this, we apply the Diamond
Lemma \cite[Theorem~1.2]{Ber78}.

We use the ordering
\[
e_{1} < e_{2} < e_{2+1} < e_{2+1+1} 
\]
on the generators of $\Lambda^{(0,1)}_{4}$, and orient the relations as 
\[
\begin{aligned}
e_{2}e_{1}&\longrightarrow -q^{2}e_{1}e_{2}, & e_{2+1}e_{1}&\longrightarrow -q^{-2}e_{1}e_{2+1}, & e_{2+1+1}e_{1}&\longrightarrow -e_{1}e_{2+1+1},\\
e_{2+1}e_{2} & \longrightarrow -q^{2}e_{2}e_{2+1}, & e_{2+1+1}e_{2} & \longmapsto -e_{2}e_{2+1+1}, & e_{2+1+1}e_{2+1}&\longrightarrow -e_{2+1}e_{2+1+1}.
\end{aligned}
\]
and 
\[ 
e_{1}^{2}, e_{2}^{2}, e_{2+1}^{2}, e_{2+1+1}^{2}\longrightarrow 0,
\]
These reductions strictly decrease the degree-lexicographic order induced by the chosen ordering of the generators. Since all leading words have length two, there are no inclusion ambiguities. The weakly decreasing length-three words feature four cubes, twelve words with two distinct generators, and four words with three distinct generators, giving twenty overlaps in total. For the four cubes, it is immediate to see that reducing adjacent pairs gives zero. Next, consider the twelve overlap ambiguities involving exactly two distinct generators. In each case, reducing the adjacent square gives zero immediately, and the alternative reduction paths give zero as well. For example:
\[
e_{2+1}e_{2+1}e_{1} \longrightarrow -q^{-2}e_{2+1}e_{1}e_{2+1}\longrightarrow q^{-4}e_{1}e_{2+1}e_{2+1}\longrightarrow 0,
\]
and the other calculations are performed similarly. Finally, we analyse the remaining combination, corresponding to the products of three different generators of $\Lambda^{(0,1)}_{4}$. We find that the four overlap ambiguities involving distinct generators are indeed resolved. We give a pictorial description of the result:
\[
\begin{tikzcd}[
    column sep=2em,
    row sep=3.5em
]
e_{2+1+1}e_{2+1}e_1
    \arrow[d, bend right=25, swap, "\mathrm{L}"]
    \arrow[d, bend left=25, "\mathrm{R}"]
&
e_{2+1}e_2e_1
    \arrow[d, bend right=25, swap, "\mathrm{L}"]
    \arrow[d, bend left=25, "\mathrm{R}"]
&
e_{2+1+1}e_{2+1}e_2
    \arrow[d, bend right=25, swap, "\mathrm{L}"]
    \arrow[d, bend left=25, "\mathrm{R}"]
&
e_{2+1+1}e_2e_1
    \arrow[d, bend right=25, swap, "\mathrm{L}"]
    \arrow[d, bend left=25, "\mathrm{R}"]
\\
-q^{-2}e_1e_{2+1}e_{2+1+1}
&
-q^2e_1e_2e_{2+1}
&
-q^2e_2e_{2+1}e_{2+1+1}
&
-q^2e_1e_2e_{2+1+1}
\end{tikzcd}
\]
where the arrows labelled $\mathrm{L}$ and $\mathrm{R}$ denote reduction sequences beginning with the leftmost and rightmost adjacent pair, respectively.

Eventually, all overlap ambiguities are resolvable. By the Diamond Lemma, the irreducible words form a vector space basis of $\Lambda^{(0,\bullet)}_{4}$. These are precisely the strictly increasing monomials listed in the statement. Counting them by degree gives 
\[
\dim_{\mathbb{C}}\Lambda^{(0,k)}_{4}=\binom{4}{k}
\]
for $0\leq k\leq 4$, and there are no irreducible words of higher degree.
\end{proof}
\end{thm}

Thus the framing calculus on $\mathcal O_q(\mathrm{Sp}_4)$ has the graded exterior dimensions of the antiholomorphic cotangent space of the classical full flag manifold of type $C_2$. 

Finally, let $V^{(0,\bullet)}_{4}$ be the exterior algebra of the maximal prolongation of the projected calculus $\Omega^{(0,1)}_{4}$ over the corresponding quantised full flag manifold. We show that, under the same identification provided in Theorem \ref{thm:maximal_prolongation_for_the_contact_flag_c2_3d}, the quadratic relations defining $V^{(0,\bullet)}_{4}$ coincide with those defining $\Lambda^{(0,\bullet)}_{4}$.

\begin{thm}
We have an isomorphism of graded algebras
\[
j^{(\bullet)}\colon V^{(0,\bullet)}_{4}\to \Lambda^{(0,\bullet)}_{4}.
\]
Equivalently, the quadratic relation spaces of the two maximal prolongations coincide under $j\otimes j$, that is, $(j\otimes j)\bigl(\mathcal{I}^{(2)}_{4}\bigr)=\underline{\mathcal{I}}^{(2)}_{4}$, and in particular,
\[
\dim_{\mathbb{C}}V^{(0,k)}_{4} = \binom{4}{k}, \quad 0\leq k\leq 4,\qquad V^{(0,k)}_{4}=0,\quad k>4
\]
\begin{proof}
	The proof follows closely that of Theorem \ref{thm:maximal_prolongation_for_the_contact_flag_c2_3d}, with the main difference arising from the module structure of the cotangent space. In particular, the generators $z_{ij}$ employed there are no longer sufficient for the full flag manifold, since one must additionally account for the cotangent direction dual to the root vector $E_{2}$. A suitable representative for this additional direction is given by $u_{32}S(u_{22})$, which is readily verified to belong to the coordinate algebra of the full quantised flag manifold. Moreover, the remaining $z_{ij}$-generators considered in Theorem \ref{thm:maximal_prolongation_for_the_contact_flag_c2_3d} are already invariant under the torus action. Their corresponding cosets, together with that of $u_{32}S(u_{22})$, therefore provide the required four-dimensional quantum cotangent space of the full flag manifold.

		It remains to show that the quadratic relations defining the maximal prolongation of the upstairs calculus are recovered from the corresponding relations at the level of the full flag manifold. More precisely, we must identify suitable elements of the classifying ideal $\mathcal{I}_{4}$ and evaluate their images under $\varpi^{\otimes 2}$, verifying that the resulting tensors generate the quadratic relation space $\underline{\mathcal{I}}^{(2)}_{4}$ of the upstairs calculus. As in the proof of Theorem \ref{thm:maximal_prolongation_for_the_contact_flag_c2_3d}, the required elements can be selected systematically by exploiting the root-space decomposition and the associated weight considerations. In particular, the image of $\varpi^{\otimes_{2}}$ on the elements already established in Theorem \ref{thm:maximal_prolongation_for_the_contact_flag_c2_3d} coincides. We now carry out the remaining calculations. Again, motivated by root-system arguments, we consider the elements 
		
		\[
		u_{32}S(u_{22})u_{32}S(u_{22}), \qquad u_{32}S(u_{22})\,z_{43},\qquad u_{32}S(u_{22})\,z_{42}, \qquad u_{32}S(u_{22})\,z_{41}, 
		\]
		
		which, as may readily be checked, are annihilated by elements of $\underline{T}^{(0,1)}_{4}$. Finally, we find 
		
		\begin{align*}
			\varpi^{\otimes_{2}}(u_{32}S(u_{22})u_{32}S(u_{22})) & = q^{6}(q^{2}+q^{-2})e_{2}\otimes e_{2},\\ 
			\varpi^{\otimes_{2}}(u_{32}S(u_{22})\,z_{43}) & = q^{3}(e_{1}\otimes e_{2}+ q^{-2}e_{2}\otimes e_{1}),\\ 
			\varpi^{\otimes_{2}}(u_{32}S(u_{22})\,z_{42}) & = q^{9}(e_{2}\otimes e_{2+1}+ q^{-2}e_{2+1}\otimes e_{2}),\\ 
			\varpi^{\otimes_{2}}(u_{32}S(u_{22})\,z_{41}) & = -q^{6}(e_{2}\otimes e_{2+1+1}+e_{2+1+1}\otimes e_{2}),
		\end{align*}
		completing the proof.
	\end{proof}
\end{thm}

\begin{rmk}
The FODC $(\underline{\Omega}^{(0,1)}_{4}(\mathrm{Sp}_{4}),\overline{\partial})$ constructed above restricts to the anti-holomorphic Heckenberger–Kolb calculus on the cominuscule quantised flag manifold of type $\mathrm{C}_{2}$. However, the identification at first order does not identify the standard tensor presentations used to describe the higher-degree forms. In fact, in the presentation obtained by the framing calculus $(\underline{\Omega}^{(0,\bullet)}_{4}(\mathrm{Sp}_{4}),\overline{\partial})$, the quadratic relations are those of a quantum symmetric algebra, while the presentation of Heckenberger–Kolb features the mixed term
\[
e_{2+1}\wedge e_{2+1} =-(1-q^{2})e_{2+1+1}\wedge e_{2}.
\]

To compare these presentations, we must account for the different monoidal identifications. Generally speaking, consider a quantum homogeneous space $B\subseteq A$, a left $A$-covariant FODC $\Omega^{1}(B)=A\square_{\pi_{B}}V^{1}\in {}^{A}_{B}\mathrm{Mod}_{0}$ and a framing calculus $\Omega^{1}(A)=A\otimes \Lambda^{1}$. For such $\Omega^{1}(B)$, the equality $\Omega B^+=B^+\Omega$ allows the usual monoidal identification following in light of Theorem \ref{thm:takeuchi_equivalence_0}, while on the other hand the framing uses the monoidal identification supplied by Theorem \ref{th:fundamental_theorem_hopf}:
\[
\mu_{\mathrm{HK}} \colon [\omega\otimes_B\eta]\mapsto [\omega]_V\otimes[\eta]_V, \qquad \mu_{\mathrm{fr}}\colon [\omega\otimes_A\eta]\mapsto [\omega\eta_{(-1)}]_\Lambda\otimes[\eta_{(0)}]_\Lambda,
\]
Letting $\iota\colon \Omega^{1}(B)\hookrightarrow \Omega^{1}(A)$ be the map induced by the inclusion of the restricted calculus into its framing and by passage from $\otimes_B$ to $\otimes_A$, given $\Phi_{-}$ as in Theorem \ref{thm:takeuchi_equivalence}, and given that the image of such FODCi under $\Phi$ precisely consists of the corresponding cotangent spaces, we have

\[
\begin{tikzcd}[column sep=5.5em,row sep=5.5em]
  \begin{gathered}
    \Phi_B(\Omega^{1}(B)\otimes_B\Omega^{1}(B))\\[-1pt]
    {\scriptstyle [\omega\otimes_B\eta]}
  \end{gathered}
  \arrow[r,"\mu_{\mathrm{0}}"]
  \arrow[dr,"\mu_{\mathrm{fr}}\circ \iota"']
  &
  \begin{gathered}
    \Phi_B(\Omega^{1}(B))\otimes
    \Phi_B(\Omega^{1}(B))
    =V^{1}\otimes V^{1}\\[-1pt]
    {\scriptstyle [\omega]_V\otimes[\eta]_V}
  \end{gathered}
  \arrow[d,shift left=1.2ex,"\chi"]
  \\
  &
  \begin{gathered}
    \Lambda^{1}\otimes\Lambda^{1}\\[-1pt]
    {\scriptstyle
    [\omega\eta_{(-1)}]_\Lambda
    \otimes[\eta_{(0)}]_\Lambda}
  \end{gathered}
  \arrow[u,shift left=1.2ex,"\chi^{-1}"]
\end{tikzcd}
\]

Where the resulting comparison maps are
\[
\chi([\omega]_{V}\otimes [\eta]_{V})=[\omega\eta_{(-1)}]_\Lambda\otimes[\eta_{(0)}]_\Lambda, \qquad \chi^{-1}([\omega]_{\Lambda}\otimes [\eta]_{\Lambda})=[\omega S(\eta_{(-1)})]_V \otimes[\eta_{(0)}]_V
\]
 
   We now specialise the construction to the case at hand. To give a meaningful comparison, we need a representative of $e_{2+1}$ belonging to the quantised flag manifold $\mathcal{O}_{q}(\mathrm{Sp}_{4}/\mathrm{L}_{\alpha_{1}})$, where now the quantised Levi subalgebra features the $\alpha_{1}$-direction alongside the full Cartan part. Considering as plausible generators for the flag the quantum minors $p_{ij}:=u_{i1}u_{j2}-q\,u_{i2}u_{j1}$, we find  
   
\[
K_{1}^{\pm1}\triangleright p_{ij}= \varepsilon(K_{1}^{\pm1})p_{ij}, \qquad E_{1}\triangleright p_{ij}  = 0 = F_{1}\triangleright p_{ij},  \qquad K_{2}^{\pm1}\triangleright p_{ij} = q^{\mp 2}\varepsilon(K_{2}^{\pm1})p_{ij},
\]
showing invariance only under $\mathrm{U}_{q}(\ell_{1}^{s})$. To obtain full invariance, we consider the elements $\tilde{p}_{ij}:=p_{ij}S(p_{12})$, whose coproducts read
\[
\begin{aligned}
\Delta\bigl(p_{ij}S(p_{12})\bigr) &= \bigl(u_{ia}u_{jb}\otimes p_{ab}\bigr) \Delta\bigl(S(p_{12})\bigr)\\
&= \bigl(u_{ia}u_{jb}\otimes p_{ab}\bigr) (S\otimes S) \bigl(p_{cd}\otimes u_{1c}u_{2d}\bigr)\\
&= u_{ia}u_{jb}S(p_{cd}) \otimes p_{ab}S(u_{2d})S(u_{1c}).
\end{aligned}
\]
We now find 
\[
\begin{aligned}
K_{k}^{\pm1}\triangleright \bigl(p_{ij}S(p_{12})\bigr) &= u_{ia}u_{jb}S(p_{cd}) \left\langle K_{k}^{\pm1}, p_{ab}S(u_{2d})S(u_{1c})\right\rangle\\
&= u_{ia}u_{jb}S(p_{12}) \left\langle K_{k}^{\pm1},\bigl(u_{a1}u_{b2}-q\,u_{a2}u_{b1}\bigr) S(u_{22})S(u_{11})\right\rangle\\
&=u_{i1}u_{j2}S(p_{12})\left\langle K_{k}^{\pm1},u_{11}u_{22}S(u_{22})S(u_{11})\right\rangle\\
&\quad-q\,u_{i2}u_{j1}S(p_{12})\left\langle K_{k}^{\pm1},u_{22}u_{11}S(u_{22})S(u_{11})\right\rangle\\
&=\varepsilon(K_{k}^{\pm1})p_{ij}S(p_{12}).
\end{aligned}
\]
Similarly,
\[
\begin{aligned}
E_{1}\triangleright
\bigl(p_{ij}S(p_{12})\bigr)
&=
u_{ia}u_{jb}S(p_{cd})
\left\langle E_{1},
\bigl(u_{a1}u_{b2}-q\,u_{a2}u_{b1}\bigr)
S(u_{2d})S(u_{1c})\right\rangle\\
&=
u_{ia}u_{jb}S(p_{cd})
\left\langle E_{1},
u_{a1}u_{b2}S(u_{2d})S(u_{1c})\right\rangle\\
&\quad-
q\,u_{ia}u_{jb}S(p_{cd})
\left\langle E_{1},
u_{a2}u_{b1}S(u_{2d})S(u_{1c})\right\rangle\\
&=
u_{i2}u_{j2}S(p_{12})
\left\langle E_{1},
u_{21}u_{22}S(u_{22})S(u_{11})\right\rangle\\
&\quad-
q\,u_{i2}u_{j2}S(p_{12})
\left\langle E_{1},
u_{22}u_{21}S(u_{22})S(u_{11})\right\rangle\\
&=
\varepsilon(E_{1})p_{ij}S(p_{12}).
\end{aligned}
\]
The $F_1$-invariance is verified analogously.  Let $V^{(0,1)}_{\mathrm{HK}}$ denote the Heckenberger–Kolb cotangent space and let $\Lambda^{(0,1)}_{\mathrm{HK}}$ be the cotangent space of the corresponding framing. We identify the corresponding generators by dually pairing the $\mathcal{O}_{q}(\mathrm{Sp}_{4}/\mathrm{L}_{1})^{\circ}$-restriction of the root vectors in $\underline{T}^{(0,1)}_{\mathrm{HK}}$ with the elements $\tilde{p}_{ij}$. We find
\[
e_2 = \left[q^{-2}\tilde{p}_{13})\right]_{V^{(0,1)}_{\mathrm{HK}}},\qquad e_{2+1}=\left[-q^{-2}\tilde{p}_{14}\right]_{V^{(0,1)}_{\mathrm{HK}}}.
\]

As for the framed cotangent space the quadratic tensors are easier, we rather apply the inverse of the comparison map $\chi$ to that element giving the relation of interest. Set $b:=-q^{-2}\widetilde p_{14}$. We find 

\[
\begin{aligned}
\chi^{-1}(e_{2+1}\otimes e_{2+1}) &= [bS(b_{(1)})]_{V^{(0,1)}_{\mathrm{HK}}} \otimes[b_{(2)}^{+}]_{V^{(0,1)}_{\mathrm{HK}}}\\
&=q^{-4}\left[p_{14}S(p_{12})S(u_{1a}u_{4c}S(p_{rs}))\right]_{V^{(0,1)}_{\mathrm{HK}}}\otimes\left[\left(p_{ac}S(u_{2s})S(u_{1r})\right)^{+}\right]_{V^{(0,1)}_{\mathrm{HK}}}\\
&=q^{-4}\left[p_{14}S(p_{12})S(u_{11}u_{44}S(p_{12}))\right]_{V^{(0,1)}_{\mathrm{HK}}} \otimes\left[\left(p_{14}S(u_{22})S(u_{11})\right)^{+}\right]_{V^{(0,1)}_{\mathrm{HK}}}\\
&\quad+q^{-4}\left[p_{14}S(p_{12})S(u_{11}u_{43}S(p_{12}))\right]_{V^{(0,1)}_{\mathrm{HK}}}\otimes\left[\left(p_{13}S(u_{22})S(u_{11})\right)^{+}\right]_{V^{(0,1)}_{\mathrm{HK}}}\\
& = q^{-2}(e_{2+1}\otimes e_{2+1} - q^{2}(1-q^{-2})e_{2+1+1}\otimes e_{2})\\ 
& = q^{-2}(e_{2+1}\otimes e_{2+1}+ (1-q^{2})e_{2+1+1}\otimes e_{2}). 
\end{aligned}
\]
where only the index choices $(a,c,r,s)=(1,4,1,2),(1,3,1,2)$ give non-vanishing contributions.  

The calculation performed above shows that, up to a normalisation prefactor, we get back the same relation featured in the Heckenberger--Kolb prolongation.
\end{rmk}

\section{Differential calculi on $\mathcal{O}_{q}(\mathrm{SO}_{8})$ and its contact quantised flag manifold}
\label{section:lusztig_for_so(8)}
Building on the results of the previous section, we now turn to the special orthogonal Lie algebra $\mathfrak{so}_{8}$. As the first non-trivial case in the $\mathrm{D}_{n}$ series, the quantum group $\mathcal{O}_{q}(\mathrm{SO}_{8})$ already exhibits considerable complexity, while on the other hand remaining feasible for explicit computations with the aid of its triality symmetry. 

In Section~\ref{subsection:basics_for_so8}, we recall the Lie algebra $\mathfrak{so}_{2\ell}$, its quantised enveloping algebra, and the associated quantum group. In Section~\ref{section:calculus_D4_flag}, we construct an eight-dimensional FODC associated with the contact directions on the corresponding contact quantised flag manifold, supply a framing calculus at the level of the quantum group, and compute the corresponding maximal prolongation. Finally, in Section~\ref{section:tangent_so8}, we extend this calculus to a nine-dimensional one by adjoining the Reeb direction and show that the resulting FODC cannot be framed by any Lusztig FODC on $\mathcal{O}_{q}(\mathrm{SO}_{8})$. For ease of reference, the dual pairings used throughout this section are collected in a table displayed in Appendix~\ref{appendix}.
  
	\subsection{The Lie algebra $\mathfrak{so}_{2\ell}$, its quantised envelope and quantum group}\label{subsection:basics_for_so8}
	Consider the special orthogonal Lie algebra $\mathfrak{g}:=\mathfrak{so}_{2\ell}$. As in Section \ref{section:basics_C2}, we briefly recall the construction of the corresponding root system and provide an explicit basis. We then write down the Cartan matrix and Dynkin diagram and present the associated Weyl group. Finally, we discuss the structure of the quantised enveloping algebra $\mathrm{U}_{q}(\mathfrak{so}_{2\ell})$ and quantised coordinate algebra $\mathcal{O}_{q}(\mathrm{SO}_{8})$. Again, the material presented in this subsection is standard and can be found in many textbooks on Lie algebras and representation theory, see \emph{e.g.} \cite{Hump78} and \cite{Kli-Sch}.

Let $\mathfrak{h}\subset \mathfrak{g}$ be the set of diagonal matrices. We write the diagonal entries of an element in $H$ labelling from $0$ to $2\ell$, and thus the diagonal entries of an element $h\in \mathfrak{h}$ may be written as $a_{1},\dots, a_{\ell},-a_{1},\dots,-a_{\ell}$. Equivalently, $h=\sum_{i=1}^{\ell}a_{i}(e_{i,i}-e_{i+\ell,i+\ell})$. A basis for $\mathfrak{g}$ consisting of simultaneous eigenvectors for the adjoint $\mathfrak{h}$-action $\mathrm{Ad}_{\mathfrak{h}}\colon \mathfrak{h} \otimes \mathfrak{g}\to \mathfrak{g}$ on $\mathfrak{g}$ is given by the elements

\begin{equation}
	\begin{aligned}
		m_{ij}& =e_{i,j}-e_{j+\ell,i+\ell}, \quad \text{for} \ 1\leq i\neq j \leq \ell,  \\ 
		p_{ij}& =e_{i,\ell +j} - e_{j,\ell +i},\quad  \text{for} \ 1\leq i < j \leq \ell,  \\
		q_{ji} =p^{t}_{ij}& =e_{\ell +j,i} - e_{\ell +i,j}, \quad \text{for} \ 1\leq i < j \leq \ell, 
	\end{aligned}
\end{equation}
together with the Cartan elements $h_i=e_{ii}-e_{i+\ell,i+\ell}$, where  $1\leq i\leq\ell$, with eigenvalues
\begin{equation} 
\begin{aligned}
   \mr{Ad}_{\mathfrak{h}}(m_{ij})& =(a_{i}-a_{j})m_{ij}, \qquad \mr{Ad}_{\mathfrak{h}}(p_{ij})&=(a_{i}+a_{j})p_{ij}, \qquad  \mr{Ad}_{\mathfrak{h}}(q_{ji})&=-(a_{i}+a_{j})q_{ji}. 
\end{aligned}
\end{equation}

\noindent For $i=1,\dots,\ell$ consider maps $\epsilon_{i}\colon \mathfrak{h}\to \mathbb{C}$, sending $h \mapsto a_{i}$, i.e. sending each element $h\in \mathfrak{h}$ to the corresponding $i-$th diagonal entry $a_{i}$. We summarise the roots corresponding to the common eigenvectors for the action of the Cartan subalgebra on $\mathfrak{g}$.
\begin{equation*}
\begin{tabular}{c|c|c|c}
 
  root & $\epsilon_{i}-\epsilon_{j}$ & $\epsilon_{i}+\epsilon_{j}$ & $-(\epsilon_{i}+\epsilon_{j})$ \\
  \hline
  eigenvector & $m_{ij}\ (i\neq j)$ & $p_{ij} \ (i<j)$ & $q_{ji} \ (i<j)$ \\

\end{tabular}.
\end{equation*}
The \emph{standard basis} for the corresponding root system is given by 
\begin{equation*}
	\mathcal{B}=\big\{\alpha_{i}:=\epsilon_{i}-\epsilon_{i+1},  \ 1\leq i < \ell\big\}\cup \big\{\beta_{n}=\epsilon_{n-1}+\epsilon_{n}\big\}, 
\end{equation*}

\noindent and the associated set of fundamental weights is 
\begin{align*}
\varpi_{\alpha_i} & =\epsilon_{1}+\cdots + \epsilon_{i},  \ \text{for} \ i\leq n-2, \\
\varpi_{\alpha_{n-1}} & = \frac{1}{2}\left(\epsilon_{1}+\cdots + \epsilon_{n-1}-\epsilon_{n}\right), \\
\varpi_{\beta_n} & = \frac{1}{2}\left(\epsilon_{1}+\cdots + \epsilon_{n-1}+\epsilon_{n}\right). 
\end{align*}

\noindent The Cartan matrix reads 

\[
\mathrm{A}_{\mathrm{D}_{n}}=
\begin{pmatrix}
2 & -1 & 0 & 0 & \cdots & 0 & 0 & 0 \\
-1 & 2 & -1 & 0 & \cdots & 0 & 0 & 0 \\
0 & -1 & 2 & -1 & \cdots & 0 & 0 & 0 \\
0 & 0 & -1 & 2 & \cdots & 0 & 0 & 0 \\
\vdots & \vdots & \vdots & \vdots & \ddots & -1 & 0 & 0 \\
0 & 0 & 0 & 0 & -1 & 2 & -1 & -1 \\
0 & 0 & 0 & 0 & 0 & -1 & 2 & 0 \\
0 & 0 & 0 & 0 & 0 & -1 & 0 & 2
\end{pmatrix},
\]
and the corresponding Dynkin diagram is as in Figure \ref{figure:dynk_D}. Finally, the Weyl group reads
\begin{equation*}
 \begin{aligned}
 	 	\mathcal{W}=\langle s_{\alpha_{1}},s_{\alpha_{2}},\dots,s_{\alpha_{n-1}},s_{\beta} & \mid
(s_i)^2=1,\;
(s_i s_j)^{m_{ij}}=1\rangle ,  
\end{aligned}
\end{equation*}
where  $m_{ii}=1$,  $m_{ij}=3$ if $i,j$ are connected and $m_{ij}=2$ otherwise. 

\begin{figure}[!htbp]
\centering	
\begin{tikzpicture}[scale=1]
\tikzstyle{smallcircle} = [circle, draw=black, fill=lightgray, minimum size=2.5mm, inner sep=0pt]

\node[smallcircle, label=above:$\alpha_1$] (n1) at (0,0) {};
\node[smallcircle, label=above:$\alpha_2$] (n2) at (2,0) {};
\node (n) at (4,0) {$\cdots$};
\node[smallcircle, label=above:$\alpha_{n-2}$] (n3) at (6,0) {};

\node[smallcircle, label=right:$\alpha_{n}$] (n4) at (8,1.2) {};
\node[smallcircle, label=right:$\alpha_{n-1}$]   (n5) at (8,-1.2) {};

\draw (n1) -- (n2);
\draw (n2) -- (n);
\draw (n) -- (n3);
\draw (n3) -- (n4);
\draw (n3) -- (n5);

\end{tikzpicture}.
\caption{The Dynking diagram for the root system $\mathrm{D}_{n}$.}
\label{figure:dynk_D}
\end{figure}

Let us fix $q_{i}=q^{\frac{(\alpha_{i},\alpha_{i})}{2}}$, with $q_{i}^{2}\neq 1$ for each $i=1,\dots, n$.  With the above datum we construct the quantised enveloping algebra $\mathrm{U}_{q}(\mathfrak{so}_{2\ell})$ precisely as in Section \ref{subsection:DJ_quantum_groups_and_flags}, and as the Lie algebra $\mathfrak{g}$ is simply laced, we have $q_{i}= q$ for all $i$'s and the $q$-Serre relations simplify as
\begin{equation}\nonumber
\begin{aligned}
	  E_{i}^{2}E_{j}-(q+q^{-1})E_{i}E_{j}E_{i}+E_{j}E_{i}^{2}& =0,  \\ 
	 F_{i}^{2}F_{j}-(q+q^{-1})F_{i}F_{j}F_{i}+F_{j}F_{i}^{2}& =0. 
\end{aligned}	
\end{equation}
Let us now fix a weight basis $\{e_{i}\}_{i=1}^{2\ell}$ of $V_{\varpi_{1}}$. The quantised coordinate algebra $\mathcal{O}_{q}(\mr{SO}_{2\ell})$ is generated by the elements $u_{ij}:=c^{\varpi_{1}}_{e^{i},e_{j}}$. The unique Hopf algebra structure on $\mathcal{O}_{q}(\mr{SO}_{2\ell})$  is given by the coproduct and counit

\[
\Delta(u_{ij}):=\sum_{k=1}^{2\ell}u_{ik}\otimes u_{kj},\quad \varepsilon(u_{ij}):=\delta_{ij}, 
\]  

\noindent and an antipode that we omit. Moreover, denoting by 
\[
\langle - , - \rangle \colon \mr{U}_{q}(\mathfrak{so}_{2\ell})\times \mathcal{O}_{q}(\mr{SO}_{2\ell})\to \mathbb{C}
\] 
the non-degenerate dual pairing between the two dual Hopf algebras, letting $1\leq i \leq 2\ell-1$ and denoting $i'=2\ell+1-i$, we have \cite{Kli-Sch}

\begin{equation}\label{eq:dual_pairing_D4}
\begin{aligned}
\langle E_{i},u_{i+1,i}\rangle = -\langle E_{i},u_{i',(i+1)'}\rangle = \langle F_{i},u_{i,i+1}\rangle = -\langle F_{i},u_{(i+1)',i'}\rangle & = 1, \\ 
-\langle E_{n},u_{n+2,n}\rangle = \langle E_{n},u_{n+1,n-1}\rangle = -\langle F_{n},u_{n,n+2}\rangle = \langle F_{n},u_{n-1,n+1}\rangle & = 1, \\ 
 \langle K_{i}^{-1},u_{i,i}\rangle = \langle K_{i},u_{i+1,i+1}\rangle =\langle K_{i}^{-1},u_{(i+1)',(i+1)'}\rangle = \langle K_{i},u_{i',i'}\rangle & = q, \\
 \langle K_{n}^{-1},u_{n-1,n-1}\rangle = \langle K_{n}^{-1},u_{n,n}\rangle = \langle K_{n},u_{n+1,n+1}\rangle = \langle K_{n},u_{n+2,n+2}\rangle & = q, \\
 \text{for} \quad  j\neq i,i+1,i',(i+1)',  \quad \langle K_{i}^{-1},u_{j,j}\rangle & = 1, \\ 
  \text{for} \quad  j\neq n-1,n,n+1,n+2,  \quad \langle K_{n},u_{j,j}\rangle & = 1.
\end{aligned}
\end{equation}

\subsection{Contact differential calculi on $\mathcal{O}_{q}(\mathrm{SO}_{8})$ and $\mathcal{O}_{q}(\mathrm{SO}_{8}/\mathrm{L}_{S})$}
\label{section:calculus_D4_flag}
 From now until the end of the Section we fix $\ell=4$ and consider the quantised enveloping and coordinate algebras associated to the Lie algebra $\mathfrak{so}_{8}$. The aim of this section is the study of differential calculi for the only non-irreducible quantised flag manifold corresponding to the choice of a single root for the quantised Levi subalgebra

\[
\mathrm{U}_{q}(\ell_{S}):=\left\langle \, K_{i}^{\pm 1},E_{j},F_{j}\mid i=1,\dots,n,\ j\in S \right\rangle\subseteq \mathrm{U}_{q}(\mathfrak{so}_{8}), 
\]

\noindent namely $S:=\Pi \backslash \{\alpha_{2}\}$. This is precisely the choice of simple roots corresponding to a contact grading of the lowest rank case of the Lie type $\mathrm{D}_{n}$. This choice may be represented graphically by a cross on the Dynkin diagram corresponding to the simple root we exclude from $\mathrm{U}_{q}(\ell_{S})$, namely
\medskip

\[
\begin{tikzpicture}
  \tikzstyle{smallcircle} = [circle, draw=black, fill=lightgray, minimum size=2.5mm, inner sep=0pt]
  \tikzstyle{cross} = [minimum size=3mm, inner sep=0pt]
  \node[smallcircle, label=above:$\alpha_1$] (n1) at (0,0) {};
  \node[cross, label=above:$\alpha_2$] (n2) at (2,0) {\scalebox{1.5}{$\times$}};
  \node[smallcircle, label=above:$\alpha_3$] (n3) at (4,-1) {};
  \node[smallcircle, label=above:$\alpha_{4}$]    (n4) at (4,1) {};
  \draw (n1) -- (n2);
  \draw (n2) -- (n3);
  \draw (n2) -- (n4);
\end{tikzpicture}.
\] 

\medskip

\noindent As can already be deduced from the general picture described in Section \ref{subsection:DJ_quantum_groups_and_flags}, the quantised flag manifold $\mathcal{O}_{q}(\mathrm{SO}_{8}/\mathrm{L}_{S})$ is defined as the coideal subalgebra of invariant elements in $\mathcal{O}_{q}(\mathrm{SO}_{8})$ under the action of $\mathrm{U}_{q}(\ell_{S})$, namely 
\[
\mathcal{O}_{q}(\mathrm{SO}_{8}/\mathrm{L}_{S}) := {}^{\mathrm{U}_{q}(\ell_{S})}\mathcal{O}_{q}(\mathrm{SO}_{8})=\left\{ b\in \mathcal{O}_{q}(\mathrm{SO}_{8}) \mid X\triangleright b = b_{(1)}\langle X,b_{(2)}\rangle =\varepsilon(X)b, \ X\in \mathrm{U}_{q}(\ell_{S})\right\}. 
\]

Proceeding along the same lines as in Section \ref{section:contact_calculus_C2}, we now explicitly construct a left $\mathcal{O}_{q}(\mathrm{SO}_{8})$-covariant FODC for $\mathcal{O}_{q}(\mathrm{SO}_{8}/\mathrm{L}_{S})$. In particular, we generate a quantum tangent space $\underline{T}^{(0,1)}_{S}$ for $\mathcal{O}_{q}(\mathrm{SO}_{8})$ via the right adjoint action of $\mathrm{U}_{q}(\ell_{S})$ and consider its restriction to $\mathcal{O}_{q}(\mathrm{SO}_{8}/\mathrm{L}_{S})$, a procedure originally exploited by Heckenberger, I. and Kolb, S. in \cite{hek-kolb-2006} to construct a de Rham complex for the quantised flag manifolds in the irreducible setting.  A similar procedure is also employed in \cite{ale_re_jun} to construct covariant FODCi on the A-series quantised flags. 

\begin{prop}
A tangent space for $\mathcal{O}_{q}(\mathrm{SO}_{8})$ is given by 
\begin{equation}
\label{eq:tangent_space_for_D4_flag_upstairs}
\underline{T}^{(0,1)}_{S}:=E_{2}\triangleleft_{\mathrm{Ad}} \mathrm{U}_{q}(\ell_{S})=\Span_{\mathbb{C}}\Big\{E_{2}, [E_{2},E_{i}]_{q}, [[E_{2},E_{i}]_{q},E_{j}]_{q}, [[[E_{2},E_{i}]_{q},E_{j}]_{q},E_{k}]_{q}\Big\}. 
\end{equation}
Moreover, the latter restricts to the tangent space for $\mathcal{O}_{q}(\mathrm{SO}_{8}/\mathrm{L}_{S})$ given by  
 \begin{equation}
T^{(0,1)}_{S}:=\mathrm{U}_{q}(\ell_{S})E_{2}=\Span_{\mathbb{C}}\Big\{E_{2}, E_{i}E_{2},  E_{j}E_{i}E_{2}, E_{k}E_{j}E_{i}E_{2}\Big\}. 
\end{equation}
 where $i,j,k$ are pairwise distinct indices ranging in $S=\Pi/\{\alpha_{2}\}$. \begin{proof}
	The right adjoint action of the $K_{i}$-generators in $\mathrm{U}_{q}(\ell_{S})$ consists of a rescaling by an integer power of $q$, namely 
	
	\[
	E_{\alpha}\triangleleft K_{\beta} = K_{\beta}^{-1}E_{\alpha}K_{\beta} = q^{-\langle \alpha,\beta\rangle} E_{\alpha},  
	\]
	
	\noindent where with $E_{\alpha}$ and $K_{\beta}$ we denoted the generators of the quantised enveloping algebra corresponding to the positive $\alpha ,\beta$. Next, considering the action of the $E_{i}$-generators we find: 
	
 \begin{align*}
     E_{2}\triangleleft E_{i} & =-qE_{i}E_{2}+E_{2}E_{i} = [E_{2},E_{i}]_{q},  \\
		[E_{2},E_{i}]_{q}\triangleleft E_{j} & = [[E_{2},E_{i}]_{q},E_{j}]_{q}, \\
		[[E_{2},E_{i}]_{q},E_{j}]_{q} \triangleleft E_{k} & = [[[E_{2},E_{i}]_{q},E_{j}]_{q},E_{k}]_{q},  \\
		[[[E_{2},E_{i}]_{q},E_{j}]_{q},E_{k}]_{q}\triangleleft E_{\ell} & = 0, 	
	\end{align*}

\noindent for pairwise distinct $i,j,k\in\{1,3,4\}$ and arbitrary $\ell\in\{1,3,4\}$. It is immediate to deduce that other, higher contributions vanish owing to the $q$-Serre relations. Finally, consider the action of the $F_{i}$-generators:

\begin{align*}
	     E_{2}\triangleleft F_{i} & = -K_{i}F_{i}E_{2}+K_{i}E_{2}F_{i}=0,  \\ 
		[E_{2},E_{i}]_{q}\triangleleft F_{j} & =  \delta_{ij}E_{2}, \\
		[[E_{2},E_{i}]_{q},E_{j}]_{q}\triangleleft F_{k} & = (\delta_{kj}\delta_{hi}+\delta_{ki}\delta_{hj})[E_{2},E_{h}]_{q},  \\
		[[[E_{2},E_{i}]_{q},E_{j}]_{q},E_{k}]_{q}\triangleleft F_{\ell} & = (\delta_{\ell k}\delta_{hi}\delta_{sj}+\delta_{\ell j}\delta_{hi}\delta_{sk}+\delta_{\ell i}\delta_{hj}\delta_{sk})[[E_{2},E_{h}]_{q},E_{s}]_{q}. 
	\end{align*}
	
	Thus, the vector space in \ref{eq:tangent_space_for_D4_flag_upstairs} is a right $\mathrm{U}_{q}(\ell_{S})$-module.  We check that it realises a tangent space for $\mathcal{O}_{q}(\mathrm{SO}_{8})$ by evaluating the coproducts for its generators, for which we find

{\small \begin{align*}
\Delta([E_{2},E_{i}]_{q}) & = [E_{2},E_{i}]_{q}\otimes K_{2}K_{i}+(q^{-1}-q)E_{2}\otimes E_{i}K_{2} + 1\otimes [E_{2},E_{i}]_{q}, \\ \\
\Delta([[E_{2},E_{i}]_{q},E_{j}]_{q}) & = [[E_{2},E_{i}]_{q},E_{j}]_{q}\otimes K_{i}K_{j}K_{2} +(q^{-1}-q)[E_{2},E_{i}]_{q}\otimes E_{j}K_{2}K_{i} \\ & \quad + (q^{-1}-q)[E_{2},E_{j}]_{q}\otimes E_{i}K_{2}K_{j}  + (q^{-1}-q)^{2}E_{2}\otimes E_{i}E_{j}K_{2}\\ 
& \quad + 1\otimes [[E_{2},E_{i}]_{q},E_{j}]_{q}, \\ \\
\Delta([[[E_{2},E_{i}]_{q},E_{j}]_{q},E_{k}]_{q}) & = [[[E_{2},E_{i}]_{q},E_{j}]_{q},E_{k}]_{q}\otimes K_{2}K_{i}K_{j}K_{k} + 1\otimes [[[E_{2},E_{i}]_{q},E_{j}]_{q},E_{k}]_{q} \\ 
& \quad + (q^{-1}-q)[[E_{2},E_{i}]_{q},E_{j}]_{q}\otimes E_{k}K_{2}K_{i}K_{j} + (q^{-1}-q)[[E_{2},E_{i}]_{q},E_{k}]_{q} \otimes E_{j}K_{2}K_{i}K_{k}\\ 
& \quad + (q^{-1}-q)^{2}[E_{2},E_{i}]_{q}\otimes E_{k}E_{j}K_{2}K_{i} + (q^{-1}-q)[[E_{2},E_{j}]_{q},E_{k}]_{q}\otimes E_{i}K_{2}K_{j}K_{k} \\ 
& \quad +(q^{-1}-q)^{2}[E_{2},E_{j}]_{q}\otimes E_{k}E_{i}K_{2}K_{j} + (q^{-1}-q)^{2}[E_{2},E_{k}]_{q}\otimes E_{i}E_{j}K_{2}K_{k} \\ 
& \quad + (q^{-1}-q)^{3}E_{2}\otimes E_{i}E_{j}E_{k}K_{2}. 
\end{align*}}

 Finally, let $Y\in\mathcal{O}_{q}(\mathrm{SO}_{8}/\mathrm{L}_{S})^{\circ}$. As $X\triangleright b=\varepsilon(X)b$ for any $b\in\mathcal{O}_{q}(\mathrm{SO}_{8}/ \mathrm{L}_{S})$ and $X\in\mathrm{U}_{q}(\ell_{S})$, we have 

\[
\langle Y\varepsilon(X),b\rangle  = \langle Y,\varepsilon(X)b\rangle = \langle Y, X\triangleright b\rangle = \langle Y, b_{(1)}\langle X,b_{(2)}\rangle \rangle = \langle Y,b_{(1)}\rangle \langle X,b_{(2)}	\rangle = \langle YX,b\rangle, 
\]

\noindent and thus the tangent space $T^{(0,1)}_{S}$ restricts to a tangent space for $\mathcal{O}_{q}(\mathrm{SO}_{8}/ \mathrm{L}_{S})$, where each $q$-commutator $[E_{k},[E_{j},\ldots,[E_{i},E_{2}]_{q}]_{q}]_{q}$ is reduced to a scalar multiple of $E_{k}E_{j}\dots E_{i}E_{2}$. 
\end{proof}
\end{prop} 
\noindent We may represent the tangent space $\underline{T}^{(0,1)}_{S}$ graphically as follows, with the corresponding restriction to the $\mathcal{O}_{q}(\mathrm{SO}_{8}/\mathrm{L}_{S})$-tangent space $T^{(0,1)}_{S}$ similarly understood:
\begin{equation}\label{eq:tangent_space_project_flag}
\begin{tikzcd}[row sep=1.0cm, column sep=0.5cm,every arrow/.append style={-stealth}]
0 \arrow[r] & E_{2} \arrow[rr] \arrow[rd] \arrow[dd]            &                                            & {[E_{2},E_{3}]_{q}} \arrow[rd] \arrow[dd, dotted]  &                                                       &        \\
            &                                                   & {[E_{2},E_{4}]_{q}} \arrow[rr] \arrow[dd]  &                                                    & {[[E_{2},E_{4}]_{q},E_{3}]_{q}} \arrow[dd]            &        \\
            & {[E_{2},E_{1}]_{q}} \arrow[rr, dotted] \arrow[rd] &                                            & {[[E_{2},E_{3}]_{q},E_{1}]_{q}} \arrow[rd, dotted] &                                                       &        \\
            &                                                   & {[[E_{2},E_{4}]_{q},E_{1}]_{q}} \arrow[rr] &                                                    & {[[[E_{2},E_{4}]_{q},E_{3}]_{q},E_{1}]_{q}} \arrow[r] & {0. }
\end{tikzcd}
\end{equation}
From the last proposition and Theorems \ref{th:hermisson_class}--\ref{th:hk_tangent}, we have a left $\mathcal{O}_{q}(\mathrm{SO}_{8})$-covariant FODC on $\mathcal{O}_{q}(\mathrm{SO}_{8}/\mathrm{L}_{S})$ corresponding to the tangent space $T^{(0,1)}_{S}$, namely  
\begin{equation}
\label{eq:restriction_Lusztig_flag}
\left(\Omega^{(0,1)}(\mathrm{SO}_{8}/\mathrm{L}_{S}):=\mathcal{O}_{q}(\mathrm{SO}_{8})\square_{\pi_{B}} V_S^{(0,1)},\overline{\partial}\right), 
\end{equation}
where $V^{(0,1)}_S$ is the corresponding quantum cotangent space. Similarly, we have a left $\mathcal{O}_{q}(\mathrm{SO}_{8})$-covariant FODC at the level of the quantum group defined by

\begin{equation}
	\label{eq:framing_calculus}
\left(\underline{\Omega}^{(0,1)}(\mathrm{SO}_{8}):=\mathcal{O}_{q}(\mathrm{SO}_{8})\otimes \Lambda_S^{(0,1)},\overline{\partial}\right),
\end{equation}
with $\Lambda^{(0,1)}$ the corresponding quantum cotangent space. By dimensional arguments, there is a vector space isomorphism $V^{(0,1)}_{S} \stackrel{\cong}{\to} \Lambda^{(0,1)}_{S}$.  Equivalently, writing $\underline{\mathcal{I}}_{S}$ and $\mathcal{I}_S$ for the corresponding defining ideals, we have $\mathcal{I}=\underline{\mathcal{I}}_{S}\cap \mathcal{O}_{q}(\mathrm{SO}_{8}/\mathrm{L}_{S})^{+}$, and the canonical map
\begin{align*}
j:V_{S}^{(0,1)}& \to  \Lambda_{S}^{(0,1)},\\ 
 [b]_{V} &\mapsto [b]_{\Lambda},
\end{align*}
is an isomorphism. Thus, the map
\[
\left(\operatorname{id}_{A}\otimes j )\right|{\mathcal{O}_{q}(\mathrm{SO}_{8})\square_{\pi_B}V_{S}^{(0,1)}}\colon \Omega^{(0,1)}(\mathrm{SO}_{8}/\mathrm{L}_{S}) \to  \underline{\Omega}^{(0,1)}(\mathrm{SO}_{8})
\]
identifies the calculus on the flag with the restriction of the calculus on the quantum group. Moreover, by surjectivity, its image is automatically a right $\mathcal{O}_{q}(\mathrm{SO}_{8})$-submodule. Hence $\underline{\Omega}^{(0,1)}(\mathrm{SO}_{8})$ is a framing calculus for $\Omega^{(0,1)}(\mathrm{SO}_{8}/\mathrm{L}_{S})$, in the sense of \cite{ale_re_jun}. We now describe the quantum cotangent spaces corresponding to the above FODCi, giving an explicit description both in terms of the quantum group generators and the quantised flag generators.

\begin{lemma}
The generators of $\Lambda^{(0,1)}_{S}$ are given as in Table \eqref{table:root_vectors_frame_dual_pairing}.
\begin{proof}
	\noindent We write down the dual pairing between elements of the tangent space of Diagram \eqref{eq:tangent_space_project_flag} and the elements $u_{ij}\in\mathcal{O}_{q}(\mathrm{SO}_{8})$: exploiting the dual pairing of Hopf algebras again in Equation \eqref{eq:dual_pairing_D4}, we get the claim.
\end{proof}
\end{lemma}

 The general form for the generators of $\mathcal{O}_{q}(\mathrm{SO}_{8}/\mathrm{L}_{S})$ is discussed in Theorem \ref{th:generators_of_a_flag}. However, for such a low-rank example as $\mathrm{D}_{4}$, we find it more practical to use a more direct approach to provide a basis in terms of quantum minors. 

\begin{prop} 
The elements 
\begin{equation}
\label{eq:generators_of_so(8)_flag}
		\overline{z}_{ij}:=z_{ij}z_{12}^{\ast}:=\left(u_{i1}u_{j2}-q\,u_{i2}u_{j1}\right)\left(u_{77}u_{88}-q\, u_{78}u_{87}\right), \qquad i,j=1,\dots,8, 
	\end{equation}
	lie in $\mathcal{O}_{q}(\mathrm{SO}_{8}/\mathrm{L}_{S})$.
	
\begin{proof}
	After deducing the coproduct $
			\Delta(\overline{z}_{ij}) = u_{ia}u_{jb}u_{7c}u_{8d}\otimes z_{ab}\left(u_{c7}u_{d8} -q\,u_{c8}u_{d7}\right)$, the proof consists of a direct calculation using the $\mathrm{U}_{q}(\ell_{S})$-action on the elements $\overline{z}_{ij}$. We illustrate the procedure in a particular case, as the general calculation follows the same pattern. We find

	\[
	\begin{aligned}
	E_{1}\triangleright \overline{z}_{ij} & = ({z}_{ij})_{(1)}\langle E_{1},	({z}_{ij})_{(2)}\rangle \\ 
	& = u_{ia}u_{jb}u_{7c}u_{8d} \, \langle E_{1}, z_{ab}(u_{c7}u_{d8} -q\,u_{c8}u_{d7})\rangle \\ 
	& = u_{ia}u_{jb}u_{7c}u_{8d} \left( \langle E_{1}, z_{ab}\rangle \langle K_{1},(u_{c7}u_{d8} -q\,u_{c8}u_{d7})\rangle + \langle 1, z_{ab}\rangle \langle E_{1},(u_{c7}u_{d8} -q\,u_{c8}u_{d7})\rangle\right) \\ 
	& = u_{ia}u_{jb}u_{7c}u_{8d} \left( \langle E_{1}, u_{a1}u_{b2}-q\, u_{a2}u_{b1}\rangle \langle K_{1},(u_{c7}u_{d8} -q\,u_{c8}u_{d7})\rangle\right)\\ 
	& =0, 
	\end{aligned}
	\]
	
	\noindent and thus $E_{1}\triangleright \overline{z}_{ij}=\varepsilon(E_{1})\,\overline{z}_{ij}$. The first inclusion now follows after repeating the procedure with the remaining generators of $\mathrm{U}_{q}(\ell_{S})$.
\end{proof}
\end{prop}

	\noindent In the next proposition we provide bases for the cotangent spaces $\Lambda^{(0,1)}_{S}$ and $V^{(0,1)}_{S}$  considering respectively the dual pairing of Hopf algebras $\langle -,-\rangle\colon\mathcal{O}_{q}(\mathrm{SO}_{8})\times \mathrm{U}_{q}(\mathfrak{so}_{8}) \to \mathbb{C}$ and the corresponding restriction to a dual pairing  $\langle -,-\rangle\colon\mathcal{O}_{q}(\mathrm{SO}_{8}/\mathrm{L}_{S})\times \mathcal{O}_{q}(\mathrm{SO}_{8}/\mathrm{L}_{S})^{\circ} \to \mathbb{C}$ at the level of the quantised flag manifold and its dual coalgebra $\mathcal{O}_{q}(\mathrm{SO}_{8}/\mathrm{L}_{S})^{\circ}\subset \mathrm{U}_{q}(\mathfrak{so}_{8})$. 
	
	\begin{prop}\label{prop:cotangent_space_flag}
	Let $\Lambda^{(0,1)}_{S}$ be the dual space to $\underline{T}_{S}^{(0,1)}$ with respect to the non-degenerate  dual pairing $\langle -,-\rangle\colon \mathrm{U}_{q}(\mathfrak{so}_{8})\times  \mathcal{O}_{q}(\mathrm{SO}_{8}) \to \mathbb{C}$, and let $V^{(0,1)}_{S}$ be the dual space to $T_{S}^{(0,1)}$ with respect to the non-degenerate  dual pairing $\langle -,-\rangle\colon \mathcal{O}_{q}(\mathrm{SO}_{8}/\mathrm{L}_{S})\times \mathcal{O}_{q}(\mathrm{SO}_{8}/\mathrm{L}_{S})^{\circ} \to \mathbb{C}$. Table \ref{table:root_vectors_dual_pairing_flag_frame} provides two bases for the latter.

\begin{proof}
The proof is a calculation using the dual pairing of Hopf algebras and the corresponding restriction to $\left(\mathcal{O}_{q}(\mathrm{SO}_{8}/\mathrm{L}_{S})\right)^{\circ}\times \mathcal{O}_{q}(\mathrm{SO}_{8}/\mathrm{L}_{S})\to \mathbb{C}$. For example, we have 
		 \begin{align*}
	 	\langle E_{1}E_{2},\tilde{z}_{ij}\rangle & = \langle E_{1}E_{2}, u_{i1}u_{j2}u_{77}u_{88} -q \, u_{i1}u_{j2}u_{78}u_{87} -q\, u_{i2}u_{j1}u_{77}u_{88} +q^{2}u_{i2}u_{j1}u_{78}u_{87}\rangle \\ 
	 	& = \langle E_{1}, u_{ia}u_{jb}u_{7c}u_{8d}\rangle \langle E_{2}, u_{a1}u_{b2}u_{c7}u_{d8} -q \, u_{a1}u_{b2}u_{c8}u_{d7} -q\, u_{a2}u_{b1}u_{c7}u_{d8} +q^{2}u_{a2}u_{b1}u_{c8}u_{d7}\rangle \\ 
	 	&= q\,\langle E_{1}, u_{i1}u_{j3}u_{77}u_{88}\rangle-q^{2}\langle E_{1}, u_{i1}u_{j3}u_{78}u_{87}\rangle-q^{2}\langle E_{1}, u_{i3}u_{j1}u_{77}u_{88}\rangle+q^{3}\langle E_{1}, u_{i3}u_{j1}u_{78}u_{87}\rangle\\ 
	 	&= q\,\delta_{i2}\delta_{j3} -q^{2}\delta_{i3}\delta_{j2}. 
	 \end{align*}
Now, as $\langle E_{2}E_{1},\tilde{z}_{23}\rangle =0$, we find $\langle [E_{2},E_{1}]_{q},q^{-3}\tilde{z}_{32}\rangle=1$, and in the same way $\langle [E_{2},E_{1}]_{q},-q^{-2}\tilde{z}_{23}\rangle =1.$ With analogous calculations we deduce the remaining pairings as in Table \ref{table:root_vectors_dual_pairing_flag_frame}.
\end{proof}
\end{prop}
\subsection{The maximal prolongation of $\Omega^{(0,1)}\left(\mathrm{SO}_{8}/\,\mathrm{L}_{S}\right)$}
\label{section:exterior_D4_flag}
In this Section, we explicitly describe the relations for the maximal prolongation of the FODC

\[
\left(\Omega^{(0,1)}\left(\mathrm{SO}_{8}/\,\mathrm{L}_{S}\right):=\mathcal{O}_{q}(\mathrm{SO}_{8})\square_{\pi_{B}} V_{S}^{(0,1)},\overline{\partial}\right)
\]

\noindent on the contact $\mathcal{O}_{q}(\mathrm{SO}_{8})$ quantised flag manifold. The relations are obtained by applying the general reconstruction principle for maximal prolongations of FODCs over quantum homogeneous spaces, expressed in terms of the bimodule map of a torsion-free bimodule connection, as already presented in Section \ref{section:contact_calculus_C2}.

In the case of $\mathcal{O}_{q}(\mathrm{SO}_{8})$, we again use the existence of a distinguished covariant connection on each irreducible component of the Heckenberger–Kolb type calculus, which yields a torsion-free bimodule connection on the relevant subcalculi. The non-irreducibility of the corresponding quantised flag manifold manifests itself through the decomposition of the antiholomorphic tangent space into two irreducible $\mathrm{U}_{q}(\ell_{S})$-submodules.

Our analysis will focus first on the irreducible component generated by the action of $\mathrm{U}_{q}(\ell_{S})$ on the root vector associated to a fixed simple root omitted from the quantised Levi subalgebra, in direct analogy with the classical construction. In addition, we isolate the remaining irreducible submodule $T^{(0,1)}_{\theta}$  associated with the highest root of $\mathfrak{so}_{8}$. 

We start by specialising the result of Theorem \ref{theorem:relations_max_prolongation_flag} to the case under study. In particular, we write down an explicit formula for the bimodule map in this setting.
\begin{prop}\label{prop:bimodule_map_cotangent_space_flag}
	 Let $e_{\alpha}, \ \alpha\in\Delta_{2}^{+}$ denote the generators for the anti-holomorphic cotangent space $V_{S}^{(0,1)}$ of $\mathcal{O}_{q}(\mathrm{SO}_{8}/\mathrm{L}_{S})$ as in Proposition \ref{prop:cotangent_space_flag}. The bimodule map on generators reads 
	 
	 \begin{equation}\label{eq:sigma_on_cotangent_space}
	\begin{aligned}
		\sigma(e_{\alpha}\otimes e_{\beta})& = \sum_{a,b=1}^{8}q^{a+b-i-j-1} [e_{\beta}]\triangleleft u_{b'j'}u_{a'i'}\otimes [e_{\gamma_{ab}}], \qquad \sigma^{-1}(e_{\alpha}\otimes e_{\beta}) & =q\sum_{a,b=1}^{8}[{e}_{ab}]\otimes [e_{\alpha}]\triangleleft (u_{ia}u_{jb}). 
	\end{aligned}
	\end{equation}
	
\begin{proof} We prove the statement by direct calculation, during which summation over repeated indices is understood, finding

\begin{align*}
\sigma(e_{\alpha}\otimes e_{\beta}) & = q^{-2}\sigma([\tilde{z}_{ij}]\otimes [\tilde{z}_{k\ell}]) \\ 
& = q^{-1} [e_{\beta}] \triangleleft S((\tilde{z}_{ij})_{(1)}) \otimes [(\tilde{z}_{ij})_{(2)}] \\
& =q^{-1} [e_{\beta}] \triangleleft S(u_{ia}u_{jb}u_{7c}u_{8d})\otimes [(u_{a1}u_{b2}-q\, u_{a2}u_{b1})(u_{c7}u_{d8}-q\, u_{c8}u_{d7})]\\  
& = q^{-1}[e_{\beta}]\triangleleft S(u_{8d})S(u_{7c})S(u_{jb})S(u_{ia}) \otimes [(u_{a1}u_{b2}-q\, u_{a2}u_{b1})(u_{c7}u_{d8}-q\, u_{c8}u_{d7})]\\ 
& = q^{-1} [e_{\beta}]\triangleleft S(u_{8d})S(u_{7c})S(u_{jb})S(u_{ia}) \otimes [(u_{a1}u_{b2}-q\, u_{a2}u_{b1})(\delta_{c7}\delta_{d8}z^{*}_{12}+\delta_{c8}\delta_{d7}z^{*}_{21})]\\
& = q^{-1} [e_{\beta}]\triangleleft S(u_{8d})S(u_{7c})S(u_{jb})S(u_{ia}) \otimes [\delta_{c7}\delta_{d8}\tilde{z}_{ab}+\delta_{c8}\delta_{d7}\bar{z}_{ab}]\\
& = q^{-1}[e_{\beta}]\triangleleft \left(S(u_{88})S(u_{77})S(u_{jb})S(u_{ia}) \otimes [\tilde{z}_{ab}] + S(u_{87})S(u_{78})S(u_{jb})S(u_{ia}) \otimes [\bar{z}_{ab}]\right)\\
& = [e_{\beta}]\triangleleft S(u_{88})S(u_{77})S(u_{jb})S(u_{ia}) \otimes [e_{\gamma_{ab}}]\\
& = q^{a+b-i-j}[e_{\beta}]\triangleleft u_{11}\,u_{22}\,u_{b'j'}\,u_{a'i'}\otimes [e_{\gamma_{ab}}]\\  
& = q^{a+b-i-j}\langle K_{\beta},u_{11}\,u_{22}\rangle [e_{\beta}]\triangleleft u_{b'j'}\,u_{a'i'}\otimes [e_{\gamma_{ab}}]\\
& = q^{a+b-i-j-1} [e_{\beta}]\triangleleft u_{b'j'}\,u_{a'i'}\otimes [e_{\gamma_{ab}}]. 
\end{align*} 

In particular, we used the fact that a non-vanishing right action $e_\alpha \triangleleft u_{ij}$ requires $i>j$, and that $\langle K_{\beta},u_{11}u_{22}\rangle=q^{-1}$ for any $\beta$ in the positive root span of $\alpha_{2}$. 

We proceed similarly for the inverse $\sigma^{-1}$, finding
\begin{align*}
\sigma^{-1}(e_{\alpha}\otimes e_{\beta}) & = q^{-1}\sigma^{-1}([e_{\alpha}]\otimes [\tilde{z}_{ij}]) \\  
& = q^{-1} [(\tilde{z}_{ij})_{(2)}]\otimes [e_{\alpha}]\triangleleft (\tilde{z}_{ij})_{(1)} \\
& = q^{-1} [(u_{a1}u_{b2}-q\, u_{a2}u_{b1})(u_{c7}u_{d8}-q\, u_{c8}u_{d7})]\otimes [e_{\alpha}]\triangleleft (u_{ia}u_{jb}u_{c7}u_{8d})\\ 
& = q\,[e_{ab}]\otimes [e_{\alpha}]\triangleleft (u_{ia}u_{jb}). 
\end{align*}
\end{proof}
\end{prop}

In view of the last proposition, the calculation for the relations of the maximal prolongation of $\Omega^{(0,1)}(\mathrm{SO}_{8}/\mathrm{L}_{S})$ is essentially reduced to the evaluation of Equation \eqref{eq:sigma_on_cotangent_space}.

An alternative and somewhat more convenient approach is again the one discussed in Section \ref{section:C2}, where instead of calculating the relations for the exterior algebra at the level of the cotangent space, we do the same at the level of the quantum tangent space. Using Lemma \ref{lemma:bimodule_map_tangent_space_sigma}, we calculate the relations for $\Omega^{(0,1)}(\mathrm{SO}_{8}/\mathrm{L}_{S})$ by the same method used in Proposition \ref{prop:relations_C2_flag_S} for $\Omega^{(0,1)}(\mathrm{Sp}_{4}/\mathrm{L}_{S})$. 

\noindent Recall that  $\widecheck\sigma$ denotes the dual version of the bimodule map $\sigma$ as in Equation \eqref{eq:sigma_check}. 

\begin{lemma}\label{lemma:sigma_magic_formula}
Consider $T_{S}^{(0,1)}:=\mathrm{U}_{q}(\ell_{S})E_{2}$. For any $Y\in\mathrm{U}_{q}(\ell_{S})$ and $E_{\alpha},E_{\gamma},E_{\delta}$ the elements corresponding to root-lattice weights $\alpha,\gamma,\delta$ such that $E_\alpha E_2,E_\gamma E_2,E_\delta E_2$ are root vectors in $T_{S}^{(0,1)}$, we have 
	\begin{equation}
	\widecheck{\sigma}(E_{\alpha}E_{2}\otimes YE_{2})  = \sum_{\gamma+\delta=\alpha} (q^{-1}-q)^{h(\gamma)}E_{\gamma}K_{2}^{-1}K_{\alpha}^{-1}YE_{2}\otimes E_{\delta}E_{2}, 
\end{equation}
 where $h(\gamma)$ is the height of $\gamma$. 
\begin{proof}
As we are dealing with a finite-dimensional tangent space, the proof is easily carried out by a direct calculation. The case $E_{\alpha}=1$ is clear. For the remaining combinations, we exploit that, generally speaking, given any $X,Y,Z\in\mathrm{U}_{q}(\ell_{S})$ it holds that 

\[
\widecheck{\sigma}(XYE_{2}\otimes ZE_{2})  = X_{(1)}\triangleright\left(Y_{(1)}K_{2}^{-1}S(Y_{(3)})S(X_{(2)})ZE_{2}\otimes Y_{(2)}E_{2}\right). 
\]

\noindent We have

\begin{align*}
\widecheck{\sigma}(E_{i}E_{2}\otimes YE_{2}) & = (q^{-1}-q)(E_{i}K_{2}^{-1}K_{i}^{-1}YE_{2}\otimes E_{2})+K_{2}^{-1}K_{i}^{-1}YE_{2}\otimes E_{i}E_{2}, \\ \\
\widecheck{\sigma}(E_{i}E_{j}E_{2}\otimes YE_{2}) & = (q^{-1}-q)(E_{j}K_{2}^{-1}K_{j}^{-1}K_{i}^{-1}YE_{2}\otimes E_{i}E_{2}+E_{i}K_{2}^{-1}K_{j}^{-1}K_{i}^{-1}YE_{2}\otimes E_{j}E_{2} \\  
&\quad + (q^{-1}-q)E_{i}E_{j}K_{2}^{-1}K_{j}^{-1}K_{i}^{-1}YE_{2}\otimes E_{2}) + K_{2}^{-1}K_{j}^{-1}K_{i}^{-1}YE_{2}\otimes E_{i}E_{j}E_{2}, \\ \\
\widecheck{\sigma}(E_{i}E_{j}E_{k}E_{2}\otimes YE_{2}) & =(q^{-1}-q)^{3}E_{i}E_{k}E_{j}K_{2}^{-1}K_{k}^{-1}K_{j}^{-1}K_{i}^{-1}YE_{2}\otimes E_{2}\\ 
		&\quad +(q^{-1}-q)^{2}E_{k}E_{j}K_{2}^{-1}K_{k}^{-1}K_{j}^{-1}K_{i}^{-1}YE_{2}\otimes E_{i}E_{2}\\ 
		&\quad +(q^{-1}-q)^{2}E_{i}E_{k}K_{2}^{-1}K_{k}^{-1}K_{j}^{-1}K_{i}^{-1}YE_{2}\otimes E_{j}E_{2}\\
		&\quad +(q^{-1}-q)^{2}E_{i}E_{j}K_{2}^{-1}K_{k}^{-1}K_{j}^{-1}K_{i}^{-1}YE_{2}\otimes E_{k}E_{2}\\  
		&\quad +(q^{-1}-q)E_{j}K_{2}^{-1}K_{k}^{-1}K_{j}^{-1}K_{i}^{-1}YE_{2}\otimes E_{i}E_{k}E_{2}\\ 
		&\quad +(q^{-1}-q)E_{k}K_{2}^{-1}K_{k}^{-1}K_{j}^{-1}K_{i}^{-1}YE_{2}\otimes E_{i}E_{j}E_{2}\\  
		&\quad +(q^{-1}-q)E_{i}K_{2}^{-1}K_{k}^{-1}K_{j}^{-1}K_{i}^{-1}YE_{2}\otimes E_{j}E_{k}E_{2}\\ 
		&\quad +K_{2}^{-1}K_{k}^{-1}K_{j}^{-1}K_{i}^{-1}YE_{2}\otimes E_{i}E_{j}E_{k}E_{2}, \\ 
		\end{align*}
		giving the claim.
\end{proof}
\end{lemma}

The quantised exterior algebra for this example closely resembles the exterior algebra of the antiholomorphic de Rham complex of the smooth manifold $\mathrm{SO}_{8}/\mathrm{L}_{S}$, but includes corrections involving the deformation parameter $q$ that arise as a purely quantum phenomenon. In the next proposition, we calculate the corresponding full set of relations. The triality symmetry of the $\mathrm{D}_{4}$ Dynkin diagram considerably simplifies the evaluation of such relations, which exhibit symmetries among the outer nodes. This behaviour is of course very specific to the contact quantised flag manifold studied here and, for example, the irreducible quantised flag manifold of the same Lie type yields relations featuring less symmetry.
 
\begin{prop} \label{prop:relations_for_D4}
Let $\alpha,\beta\in\Delta_{2}^{+}$ and let $e_{\alpha},e_{\beta}$ be the corresponding basis elements in $V^{(0,1)}_{S}$. The relations for the maximal prolongation of $\Lambda^{(0,1)}_{S}$ corresponding to the generating sets $G_{i}$, $i=1,2$ of Theorem \ref{theorem:relations_max_prolongation_flag}, are of the following form: 
\begin{equation}
	\begin{aligned}
		(1+q^{-2}) \, e_{\alpha}\wedge e_{\alpha} &=0,\\ 
		e_{\alpha}\wedge e_{\beta}+q^{-\langle \alpha,\beta\rangle}e_{\beta}\wedge e_{\alpha}&=0,\\ 
	e_{\alpha}\wedge e_{\beta}+q^{\langle \alpha,\beta\rangle}e_{\beta}\wedge e_{\alpha} + \sum_{\gamma+\delta=\alpha+\beta}\nu_{\gamma\delta}\, e_{\gamma}\wedge e_{\delta}&=0. 
\end{aligned}
\end{equation}
\begin{proof} 
We calculate the relations exploiting the results of Theorem \ref{theorem:relations_max_prolongation_flag} and Proposition \ref{prop:bimodule_map_cotangent_space_flag}. The first identity is clear according to the structure of $\Delta_{2}^{+}$. To deduce the other relations, we use Lemma \ref{lemma:sigma_magic_formula}. To simplify notation, we write $e_{\alpha_{i}}:=e_{i}$ and take $i,j,k\in\{1,3,4\}$, with $i,j,k$ pairwise distinct. The relations are divided into three categories:
\begin{enumerate}
	\item The first set of relations consists of anticommutation relationships between the generators of $V^{(0,1)}_{S}$, where a prefactor corresponding to the inner product of the underlying roots appears

\begin{align*}
e_{2}\wedge e_{2+i}+q^{-1}e_{i+2}\wedge e_{2}&=0,  \\ 
e_{2}\wedge e_{2+i+j} + e_{2+i+j}\wedge e_{2}&=0,   \\ 
e_{2}\wedge  e_{2+i+j+k}+ q\,e_{2+i+j+k}\wedge  e_{2}&=0,  \\ 
e_{2+i}\wedge  e_{2+i+j}+ q^{-1}e_{2+i+j}\wedge  e_{2+i}&=0,  \\ 
e_{2+i}\wedge e_{2+i+j+k}+e_{2+i+j+k}\wedge  e_{2+i}&=0, 	\\ 
e_{2+i+j}\wedge  e_{2+i+j+k}+ q^{-1}e_{2+i+j+k}\wedge  e_{2+i+j}&=0.  
\end{align*}

\item The second set of relations features an extra factor $\nu:=q^{-1}-q$, and again a prefactor corresponding to the inner product of the underlying roots, namely

\begin{align*}
e_{2+j}\wedge  e_{2+i} +e_{2+i}\wedge  e_{2+j}+\nu \, e_{2+i+j}\wedge  e_{2} &=0, \\  
e_{2+i+j}\wedge  e_{2+k}+q^{-1}e_{2+k}\wedge  e_{2+i+j}+ \nu\,  e_{2+i+j+k}\wedge e_{2} &=0,  \\ 
e_{2+i+j}\wedge  e_{2+i+k}+e_{2+i+k}\wedge  e_{2+i+j}+ \nu\,  e_{2+i+j+k}\wedge  e_{2+i}&=0. \\   
\end{align*}

\item  Finally, the third set consists of more involved relations owing to the multiple possibilities with which the positive root corresponding to both sides of the tensor factor can be decomposed. We obtain 
\begin{align*}
e_{2+i+j}\wedge e_{2+k} +q\,e_{2+k}\wedge e_{2+i+j} +q\nu\bigl( e_{2+i+k}\wedge e_{2+j}+e_{2+j+k}\wedge e_{2+i} +\nu\,e_{2+i+j+k}\wedge e_{2}\bigr)=0.\\ 
e_{2}\wedge e_{2+i+j+k}+ q^{-1}e_{2+i+j+k}\wedge e_{2}+\nu(e_{2+i}\wedge e_{2+j+k} + e_{2+j}\wedge e_{2+i+k} + e_{2+k}\wedge e_{2+i+j})\\
+ \nu^{2}(e_{2+j+k}\wedge e_{2+i} + e_{2+i+k}\wedge e_{2+j} + e_{2+i+j}\wedge e_{2+k}) \\ 
+ \nu^{3}e_{2+i+j+k}\wedge e_{2}& = 0. 
\end{align*}
Notice how the more involved relations are the only ones featuring the root content corresponding to a positive root in $\Delta_{\alpha_{2}}^{+}$, which in particular happens to correspond to the highest root of the underlying Lie algebra. 
\end{enumerate}

To determine the remaining relations arising from $G_2$, recall that $\bigl(\Omega^{(0,1)}(\mathrm{SO}_8/\mathrm L_S),\overline{\partial}\bigr)$ is the restriction to $\mathcal O_q(\mathrm{SO}_8/\mathrm L_S)$ of the FODC $\bigl(\underline{\Omega}^{(0,1)}(\mathrm{SO}_8),\overline{\partial}\bigr)$. In the present case, the restriction map on the corresponding quantum tangent spaces is surjective; in particular it is an isomorphism of vector spaces at the level of the quantum cotangent spaces $j\colon V_S^{(0,1)}\rightarrow\Lambda_S^{(0,1)}.$  Tensoring this inclusion yields an isomorphism $j^{\otimes 2}\colon V_S^{(0,1)}\otimes V_S^{(0,1)}\rightarrow\Lambda_S^{(0,1)}\otimes\Lambda_S^{(0,1)}$.  Using this identification, we apply the argument of Section~\ref{section:contact_calculus_C2} to compute the remaining quadratic relations. Specifically, to obtain the relations associated with $G_2$ of Theorem \ref{theorem:relations_max_prolongation_flag}, it is enough to evaluate the map $\varpi^{\otimes_{2}}$ on a given set of generators of $\mathcal{O}_{q}(\mathrm{SO}_{8}/\mathrm{L}_{S})$ as an associative unital algebra (see the discussion in Section \ref{section:contact_calculus_C2} or more in detail \cite[Lemma 3.5]{Re-Jun-Ale-io}). Here we limit our attention to the elements $\tilde{z}_{ij}$, and specialise the map $\varpi^{\otimes_{2}}$ to the case of interest as
\begin{equation}
	\varpi^{\otimes_{2}}(z_{ij})=[u_{ia}^{+}u_{jb}]\otimes [\tilde{z}_{ab}^{+}]+[u_{jb}^{+}]\otimes [\tilde{z}_{ib}^{+}], 
\end{equation} 

\noindent where the square brackets denote cosets in the framing cotangent space $\Lambda^{(0,1)}_{S}$. Evaluating on all the elements $\tilde{z}_{ij}$ we find the extra relations

\begin{align*}
 e_2\wedge e_{2+i+j}+q^{-2}e_{2+i+j}\wedge e_2 -q^{-1}\bigl(e_{2+i}\wedge e_{2+j}+e_{2+j}\wedge e_{2+i}\bigr)&=0,\\
e_{2+i}\wedge e_{2+i+j+k}+q^{-2}e_{2+i+j+k}\wedge e_{2+i}-q^{-1}\bigl(e_{2+i+j}\wedge e_{2+i+k}+e_{2+i+k}\wedge e_{2+i+j}\bigr) &=0,\\
e_2\wedge e_{2+i+j+k}+q^{-2}e_{2+i+j}\wedge e_{2+k} -q^{-1}\bigl(e_{2+i}\wedge e_{2+j+k}+e_{2+j}\wedge e_{2+i+k}\bigr)&=0,\\
 e_{2+k}\wedge e_{2+i+j}+q^{-2}e_{2+i+j+k}\wedge e_2 -q^{-1}\bigl(e_{2+i+k}\wedge e_{2+j}+q^{-2}e_{2+j+k}\wedge e_{2+i}\bigr)&=0.
\end{align*}

Together, the relations presented above span a $46$-dimensional space $\mathcal I^{(2)}$. Thus, already in degree two, the resulting graded algebra
\[
V_S^{(0,\bullet)}:=\mathcal T\!\bigl(V_S^{(0,1)}\bigr)/\langle\mathcal I^{(2)}\rangle
\]
has smaller dimension than the classical exterior algebra, whose second homogeneous component has dimension $\binom{8}{2}=28$. In particular, the quadratic relations yield a degree-two space of dimension $18$ rather than $28$.
 \end{proof}
\end{prop}

Again, we observe the same phenomenon as in Section~\ref{section:contact_calculus_C2}: for calculi constructed solely from the contact distribution of the underlying classical manifold, the associated exterior algebra exhibits a drop in dimension relative to its classical counterpart. 

\subsection{A Reeb extended FODC for $\mathcal{O}_{q}(\mathrm{SO}_{8}/\mathrm{L}_{S})$}
\label{section:tangent_so8}

Following the approach of Section \ref{section:reeb_calculus_C2}, we extend the eight-dimensional tangent space constructed in the previous section by adjoining the direction associated classically with the Reeb vector field. This additional direction corresponds to the highest root $\theta$ of $\mathfrak{so}_{8}$ and spans a one-dimensional irreducible $\mathrm{U}_{q}(\ell_{S})$-module, denoted by $T^{(0,1)}_{\theta}$. The resulting nine-dimensional space $T^{(0,1)}:=T^{(0,1)}_{S}\oplus T^{(0,1)}_{\theta}$ is the quantum analogue of the full anti-holomorphic tangent space of the contact flag $\mathcal{O}_{q}(\mathrm{SO}_{8}/\mathrm{L}_{S})$. The requirement that $T^{(0,1)}_{\theta}$ be one-dimensional reflects the classical contact grading, in which the corresponding component $\mathfrak{g}_{2}$ is one-dimensional. We therefore seek a single generator whose transformation under the right adjoint action of $\mathrm{U}_{q}(\ell_{S})$ realises the required module structure. The next proposition gives the general form of such a generator.
	
\begin{prop} 
There exists a unique nine-dimensional quantum tangent space for $\mathcal O_q(\mathrm{SO}_8/\mathrm{L}_S)$ which decomposes, as a $\mathrm{U}_q(\mathfrak \ell_S)$-module, into the eight-dimensional irreducible module $T_S^{(0,1)}$ and a one-dimensional irreducible module given by the complex span of
	\begin{align*}
	 E_{\theta}&= E_{2}E_{4}E_{3}E_{1}E_{2}  +2(q+q^{-1})^{-3}E_{4}E_{3}E_{1}E_{2}^{2}\\ 
	 & \quad -(q+q^{-1})^{-1}\left(E_{4}E_{2}E_{3}E_{1}E_{2} + E_{1}E_{2}E_{4}E_{3}E_{2} + E_{3}E_{2}E_{4}E_{1}E_{2}\right).
	 	\end{align*}
 We call the latter $T^{(0,1)}$.
 \begin{proof}
The verification that $T^{(0,1)}$ is a $9$-dimensional quantum tangent space follows by the same argument as in Proposition \ref{prop:tangent_space_with_theta_C2}. To prove uniqueness within the class under consideration, let $T$ be another such tangent space. The Levi-invariance and coideal conditions force $T$ to contain $T_S^{(0,1)}$ and force its remaining one-dimensional summand to be $\mathbb C E_\theta$. Hence $T=T^{(0,1)}$.
\end{proof}
\end{prop}

 Letting $\mathcal{I}_{\theta}$ be the right $\mathcal{O}_{q}(\mathrm{SO}_{8}/\mathrm{L}_{S})$-ideal in $\mathcal{O}_{q}(\mathrm{SO}_{8}/\mathrm{L}_{S})^{+}$ corresponding to the tangent space $T^{(0,1)}$ according to Theorem \ref{th:hermisson_class}, we have a left $\mathcal{O}_{q}(\mathrm{SO}_{8})$-covariant FODC on $\mathcal{O}_{q}(\mathrm{SO}_{8}/\mathrm{L}_{S})$ as 

\begin{equation}
\left({\Omega}^{(0,1)}_{\theta}(\mathrm{SO}_{8}/\mathrm{L}_{S}):=\mathcal{O}_{q}(\mathrm{SO}_{8})\square_{\pi_{B}} V^{(0,1)},\overline{\partial}\right), 
\end{equation}

\noindent where $V^{(0,1)}:=\mathcal{O}_{q}(\mathrm{SO}_{8}/\mathrm{L}_{S})^{+}/\,\mathcal{I}_{\theta}$ is the associated cotangent space.

The differential calculus constructed above does not belong to the category ${}^{\mathcal{O}_{q}(\mathrm{SO}_{8})}_{\mathcal{O}_{q}(\mathrm{SO}_{8}/\mathrm{L}_{S})}\mathrm{Mod}_{0}$. Consequently, determining the relations of its maximal prolongation requires us to follow the approach outlined in Section \ref{section:contact_adapted_calculus_C2}, which relies on the choice of a suitable framing calculus. As in the $\mathrm{C}_{2}$ case, a central step in this procedure is the construction of an explicit lift of the root vector $E_{\theta}$. While this was computationally feasible in that case, carrying out an analogous computation in the present setting has so far proved elusive. This difficulty points to the need for a more general strategy for constructing such lifts, whose development we leave to future work.

Nevertheless, the problem of describing a geometrically coherent exterior algebra for the flag manifold under consideration remains. With this motivation, we explore an alternative approach, following the construction of \cite{Re-Sombrg} based on Lusztig root vectors. The authors construct covariant first order differential calculi for quantised flag manifolds of type $\mathrm{A}$ whose maximal prolongations have classical dimension. Their results suggest that Lusztig root vectors may also provide a useful starting point in the present setting, provided that their choice is compatible with the root vector $E_{\theta}$ appearing in our construction.

Since Lusztig root vectors depend on the choice of a reduced expression for the longest element of the Weyl group, the compatibility required above leads to the following question:
\begin{quote}
Does there exist a reduced expression for the longest element of the Weyl group of type $\mathrm{D}_{4}$ such that the corresponding root vectors span a quantum tangent space and the root vector associated to the highest root restricts to the prescribed $E_{\theta}$?
\end{quote}
We investigate this question in what follows, beginning with a brief review of the construction of \cite{Re-Sombrg} and the role played by the choice of reduced expression. 

 Let $\mathcal{B}$ denote the braid group associated to the root system $\mathrm{D}_{4}$, that is, the group generated by elements $s_{\alpha_{1}},\dots,s_{\alpha_{4}}$ subject to the relations 
 \begin{equation}
 	\begin{aligned}
 		s_{\alpha_{i}}s_{\alpha_{2}}s_{\alpha_{i}} & = s_{\alpha_{2}}s_{\alpha_{i}}s_{\alpha_{2}}, \\
 		s_{\alpha_{i}}s_{\alpha_{j}} & = s_{\alpha_{j}}s_{\alpha_{i}}, \\
 	\end{aligned}
 \end{equation} 
 where $\alpha_{i}\in\Pi:=\{\alpha_{1},\alpha_{3},\alpha_{4}\}$. It is a well-known result of Lusztig \cite{Lusz88} that there is a map
 
 \begin{equation}
 \begin{aligned}
\mathcal{B}_{\mathfrak{so}_{8}} & \to \mr{End}_{\mr{Alg}}\left(\mr{U}_{q}(\mathfrak{so}_{8})\right), \\
 s_{\alpha_i} & \mapsto \mathcal{T}_{\alpha_i}=:\mathcal{T}_{i}, 
 \end{aligned}
 \end{equation}
 
 \noindent of the braid group into the group of algebra automorphisms of the quantised enveloping algebra, defining an action $\mathcal{T}\colon\mathcal{B}\otimes \mathrm{U}_{q}(\mathfrak{so}_{8}) \to \mathrm{U}_{q}(\mathfrak{so}_{8})$. In particular, given $k\in\mathbb{C}$ and letting  $[X,Y]_{k}:=XY-k\, YX$ denote the $k$-commutator of elements in $\mr{U}_{q}(\mathfrak{so}_{8})$, the action on the $K$-generators is  $\mathcal{T}_{i}(K_{j}) = K_{j}K_{i}^{-a_{ij}}$, that on the $E,F$-generators is
 
 \begin{align*}
 		\mathcal{T}_{i}(E_{i}) &=-F_{i}K_{i}, & \mathcal{T}_{i}(E_{j})& = - [ E_{i},E_{j}]_{q^{-1}},  & & \text{if} \ a_{ij}=-1, & \mathcal{T}_{i}(E_{j})& =E_{j}, & &  \text{if} \ a_{ij}=0,  \notag \\	  
 		\mathcal{T}_{i}(F_{i})&=-K_{i}^{-1}E_{i}, & \mathcal{T}_{i}(F_{j})& = - [F_{j},F_{i}]_{q} ,  & &  \text{if} \ a_{ij}=-1, & \mathcal{T}_{i}(F_{j})& =F_{j}, & &  \text{if} \ a_{ij}=0. 
        \notag  
 \end{align*}
 
 We define root elements for $\mathrm{U}_{q}(\mathfrak{so}_{8})$ given arbitrary roots of $\mathfrak{so}_{8}$ using the action of the braid group. Consider a fixed reduced decomposition $w_{0}=w_{i_{1}}\cdots w_{i_{n}}$ of the longest element of the Weyl group $\mathcal{W}$. As all positive roots of the Lie algebra are exhausted by reflections with respect to the simple roots, we define the \emph{Lusztig root vectors}, or just root vectors, of $\mathrm{U}_{q}(\mathfrak{so}_{8})$ corresponding to the positive roots $\beta_{r}$ and $-\beta_{r}$ respectively as
\begin{equation}
\label{eq:definition_of_root_vectors}
	E_{\beta_{r}}:=\mathcal{T}_{i_{1}}\cdots \mathcal{T}_{i_{r-1}}(E_{i_{r}}), \qquad F_{\beta_{r}} := \mathcal{T}_{i_{1}}\cdots \mathcal{T}_{i_{r-1}}(F_{i_{r}}).
\end{equation}

To exemplify the procedure, consider the \emph{maximal orthogonal decomposition} (see \cite{STEKOLSHCHIK23,Uli}) for the longest element of $\mathcal{W}$
\begin{equation}
	w_{\mathrm{max}}:=s_{\alpha_4}s_{\alpha_3}s_{\alpha_2}s_{\alpha_4}s_{\alpha_3}s_{\alpha_2}s_{\alpha_1}s_{\alpha_2}s_{\alpha_4}s_{\alpha_3}s_{\alpha_2}s_{\alpha_1}. 
\end{equation} 
and the corresponding positive root vectors. A crucial property (see \cite{Lus}, page 262) that we make repeated use of in the next proposition is that, for any pair of adjacent nodes in the Dynkin diagram the following relation holds
\[
\mathcal{T}_{i}\,\mathcal{T}_{j}(E_{i})=E_{j}. 
\]  
Furthermore, if the property $w(\alpha_{i})=\alpha_{j}$ holds at the level of the Lie algebra, for any element $w\in \mathcal{W}$ and any simple roots $\alpha_{i},\alpha_{j}$, then the corresponding action of the braid group on the quantised enveloping algebra reads $\mathcal{T}_{w}(E_{i})=E_{j}$ (see \cite{Kli-Sch} page 176, Proposition 23).

\begin{prop}\label{proposition:root_vectors_of_D4}
The reduced decomposition $w_{\mathrm{max}}$ of the longest element in $\mathcal{W}$ provides the Lusztig root vectors 
\[
\begin{aligned}
E_{4}, &  & E_{3}, &  & E_{2}, &  &E_{1},\\ 
 -[E_{2},E_{1}]_{q^{-1}},& &-[E_{3},E_{2}]_{q^{-1}}, & & -[E_{4},E_{2}]_{q^{-1}}, & & [E_{4},[E_{3},E_{2}]_{q^{-1}}]_{q^{-1}},\\ 
 [[E_{4},E_{2}]_{q^{-1}},E_{1}]_{q^{-1}},& & [[E_{3},E_{2}]_{q^{-1}},E_{1}]_{q^{-1}}, & &  -[[E_{4},[E_{3},E_{2}]_{q^{-1}}]_{q^{-1}},E_{1}]_{q^{-1}}, & & [[[E_{2},E_{4}]_{q^{-1}},E_{3}]_{q^{-1}},[E_{2},E_{1}]_{q^{-1}}]_{q^{-1}}.
\end{aligned}
\] 

	\begin{proof}
	The proof consists of a direct calculation using the definition of root vectors as in Equation \eqref{eq:definition_of_root_vectors} and exploiting the properties of the action of $\mathcal{B}$.
	\end{proof}	
\end{prop}

It turns out that the root vectors in the last proposition supply a quantum tangent space, which we will call the \emph{Lusztig tangent space}, for $\mathcal{O}_{q}(\mathrm{SO}_8)$. The latter is identified, under a suitable limit procedure, with the anti-holomorphic component of the complexified tangent bundle of the complex manifold underlying the full flag manifold of type $\mathrm{D}_{4}$. 
\begin{prop}
\label{proposition:tangent_space_D4}
The complex span of the root vectors in Proposition \ref{proposition:root_vectors_of_D4} provides a quantum tangent space $\underline{T}^{(0,1)}_{\mathcal{L}}$ for the quantised coordinate algebra $\mathcal{O}_{q}(\mathrm{SO}_8)$. 
\begin{proof} 
As the counit is an algebra morphism vanishing on the $E_{i}$-generators of $\mathrm{U}_{q}(\mathfrak{so}_{8})$ we are only left to check that $T^{(0,1)}_{\mathcal{L}}\oplus \mathbb{C}1$ is a right coideal of $\mathrm{U}_{q}(\mathfrak{so}_{8})$. We proceed through a direct check of the latter. The condition is clearly verified for the positive root vectors corresponding to simple roots. Next, consider elements featuring at least a $q^{-1}$-commutator. Owing to the triality symmetry of the $\mathrm{D}_{4}$ Dynkin diagram, it is enough to consider a single representative in terms of the number of $q^{-1}$-commutators. We find

		\begin{align*}
		       \Delta[E_{2},E_{i}]_{q^{-1}} & = [E_{2},E_{i}]_{q^{-1}}\otimes K_{i}K_{2} + \nu E_{i}\otimes K_{i}E_{2} +  1\otimes [E_{2},E_{i}]_{q^{-1}},
       \\ 
       \Delta[E_{i},E_{2}]_{q^{-1}} & = [E_{i},E_{2}]_{q^{-1}}\otimes K_{i}K_{2} + \nu E_{2}\otimes K_{2}E_{i} +  1\otimes [E_{i},E_{2}]_{q^{-1}}, \\ \\ 
		\Delta[E_{i},[E_{j},E_{2}]_{q^{-1}}]_{q^{-1}} & =  [E_{i},[E_{j},E_{2}]_{q^{-1}}]_{q^{-1}}\otimes K_{2}K_{i}K_{j} + \nu [E_{j},E_{2}]_{q^{-1}}\otimes K_{j}K_{2}E_{i} \\
		&\quad + 1\otimes [E_{i},[E_{j},E_{2}]_{q^{-1}}]_{q^{-1}} + \nu [E_{i},E_{2}]_{q^{-1}}\otimes K_{2}K_{i}E_{j},\\  
       		\Delta[[E_{i},E_{2}]_{q^{-1}},E_{j}]_{q^{-1}} & = [[E_{i},E_{2}]_{q^{-1}},E_{j}]_{q^{-1}}\otimes K_{2}K_{i}K_{j} + \nu [E_{2},E_{j}]_{q^{-1}}\otimes K_{2}K_{j}E_{i} \\ 
		&\quad + \nu E_{j}\otimes K_{j}[E_{i},E_{2}]_{q^{-1}}+  1\otimes [[E_{i},E_{2}]_{q^{-1}},E_{j}]_{q^{-1}}, \\ \\
		\Delta[[E_{i},[E_{j},E_{2}]_{q^{-1}}]_{q^{-1}},E_{k}]_{q^{-1}} & = [[E_{i},[E_{j},E_{2}]_{q^{-1}}]_{q^{-1}},E_{k}]_{q^{-1}}\otimes K_{i}K_{j}K_{k}K_{2} \\
		&\quad + \nu [[E_{j},E_{2}]_{q^{-1}},E_{k}]_{q^{-1}}\otimes K_{j}K_{k}K_{2}E_{i} \\ 
		&\quad + \nu E_{k}\otimes K_{k}[E_{i},[E_{j},E_{2}]_{q^{-1}}]_{q^{-1}} \\
		&\quad + 1\otimes [[E_{i},[E_{j},E_{2}]_{q^{-1}}]_{q^{-1}},E_{k}]_{q^{-1}} \\ 
		&\quad + \nu [[E_{i},E_{2}]_{q^{-1}},E_{k}]_{q^{-1}}\otimes K_{i}K_{k}K_{2}E_{j}, \\ \\
		\Delta[[[E_{2},E_{i}]_{q^{-1}},E_{j}]_{q^{-1}},[E_{2},E_{k}]_{q^{-1}}]_{q^{-1}} & = [[[E_{2},E_{i}]_{q^{-1}},E_{j}]_{q^{-1}},[E_{2},E_{k}]_{q^{-1}}]_{q^{-1}} \otimes K_{2}^{2}K_{k}K_{j}K_{i} \\ 
		& \quad + \nu [E_{j},[E_{2},E_{k}]_{q^{-1}}]_{q^{-1}}\otimes K_{k}K_{2}K_{j}[E_{2},E_{i}]_{q^{-1}} \\
		& \quad + \nu E_{j}\otimes K_{j}[[E_{2},E_{i}]_{q^{-1}},[E_{2},E_{k}]_{q^{-1}}]\\
		& \quad + 1\otimes [[[E_{2},E_{i}]_{q^{-1}},E_{j}]_{q^{-1}},[E_{2},E_{k}]_{q^{-1}}]_{q^{-1}} \\ 
		&\quad + \nu E_{k}\otimes K_{k}(q[[E_{2},E_{i}]_{q^{-1}},E_{j}]_{q^{-1}}E_{2}-q^{-1}E_{2}[[E_{2},E_{i}]_{q^{-1}},E_{j}]_{q^{-1}}) \\
				& \quad + \nu [E_{i},[E_{2},E_{k}]_{q^{-1}}]_{q^{-1}}\otimes K_{i}K_{2}K_{k}[E_{2},E_{j}]_{q^{-1}}\\ 
				&\quad +\nu [[E_{i},[E_{j},E_{2}]_{q^{-1}}]_{q^{-1}},E_{k}]_{q^{-1}} \otimes K_{i}K_{j}K_{2}K_{k}E_{2}, 
	\end{align*}
	
\noindent where $\nu=q-q^{-1}$ and where $i,j,k$ are distinct indices in $\{1,3,4\}$. 
	
\end{proof}
\end{prop}

\begin{rmk}
It turns out that the tangent space $\underline{T}^{(0,1)}_{\mathcal L}$ does not restrict to $T^{(0,1)}$. More in detail, it is easy to see  that the part corresponding to the contact distribution, namely the one generated by the adjoint $\mathrm{U}_{q}(\ell_{S})$-action on the crossed node, restricts well. However, expanding the root vector corresponding to the highest root of the underlying Lie algebra, projecting to the flag, and applying the $q$-Serre relations to carry $E_{2}$ to the rightmost part of the single summands gives 
\[
\begin{aligned}
E'_{\theta} & = -q^{-2}(q+q^{-1})E_2E_iE_jE_kE_2+q^{-2}E_iE_2E_jE_kE_2 +q^{-2}E_jE_2E_iE_kE_2\\
&\quad +q^{-4}E_kE_2E_iE_jE_2 -\frac{q^{-3}}{q+q^{-1}}E_iE_jE_kE_2^2,
\end{aligned}
\]
which is not a scalar multiple of $E_\theta$. Moreover, as it may be checked by direct calculation, $\mathrm{U}_{q}(\ell_{S})E'_{\theta}$ does not supply a 1-dimensional module. 
 \end{rmk}

\begin{prop}
No Lusztig tangent space of $\mathcal{O}_q(\mathrm{SO}_8)$ restricts to the quantum tangent space $T^{(0,1)}$. 
\begin{proof}

 By the characterization of Carotenuto, Hohlweg, and Papi \cite{CHP}, precisely $15$ commutation classes of reduced decompositions of the longest Weyl group element in type $\mathrm D_4$ give rise to quantum tangent spaces. Following the lines of the previous remark, a direct inspection of the associated Lusztig root vectors shows that none of these tangent spaces restricts to $T^{(0,1)}$.
	\end{proof}
\end{prop}

The preceding proposition shows that the Lusztig construction does not recover the nine-dimensional tangent space $T^{(0,1)}$ by restriction to the contact flag manifold, paralleling the obstruction found in type $\mathrm C_2$ in Section \ref{section:reeb_calculus_C2}. Although the contact directions restrict correctly, the highest-root vector fails to generate the required one-dimensional $\mathrm U_q(\ell_S)$-submodule. A framing calculus for this extension must therefore be obtained analysing possibilities that go beyond the class of Lusztig tangent spaces.

 \section*{Acknowledgements}
The author would like to thank Reamonn \'{O} Buachalla and Thomas Weber for their guidance throughout the development of this paper. The author is also grateful to Julius Benner, Arnab Bhattacharjee and Emanuele Latini for many fruitful discussions. Special thanks go to Alessandro Carotenuto and Giovanni Gava for the insightful comments and suggestions. This publication is partially supported by HORIZON-MSCA-2022-SE-01-01 CaLIGOLA and UNCE/24/SCI/022. The author acknowledges the use of \href{https://tikzcd.yichuanshen.de/}{https://tikzcd.yichuanshen.de/} and ChatGPT in preparing some of the diagrams and tables.

\appendix

\clearpage

\section{Quantum tangent spaces and dual pairings}
\label{appendix}

We collect the tables of quantum vectors and dual pairings used throughout the paper. The tables bring together the data for the flag calculi and their framing calculi, facilitating comparison between the constructions.

\begingroup
\newcommand{\pairingzero}{\textcolor{black!45}{$0$}}
\newcommand{\pairingdash}{\textcolor{black!45}{---}}

\begin{table}[h]
\centering
\small
\renewcommand{\arraystretch}{1.5}
\setlength{\tabcolsep}{3pt}
\arrayrulecolor{flagblue!60}

\begin{tabular*}{\textwidth}{
  @{\extracolsep{\fill}}lcccccc@{}
}
\toprule

\rowcolor{flagblue!8}
\multicolumn{7}{l}{
  \strut\textcolor{flagblue}{
    \textbf{(a) Pairings with matrix generators}
  }
}\\
\addlinespace[3pt]

\textbf{Tangent vector}
& $u_{21}$ & $u_{43}$ & $u_{31}$
& $u_{42}$ & $u_{41}$ & $u_{32}$ \\
\cmidrule(l){2-7}

$E_1$
& $1$ & $-1$
& \pairingzero & \pairingzero
& \pairingzero & \pairingzero \\

$E_2$
& \pairingzero & \pairingzero
& \pairingzero & \pairingzero
& \pairingzero & $1$ \\

$[E_1,E_2]_{q^2}$
& \pairingzero & \pairingzero
& $-q^2$ & $-1$
& \pairingzero & \pairingzero \\

$[[E_2,E_1]_{q^2},E_1]_{q^{-2}}$
& \pairingzero & \pairingzero
& \pairingzero & \pairingzero
& $q^2+q^{-2}$ & \pairingzero \\

\addlinespace[5pt]
\midrule

\rowcolor{contactpurple!8}
\multicolumn{7}{l}{
  \strut\textcolor{contactpurple}{
    \textbf{(b) Pairings with flag elements}
  }
}\\
\addlinespace[3pt]

\textbf{}
& $z_{43}$ & $z_{42}$
& \multicolumn{2}{c}{$z_{41}$}
& \multicolumn{2}{c}{$u_{32}S(u_{22})$} \\
\cmidrule(l){2-7}

\multicolumn{7}{l}{
  \textcolor{contactpurple}{
    \itshape Contact-flag calculus
  }
}\\

$E_1$
& $q$ & \pairingzero
& \multicolumn{2}{c}{\pairingzero}
& \multicolumn{2}{c}{\pairingdash} \\

$E_2E_1$
& \pairingzero & $-q^3$
& \multicolumn{2}{c}{\pairingzero}
& \multicolumn{2}{c}{\pairingdash} \\

$\begin{gathered}
  E_2E_1^2-(q^2+q^{-2})E_1E_2E_1
\end{gathered}$
& \pairingzero & \pairingzero
& \multicolumn{2}{c}{$-q^4(q^2+q^{-2})$}
& \multicolumn{2}{c}{\pairingdash} \\

\addlinespace[5pt]

\multicolumn{7}{l}{
  \textcolor{flagblue}{
    \itshape Framing and full-flag calculi
  }
}\\

$E_1$
& $q$ & \pairingzero
& \multicolumn{2}{c}{\pairingzero}
& \multicolumn{2}{c}{\pairingzero} \\

$E_2$
& \pairingzero & \pairingzero
& \multicolumn{2}{c}{\pairingzero}
& \multicolumn{2}{c}{$q^2$} \\

$[E_1,E_2]_{q^2}$
& \pairingzero & $q^5$
& \multicolumn{2}{c}{\pairingzero}
& \multicolumn{2}{c}{\pairingzero} \\

$[[E_2,E_1]_{q^2},E_1]_{q^{-2}}$
& \pairingzero & \pairingzero
& \multicolumn{2}{c}{$-q^4(q^2+q^{-2})$}
& \multicolumn{2}{c}{\pairingzero} \\

\bottomrule
\end{tabular*}

\caption{Pairings for the type $\mathrm C_2$ calculi. Panel~(a) lists the nonzero pairings with matrix generators. Panel~(b) compares the contact-flag tangent vectors with their framing counterparts and includes the additional full-flag direction detected by $u_{32}S(u_{22})$. Dashes indicate that this element does not belong to the contact flag.}

\label{table:C2_pairings}

\label{table:root_vectors_dual_pairing_C2_up}
\label{table:root_vectors_dual_pairing_C2}
\label{table:root_vectors_dual_pairing_C2_full}
\label{table:root_vectors_dual_pairing_total_C2}

\end{table}
\endgroup

 \begingroup
\small
\renewcommand{\arraystretch}{1.4}
\setlength{\tabcolsep}{2.5pt}
\setlength{\LTleft}{\fill}
\setlength{\LTright}{\fill}
\setlength{\LTcapwidth}{\textwidth}
\arrayrulecolor{flagblue!60}

\newcommand{\Dfourpairs}[4]{%
  $\begin{gathered}
    #1\mapsto #2\\[-2pt]
    #3\mapsto #4
  \end{gathered}$%
}
\newcommand{\Dfoursection}[2]{%
  \rowcolor{#1!8}
  \multicolumn{5}{l}{%
    \strut\textcolor{#1}{\itshape #2}%
  }\\*[4pt]
}

\begin{longtable}{@{}
  >{\raggedright\arraybackslash}m{0.14\textwidth}
  >{\raggedright\arraybackslash}m{0.28\textwidth}
  >{\centering\arraybackslash}m{0.16\textwidth}
  >{\centering\arraybackslash}m{0.17\textwidth}
  >{\centering\arraybackslash}m{0.17\textwidth}
@{}}
\toprule
\endfirsthead

\toprule
\endhead

\midrule
\multicolumn{5}{r}{\footnotesize\textcolor{flagblue}{Continued on the next page}}\\
\endfoot

\bottomrule
\caption{Root vectors and dual pairings in type $\mathrm D_4$. An entry $a\mapsto c$ means that the pairing of the indicated vector with $a$ equals $c$. Panel~(a) lists the Lusztig root vectors associated with $w_{\mathrm{max}}$ and their matrix pairings. Panel~(b) compares the contact-flag and framing vectors; matrix pairings in that panel refer to the framing vector.}
\label{table:D4_pairings}
\label{table:root_vectors_of_D4}
\label{table:root_vectors_frame_dual_pairing}
\label{table:root_vectors_dual_pairing_flag_frame}
\label{table:root_vectors_dual_pairing_total}\\
\endlastfoot

\Dfoursection{flagblue}{(a) Lusztig root vectors and matrix pairings}
\multicolumn{3}{l}{\textbf{Root vector}}
& \multicolumn{2}{c}{\textbf{Matrix pairings}}\\*
\cmidrule(lr){1-3}\cmidrule(l){4-5}

\multicolumn{3}{>{\raggedright\arraybackslash}m{\dimexpr0.58\textwidth+10pt\relax}}{
  $E_4$
}
& $u_{53}\mapsto 1$ & $u_{64}\mapsto -1$ \\

\multicolumn{3}{>{\raggedright\arraybackslash}m{\dimexpr0.58\textwidth+10pt\relax}}{
  $E_3$
}
& $u_{43}\mapsto 1$ & $u_{65}\mapsto -1$ \\

\multicolumn{3}{>{\raggedright\arraybackslash}m{\dimexpr0.58\textwidth+10pt\relax}}{
  $E_2$
}
& $u_{32}\mapsto 1$ & $u_{76}\mapsto -1$ \\

\multicolumn{3}{>{\raggedright\arraybackslash}m{\dimexpr0.58\textwidth+10pt\relax}}{
  $E_1$
}
& $u_{21}\mapsto 1$ & $u_{87}\mapsto -1$ \\

\addlinespace[2pt]

\multicolumn{3}{>{\raggedright\arraybackslash}m{\dimexpr0.58\textwidth+10pt\relax}}{
  $-[E_2,E_1]_{q^{-1}}$
}
& $u_{31}\mapsto -1$ & $u_{86}\mapsto q^{-1}$ \\

\multicolumn{3}{>{\raggedright\arraybackslash}m{\dimexpr0.58\textwidth+10pt\relax}}{
  $-[E_3,E_2]_{q^{-1}}$
}
& $u_{42}\mapsto -1$ & $u_{75}\mapsto q^{-1}$ \\

\multicolumn{3}{>{\raggedright\arraybackslash}m{\dimexpr0.58\textwidth+10pt\relax}}{
  $-[E_4,E_2]_{q^{-1}}$
}
& $u_{52}\mapsto -1$ & $u_{74}\mapsto q^{-1}$ \\

\addlinespace[2pt]

\multicolumn{3}{>{\raggedright\arraybackslash}m{\dimexpr0.58\textwidth+10pt\relax}}{
  $[E_4,[E_3,E_2]_{q^{-1}}]_{q^{-1}}$
}
& $u_{62}\mapsto -1$ & $u_{73}\mapsto q^{-2}$ \\

\multicolumn{3}{>{\raggedright\arraybackslash}m{\dimexpr0.58\textwidth+10pt\relax}}{
  $[[E_4,E_2]_{q^{-1}},E_1]_{q^{-1}}$
}
& $u_{51}\mapsto 1$ & $u_{84}\mapsto -q^{-2}$ \\

\multicolumn{3}{>{\raggedright\arraybackslash}m{\dimexpr0.58\textwidth+10pt\relax}}{
  $[[E_3,E_2]_{q^{-1}},E_1]_{q^{-1}}$
}
& $u_{41}\mapsto 1$ & $u_{85}\mapsto -q^{-2}$ \\

\addlinespace[2pt]

\multicolumn{3}{>{\raggedright\arraybackslash}m{\dimexpr0.58\textwidth+10pt\relax}}{
  $\begin{aligned}
    &-[[E_4,[E_3,E_2]_{q^{-1}}]_{q^{-1}}, E_1]_{q^{-1}}
  \end{aligned}$
}
& $u_{61}\mapsto -1$ & $u_{83}\mapsto q^{-3}$ \\

\multicolumn{3}{>{\raggedright\arraybackslash}m{\dimexpr0.58\textwidth+10pt\relax}}{
  $\begin{aligned}
    &[[[E_2,E_4]_{q^{-1}},E_3]_{q^{-1}},[E_2,E_1]_{q^{-1}}]_{q^{-1}}
  \end{aligned}$
}
& $u_{71}\mapsto 1$ & $u_{82}\mapsto -q^{-4}$ \\
\Dfoursection{contactpurple}{(b) Contact-flag and framing calculi}
\multicolumn{2}{c}{\textbf{Tangent vectors}}
& \multicolumn{3}{c}{\textbf{Pairings}}\\*
\cmidrule(lr){1-2}\cmidrule(l){3-5}
{\footnotesize\itshape Contact flag}
& {\footnotesize\itshape Framing}
& {\footnotesize\itshape Matrix elements\newline(framing)}
& {\footnotesize\itshape Flag elements\newline(contact flag)}
& {\footnotesize\itshape Flag elements\newline(framing)}\\*
\midrule
$E_2$ & $E_2$
& \Dfourpairs{u_{32}}{1}{u_{76}}{-1}
& \Dfourpairs{\tilde z_{13}}{q}{\tilde z_{31}}{-q^2}
& \Dfourpairs{\tilde z_{13}}{q}{\tilde z_{31}}{-q^2}
\\[3pt]
$E_1E_2$ & $[E_2,E_1]_q$
& \Dfourpairs{u_{31}}{1}{u_{86}}{-q}
& \Dfourpairs{\tilde z_{23}}{q}{\tilde z_{32}}{-q^2}
& \Dfourpairs{z_{32}}{q^3}{z_{23}}{-q^2}
\\[3pt]
$E_3E_2$ & $[E_2,E_3]_q$
& \Dfourpairs{u_{75}}{1}{u_{42}}{-q}
& \Dfourpairs{\tilde z_{14}}{q}{\tilde z_{41}}{-q^2}
& \Dfourpairs{z_{41}}{q^3}{z_{14}}{-q^2}
\\[3pt]
$E_4E_2$ & $[E_2,E_4]_q$
& \Dfourpairs{u_{74}}{1}{u_{52}}{-q}
& \Dfourpairs{\tilde z_{15}}{q}{\tilde z_{51}}{-q^2}
& \Dfourpairs{z_{51}}{q^3}{z_{15}}{-q^2}
\\[3pt]
$E_3E_4E_2$ & $[[E_2,E_4]_q,E_3]_q$
& \Dfourpairs{u_{73}}{1}{u_{62}}{-q^2}
& \Dfourpairs{\tilde z_{61}}{q^2}{\tilde z_{16}}{-q}
& \Dfourpairs{\tilde z_{61}}{q^4}{\tilde z_{16}}{-q^3}
\\[3pt]
$E_4E_1E_2$ & $[[E_2,E_4]_q,E_1]_q$
& \Dfourpairs{u_{84}}{q}{u_{51}}{-q}
& \Dfourpairs{\tilde z_{25}}{q}{\tilde z_{52}}{-q^2}
& \Dfourpairs{\tilde z_{25}}{q^3}{\tilde z_{52}}{-q^4}
\\[3pt]
$E_3E_1E_2$ & $[[E_2,E_3]_q,E_1]_q$
& \Dfourpairs{u_{85}}{q}{u_{41}}{-q}
& \Dfourpairs{\tilde z_{24}}{q}{\tilde z_{42}}{-q^2}
& \Dfourpairs{\tilde z_{24}}{q^3}{\tilde z_{42}}{-q^4}
\\[3pt]
$E_3E_4E_1E_2$ & $\begin{aligned}&[[[E_2,E_4]_q,E_3]_q,E_1]_q\end{aligned}$
& \Dfourpairs{u_{83}}{q}{u_{61}}{-q^2}
& \Dfourpairs{\tilde z_{62}}{q^2}{\tilde z_{26}}{-q}
& \Dfourpairs{\tilde z_{26}}{q^4}{\tilde z_{62}}{-q^5}
\\[3pt]
\end{longtable}
\endgroup

\end{document}